\documentclass[letter,11pt]{article}

\usepackage[utf8]{inputenc}
\usepackage[dvipsnames,table,xcdraw]{xcolor}
\usepackage{amssymb}
\usepackage[pdfencoding=auto, psdextra, colorlinks,citecolor=blue!90!black, urlcolor= black]{hyperref}       %
\usepackage{booktabs}       %
\usepackage{amsfonts}       %
\usepackage{nicefrac}       %
\usepackage{mathrsfs}
\usepackage{graphicx}
\usepackage{amsmath,amssymb,amsthm}
\usepackage[sans]{dsfont}
\usepackage[numbers,square]{natbib}
\usepackage{cleveref}
\usepackage{pifont}
\usepackage{algorithm,algorithmicx,algpseudocode}
\usepackage{thmtools}
\usepackage{thm-restate}
\usepackage{nicematrix}
\def\O#1{\text{\ding{\the\numexpr#1+171}}}
\declaretheorem[name=Theorem,within=section,Refname={Theorem,Theorems}]{Theorem}
\declaretheorem[name=Theorem,numbered=no]{nTheorem}
\declaretheorem[name=Problem,sibling=Theorem,Refname={Problem,Problems}]{Problem}
\declaretheorem[name=Lemma,sibling=Theorem,Refname={Lemma,Lemmas}]{Lemma}
\declaretheorem[name=Proposition,sibling=Theorem,Refname={Proposition,Propositions}]{Proposition}
\declaretheorem[name=Proposition,numbered=no]{nProposition}
\declaretheorem[name=Fact,sibling=Theorem]{Fact}
\declaretheorem[name=Fact,numbered=no]{nFact}
\declaretheorem[name=Corollary,sibling=Theorem]{Corollary}
\declaretheorem[name=Corollary,numbered=no]{nCorollary}
\declaretheorem[name=Example,sibling=Theorem]{Example}
\declaretheorem[name=Remark,sibling=Theorem]{Remark}
\declaretheorem[name=Definition,sibling=Theorem]{Definition}
\declaretheorem[name=Definition,numbered=no]{nDefinition}

\usepackage{bm}

\title{Testing Approximate Stationarity Concepts \\ for Piecewise Affine Functions\footnote{This work will be presented in part at the ACM-SIAM Symposium on Discrete Algorithms (SODA), 2025.}}

\def\coloneqq{:=}
\def\epsilon{\varepsilon}

\def\ext{\mathop{\textnormal{ext}}}

\def\conv{\mathop{\textnormal{conv}}}
\def\aff{\mathop{\textnormal{aff}}}
\def\cone{\mathop{\textnormal{cone}}}
\def\parr{\mathop{\textnormal{par}}}

\def\argmin{\mathop{\textnormal{argmin}}}
\def\argmax{\mathop{\textnormal{argmax}}}

\def\eargmax{\mathop{\epsilon\textnormal{-argmax}}}
\def\lB{\left\{}
\def\rB{\right\}}

\def\cl{\mathop{\textnormal{cl}}}
\def\intt{\mathop{\textnormal{int}}}

\def\dist{\textnormal{dist}}
\def\sgn{\textnormal{sgn}}

\def\spn{\textnormal{span}}
\def\rnk{\textnormal{rank}}
\def\diag{\mathop{\textnormal{Diag}}}
\def\ri{\mathop{\textnormal{ri}}}

\newcommand{\sz}[1]{\langle #1 \rangle}

\def\leq{\leqslant}
\def\geq{\geqslant}

\def\cP{\mathsf{P}}
\def\cNP{\mathsf{NP}}
\def\MC{\textnormal{MC}}
\def\PA{\textnormal{PA}}
\def\DC{\textnormal{DC}}
\def\MaxMin{\textnormal{Max-Min}}
\def\NAS{\textnormal{NAS}} 

\def\nref{\labelcref}

\usepackage{mathtools}

\usepackage{tablefootnote}

\usepackage{eufrak}

\usepackage{tcolorbox}

\usepackage{multirow}

\usepackage{enumitem}

\definecolor{ye}{HTML}{FFFFC7}
\definecolor{airforceblue}{rgb}{0.36, 0.54, 0.66}
\definecolor{amber}{rgb}{1.0, 0.75, 0.0}
\definecolor{burntorange}{rgb}{0.8, 0.33, 0.0}
\definecolor{dimgray}{rgb}{0.41, 0.41, 0.41}

\usepackage{tikz}
\usetikzlibrary{shadows}
\usetikzlibrary{arrows,positioning}

\hyphenation{non-smooth non-convex}

\usepackage{framed}

\numberwithin{equation}{section}

\usepackage[disable]{todonotes}

\date{January 7, 2025}

\author{Lai~Tian \and Anthony~Man-Cho~So 
}
\author{Lai~Tian\thanks{Department of Systems Engineering and Engineering Management, The Chinese University of Hong Kong, Sha Tin, N.T., Hong Kong SAR. E-mail: \href{mailto:tianlai@se.cuhk.edu.hk}{\tt tianlai.cs@gmail.com}} \and
Anthony~Man-Cho~So\thanks{Department of Systems Engineering and Engineering Management, The Chinese University of Hong Kong, Sha Tin, N.T., Hong Kong SAR. E-mail: \href{mailto:manchoso@se.cuhk.edu.hk}{\tt manchoso@se.cuhk.edu.hk}} 
}

\usepackage{fullpage}

\newcommand{\notimplies}{\;\;\not\!\!\!\!\implies}

\providecommand{\keywords}[1]
{
  \small	
  \textbf{Keywords:} #1
}

\begin{document}
\maketitle
\begin{abstract}

We study the basic computational problem of detecting approximate stationary points for continuous piecewise affine (PA) functions. Our contributions span multiple aspects, including complexity, regularity, and algorithms. Specifically, we show that testing first-order approximate stationarity concepts, as defined by commonly used generalized subdifferentials, is computationally intractable unless $\cP=\cNP$. To facilitate computability, we consider a polynomial-time solvable relaxation by abusing the convex subdifferential sum rule and establish a tight characterization of its exactness. Furthermore, addressing an open issue motivated by the need to terminate the subgradient method in finite time, we introduce the first oracle-polynomial-time algorithm to detect so-called near-approximate stationary points for PA functions.

A notable byproduct of our development in regularity is the first necessary and sufficient condition for the validity of an equality-type (Clarke) subdifferential sum rule. Our techniques revolve around two new geometric notions for convex polytopes and may be of independent interest in nonsmooth analysis. Moreover, some corollaries of our work on complexity and algorithms for stationarity testing address open questions in the literature. To demonstrate the versatility of our results, we complement our findings with applications to a series of structured piecewise smooth functions, including $\rho$-margin-loss SVM, piecewise affine regression, and nonsmooth neural networks.

\vspace{.2cm}
\noindent%
\!\!\keywords{Stationary Points; Piecewise Affine Functions; Computational Complexity; Nonsmooth Analysis; Subdifferential Calculus; Nonsmooth Optimization; Termination Criterion}

\end{abstract}

\pagenumbering{gobble}%
\clearpage

\newpage
{\hypersetup{linkcolor=black}
\setcounter{tocdepth}{2}
\tableofcontents
}

\pagenumbering{gobble}%
\clearpage

\newpage
\clearpage
\pagenumbering{arabic}
\section{Introduction}

For a continuously differentiable function $f:\mathbb{R}^d\rightarrow \mathbb{R}$, a point $\bm{x} \in \mathbb{R}^d$ is called stationary (or critical) if $\nabla f(\bm{x}) = \bm{0}$. However, the situation is much more complicated when the function is nondifferentiable. 
Indeed, there are different stationarity concepts for nonsmooth functions; we refer the reader to \cite{li2020understanding,cui2021modern} for a gentle introduction.
For a locally Lipschitz function $f$, a classic notion of (Clarke) generalized subdifferential of $f$ at $\bm{x}$ can be defined as (see \Cref{def:subd})
\[
	\partial f(\bm{x}) := \conv\big\{\bm{s}:\exists \bm{x}_n\to \bm{x}, \nabla f(\bm{x}_n) \textnormal{ exists}, \nabla f(\bm{x}_n)\to\bm{s}\big\}.
\]
In this paper, we consider the complexity of and robust algorithms for checking whether a given point is approximately stationary with respect to a piecewise affine (PA) function. 
The study of PA functions forms the foundation for investigating the analytic approximation of more general piecewise differentiable functions. 
Specifically, given a \PA{} function $f$ and a point $\bm{x}$, we want to test whether
$
\bm{0} \in \partial f(\bm{x})
$ 
and its various approximate versions.
We emphasize that ``detecting'' (or ``testing'') and ``finding'' (or ``searching'') are two different computational problems. While the co-$\cNP$-hardness of detecting the local optimality of a given point in nonconvex optimization was shown by \citet{murty1987some}, the complexity of finding a local minimizer was an open question proposed by \citet{pardalos1992open}, and is recently settled by \citet{ahmadi2022complexity}. Furthermore, in contrast to the high efficiency of detecting stationary points for smooth function, finding such points can be difficult in general \cite{fearnley2022complexity,hollender2023computational}.

\paragraph{Motivation.} The gradient method and its variants have been the workhorse in nonconvex smooth optimization. For a lower-bounded function $f$ with a Lipschitz gradient, the standard descent lemma shows that the gradient method computes a point $\bm{x}$ satisfying $\|\nabla f(\bm{x})\| \leq \epsilon$ in $O(1/\epsilon^2)$ steps. While convexity and differentiability have long been considered desirable, problems lacking both properties have recently emerged in machine learning, operations research, and statistics. For such applications, the subgradient method is the \textit{de facto} approach employed in practice. However, its convergence behavior was not clear until very recently.
\begin{Fact}[cf.~{\cite[Corollary 5.9]{davis2020stochastic}}]\label{fct:ddtame}
Let a \PA{} function $f:\mathbb{R}^d\to\mathbb{R}$ be given. Consider the iterates $\{\bm{x}_n\}_n$ produced by the subgradient method. Under mild assumptions on the step-size and boundedness of $\{\bm{x}_n\}_n$, every limit point, say $\bm{x}^*$, of the iterates $\{\bm{x}_n\}_n$ satisfies $\bm{0} \in \partial f(\bm{x}^*)$.
\end{Fact}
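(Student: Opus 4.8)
The statement is a known convergence guarantee, so the plan is to recover it via the \emph{differential inclusion} (ODE) method of stochastic approximation, in the spirit of Bena\"{\i}m--Hofbauer--Sorin as adapted to tame functions. The first step is to view the subgradient recursion $\bm{x}_{n+1} = \bm{x}_n - \alpha_n \bm{g}_n$, with $\bm{g}_n \in \partial f(\bm{x}_n)$, as a (possibly noisy) Euler discretization of the subgradient flow
\[
	\dot{\bm{x}}(t) \in -\partial f(\bm{x}(t)).
\]
Under the stated step-size conditions ($\alpha_n \to 0$, $\sum_n \alpha_n = \infty$) and boundedness of $\{\bm{x}_n\}_n$, I would form the continuous-time linear interpolation $\bar{\bm{x}}(\cdot)$ of the iterates and show it is an \emph{asymptotic pseudotrajectory} of this inclusion; in the deterministic case the noise terms vanish identically, so this reduces to a discretization-error estimate. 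A general principle then guarantees that the limit set $L$ of the iterates is \emph{internally chain transitive} (ICT) for the associated set-valued flow, and in particular is compact, connected, and invariant.

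The crux is to certify that $f$ is a \emph{strict Lyapunov function} for the inclusion, and this is exactly where the piecewise affine (more generally, tame) structure enters. I would establish the chain rule: for every absolutely continuous trajectory $\bm{x}(\cdot)$ of the inclusion and almost every $t$,
\[
	\frac{\d}{\d t}\, f(\bm{x}(t)) = \sz{\bm{v}, \dot{\bm{x}}(t)} \quad \textnormal{for all } \bm{v} \in \partial f(\bm{x}(t)).
\]
For a PA function this can be read off the finite polyhedral subdivision on which $f$ agrees with affine selection functions: the gradients of the active pieces span the subdifferential, and a Whitney-type stratification argument shows that along $\bm{x}(\cdot)$ the velocity $\dot{\bm{x}}(t)$ is, for a.e.\ $t$, tangent to the stratum containing $\bm{x}(t)$, on which $f$ is affine. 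Consequently $\frac{\d}{\d t} f(\bm{x}(t)) = -\|\mathrm{P}_{\partial f(\bm{x}(t))}(\bm{0})\|^2 \leq 0$, with strict decrease unless $\bm{0} \in \partial f(\bm{x}(t))$. I expect this chain rule to be the main obstacle: for a general locally Lipschitz $f$ it can fail, and the entire argument then collapses, so the PA/tame hypothesis must be used in an essential way here---it is precisely what rules out the pathological nonsmooth flows along which $f$ fails to decrease.

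Finally, I would combine the Lyapunov property with the ICT structure of $L$. The general theory asserts that if $V$ is a Lyapunov function for an equilibrium set $\Lambda$ and $V(\Lambda)$ has empty interior, then every ICT set satisfies $L \subseteq \Lambda$ with $V|_L$ constant. Taking $V = f$ and $\Lambda = \St(f) := \{\bm{x} : \bm{0} \in \partial f(\bm{x})\}$, the empty-interior condition is supplied by a Sard-type theorem for tame functions: the set of critical values $f(\St(f))$ is finite. Hence $L \subseteq \St(f)$, and since every limit point $\bm{x}^*$ of $\{\bm{x}_n\}_n$ lies in $L$, we conclude $\bm{0} \in \partial f(\bm{x}^*)$, as claimed.
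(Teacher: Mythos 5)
The paper does not prove this statement itself; it is quoted as a known result with a citation to \cite[Corollary 5.9]{davis2020stochastic}, and your sketch is precisely the argument of that reference: the asymptotic-pseudotrajectory/ICT framework of Bena\"{\i}m--Hofbauer--Sorin combined with the chain rule along absolutely continuous curves (via Whitney stratification) and a tame Sard theorem to make $f$ a strict Lyapunov function for $\dot{\bm{x}} \in -\partial f(\bm{x})$. Your proposal is correct and takes essentially the same route as the cited source, and you have correctly identified the chain rule as the step where the piecewise affine (tame) hypothesis is used in an essential way.
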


These types of results \cite{benaim2005stochastic,majewski2018analysis,davis2020stochastic} are asymptotic in nature, without any \emph{a priori} complexity guarantees. This stands in stark contrast to the smooth and convex cases, where problem complexity is central to evaluating method efficiency. It turns out that such absence is fundamental in the sense that any \emph{a priori} complexity guarantee for the subgradient method is impossible.

\begin{Fact}[cf.~{\cite[Theorem 2]{tian2022no}}]
For any $T \in \mathbb{N}_+$ and any chosen step-size and initial point, there exists a $6$-Lipschitz, lower-bounded \PA{} function $f:\mathbb{R}^2\to \mathbb{R}$ such that the iterates $\{\bm{x}_n\}_n$ produced by the subgradient method satisfy
\[\dist\Big(\bm{0}, \conv \partial f(\bm{x}_n+0.25\mathbb{B})\Big) > 0.12,\qquad\text{for all $n \leq T$.}\footnote{For a set $S$, we write $\partial f(S)$ for the set $\cup_{\bm{y} \in S}\partial f(\bm{y})$.}
\]
\end{Fact}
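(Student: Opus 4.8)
The plan is to prove this as an \emph{adversarial} (information-hiding) lower bound: since the step-size schedule $\{\alpha_n\}_{n=1}^{T}$ and the initial point $\bm{x}_0$ are fixed first and the function is constructed afterward, the function-builder may tailor $f$ to the entire prescribed trajectory. I would first reduce the statement to a purely local specification. Recording that $\bm{x}_{n+1} = \bm{x}_n - \alpha_n \bm{s}_n$ with $\bm{s}_n \in \partial f(\bm{x}_n)$, and that at any point of differentiability $\partial f$ is a singleton, the whole run of the method is pinned down once we decide the gradient $\bm{g}_n$ that the oracle returns at each visited point $\bm{x}_n$. Thus it suffices to exhibit a single globally defined continuous \PA{} function $f$ whose gradient at $\bm{x}_n$ equals a chosen vector $\bm{g}_n$, and to choose the $\bm{g}_n$ so that (i) $\|\bm{g}_n\|$ is comfortably larger than $0.12$ yet bounded by $6$, and (ii) over the ball $\bm{x}_n + 0.25\mathbb{B}$ all Clarke subgradients lie in a cone of directions whose convex hull excludes a $0.12$-neighborhood of the origin.

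The geometric heart is the choice of the vectors $\bm{g}_n$. I would set each $\|\bm{g}_n\|$ to a fixed magnitude $c$ with $0.12 < c \le 6$ and let the \emph{direction} of $\bm{g}_n$ rotate slowly as $n$ increases --- the ``spiral/rotating-gradient'' device used in pathological nonsmooth examples. Two regimes must be handled. When the prescribed step $\alpha_n \bm{g}_n$ is large enough that consecutive iterates are separated by more than $0.5$, the balls $\bm{x}_n + 0.25\mathbb{B}$ are pairwise disjoint, and it is enough that $f$ is locally the single affine piece $\bm{x} \mapsto \sz{\bm{g}_n, \bm{x}-\bm{x}_n} + \beta_n$ on that ball, so that $\dist(\bm{0}, \conv\partial f(\bm{x}_n + 0.25\mathbb{B})) = c > 0.12$. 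When step-sizes are small and iterates cluster, I would instead exploit the slow rotation: by keeping the angular spread of $\{\bm{g}_m : \bm{x}_m \in \bm{x}_n + 0.25\mathbb{B}\}$ below a fixed threshold strictly less than $\pi$, the convex hull of these gradients stays within a half-space at distance $> 0.12$ from $\bm{0}$. Calibrating the per-step rotation angle against the (arbitrary, possibly tiny) step-sizes so that both the separation/clustering bookkeeping and the $0.12$ margin hold simultaneously is the delicate quantitative part.

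Having fixed the trajectory data $\{(\bm{x}_n,\bm{g}_n,\beta_n)\}$, the next step is to glue these local affine prescriptions into one honest continuous \PA{} function on $\mathbb{R}^2$. I would use the fact that any finite min-of-affine (or max-min-of-affine) expression is continuous and piecewise affine, and build $f$ as a suitable selection --- e.g.\ a min or max over the planned pieces together with a few ``guard'' affine functions --- with the offsets $\beta_n$ chosen so that exactly the intended piece is active, and strictly so, in each ball $\bm{x}_n + 0.25\mathbb{B}$. This simultaneously makes $f$ globally Lipschitz with constant $\max_n\|\bm{g}_n\| \le 6$ (the Lipschitz constant of a min/max of affine maps is the largest slope), forces $\partial f(\bm{x}_n) = \{\bm{g}_n\}$ so that the subgradient oracle really does move the iterate to $\bm{x}_{n+1}$, and --- after capping by a large constant outside a bounded region --- yields lower-boundedness with only finitely many affine pieces.

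The main obstacle I anticipate is precisely this \emph{global realizability under all constraints at once}: producing one finitely-piecewise, continuous, $6$-Lipschitz, lower-bounded function that (a) interpolates the prescribed gradients along an adversarial path, (b) compels the oracle to return those very gradients at the iterates, and (c) keeps $\bm{0}$ at distance $>0.12$ from $\conv\partial f$ over every radius-$0.25$ ball around an iterate. The packing/activation bookkeeping must be robust to \emph{arbitrary} step-sizes, which is why the slowly-rotating-gradient construction, rather than a naive disjoint-ball one, is needed; verifying that the active-piece structure is consistent --- i.e.\ that following the returned gradient from $\bm{x}_n$ lands in the region where the next intended piece is active --- is the step I expect to demand the most care.
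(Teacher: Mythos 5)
First, a point of reference: the paper does not prove this statement at all --- it is quoted verbatim as a known result, citing \cite[Theorem 2]{tian2022no}, so there is no internal proof to compare against. Your proposal is therefore judged on its own merits, and while the adversarial ``build the function after seeing the step-sizes and initial point'' framing is the right high-level stance, the argument as sketched has a genuine gap at exactly the point you flag as delicate, and I do not think the rotating-gradient device resolves it.

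The core problem is that the quantity to be bounded is $\dist(\bm{0}, \conv \partial f(\bm{x}_n+0.25\mathbb{B}))$, which ranges over the Clarke subgradients at \emph{every} point of the ball, not merely over the gradients $\bm{g}_m$ of other iterates that happen to land in it. Controlling the angular spread of $\{\bm{g}_m : \bm{x}_m \in \bm{x}_n+0.25\mathbb{B}\}$ is therefore not sufficient: you must control the gradient of every affine piece that is active anywhere in the ball, including any ``guard'' pieces used in the min/max gluing and, critically, the pieces used to cap $f$ from below (a constant cap contributes the subgradient $\bm{0}$, which instantly kills the bound for any ball touching it, so the capped region must be kept at distance more than $0.25$ from all $T$ iterates while $f$ still descends along the trajectory). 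Moreover, with arbitrary step-sizes the trajectory can return to within $0.25$ of a much earlier iterate, so spatial proximity does not imply index proximity; a slowly rotating gradient direction then gives no control on the angular spread of the pieces active in a single ball, and two nearly antipodal gradients in one ball put $\bm{0}$ in the convex hull. Finally, the gluing step is understated: in a min (or max) of affine functions the activity regions are determined by the affine data and cannot be freely assigned to prescribed disjoint balls, so ``choose the offsets $\beta_n$ so that exactly the intended piece is strictly active on $\bm{x}_n+0.25\mathbb{B}$'' is itself a nontrivial feasibility claim that your argument does not establish. A workable fix is to abandon rotation and force \emph{all} gradients of \emph{all} pieces active near the trajectory to lie in a fixed half-space $\{\bm{v} : \langle \bm{u},\bm{v}\rangle \geq c\}$ with $c>0.12$ (as the explicit valley/zigzag construction in \cite{tian2022no} effectively does); then the per-ball convex-hull bound is automatic and only the realizability and lower-boundedness bookkeeping remains.
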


Because proving an \emph{a priori} guarantee is impossible, we focus on \emph{a posteriori} analysis for the subgradient method. Given a lower-bounded \PA{} function $f:\mathbb{R}^d\to \mathbb{R}$, by the asymptotic convergence of the subgradient method, for any $\epsilon\geq 0, \delta > 0$, there exists a finite $T\in\mathbb{N}_+$ such that 
\begin{equation}\label{eq:intro-stat}
	\bm{0}\in \partial f(\bm{x}_T + \delta\mathbb{B}) + \epsilon\mathbb{B},
\end{equation}
ensuring that the subgradient method will eventually bypass approximate stationary points in finite time.\footnote{Notably, when considering algorithms beyond subgradient method, \citet{kornowski2021oracle} establish an exponential oracle complexity lower bound for any local algorithm that computes approximate stationary points in the sense of \eqref{eq:intro-stat}.} This naturally raises the following basic question:
\begin{tcolorbox}[notitle,boxrule=.7pt,colback=white,colframe=black, sharp corners]
\begin{center}
\textit{How can we confidently stop the subgradient method?}	
\end{center}
\end{tcolorbox}
Unlike the smooth case, where verifying $\|\nabla f(\bm{x}_n)\| \leq \epsilon$ is rarely  a problem, deciding whether a point $\bm{x}_n$ is approximately stationary in the sense of \nref{eq:intro-stat} is highly nontrivial and barely explored.
The main goal of this paper is to initiate the study of testing concepts of approximate stationarity for nonconvex, nonsmooth functions.

	\subsection{Our Results}
	
\begin{table}[]
\setlength\arrayrulewidth{0.7pt}
\centering
\begin{tabular}{c|cc|c}
                                             & \multicolumn{2}{c|}{degree-$p$ polynomials}                                                                                                              &                                                 \\ \cline{2-3}
\multirow{-2}{*}{\textbf{solution concepts}} & \multicolumn{1}{c|}{$p\leq 3$}                         & $p\geq 4$                                                                                       & \multirow{-2}{*}{piecewise affine (\PA{})}      \\ \hline
                                             & \multicolumn{1}{c|}{$\cP$}                             & strongly co-$\cNP$-hard\tablefootnote{See the footnote of \cite[p.~4, Table 1]{ahmadi2022AIM}.} & \cellcolor[HTML]{FFFFC7}strongly co-$\cNP$-hard \\
\multirow{-2}{*}{local minimum}              & \multicolumn{1}{c|}{\cite[Theorem 3.3]{ahmadi2022AIM}} & \cite[Theorem 2]{murty1987some}                                                                & \cellcolor[HTML]{FFFFC7}Theorems~\ref{thm:hard-dc}\ref{item:thm:hard-dc-a},~\ref{thm:hard-general}\ref{item:thm:hard-general-a}   \\ \hline
                                             & \multicolumn{1}{c|}{$\cP$}                             & $\cP$                                                                                           & \cellcolor[HTML]{FFFFC7}strongly $\cNP$-hard    \\
\multirow{-2}{*}{stationary point}           & \multicolumn{1}{c|}{(folklore)}                        & (folklore)                                                                                      & \cellcolor[HTML]{FFFFC7}Theorems~\ref{thm:hard-dc}\ref{item:thm:hard-dc-b},~\ref{thm:hard-general}\ref{item:thm:hard-general-b}  
\end{tabular}
\caption{Complexity of deciding whether a given point belongs to a certain solution type.}
\label{tab:complexity}
\end{table}
	
We start with a classic fact about the representation of \PA{} functions.
\begin{nFact}[DC form; cf.~{\Cref{fct:pa-form}}, {\cite[Proposition 4]{melzer1986expressibility}~and~\cite{kripfganz1987piecewise}}]
Any \PA{} function $f:\mathbb{R}^d\to \mathbb{R}$ can be written as the difference of two convex \PA{} functions $h,g:\mathbb{R}^d\to \mathbb{R}$, i.e., $f=h-g$.
\end{nFact}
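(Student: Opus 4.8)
The plan is to split the argument into two parts: a structural \emph{lattice representation} of continuous $\PA{}$ functions, and a purely algebraic observation that the class of differences of convex $\PA{}$ functions is closed under the pointwise lattice operations $\max$ and $\min$. Granting the first, the second finishes the proof immediately, so I expect all the real content to sit in the representation result.

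The structural input I would invoke is that any continuous $\PA{}$ function $f$ can be written as a max-min of finitely many affine functions: there exist affine maps $\ell_1,\dots,\ell_m:\mathbb{R}^d\to\mathbb{R}$ (the affine pieces of $f$ on the full-dimensional cells of an associated polyhedral subdivision) and index sets $S_1,\dots,S_p\subseteq\{1,\dots,m\}$ with
\[
	f(\bm{x}) \;=\; \max_{1\leq i\leq p}\ \min_{j\in S_i}\ \ell_j(\bm{x}),\qquad \bm{x}\in\mathbb{R}^d.
\]
This is the classical lattice representation \cite{melzer1986expressibility,kripfganz1987piecewise}.

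For the algebraic step, let $\mathcal{D}$ denote the family of functions of the form $g-h$ with $g,h$ convex $\PA{}$. Every affine function lies in $\mathcal{D}$ (take $h\equiv 0$), and I claim $\mathcal{D}$ is closed under finite $\max$ and finite $\min$. Closure under $\max$ uses that convex $\PA{}$ functions are closed under finite sums and finite pointwise maxima, together with the identity, for $f_i=g_i-h_i$,
\[
	\max_{1\leq i\leq k} f_i \;=\; \max_{1\leq i\leq k}\Big(g_i+\textstyle\sum_{l\neq i}h_l\Big)\;-\;\sum_{l=1}^{k}h_l,
\]
in which both the minuend and the subtrahend are convex $\PA{}$. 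Closure under $\min$ then follows because negation preserves $\mathcal{D}$ (if $f=g-h$ then $-f=h-g$) and $\min_i f_i=-\max_i(-f_i)$. Applying $\min$-closure to the affine $\ell_j$ puts each inner term $\min_{j\in S_i}\ell_j$ in $\mathcal{D}$, and applying $\max$-closure to these puts $f$ itself in $\mathcal{D}$; this exhibits $f$ as a difference of two convex $\PA{}$ functions, as desired.

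The main obstacle is the lattice representation, not the closure bookkeeping. To establish it I would fix a polyhedral subdivision on whose full-dimensional cells $f$ is affine, say $f=\ell_{\sigma(i)}$ on cell $C_i$, and set $S_i:=\{\,j:\ell_j\geq\ell_{\sigma(i)}\text{ on }C_i\,\}$, so that $\sigma(i)\in S_i$. One inequality is easy: for $\bm{x}\in C_{i}$ every $\ell_j$ with $j\in S_i$ is at least $\ell_{\sigma(i)}(\bm{x})=f(\bm{x})$, so the inner minimum over $S_i$ equals $f(\bm{x})$ there and the outer maximum is at least $f(\bm{x})$. The delicate direction is showing the outer maximum never exceeds $f$, i.e.\ that for \emph{every} cell index $i$ and \emph{every} point $\bm{x}$ one has $\min_{j\in S_i}\ell_j(\bm{x})\leq f(\bm{x})$; this is where continuity of $f$ across cell boundaries and the combinatorics of adjacent pieces must be used, and it is the technical heart of the classical proof.
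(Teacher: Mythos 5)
Your proposal is correct in substance, but note first that the paper does not prove this statement at all: it is recorded as a Fact with citations to Melzer and to Kripfganz--Schwartz, and the max-min representation you lean on is itself recorded, equally without proof, as \Cref{fct:pa-form}\ref{item:fct:pa-form-b}. Your algebraic half is complete and right: the identity
\[
\max_{1\leq i\leq k}(g_i-h_i)\;=\;\max_{1\leq i\leq k}\Big(g_i+\sum_{l\neq i}h_l\Big)\;-\;\sum_{l=1}^{k}h_l
\]
exhibits a pointwise maximum of DC functions as a difference of two convex \PA{} functions (finite sums and finite maxima of convex \PA{} functions are convex \PA{}), negation together with $\min_i f_i=-\max_i(-f_i)$ gives closure under $\min$, and affine functions are trivially in $\mathcal{D}$; applied to the \MaxMin{} form this yields the claim. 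So if you are willing to quote \Cref{fct:pa-form}\ref{item:fct:pa-form-b} --- which sits on exactly the same footing in this paper as the statement you are proving --- your argument is finished, and it is the standard modern route (essentially that of Kripfganz--Schwartz and Scholtes), whereas Melzer's original proof constructs the convex majorant more directly from the cell structure. The only genuine gap is the one you flag yourself: the inequality $\min_{j\in S_i}\ell_j(\bm{x})\leq f(\bm{x})$ for every cell index $i$ and every $\bm{x}$ is not established, and it does not follow from continuity in any soft way; the standard argument takes a generic segment from $\intt(C_i)$ to $\bm{x}$, which crosses only finitely many cells, and propagates cell by cell the existence of an index $j\in S_i$ whose affine piece stays below the currently active piece of $f$. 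If you intend the lattice representation to be part of your proof rather than a quoted input, that induction along the segment must be written out.
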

In this paper, for a given point $\bm{x}\in \mathbb{R}^d$ and two convex \PA{} functions $h,g:\mathbb{R}^d\to\mathbb{R}$, our main goal is to check $\bm{0}\in\partial (h-g)(\bm{x})$ 
in both  exact and approximate senses.

\subsubsection{Negative Results}

\paragraph{Hardness.} 
Given a sum or composition of smooth elemental functions, testing the (approximate) stationarity of a point relies on the applicability of classic gradient calculus rules. In modern computational environments, this can be implemented efficiently by using Algorithmic Differentiation (AD) \cite{griewank2008evaluating} software, such as PyTorch and TensorFlow. A natural question that arises is whether testing stationarity concepts for a piecewise smooth function (e.g., the loss of a ReLU network) can be handled as efficiently as its smooth counterpart. In sharp contrast, we show (in \Cref{thm:hard-dc}) that such testing for even \PA{} functions is already computationally intractable unless $\cP=\cNP$.

\begin{nTheorem}[Informal; cf.~{\Cref{thm:hard-dc}}] Fix any $\epsilon \in [0,1/2)$. Let two convex \PA{} functions $h,g:\mathbb{R}^d\to\mathbb{R}$ and a point $\bm{x}\in\mathbb{R}^d$ be given.
The following hold:
\begin{itemize}
	\item Checking whether $\bm{0}\in \widehat{\partial} (h-g)(\bm{x}) + \epsilon\mathbb{B}$ is co-$\cNP$-hard.
	\item Checking whether $\bm{0}\in \partial (h-g)(\bm{x}) + \epsilon\mathbb{B}$ is $\cNP$-hard.
\end{itemize}
\end{nTheorem}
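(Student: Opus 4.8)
The plan is to reduce both tests to purely geometric questions about the two subdifferential polytopes $G:=\partial g(\bm x)$ and $H:=\partial h(\bm x)$, which the reduction is free to prescribe. For the Fréchet case I would first prove the exact identity $\widehat{\partial}(h-g)(\bm x)=\{\bm s:\bm s+G\subseteq H\}$, the Minkowski (geometric) difference $H\ominus G$. This follows because near $\bm x$ a convex PA function coincides with the shifted support function of its subdifferential, so the defining inequality of $\widehat{\partial}$ collapses to $h_{\bm s+G}\le h_H$, i.e.\ containment. In particular $\bm 0\in\widehat{\partial}(h-g)(\bm x)$ is equivalent to $G\subseteq H$, which for PA $f$ is exactly local minimality of $\bm x$; and $\bm 0\in\widehat{\partial}(h-g)(\bm x)+\epsilon\mathbb{B}$ asks whether some translate $\bm s+G$ with $\|\bm s\|\le\epsilon$ fits inside $H$. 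For the Clarke case I would establish the companion description of $\partial(h-g)(\bm x)$ as the convex hull of the difference vectors $\bm u-\bm v$ over the vertex pairs $(\bm u,\bm v)\in H\times G$ that are \emph{jointly active}, i.e.\ are the gradients of $h$ and $g$ on a common full-dimensional cell of $f$ whose closure meets $\bm x$; then $\bm 0\in\partial(h-g)(\bm x)+\epsilon\mathbb{B}$ becomes $\dist\big(\bm 0,\conv(\textnormal{active differences})\big)\le\epsilon$.

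Next I would realize $G$ and $H$ as prescribed zonotopes by taking $h$ and $g$ as sums of absolute-value atoms $\bm y\mapsto|\langle\bm a_i,\bm y\rangle|$ evaluated at $\bm x=\bm 0$, so that $H=\sum_i[-\bm a_i,\bm a_i]$ and $G=\sum_j[-\bm b_j,\bm b_j]$ with generators I control. This is where combinatorial hardness enters: although linear optimization over one zonotope is trivial, deciding $G\subseteq H$ means verifying $\sum_j|\langle\bm b_j,\bm n\rangle|\le\sum_i|\langle\bm a_i,\bm n\rangle|$ for \emph{every} $\bm n$, a universal quantifier over the exponentially many sign cells of the generators. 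Its negation is in $\cNP$ --- guess the sign pattern (which linearizes both sides) and solve an LP for a violating $\bm n$ --- and I would make it $\cNP$-hard by a gadget reduction from a strongly $\cNP$-hard problem such as $3$-SAT, encoding variables by generators and clauses by coordinates so that a satisfying assignment yields a vertex of $G$ that pierces a facet of $H$ while unsatisfiability forces $G\subseteq H$. This yields co-$\cNP$-hardness of the Fréchet test.

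For the Clarke test I would use the same zonotope machinery but exploit the \emph{activity filter}: without it, $\conv\{\bm u-\bm v\}$ over \emph{all} vertex pairs is the convex Minkowski sum $H\oplus(-G)$ and $\bm 0$-membership reduces to $H\cap G\neq\emptyset$, an easy LP; the $\cNP$-hardness must therefore come entirely from the restriction to jointly active pairs, whose index set is combinatorial. I would arrange the gradients so that $\bm 0$ lies within distance $\epsilon$ of the hull of the active differences precisely when a satisfying configuration exists, making $\bm 0\in\partial(h-g)(\bm x)+\epsilon\mathbb{B}$ encode the $\cNP$-hard existence question. To obtain robustness for every fixed $\epsilon\in[0,1/2)$ in both parts, I would normalize the gadget so that the scalar quantity deciding the instance equals $+\tfrac12$ in ``yes'' instances and $-\tfrac12$ in ``no'' instances (a vertex poking a facet of $H$ by $\tfrac12$ versus sitting $\tfrac12$ inside); since an $\epsilon\mathbb{B}$ perturbation moves this signed quantity by at most $\epsilon$, the two cases stay separated exactly when $\epsilon<\tfrac12$.

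The main obstacle, I expect, is establishing and exploiting the Clarke formula. Unlike the Fréchet case, not every vertex pair $(\bm u,\bm v)$ of $(H,G)$ corresponds to a genuine linear piece of $f=h-g$ touching $\bm x$, so I must characterize exactly which pairs are jointly active and prove the convex-hull description --- this is precisely where the paper's new geometric notions for polytopes are needed, and where an off-the-shelf calculus (which would wrongly return $H\oplus(-G)$) fails. The secondary difficulty is engineering a single family of instances that simultaneously keeps all data polynomially bounded (for \emph{strong} hardness) and produces the clean $\pm\tfrac12$ separation uniformly across the exponential vertex set, so that exactly one combinatorial vertex pokes out by the prescribed amount while all others remain safely inside.
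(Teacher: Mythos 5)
Your Fr\'echet-case plan is essentially the paper's: the identity $\widehat{\partial}(h-g)(\bm{x})=\{\bm{s}:\bm{s}+\partial g(\bm{x})\subseteq \partial h(\bm{x})\}$ is correct, and the paper likewise reduces the test to a polytope-containment / support-function-domination question (there, ``is the parallelotope $\sum_i[-1,1]\bm{y}_i$ contained in the $\ell_1$-ball of radius $r$,'' i.e.\ the strongly $\cNP$-hard PAR$\{-1,0,1\}$MAX$_1$ problem, rather than your zonotope-in-zonotope encoding of 3-SAT). One remark: the paper gets hardness for \emph{all} $\epsilon\geq 0$ in the Fr\'echet case because in ``yes'' instances $\widehat{\partial}(h_{\sf F}-g_{\sf F})(\bm{0})$ is actually \emph{empty} (the function is $\leq 0$, positively homogeneous, and dips to $-1$, which kills every candidate subgradient), while in ``no'' instances it equals $\{\bm{0}\}$; this is cleaner than your $\pm\tfrac12$ margin bookkeeping over all translates $\bm{s}+G$, though your version suffices for $\epsilon<\tfrac12$.

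The Clarke half has a genuine gap. You correctly identify that the hardness must live in the ``activity filter,'' but you stop at the desideratum ``arrange the gradients so that $\bm{0}$ lies within $\epsilon$ of the hull of active differences iff a satisfying configuration exists'' without supplying a construction, and the route you propose --- fully characterizing the jointly active vertex pairs and their convex hull --- is both harder than necessary and not what the paper does (its compatibility/transversality machinery belongs to the sum-rule section, not to the hardness proof). The missing idea is a concrete gadget. The paper takes the Fr\'echet-hard pair $(h_{\sf F},g_{\sf F})$, for which $\phi:=h_{\sf F}-g_{\sf F}\leq 0$ with $\phi\equiv 0$ in ``no'' instances and $\phi(\bm{d})\leq -1$ at some $\bm{d}\in\{-1,1\}^n$ in ``yes'' instances, and wraps it into the seesaw
\[
(h_{\sf C}-g_{\sf C})(\bm{d})=\tfrac12\,\bm{d}^\top\bm{e}_1+\max\bigl\{\phi(\bm{d}),\,-\tfrac12|\bm{d}^\top\bm{e}_1|\bigr\},
\]
with $h_{\sf C},g_{\sf C}$ still convex \PA{}. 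In a ``no'' instance this is identically $\tfrac12\bm{d}^\top\bm{e}_1$, so $\partial(h_{\sf C}-g_{\sf C})(\bm{0})=\{\bm{e}_1/2\}$ and the distance to $\bm{0}$ is exactly $\tfrac12>\epsilon$; in a ``yes'' instance the branch $-\tfrac12|\bm{d}^\top\bm{e}_1|$ wins on an open cone through a witness direction, the two halves of the seesaw cancel there, and the function is locally \emph{constant} along that cone, so $\bm{0}$ arises as a limit of gradients and hence lies in $\partial(h_{\sf C}-g_{\sf C})(\bm{0})$. This ``flat passage'' mechanism is what converts an existential statement into membership of $\bm{0}$ in the Clarke subdifferential with a clean $\tfrac12$ gap; note also that it flips the reduction's polarity (satisfiable $\Leftrightarrow$ stationary), which is exactly why the Clarke test is $\cNP$-hard while the Fr\'echet test is co-$\cNP$-hard. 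Without this gadget (or an equivalent one), your outline does not yet yield the second bullet.
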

In the above theorem, we use the notation $\widehat{\partial}f(\bm{x})$ to denote the Fr\'echet subdifferential (see \Cref{def:subd-f}) of the function $f$ at point $\bm{x}$, which coincides with $\{\nabla f(\bm{x})\}$ when $f$ is continuously differentiable at $\bm{x}$. Notably, for a \PA{} function $f$, a point $\bm{x}$ satisfies $\bm{0} \in \widehat{\partial}f(\bm{x})$ if and only if $\bm{x}$ is a local minimum (see \Cref{fct:frechetloca}).
To put the above hardness results in perspective, let us make the following remarks:
\begin{itemize}
	\item The co-$\cNP$-hardness of detecting local minima for degree-$p$ polynomials, where $p \geq 4$, has been established in \cite[Theorem 2]{murty1987some}. This result is confirmed to be tight in terms of degree $p$ by \cite[Theorem 3.3]{ahmadi2022AIM}. In \Cref{thm:hard-dc}\ref{item:thm:hard-dc-a}, we show that for piecewise degree-$p$ polynomials, the co-$\cNP$-hardness of checking local minimality emerges even when $p=1$, that is, for piecewise affine functions; see \Cref{tab:complexity}.
\item \citet{nesterov2013gradient} shows that deciding whether a given point is a local minimizer for a \PA{} function is co-$\cNP$-hard. However, the reduction in \citep{nesterov2013gradient} is from the weakly $\cNP$-complete problem of 2-PARTITION leaving open the possibility of the existence of a pseudo-polynomial time algorithm. By contrast, our co-$\cNP$-hardness result in \Cref{thm:hard-dc}\ref{item:thm:hard-dc-a} is in the strong sense, thereby ruling out the aforementioned possibilities.
\item For smooth functions, there is little doubt about the high efficiency in detecting stationary points.  In \Cref{thm:hard-dc}\ref{item:thm:hard-dc-b}, we demonstrate that for a relatively simple class of nonsmooth functions, even approximately testing for (Clarke) stationary points is already $\cNP$-hard. 
Note that a Clarke stationary point of a \PA{} function is not necessarily local minimal.
To the best of our knowledge, this is the first hardness result concerning the testing of a (possibly) non-minimizing first-order optimality condition.
	\item For \PA{} functions given in a natural multi-composite form, we highlight the complexity distinction between testing for a local minimal point (i.e., co-$\cNP$-hardness for $\bm{0} \in \widehat{\partial}(h-g)(\bm{x})$) and for a (possibly) non-minimizing stationary point (i.e., $\cNP$-hardness for $\bm{0} \in \partial(h-g)(\bm{x})$). This distinction appears to be fundamental. In fact, we show in \Cref{thm:mem-dc} that detecting a Clarke stationary point (resp.~a local minimum) cannot be co-$\cNP$-hard (resp.~$\cNP$-hard) unless $\cNP=\text{co-}\cNP$ and the Polynomial Hierarchy (\textsf{PH}) collapses to the second level.
\end{itemize}

\paragraph{Corollaries.} We mention three notable corollaries here. 
\begin{itemize}
\item DC-criticality, which characterizes the points to which DCA-type algorithms converge \cite{de2020abc}, has been studied for over 30 years \cite{le2018dc}. It is well-known that DC-criticality represents a weaker notion than Clarke stationarity. In \Cref{coro:dc-critical}, we demonstrate that determining whether a given DC-critical point is Clarke stationary is $\cNP$-hard. Therefore, distinguishing between these two solution concepts is computationally intractable.
\item In \Cref{coro:abs-norm-hard}, we prove that testing so-called first-order minimality (FOM) for the abs-normal form of piecewise differentiable functions is co-$\cNP$-complete, confirming a conjecture of \citet[p.~284]{griewank2019relaxing}.
\item In \Cref{coro:nnhard}, we show that detecting a (Clarke) stationary point for the loss of Convolutional Neural Networks (CNNs) is $\cNP$-hard. CNNs are one of the most popular network architectures for image classification.
\end{itemize}

\paragraph{Max-Min Representation.} 
Just like convex polytopes can be described either as the convex hull of extreme points or as the intersection of finitely many halfspaces, \PA{} functions also have different representations. Besides being written as the difference between two convex \PA{} functions, any \PA{} function can also be written as the maximin of finitely many affine functions; see \Cref{fct:pa-form}. Compared with the DC form, which could have an exponentially large number of affine pieces, a function given in \MaxMin{} form can only have a polynomially many pieces (in the sense of \citet{scholtes2012introduction}). Nevertheless, in \Cref{thm:hard-general,thm:mem-general}, we show that similar hardness results still hold for the \MaxMin{} representation. The \MaxMin{} form appears to be less popular in real-world applications than its DC counterpart, perhaps due to its limited expressive power and the inconvenience of encompassing sum and multi-composite structures.

\subsubsection{Positive Results}\label{sec:postive}
We have just shown that even approximate stationarity testing is already $\cNP$-hard. A natural strategy to proceed is to solve a relaxation of the original computational problem and isolate conditions under which such a relaxation is tight. In this paper, we consider a very intuitive relaxation by abusing the convex subdifferential sum rule\footnote{For convex functions $h,g:\mathbb{R}^d\to \mathbb{R}$ and any point $\bm{x}\in\mathbb{R}^d$, we always have $\partial (h+g)(\bm{x})=\partial h(\bm{x}) + \partial g(\bm{x})$; see \cite[Theorem 23.8]{rockafellar1970convex}.} to nonconvex, nonsmooth functions. Specifically, we propose to check the condition $\bm{0} \in \partial h(\bm{x}) - \partial g(\bm{x})$, rather than the $\cNP$-hard one $\bm{0} \in \partial (h - g)(\bm{x})$, as follows:

\begin{framed}
\vspace{-.5cm}
\paragraph{Sum Rule Relaxation (SRR).} 
Given convex \PA{} functions $h,g$ and a point $\bm{x}$, 
we check the ``$\epsilon$-stationarity'' of a \PA{} function $h-g$ by running the following procedure:
\begin{itemize}
	\item Compute the shortest vector $\bm{g}$ in the polytope $\partial h(\bm{x}) - \partial g(\bm{x})$.
	\item If $\|\bm{g}\|\leq \epsilon$: return {\sf (True)}; else return {\sf (False)}.
\end{itemize}
\vspace{-.1cm}
\end{framed}

For convex \PA{} functions $h$ and $g$, the convex subdifferentials $\partial h(\bm{x})$ and $\partial g(\bm{x})$ are both non-empty polytopes. The projection in above procedure can be efficiently performed via solving a convex QP.
However, despite its efficiency, such a relaxation procedure cannot always guarantee its correctness. The reason for this turns out to be quite fundamental in the field of nonsmooth analysis.

\paragraph{Subdifferential Sum Rule.}
The problem here is on the failure of an exact (equality-type) subdifferential sum rule. For a locally Lipschitz function, subdifferential sum rule is only known to hold in the weak form of set inclusions rather than equalities.\footnote{For a quick example, consider $\{0\}=\partial(|\cdot| - |\cdot|)(0) \subsetneq \partial |\cdot|(0) - \partial |\cdot|(0)=[-2,2]$.}

\begin{Fact}[cf.~{\cite[Proposition 2.3.3]{clarke1990optimization}}]\label{fct:wsum}
 Let $f_1,f_2:\mathbb{R}^d\to\mathbb{R}$ be locally Lipschitz functions. Then, for every $\bm{x} \in \mathbb{R}^d$, we have $\partial (f_1+f_2)(\bm{x}) \subseteq \partial f_1(\bm{x}) + \partial f_2(\bm{x}).$
\end{Fact}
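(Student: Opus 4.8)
The plan is to route the argument through the Clarke generalized directional derivative and exploit support-function duality for convex compact sets. For a locally Lipschitz $f$ and $\bm{x},\bm{v}\in\mathbb{R}^d$, recall the Clarke directional derivative
\[
f^\circ(\bm{x};\bm{v}) := \limsup_{\bm{y}\to\bm{x},\, t\downarrow 0} \frac{f(\bm{y}+t\bm{v}) - f(\bm{y})}{t}.
\]
The key structural fact I would invoke (the core of Clarke's theory, consistent with the gradient-limit definition of $\partial f(\bm{x})$ given in the introduction) is that, for each fixed $\bm{x}$, the map $\bm{v}\mapsto f^\circ(\bm{x};\bm{v})$ is finite, positively homogeneous, and subadditive, and is precisely the support function of the nonempty convex compact set $\partial f(\bm{x})$:
\[
f^\circ(\bm{x};\bm{v}) = \max_{\bm{s}\in\partial f(\bm{x})} \langle \bm{s}, \bm{v}\rangle \quad\text{for all } \bm{v}\in\mathbb{R}^d.
\]

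With this in hand, the first substantive step is to prove subadditivity of $f^\circ$ across the two summands. Since $(f_1+f_2)(\bm{y}+t\bm{v}) - (f_1+f_2)(\bm{y})$ splits as the sum of the corresponding difference quotients for $f_1$ and $f_2$, and the $\limsup$ of a sum never exceeds the sum of the separate $\limsup$s, I obtain
\[
(f_1+f_2)^\circ(\bm{x};\bm{v}) \leq f_1^\circ(\bm{x};\bm{v}) + f_2^\circ(\bm{x};\bm{v}) \quad\text{for all } \bm{v}\in\mathbb{R}^d.
\]
This is the only place where the structure of the difference quotient enters; it is elementary but it is the crux, because equality can genuinely fail here, which is exactly the phenomenon responsible for the strictness in examples like $|\cdot|-|\cdot|$ at the origin.

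Finally I would translate the scalar inequality back into a set inclusion via duality. The right-hand side above is the sum of two support functions, hence the support function of the Minkowski sum:
\[
f_1^\circ(\bm{x};\bm{v}) + f_2^\circ(\bm{x};\bm{v}) = \max_{\bm{s}\in\partial f_1(\bm{x})+\partial f_2(\bm{x})} \langle \bm{s}, \bm{v}\rangle.
\]
Thus the support function of the convex compact set $\partial(f_1+f_2)(\bm{x})$ is dominated, pointwise in $\bm{v}$, by the support function of the convex compact set $\partial f_1(\bm{x}) + \partial f_2(\bm{x})$. Invoking the standard fact that, for convex compact sets $A,B$, one has $A\subseteq B$ if and only if the support function of $A$ is pointwise bounded by that of $B$, I conclude $\partial(f_1+f_2)(\bm{x}) \subseteq \partial f_1(\bm{x}) + \partial f_2(\bm{x})$, as desired. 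The main obstacle is not any single step but securing the support-function representation of $\partial f$; since that is precisely Clarke's characterization and all sets involved are compact and convex, the duality step is clean and the whole argument reduces to the one-line $\limsup$ inequality above.
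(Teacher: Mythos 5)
Your proof is correct and is essentially the argument of the cited source (Clarke's Proposition 2.3.3), which the paper invokes without reproving: subadditivity of $f^\circ(\bm{x};\cdot)$ across the sum via the elementary $\limsup$ inequality, followed by the support-function characterization of $\partial f(\bm{x})$ (recorded in the paper as \Cref{fct:clarke-dd}) and the standard duality between pointwise domination of support functions and inclusion of compact convex sets. Nothing is missing; the finiteness of each $\limsup$ (from local Lipschitzness) ensures the splitting step is legitimate, and the Minkowski sum of the two compact convex subdifferentials is closed, so the final inclusion follows.
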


 This weak form may hinder one from computing the subdifferential set of the concerned function.   Thus, to facilitate the tractability of stationarity testing, it is of interest to establish a condition under which an equality-type sum rule holds. This would allow for efficient characterization of the subdifferential set and guarantee the correctness of our SRR method.

For convex \PA{} functions $h,g$ and a given point $\bm{x}$, we establish the first necessary and sufficient condition for the validity of an equality-type subdifferential sum rule $\partial (h-g)(\bm{x})=\partial h(\bm{x}) - \partial g(\bm{x})$. Our new condition is built on a new geometric property, termed \emph{compatibility}, concerning a pair of convex polytopes.

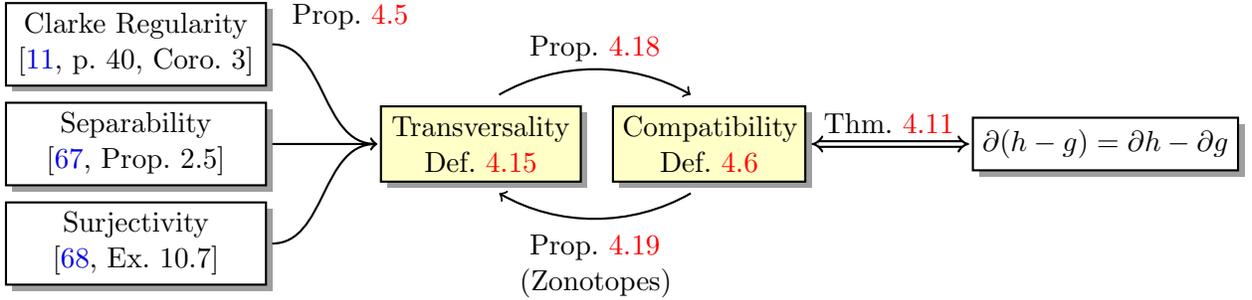
\begin{figure*}
\center
\begin{tikzpicture}[
	block/.style={
		thick,
		draw,
		drop shadow={opacity=0.75},
		fill=white,
		minimum height=0.7cm,
		text centered
	},
	yblock/.style={
		thick,
		draw,
		drop shadow={opacity=0.75},
		fill=ye,
		minimum height=0.7cm,
		text centered
	},
	pil/.style={
		thick,
		->,
		in=180,
		out=0,
		shorten <=2pt,
		shorten >=1pt
	},
	npil/.style={
		thick,
		in=180,
		out=0,
		shorten <=2pt,
		shorten >=1pt
	},
	epil/.style={
		thick,
		in=180,
		out=0,
		shorten <=2pt,
		shorten >=1pt,
		implies-implies,
		double equal sign distance
	},
]
\node[block,text width=3.2cm,align=center] (reg) at (4.5,1.5) {Clarke Regularity \\ \cite[p.~40, Coro.~3]{clarke1990optimization}};
\node[block,text width=3.2cm,align=center,below=.2cm of reg] (sep) {Separability \\ \cite[Prop.~2.5]{rockafellar1985extensions}};
\node[block,text width=3.2cm,align=center,below=.2cm of sep] (suj) {Surjectivity \\ \cite[Ex.~10.7]{rockafellar2009variational}};

\node[yblock,right=1.5cm of sep,text width=2.4cm,align=center] (PI) {Transversality \\ Def.~\ref{def:transv}};

\node[yblock,right=0.4cm of PI,text width=2.3cm,align=center] (comp) {Compatibility \\ Def.~\ref{def:comp-poly}};

\node[block,right=2.2cm of comp] (subd) {$\partial (h-g)=\partial h - \partial g$};

\draw (reg) edge[pil]  (PI);
\path (sep) edge[pil]  (PI);
\path (suj) edge[pil]  (PI);

\path (PI.north) edge[thick, ->, bend left,shorten <=8pt, shorten >= 8pt] node[thick,above] {Prop.~\ref{prop:par-sufficient}} (comp.north);
\path (comp.south) edge[thick, ->, bend left,shorten <=8pt, shorten >= 8pt,] node[thick,below=.1cm, text width=2cm,align=center] {Prop.~\ref{prop:zonotop}\\(Zonotopes)} (PI.south);

\path (comp) edge[epil] node[thick,above] {Thm.~\ref{thm:general-sum-Clarke-geometric}} (subd);

\node[right=.2cm of sep, yshift=1.7cm]{Prop.~\ref{prop:all-are-transveral}};

\end{tikzpicture}
\caption{Interrelations of various conditions validating the exact sum rule for \PA{} functions.}\label{fig:relation}
\end{figure*}

\begin{nDefinition}[Compatibility; cf.~\Cref{def:comp-poly}]%
	Two polytopes $A$ and $B$ in $\mathbb{R}^d$ are called compatible if for any vectors $\bm{a}\in A$ and $\bm{b} \in B$ such that $\bm{a} - \bm{b} \in \ext(A-B)$, we have $\bm{a} + \bm{b} \in \ext(A+B)$.\footnote{The set $\ext(A)$ denotes the set of extreme points of a convex set $A$; see the beginning of \Cref{sec:prel}.}
\end{nDefinition}

One of the main technical contributions of this paper is the following full characterization of the validity of exact subdifferential sum rule for \PA{} functions. To the best of our knowledge, despite being studied for decades in the nonsmooth analysis literature \cite{clarke1975generalized,rockafellar1979directionally,rockafellar1985extensions,clarke1990optimization,Mordukhovich.18}, this is the first time a nontrivial condition validating the (Clarke) subdifferential sum rule has been identified as simultaneously necessary and sufficient.

\begin{nTheorem}[cf.~{\Cref{thm:general-sum-Clarke-geometric}}]
For convex \PA{} functions $h,g:\mathbb{R}^d\to \mathbb{R}$ and any $\bm{x}\in\mathbb{R}^d$, we have $\partial (h - g)(\bm{x})=\partial h(\bm{x}) - \partial g(\bm{x})$ if and only if the polytopes $\partial h(\bm{x})$ and $\partial g(\bm{x})$ are compatible.
\end{nTheorem}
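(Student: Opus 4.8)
The plan is to reduce the whole statement to the combinatorics of the two polytopes $A := \partial h(\bm{x})$ and $B := \partial g(\bm{x})$ together with their normal fans. Since $h,g$ are convex \PA{}, both $A$ and $B$ are nonempty polytopes, and locally around $\bm{x}$ one has $h(\bm{y}) = h(\bm{x}) + \sigma_A(\bm{y}-\bm{x})$ and $g(\bm{y}) = g(\bm{x}) + \sigma_B(\bm{y}-\bm{x})$, where $\sigma_A,\sigma_B$ denote support functions. The weak inclusion $\partial(h-g)(\bm{x}) \subseteq A - B$ from \Cref{fct:wsum} always holds, so the entire content lies in the reverse inclusion; I will show that it is governed exactly by compatibility, and that a single equivalence settles both directions of the ``if and only if'' at once.

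First I would give an explicit description of the Clarke subdifferential of the \PA{} function $h-g$ at $\bm{x}$. Its limiting gradients are the gradients of $h-g$ on the full-dimensional affine pieces whose closure contains $\bm{x}$. Near $\bm{x}$ these pieces are precisely the full-dimensional cones of the common refinement of the normal fans of $A$ and $B$, i.e.\ the full-dimensional intersections $N_A(\bm{a})\cap N_B(\bm{b})$ with $\bm{a}\in\ext A$, $\bm{b}\in\ext B$; on such a cell $h$ has gradient $\bm{a}$ and $g$ has gradient $\bm{b}$, so $h-g$ has gradient $\bm{a}-\bm{b}$. This yields $\partial(h-g)(\bm{x}) = \conv D$, where $D := \{\bm{a}-\bm{b} : \bm{a}\in\ext A,\ \bm{b}\in\ext B,\ \intt N_A(\bm{a})\cap \intt N_B(\bm{b})\neq\emptyset\}$. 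I expect this step to be the main technical obstacle: one must verify both that every limiting gradient arises from such a cell and that every full-dimensional cell genuinely contributes a smooth gradient, which is where the local polyhedral representation and the common-refinement structure must be pinned down carefully.

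Next I would invoke the standard Minkowski-sum/normal-fan dictionary to rewrite $D$. For a direction $\bm{v}$, $\max_{A+B}\langle \bm{v},\cdot\rangle = \sigma_A(\bm{v})+\sigma_B(\bm{v})$, so $\bm{a}+\bm{b}$ is the unique maximizer over $A+B$ iff $\bm{v}\in \intt N_A(\bm{a})\cap \intt N_B(\bm{b})$; hence $\bm{a}+\bm{b}\in\ext(A+B) \iff \intt N_A(\bm{a})\cap\intt N_B(\bm{b})\neq\emptyset$, giving the clean reformulation $D = \{\bm{a}-\bm{b} : \bm{a}\in\ext A,\ \bm{b}\in\ext B,\ \bm{a}+\bm{b}\in\ext(A+B)\}$. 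Applying the same reasoning to $A-B=A+(-B)$ via $N_{-B}(-\bm{b})=-N_B(\bm{b})$ shows that every vertex of $A-B$ admits a \emph{unique} decomposition $\bm{a}-\bm{b}$ with $\bm{a}\in\ext A$, $\bm{b}\in\ext B$, both vertices.

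Finally I would close the loop with an extreme-point argument. Since $D\subseteq A-B$, we have $\conv D = A-B$ iff $\ext(A-B)\subseteq D$; the nontrivial direction uses that an extreme point of $A-B$ expressed as a convex combination of points of $D\subseteq A-B$ must coincide with each of them. Now a vertex $\bm{v}=\bm{a}-\bm{b}\in\ext(A-B)$, taken with its unique decomposition, lies in $D$ precisely when $\bm{a}+\bm{b}\in\ext(A+B)$; and the vectors $\bm{a},\bm{b}$ quantified in \Cref{def:comp-poly} are exactly these unique decompositions. Hence $\ext(A-B)\subseteq D$ is literally the assertion that $A$ and $B$ are compatible, and combining this with $\partial(h-g)(\bm{x})=\conv D$ gives $\partial(h-g)(\bm{x})=A-B \iff$ compatibility, as claimed.
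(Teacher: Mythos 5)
Your proposal is correct, and while the sufficiency direction matches the paper's in spirit, the overall organization is genuinely different and arguably cleaner. The paper proves the two implications separately: for (b)$\Rightarrow$(a) it shows, exactly as you do, that each vertex of $\partial h(\bm{x})-\partial g(\bm{x})$ whose (unique) decomposition is compatible is a limiting gradient along a ray in the common unique-maximizer direction supplied by \Cref{lem:fukuda}; but for $\neg$(b)$\Rightarrow\neg$(a) it runs a contradiction argument through Scholtes' essentially active index set, reducing to an open set on which two max-functions $u,v$ agree and then performing a two-case perturbation analysis along the separating direction $\bm{h}$. You avoid that case analysis entirely by first establishing the closed-form identity $\partial(h-g)(\bm{x})=\conv D$ with $D=\{\bm{a}-\bm{b}:\bm{a}\in\ext A,\ \bm{b}\in\ext B,\ \bm{a}+\bm{b}\in\ext(A+B)\}$ (via the normal-fan dictionary $\bm{a}+\bm{b}\in\ext(A+B)\iff\intt\mathcal{N}_A(\bm{a})\cap\intt\mathcal{N}_B(\bm{b})\neq\emptyset$, which is \Cref{lem:fukuda}), after which both directions follow from a one-line extreme-point argument: $\conv D=A-B$ iff $\ext(A-B)\subseteq D$, and by the uniqueness of the Minkowski decomposition of vertices of $A-B$ this inclusion is verbatim the compatibility condition. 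What your route buys is symmetry and an explicit subdifferential formula of independent interest; what it costs is that the whole weight rests on the identity $\partial(h-g)(\bm{x})=\conv D$, and there the claim that \emph{every} limiting gradient arises from a full-dimensional cell of the common refinement is not literally true pointwise ($h-g$ can be differentiable at points where $h$ and $g$ are not). You flag this yourself; the correct repair is either the null-set-robust gradient formula behind \Cref{def:subd} (Clarke's Theorem 2.5.1 allows discarding the measure-zero union of lower-dimensional cones of the two normal fans) or the essentially-active-index representation \cite[Proposition 4.3.1]{scholtes2012introduction} that the paper itself invokes, together with the observation that any gradient value attained on a set with nonempty interior is also attained on some full-dimensional cell. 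With that step pinned down, your argument is complete.
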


For the intuitive SRR method, 
the compatibility of convex subdifferentials provides a tight characterization of its correctness. Specifically, if the polytopes $\partial h(\bm{x})$ and $\partial g(\bm{x})$ are incompatible, then, by \cite[Exercise 8.8(c)]{rockafellar2009variational}, there exists a vector $\bm{v} \in \mathbb{R}^d$ such that the  relaxation method SRR gives an incorrect answer for the \PA{} function $h'-g$, where $h'(\bm{x}):=\bm{x}^\top \bm{v} + h(\bm{x})$.
However, the time required to verify the compatibility of $\partial h(\bm{x})$ and $\partial g(\bm{x})$ can be exponential. This is because the concerned polytopes  may contain an exponentially large number of vertices.
To alleviate computational difficulties, we introduce a new polynomial-time verifiable sufficient condition, termed transversality,\footnote{See \Cref{rmk:transversality} for connections with existing transversality-type conditions in the literature.} which, when applied to \PA{} functions, generalizes classical notions of regularity in the nonsmooth analysis literature; see \Cref{fig:relation}.

\begin{nDefinition}[Transversality; cf.~\Cref{def:transv}]%
	Given two convex \PA{} functions $h,g:\mathbb{R}^d\to \mathbb{R}$, we say that the functions $h$ and $g$ are transversal at a point $\bm{x} \in \mathbb{R}^d$ if 
	\[
	\parr(\partial h(\bm{x})) \cap \parr(\partial g(\bm{x})) = \{\bm{0}\}.\footnote{The set $\parr(S)$ denote the subspace parallel to the set $S$; see the beginning of \Cref{sec:prel}.}
	\]
\end{nDefinition}

A remarkable property of the notion of transversality, beyond its polynomial-time verifiability and sufficiency in general, is its simultaneous necessity and sufficiency when the polytopes $\partial h(\bm{x})$ and $\partial g(\bm{x})$ are zonotopes.\footnote{Zonotopes are a subclass of convex polytopes; see \Cref{def:zonotope} and \Cref{fig:zonotopes}.}

\begin{nProposition}[Informal; cf.~{\Cref{prop:par-sufficient,prop:zonotop}}] The following hold:
\begin{itemize}
	\item Transversality implies compatibility.
	\item If $\partial h(\bm{x})$ and $\partial g(\bm{x})$ are zonotopes, then compatibility implies transversality.
\end{itemize}
\end{nProposition}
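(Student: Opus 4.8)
The plan is to prove the two implications separately, working directly with the definitions of transversality and compatibility in terms of the parallel subspaces $\parr(\partial h(\bm{x}))$ and $\parr(\partial g(\bm{x}))$. For brevity write $A=\partial h(\bm{x})$ and $B=\partial g(\bm{x})$, and recall that the extreme points of the Minkowski sum or difference of two polytopes are precisely those obtained from unique representations as sums/differences of extreme points of the summands. The key geometric dictionary I expect to use is the following: a point $\bm{c}\in A-B$ (respectively $A+B$) is an extreme point if and only if the face of $A$ on which $\bm{a}$ is minimized (along the relevant linear functional) and the corresponding face of $B$ are both singletons. Equivalently, $\bm{a}-\bm{b}\in\ext(A-B)$ means that $\bm{a}$ is the unique maximizer over $A$, and $\bm{b}$ the unique minimizer over $B$, of some common linear functional $\bm{v}$; whereas $\bm{a}+\bm{b}\in\ext(A+B)$ means $\bm{a}$ and $\bm{b}$ are both unique maximizers over $A$ and $B$ of some common $\bm{w}$.

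First I would prove that transversality implies compatibility. Assume $\parr(A)\cap\parr(B)=\{\bm{0}\}$, and suppose $\bm{a}\in A,\bm{b}\in B$ satisfy $\bm{a}-\bm{b}\in\ext(A-B)$. I want to show $\bm{a}+\bm{b}\in\ext(A+B)$. The extremality of $\bm{a}-\bm{b}$ gives, via the face-characterization above, a functional $\bm{v}$ such that $\bm{a}$ is the unique maximizer of $\langle\bm{v},\cdot\rangle$ over $A$ and $\bm{b}$ is the unique minimizer over $B$ (i.e.\ unique maximizer of $\langle-\bm{v},\cdot\rangle$). Uniqueness of the maximizer over a polytope is equivalent to the normal cone at that point having nonempty interior relative to $\parr(A)^\perp$; more usefully, non-extremality of $\bm{a}+\bm{b}$ in $A+B$ would force a nonzero direction $\bm{d}$ with $\bm{a}\pm t\bm{d}\cdot\alpha\in A$ and $\bm{b}\pm t\bm{d}\cdot\beta\in B$ lying in the respective faces, producing a nonzero common direction in $\parr(A)\cap\parr(B)$ after aligning the supporting functionals. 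The transversality hypothesis rules this out. The crux is translating ``unique maximizer along $\bm{v}$'' into a statement about which directions in $\parr(A)$ and $\parr(B)$ can persist in the optimal faces of the sum; I expect to argue that any failure of extremality of $\bm{a}+\bm{b}$ manufactures parallel segments in both $A$ and $B$, i.e.\ a common nonzero element of the two parallel subspaces, contradicting $\parr(A)\cap\parr(B)=\{\bm{0}\}$.

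Next I would prove the converse under the zonotope hypothesis: if $A$ and $B$ are zonotopes and compatible, then they are transversal. The natural approach is the contrapositive: assume transversality fails, so there is a nonzero $\bm{u}\in\parr(A)\cap\parr(B)$, and construct $\bm{a}\in A,\bm{b}\in B$ witnessing incompatibility, namely $\bm{a}-\bm{b}\in\ext(A-B)$ but $\bm{a}+\bm{b}\notin\ext(A+B)$. Here the zonotope structure is essential and is exactly what I would exploit: a zonotope is a Minkowski sum of segments, so $A=\sum_i[-\bm{p}_i,\bm{p}_i]$ (up to centering) and $B=\sum_j[-\bm{q}_j,\bm{q}_j]$, its parallel subspace is the span of the generators, and a common direction $\bm{u}$ forces a generator of $A$ and a generator of $B$ that are parallel (after decomposing $\bm{u}$ in the two generating systems). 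The extreme points of a zonotope, its Minkowski sum with another zonotope, and its Minkowski difference all have clean descriptions as sign-choices over the generators, which lets me pick vertices $\bm{a},\bm{b}$ whose difference is extreme (choosing signs so the generators align constructively on $A-B$) while their sum sits in the relative interior of an edge of $A+B$ (because the two parallel generators cancel in orientation there).

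I anticipate the main obstacle to be the first implication, specifically the careful handling of the non-unique/degenerate cases when the relevant faces are not singletons and the bookkeeping that converts ``$\bm{a}+\bm{b}$ is not extreme in $A+B$'' into an honest nonzero vector of $\parr(A)\cap\parr(B)$. Making the ``shared direction'' argument rigorous requires relating the optimal faces of $A$, $B$ with respect to a linear functional to their parallel subspaces, and ensuring the direction extracted is genuinely common to both rather than merely lying in one. For the zonotope direction, the delicate point is the explicit vertex construction: I must verify simultaneously that the chosen sign pattern yields an extreme point of $A-B$ and a non-extreme point of $A+B$, which reduces to a parity/alignment check on the pair of parallel generators; this is concrete but must be done so that the degenerate coincidences among generators do not spoil extremality. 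I expect both arguments to route through \cite[Exercise 8.8(c)]{rockafellar2009variational} and the standard correspondence between extreme points of Minkowski combinations and unique face selections, so I would first isolate that correspondence as a short lemma and then feed it into both implications.
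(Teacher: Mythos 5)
Your first implication is essentially sound, and your route is actually more elementary than the paper's: the paper derives ``transversality $\Rightarrow$ compatibility'' by passing through its main sum-rule theorem (\Cref{thm:general-sum-Clarke-geometric}), observing that every tangent cone $\mathcal{T}_{A}(\bm{a})=\cone(A-\bm{a})$ is contained in $\parr(A)$, so that trivial intersection of the parallel subspaces forces the tangent-cone condition in part (c) of that theorem. Your direct argument can be made even shorter than you anticipate: $\bm{a}-\bm{b}\in\ext(A-B)$ forces $\bm{a}\in\ext(A)$ and $\bm{b}\in\ext(B)$, and if $\bm{a}+\bm{b}=\frac{1}{2}(\bm{a}_1+\bm{b}_1)+\frac{1}{2}(\bm{a}_2+\bm{b}_2)$ were a nontrivial convex combination in $A+B$, then $\bm{u}:=\frac{1}{2}(\bm{a}_1+\bm{a}_2)-\bm{a}=-\bigl(\frac{1}{2}(\bm{b}_1+\bm{b}_2)-\bm{b}\bigr)$ would be a common element of $\parr(A)$ and $\parr(B)$, nonzero by extremality of $\bm{a}$ and $\bm{b}$. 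One caution: your ``$\bm{a}\pm t\bm{d}$'' phrasing suggests two-sided segments inside $A$ through $\bm{a}$, which cannot exist at an extreme point; the directions you can extract are one-sided tangent directions, and this suffices only because $\parr(A)$ is a subspace.

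The second implication has a genuine gap. You claim that a nonzero $\bm{u}\in\spn\{\bm{p}_i\}\cap\spn\{\bm{q}_j\}$ ``forces a generator of $A$ and a generator of $B$ that are parallel,'' and your entire vertex construction (aligning the parallel pair constructively in $A-B$ while cancelling it in $A+B$) rests on this. It is false: take $A=[-\bm{e}_1,\bm{e}_1]+[-\bm{e}_2,\bm{e}_2]$ and $B=[-(\bm{e}_1+\bm{e}_2),\bm{e}_1+\bm{e}_2]+[-(\bm{e}_1-\bm{e}_2),\bm{e}_1-\bm{e}_2]$ in $\mathbb{R}^2$; the generator spans coincide, yet no generator of $A$ is parallel to any generator of $B$. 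The correct construction (the paper's \Cref{lem:s-nece}) is necessarily global: one writes a single linear dependence $\bm{y}_m=\sum_i\alpha_i\bm{x}_i+\sum_{j<m}\beta_j\bm{y}_j$ over a carefully extracted subfamily that is linearly independent with all coefficients nonzero (this extraction is itself a nontrivial elimination procedure), and then uses the \emph{signs} of the $\alpha_i,\beta_j$ to choose the sign pattern of the witnessing vertices, with Gordan's theorem (\Cref{lem:gordan}) certifying on one hand that the chosen difference is extreme in $A-B$ and on the other that the chosen sum is not extreme in $A+B$. You would also need the reduction step showing that incompatibility of the sub-zonotopes built from the extracted subfamily implies incompatibility of the full zonotopes (the paper's \Cref{lem:localization} and \Cref{prop:comp-poly}), which your sketch does not address; note also that your ``clean sign-choice description'' of zonotope vertices only holds after restricting to linearly independent generators. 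Without replacing the parallel-generator claim by such a global sign construction, the second implication does not go through.
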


We note in passing that the subdifferentials of key elements in the loss of two-layer ReLU networks \cite{yun2018efficiently}, loss of penalized deep nonsmooth networks \cite{zeng2019global}, and ramp-loss SVMs \cite{brooks2011support} are of zonotope type.
Additionally, our results on regularity, which validate the sum rule, are geometric and independent of the representation of the concerned functions.

\paragraph{Rounding and Finite Termination.}
Up to this point, our focus has been exclusively on the \emph{exact} $\epsilon$-stationarity testing problem, which involves verifying whether $\bm{0} \in \partial (h-g)(\bm{x}) + \epsilon\mathbb{B}$ holds for a point $\bm{x}$. However, in practice, exact nondifferentiable points are almost impossible to reach, primarily due to randomization or finite-precision limitations. Therefore, it is desirable to have a \emph{robust} stationarity testing algorithm that works for any point sufficiently \emph{close} to a stationary (and possibly nondifferentiable) one.
 In other words, we are interested in testing so-called $(\epsilon,\delta)$-Near-Approximate Stationarity ($(\epsilon,\delta)$-NAS; see \Cref{def:stat-concept}), that is, verifying the condition $\bm{0}\in\partial (h-g)(\bm{x} + \delta\mathbb{B}) + \epsilon\mathbb{B}$ for a given point $\bm{x}$.  See \cite{tian2022finite} and the references therein for discussion and algorithms for computing $(\epsilon,\delta)$-NAS points. 
 
 In our new robust testing algorithm, we only need to call the exact $\epsilon$-stationarity testing procedure, i.e., checking whether $\bm{0}\in\partial (h-g)(\bm{x})+\epsilon\mathbb{B}$, in a black-box manner. Therefore, we start by abstracting this procedure for exact testing into the following oracle.

\begin{nDefinition}[cf.~\Cref{def:oracle}]%
	Given a \PA{} function $f:\mathbb{R}^d\to \mathbb{R}$ and point $\bm{x} \in \mathbb{R}^d$, for $\epsilon \geq 0$, the oracle decides whether $\bm{0}\in \partial f(\bm{x}) + \epsilon\mathbb{B}$  or not.
\end{nDefinition}

The main algorithmic contribution of this paper is a  geometric scheme to certify or refute $(\epsilon,\delta)$-NAS for a given point. The following result guarantees the correctness and efficiency of our new robust testing approach.

\begin{nTheorem}[Informal; cf.~{\Cref{thm:robust-test} and \Cref{rmk:uniform-lb}}]
Let convex \PA{} functions $h,g:\mathbb{R}^d\to \mathbb{R}$ and $\epsilon \geq 0$ be given. Suppose that $\bm{x}_n\to \bm{x}^*$ for some unknown $\bm{x}^*\in\mathbb{R}^d$ satisfying $\bm{0}\in\partial (h-g)(\bm{x}^*) + \epsilon\mathbb{B}$ and $\|\bm{x}_n\| \leq B$ for all $n\in\mathbb{N}$. There exists an oracle-polynomial-time algorithm that, for any $n\in\mathbb{N}$ and any $\delta > 0$, certifies either
\[
\bm{0}\in \partial(h-g)(\bm{x}_n + \delta\mathbb{B}) + \epsilon\mathbb{B},\qquad\text{or}\qquad
\|\bm{x}_n - \bm{x}^*\| > \min\{\delta, \delta^*\},
\]
where $\delta^* > 0$ is a constant only dependent on $h,g, \epsilon$, and $B$.%
\end{nTheorem}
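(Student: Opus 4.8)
The plan is to first reduce the robust test to a search problem and dispatch the trivial direction. By definition, $\bm{0}\in\partial(h-g)(\bm{x}_n+\delta\mathbb{B})+\epsilon\mathbb{B}$ holds if and only if there is a point $\bm{y}\in\bm{x}_n+\delta\mathbb{B}$ with $\dist(\bm{0},\partial(h-g)(\bm{y}))\leq\epsilon$, i.e.\ a genuine $\epsilon$-stationary point inside the ball. The easy half of the dichotomy is then immediate: if $\|\bm{x}_n-\bm{x}^*\|\leq\delta$, then $\bm{x}^*\in\bm{x}_n+\delta\mathbb{B}$, and since $\partial(h-g)(\bm{x}^*)\subseteq\partial(h-g)(\bm{x}_n+\delta\mathbb{B})$ together with $\bm{0}\in\partial(h-g)(\bm{x}^*)+\epsilon\mathbb{B}$, the NAS condition is satisfied; contrapositively, whenever the procedure fails to certify NAS we may output $\|\bm{x}_n-\bm{x}^*\|>\delta\geq\min\{\delta,\delta^*\}$. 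The whole content is therefore to exhibit a procedure that, using only the single-point oracle, \emph{provably certifies} NAS whenever $\bm{x}_n$ lies within $\min\{\delta,\delta^*\}$ of the unknown limit $\bm{x}^*$. The subtlety is that smooth points near $\bm{x}^*$ may have large subgradients, so one cannot simply probe $\bm{x}_n$ itself.

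Second, I would pin down the uniform radius $\delta^*$. The affine pieces of $h$ and $g$ induce a finite polyhedral subdivision of $\mathbb{R}^d$, and on the compact region $\{\bm{x}:\|\bm{x}\|\leq B\}$ only finitely many faces are relevant (note $\|\bm{x}^*\|\leq B$ since $\bm{x}_n\to\bm{x}^*$ and $\|\bm{x}_n\|\leq B$). I would take $\delta^*$ to be the resulting \emph{feature size}: a positive lower bound on the distance from any point of the region to the non-incident faces. This makes $f:=h-g$ locally conical within $\bm{x}^*+\delta^*\mathbb{B}$, where $f$ agrees with its positively homogeneous first-order expansion at $\bm{x}^*$, so that $\partial f(\bm{y})\subseteq\partial f(\bm{x}^*)$ for every $\bm{y}\in\bm{x}^*+\delta^*\mathbb{B}$; consequently $\bm{x}^*$ carries the richest subdifferential and hence the smallest value of $\dist(\bm{0},\partial f(\cdot))$ in its conical neighborhood. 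Finiteness of the subdivision makes $\delta^*$ strictly positive and dependent only on $h,g,B$ (and, if one stratifies by $\epsilon$, on $\epsilon$).

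Third, the algorithm is a geometric search that issues polynomially many oracle calls. Starting from $\bm{x}_n$, I would use the piecewise-affine data of $h$ and $g$ to move, in exact arithmetic, onto progressively lower-dimensional nonsmooth strata: at each phase compute the nearest breakpoint of $f$ along a nonsmoothness-seeking direction (the intersection with the next active hyperplane), append the newly activated affine constraint, and recurse inside the resulting lower-dimensional affine subspace. After at most $d$ phases the accumulated active hyperplanes pin down a candidate $\widehat{\bm{y}}$ — in the conical regime this is exactly the apex $\bm{x}^*$ — which is submitted to the exact stationarity oracle at radius $\epsilon$. The procedure returns \textsf{(True)} (NAS certified) as soon as some probed $\widehat{\bm{y}}\in\bm{x}_n+\delta\mathbb{B}$ is accepted, and \textsf{(False)} (the ``far'' certificate) otherwise. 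Each phase solves a linear system and issues a polynomial number of probes, so the scheme is oracle-polynomial-time, and soundness is automatic: any accepted probe is a point of $\bm{x}_n+\delta\mathbb{B}$ with $\dist(\bm{0},\partial f(\widehat{\bm{y}}))\leq\epsilon$, which is precisely NAS.

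Finally, the crux is completeness within the threshold. When $\|\bm{x}_n-\bm{x}^*\|\leq\min\{\delta,\delta^*\}$, I must show the stratum-climbing search terminates at a point of the accepting set $\{\bm{y}:\dist(\bm{0},\partial f(\bm{y}))\leq\epsilon\}$ that still lies in $\bm{x}_n+\delta\mathbb{B}$. The containment is easy — the search never leaves the conical neighborhood of $\bm{x}^*$, and $\bm{x}^*$ is within $\delta$ of $\bm{x}_n$. The genuinely hard part is proving that the search provably \emph{hits} this accepting set with only polynomially many exact probes: because $\{\dist(\bm{0},\partial f(\cdot))\leq\epsilon\}$ may collapse to the measure-zero nonsmooth locus (nearby smooth points having gradient norm exceeding $\epsilon$), the search must land \emph{exactly} on the apex, and one must argue, using the conical first-order model and the inclusion $\partial f(\bm{y})\subseteq\partial f(\bm{x}^*)$, that monotonically enlarging the active stratum drives $\dist(\bm{0},\partial f(\cdot))$ down to its conical minimum at $\bm{x}^*$ in a bounded number of steps. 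This, together with the uniform positivity and $h,g,\epsilon,B$-only dependence of $\delta^*$, is where I expect the main technical effort to lie; the remaining estimates — Lipschitz bounds relating $\epsilon$, $\delta$, and the probe locations, and the exact breakpoint computations — should be routine.
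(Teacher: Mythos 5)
There are two genuine gaps. First, your uniform radius $\delta^*$ does not exist as you define it: the infimum over all points of the ball $B\mathbb{B}$ of the distance to non-incident faces of the polyhedral subdivision is zero, since a point in the relative interior of a full-dimensional cell can sit arbitrarily close to a boundary face it does not lie on. Finiteness of the subdivision does not rescue this. The paper instead works with a \emph{point-dependent} separation $\delta_{\sf sep}(\bm{w}^*)$ (the minimal nonzero gap among the active/inactive value functions at $\bm{w}^*$, scaled by the Lipschitz constant $R$), phrases the \textsf{(False)} certificate as $\|\bm{x}_n-\bm{x}^*\|>\min\{\delta,\delta_{\sf sep}(\bm{x}^*)\}$, and only then extracts a uniform $\delta^*>0$ by a Lebesgue-number-type compactness argument over the closed bounded set of $\epsilon$-stationary points (\Cref{rmk:uniform-lb}) --- not over the whole ball. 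Your contrapositive framing is fine, but the quantitative object it rests on is vacuous as stated.

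Second, and more seriously, the heart of the theorem --- provably producing, from $\bm{x}_n$ alone, a probe point at which the oracle accepts whenever $\bm{x}_n$ is close enough to the unknown $\bm{x}^*$ --- is exactly the step you defer (``where I expect the main technical effort to lie''). Your stratum-climbing search is not specified in a way that can be analyzed: ``nonsmoothness-seeking directions'' are undefined, there is no guarantee the accumulated hyperplanes pin down the apex rather than some other low-dimensional stratum, and you give no termination rule that works without knowing $\delta^*$ or $\bm{x}^*$. The paper's resolution is structurally different and avoids locating $\bm{x}^*$ at all: it builds an explicit convex polyhedron $P^{\bm{w},\delta,h,g}$ from $3R\delta$-approximate active index sets at every level of the \MC{} hierarchy, projects $\bm{w}$ onto it, and proves (\Cref{thm:robust}) that whenever $\|\bm{w}-\bm{w}^*\|\leq\delta\leq 2\delta_{\sf sep}(\bm{w}^*)$ the approximate active pattern at $\bm{w}$ \emph{coincides} with the exact active pattern at $\bm{w}^*$; hence $\bm{w}^*\in P^{\bm{w},\delta,h,g}$ and the projection $\widehat{\bm{w}}$, though generally different from $\bm{w}^*$, satisfies $\partial(h-g)(\widehat{\bm{w}})=\partial(h-g)(\bm{w}^*)$. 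The unknown scale $\delta_{\sf sep}(\bm{w}^*)$ is then handled by geometric halving of $\delta$ with a computable stopping rule at the known point $\bm{w}$ (via $\delta_{\sf sep}(\bm{w})$ and \Cref{prop:termination}). Without an analogue of this identification lemma, your completeness argument does not go through.
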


One notable application of such a NAS test is to obtain an efficient termination criterion for algorithms that only have asymptotic convergence results. 
For example, given a lower-bounded \PA{} function, every limiting point of the sequence generated by the subgradient method is a Clarke stationary point (see \Cref{fct:ddtame}). %
However, it is still unclear when to terminate the algorithm and how to certify  the obtained point is at least close to some Clarke stationary point, as the norm of any vector in the subdifferential is almost surely lower bounded away from zero along the entire trajectory  (consider running the subgradient method on $x\mapsto |x|$). 
 Our NAS testing approach offers an algorithm-independent stopping rule, effectively transforming asymptotically convergent algorithms into finite-time ones.
The following immediate corollary highlights this application.

\begin{nCorollary}[Informal; cf.~{\Cref{coro:stop}}]
Let two convex \PA{} functions $h,g:\mathbb{R}^d\to\mathbb{R}$ be given. Consider the iterates $\{\bm{x}_n\}_n$ produced by the subgradient method on the function $h-g$. 
There exists an oracle-polynomial-time algorithm such that, for any $\epsilon\geq 0$ and $\delta > 0$, the stopping criterion $\bm{0}\in\partial (h-g)(\bm{x}_T + \delta\mathbb{B}) + \epsilon\mathbb{B}$ can be certified for a finite $T\in\mathbb{N}_+$ by the algorithm.
Consequently, for \PA{} functions, the subgradient method can be terminated confidently in finite time.
\end{nCorollary}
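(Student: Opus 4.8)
The plan is to combine the asymptotic convergence of the subgradient method on \PA{} functions (\Cref{fct:ddtame}) with the robust $(\epsilon,\delta)$-NAS test established in \Cref{thm:robust-test}. Under the standard mild assumptions on the step-size, boundedness of the trajectory furnishes a uniform bound $\|\bm{x}_n\|\leq B$ for all $n$; hence, by Bolzano--Weierstrass, the iterates admit a convergent subsequence $\bm{x}_{n_k}\to\bm{x}^*$. By \Cref{fct:ddtame}, every limit point is Clarke stationary, so $\bm{0}\in\partial(h-g)(\bm{x}^*)$, and since $\epsilon\geq 0$ this immediately gives $\bm{0}\in\partial(h-g)(\bm{x}^*)+\epsilon\mathbb{B}$. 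Thus the hypotheses of \Cref{thm:robust-test} are satisfied by the (bounded) stationary limit point $\bm{x}^*$ and the bound $B$.

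Next I would run the robust test as a black box, applying it at every iterate $\bm{x}_n$ with the prescribed tolerance $\delta$. The test takes a single point and returns a certificate of either $\bm{0}\in\partial(h-g)(\bm{x}_n+\delta\mathbb{B})+\epsilon\mathbb{B}$ or $\|\bm{x}_n-\bm{x}^*\|>\min\{\delta,\delta^*\}$, where $\delta^*>0$ depends only on $h,g,\epsilon,B$; importantly, the procedure never needs to know $\bm{x}^*$, only that such a point exists. The stopping rule is to halt at the first index $T$ at which the NAS certificate is issued. Correctness of termination is then immediate from the correctness of the test: whenever the first alternative is certified, the $(\epsilon,\delta)$-NAS condition genuinely holds at $\bm{x}_T$, which is exactly the advertised stopping criterion.

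It remains to show that this halting index is finite, and here I would argue along the convergent subsequence. Since $\bm{x}_{n_k}\to\bm{x}^*$, there is a finite $k_0$ with $\|\bm{x}_{n_k}-\bm{x}^*\|\leq\min\{\delta,\delta^*\}$ for all $k\geq k_0$. For each such $\bm{x}_{n_k}$ the distance alternative is a false statement, so the test cannot certify it and is therefore forced to output the NAS certificate. Consequently the test succeeds no later than the iterate $\bm{x}_{n_{k_0}}$, so the first-success index obeys $T\leq n_{k_0}<\infty$. As each invocation of \Cref{thm:robust-test} runs in oracle-polynomial time, we obtain an oracle-polynomial-time procedure that certifies the stopping criterion at a finite $T$, and the closing assertion on confident finite-time termination follows.

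The step I expect to be the main (if subtle) obstacle is reconciling the convergence hypothesis of \Cref{thm:robust-test}, which is phrased for a fully convergent sequence $\bm{x}_n\to\bm{x}^*$, with the subgradient method, whose entire iterate sequence need not converge but only subconverges. The resolution is to separate the two roles of the test: its per-point correctness lets us halt safely at \emph{any} iterate where the NAS certificate is issued, while its finite-termination guarantee is invoked only along a convergent subsequence, which forces a success by index $n_{k_0}$. One should also verify that the constant $\delta^*$ and the radius $B$ are uniform across iterates---they depend solely on $h,g,\epsilon,B$ and not on $n$---so that the single threshold $\min\{\delta,\delta^*\}$ governs termination consistently along the whole subsequence.
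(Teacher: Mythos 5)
Your proposal is correct and follows essentially the same route as the paper: the paper also treats this as an immediate consequence of \Cref{thm:robust-test}, invoking the contrapositive of the \textsf{(False)} guarantee to force a \textsf{(True)} output once an iterate enters the $\min\{\delta,\delta_{\sf sep}(\bm{x}^*)\}$-ball around a stationary limit point. Your explicit reconciliation of the subsequential convergence of the subgradient iterates with the fully-convergent-sequence hypothesis of \Cref{coro:stop} is a welcome clarification, but it does not change the underlying argument.
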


Notably, when specialized to neural networks with ReLU activation functions (see \Cref{sec:2relu}), the above corollary resolves the open problem proposed in \cite[Section 5]{yun2018efficiently}.
	We also mention that, for black-box \PA{} functions in the sense of \cite{nemirovskij1983problem}, robust testing (i.e., detecting NAS points) is impossible to implement in general \cite[Proposition 2]{tian2022no}.

\subsection{Our Techniques} 
We now discuss the main challenges and new ideas in our development.

\subsubsection{Computational Complexity}

\paragraph{DC Representation.} 
For \PA{} functions, the set of Fr\'echet stationary points exactly matches the local minima. However, the existing hardness result for detecting local minima of polynomials cannot be applied here mainly due to two reasons. First, the proof for degree-$p$ polynomials in \cite[Problem 11]{murty1987some} requires $p \geq 4$, which is tight according to \cite[Theorem 3.3]{ahmadi2022AIM}. Despite the piecewise nature, \PA{} functions are merely piecewise degree-$1$ polynomials. Second, in sharp contrast to \PA{} functions, the set of Fr\'echet stationary points of a smooth function $f$, such as polynomials, coincides with the set $\{\bm{x}: \nabla f(\bm{x}) = \bm{0}\}$, whose detection is known to be in the class $\cP$ \cite{ahmadi2022AIM}. 
As for the complexity of detecting a Clarke stationary point, let convex \PA{} functions $h,g:\mathbb{R}^d\to \mathbb{R}$ be given. The subdifferentials $\partial h(\bm{0})$ and $\partial g(\bm{0})$ are both convex polytopes. Indeed, we can verify the condition $\bm{0} \in \partial h(\bm{0}) - \partial g(\bm{0})$ by solving an LP feasibility problem in polynomial time, provided a proper description of the polytopes $\partial h(\bm{0})$ and $\partial g(\bm{0})$. Therefore, the construction for proving hardness of checking whether $\bm{0} \in \partial (h-g)(\bm{0})$ must fundamentally exploit  the failure of the exact subdifferential sum rule (i.e., $\partial (h-g)(\bm{0}) \subsetneq \partial h (\bm{0})- \partial g(\bm{0})$). Besides, while being popular in computation, a Clarke stationary point could be non-minimizing in a meaningful sense, unlike Fr\'echet stationarity. To our knowledge, there is no hardness result on verifying non-minimizing first-order necessary conditions; therefore, we need some new ideas for our proof.

To obtain the desired hardness results, 
for the Fr\'echet case, we construct two simple convex \PA{} functions $h_{\sf F}$ and  $g_{\sf F} $. Our reduction is from maximizing the $\ell_1$-norm over a centered parallelotope, called PAR$\{-1,0,1\}$MAX$_1$ problem; see~{\cite[Theorem 12]{bodlaender1990computational}. We relate the complement problem of checking whether $\bm{0} \in \widehat{\partial} (h_{\sf F} - g_{\sf F})(\bm{0}) + \epsilon\mathbb{B}$ to a polytope containment problem, and then to the strongly $\cNP$-hard norm-maximization PAR$\{-1,0,1\}$MAX$_1$ problem. 
To prove the hardness of detecting Clarke stationarity, the key construction is a seesaw-type gadget composed of two convex \PA{} functions $h_{\sf C}$ and  $g_{\sf C}$. The gadget uses the Fr\'echet stationarity status of $h_{\sf F} - g_{\sf F} $ as its trigger. Specifically, when $\bm{0} \in \widehat{\partial} (h_{\sf F}-g_{\sf F})(\bm{0})$, the gadget function $h_{\sf C} - g_{\sf C}$ is affine near the point $\bm{0}$ with $\|\nabla (h_{\sf C} - g_{\sf C})(\bm{0})\| \geq 1/2$. When $\bm{0} \notin \widehat{\partial} (h_{\sf F}-g_{\sf F})(\bm{0})$, the gadget will open a flat passage near the point $\bm{0}$ resulting $\bm{0} \in \partial (h_{\sf C} - g_{\sf C})(\bm{0})$.

The distinction in complexity between testing whether $\bm{0} \in \widehat{\partial}(h-g)(\bm{0})$ and testing whether $\bm{0} \in \partial(h-g)(\bm{0})$ can be demonstrated by showing membership in their own complexity classes. Unlike problems with explicit discrete structure, the completeness for some problems related to continuous optimization can be anything but trivial (see, e.g., \cite{fearnley2022complexity}), especially when we allow a rather rich and expressive class of convex \PA{} components. 
The crux to our completeness results is a protocol to certify membership to certain index set, termed \emph{essentially active index set} \cite[p.~92]{scholtes2012introduction}.
Assuming the separation between $\cNP$ and co-$\cNP$, we conclude that the distinction in computational complexity between testing Fr\'echet and Clarke stationarities is fundamental.

\paragraph{Max-Min Representation.} %
The main challenge in proving hardness for the \MaxMin{} form lies in its inefficiency in expressiveness. Indeed, any \PA{} function represented in \MaxMin{} form can have only a polynomial number of affine pieces with respect to its input size. In our proof, we build a polynomial-time reduction from 3SAT; see \cite[Section 3.1.1]{garey1979computers}. For the Fr\'echet case, we construct a \PA{} function $f_{\sf F}$ given in \MaxMin{} form and demonstrate that the subdifferential $\widehat{\partial} f_{\sf F}(\bm{0})$ contains a small vector if and only if a given instance of 3SAT is unsatisfiable.  
For the Clarke case, we adapt the seesaw-type gadget originally developed for the \DC{} representation to translate the hardness of testing Fr\'echet stationarity to that of Clarke stationarity.

\subsubsection{Subdifferential Sum Rule} 
For convex \PA{} functions $h,g:\mathbb{R}^d\to \mathbb{R}$, the validity of the exact sum rule $\partial (h-g)(\bm{x}) = \partial h(\bm{x}) - \partial g(\bm{x})$ at a given point $\bm{x}\in\mathbb{R}^d$ is central to understanding the correctness of the natural SRR method  for stationarity testing in \Cref{sec:postive}. Moreover, the equality-type sum rule has numerous applications in nonsmooth optimization and analysis; see, e.g., \cite[Chapter 10]{rockafellar2009variational}, \cite[Section 2.4]{Mordukhovich.18}. 
The main obstacle and technical contribution in our study are mostly conceptual, focusing on identifying the correct regularity condition. 

The calculus of (Clarke) subdifferentials is a well-developed area, pioneered by giants in convex and variational analysis.
Monographs by \citet{clarke1990optimization}, \citet{rockafellar2009variational}, and \citet{Mordukhovich.18} have collected and consolidated decades of development in this direction. 
After examining some known regularity conditions validating the sum rule for the \PA{} function $h-g$, we are left with a vague sense that some form of separability between $h$ and $g$ seems necessary; see \cite[p.~119]{melzer1986expressibility} for similar remark. This observation leads us to consider a transversality-type condition in \Cref{def:transv}, which can be seen as a generalized separability condition encompassing three known regularities as special cases when applied to \PA{} functions (see \Cref{prop:all-are-transveral}). However, its unnecessity for the sum rule was initially unclear to us. Meanwhile, the discovery of the simultaneous necessity and sufficiency of transversality for zonotopes (see \Cref{prop:zonotop}) appears as strong evidence of its necessity for the general \PA{} function $h-g$.

This belief is eventually disproven by a nontrivial example in $\mathbb{R}^4$ (see \Cref{exam:comp}), where the subdifferential sum rule holds without transversality. Consequently, the new notion of compatibility (see \Cref{def:comp-poly}) emerges as the correct regularity condition that fully characterizes the validity of the sum rule, with transversality serving as an appealing, polynomial-time verifiable sufficient condition.
The main technical challenge in our investigation lies in characterizing how the geometry of the polytopes $\partial h(\bm{x})$ and $\partial g(\bm{x})$ affects the structure of the analytically defined polytope $\partial (h-g)(\bm{x})$.
The goal is to identify a simple, practical, and nontrivial regularity condition that uses only  information from $\partial h(\bm{x})$ and $\partial g(\bm{x})$ to ensure $\partial (h-g)(\bm{x})=\partial h(\bm{x})-\partial g(\bm{x})$. We find the new notion of compatibility, in some sense, neat and elegant. Moreover, both the concepts of transversality and compatibility are purely geometric and independent of the concrete representation of the convex functions $h$ and $g$.

\subsubsection{Rounding and Finite Termination}

Stopping criterion is important in the design of iterative algorithms for optimization problems.
Our rounding algorithm is inspired by the finite-termination strategies for solving LP with interior-point methods (IPMs); see \cite[Chapter 6]{grotschel2012geometric} and \cite{ye1992finite,mehrotra1993finding,spielman2003smoothed}. However, our task differs significantly from that of terminating IPMs.
 LP is convex, and IPMs are guaranteed to converge to a global optimal solution. In contrast, our interest lies in terminating an algorithm that converges to a stationary point of a nonconvex objective function. Hence, neither convexity nor optimality gap can be exploited in algorithm design for finite termination.
	Moreover, the existing techniques in \cite{ye1992finite,mehrotra1993finding,spielman2003smoothed} crucially rely on the strict complementarity of the solution that many IPMs converge to (see \cite{guler1993convergence}). However, in our stationarity testing setting, it is unrealistic to make any assumptions about the stationary point to which the algorithm is converging. %
	Another closely related line of research is on \emph{active manifold/constraints identification}; see \cite{hare2004identifying,lewis2011identifying} and the references therein. These works typically require the concerned function to be Clarke regular or even amenable, ruling out general \PA{} functions. Moreover, they can only guarantee identification if the sequence of subgradients evaluated along the iterates approaches zero, which is almost impossible when applying the subgradient method to \PA{} functions.

	Similar to LP and many numerical algorithms, we need a standard algebraic representation for the convex functions $h$ and $g$. According to \cite[Theorem 2.49]{rockafellar2009variational}, every convex \PA{} function can be written as the pointwise maximum of finitely many affine functions. However, this representation is somewhat inefficient, as the number of affine pieces grows only polynomially with respect to the input size. This lack of expressive power renders many computational problems trivial and cannot adequately address many real-world problems (e.g., the empirical loss of a shallow ReLU neural network), where a natural parameterization results in at least exponentially many pieces. In this paper, we consider a multi-composite  (\MC) form of convex \PA{} functions; see \Cref{def:mc}. Simply put, the \MC{} representation allows for arbitrarily deep compositions and summations of pointwise maximum over affine functions. An \MC{} function is, by definition, convex, piecewise affine, and can have exponentially many affine pieces.
	As discussed in \cite[Proposition 2.2.3]{scholtes2012introduction}, every \PA{} function $f:\mathbb{R}^d \to \mathbb{R}$ defines a corresponding polyhedral subdivision of $\mathbb{R}^d$. The \PA{} function $f$ is actually affine on every element of the polyhedron subdivision. Given a point $\bm{w} \in \mathbb{R}^d$, there can be more than exponentially many polyhedra in the subdivision near $\bm{w}$, whose enumeration is clearly computational intractable. The key component of our algorithm is a family of convex polyhedra $\{P^\delta \}_\delta$, parameterized by a positive scalar $\delta$, which governs the process of searching for the unknown stationary point (or certifying its absence).

Let us informally sketch the idea behind our new robust testing algorithm. Let  convex \PA{} functions $h,g:\mathbb{R}^d\to \mathbb{R}$ and a point $\bm{w} \in \mathbb{R}^d$ be given. Fix any desired precision $\epsilon \geq 0$ and $\delta > 0$. Our goal is to identify an unknown stationary point $\bm{w}^* \in \mathbb{R}^d$ near the point $\bm{w}$ or certify its absence. To convey the intuition informally, imagine we are standing at the point $\bm{w}$, swinging a butterfly net in an attempt to capture the elusive $\bm{w}^*$. The length of the net is adjustable, set by a positive scalar $\delta_0:=\delta$, and the shape of its opening is also adjustable, polyhedral, and parameterized by $\delta_0$. Let $P^{\delta_0} \subseteq \mathbb{R}^d$ represent the opening. We swing the net with $\delta_0$ by projecting the point $\bm{w}$ onto the polyhedral $P^{\delta_0}$. Suppose that the projection is $\widehat{\bm{w}} \in P^{\delta_0}$. If $\|\bm{w} - \widehat{\bm{w}}\| \leq \delta$ and $\bm{0} \in \partial (h-g)(\widehat{\bm{w}}) + \epsilon\mathbb{B}$, which is efficiently verifiable by the $\epsilon$-stationarity testing oracle, then we succeed by confirming $\bm{w}$ as an $(\epsilon,\delta)$-\NAS{} point with the certificate $\widehat{\bm{w}}$. If not, we halve $\delta_0$ to $\delta_0/2$ and repeat the capturing process. This loop will not continue indefinitely; we will stop the iteration after at most a polynomial number of steps. The crux of the correctness of our algorithm is that if an identification condition is satisfied, 
then we are guaranteed to capture the point $\bm{w}^* \in P^{\delta_0}$ for some positive $\delta_0$, and the projection $\widehat{\bm{w}}$ satisfies
$\partial (h-g)(\widehat{\bm{w}}) = \partial (h-g)(\bm{w}^*)$. This enables us to verify the $(\epsilon,\delta)$-\NAS{} status of $\bm{w}$ without precisely pinpointing the original target $\bm{w}^*$. 
On the contrary, if the identification condition is not satisfied before termination, then we can certify that the candidate point $\bm{w}$ is at least $\min\{\delta, \delta^*\}$ away from any $\epsilon$-stationary point, where $\delta^*$ is a constant independent of the point $\bm{w}$.

\subsection{Related Work and Organization}

\paragraph{Complexity of Testing Solutions.}

There have been many works on the complexity of deciding whether a given point belongs to a certain solution type. It is shown in \citep{murty1987some,pardalos1988checking} that checking whether a given point is local minimum, or strict local minimum for unconstrained or simply constrained problem are both co-$\cNP$-hard. For low-degree polynomials, the work \citep{ahmadi2022AIM} shows that it is possible to efficiently determine the existence and membership of local minima. The work \citep[Theorem 4]{bolte2022complexity} demonstrates that certifying the singleton of the Clarke subdifferential set is $\cNP$-hard. 
The work \citep{nesterov2013gradient} shows that deciding whether a given point is a local minimizer for a \PA{} function is (weakly) co-$\cNP$-hard. However, the result in \citep{nesterov2013gradient} only applies to the exact testing of local minimizers, and it is unclear whether the construction is DC-representable with constant-layer $\MC$ components. Most importantly, the reduction in \citep{nesterov2013gradient} is from the 2-PARTITION problem, leaving open the possibility of detecting local minimizers in pseudo-polynomial time.
For any piecewise smooth function representable by the so-called abs-normal form, the work \cite{griewank2016first} shows that a first-order optimality condition, also called first-order minimality (FOM) in \cite{griewank2019relaxing}, can be verified efficiently under a linear independence constraint qualification (LICQ)-type condition, termed linear independence kink qualification (LIKQ). When applied to \PA{} functions, this LIKQ condition has a close relation to a surjectivity-type assumption.
Another related work to ours is the one by \citet{yun2018efficiently}. They consider the empirical loss of a two-layer ReLU network and introduce an algorithm to check Clarke stationarity, which is essentially similar to the SRR method presented in \Cref{sec:postive}. A limitation of the work \cite{yun2018efficiently} (discussed in \cite[Section 5]{yun2018efficiently}) is that the algorithm therein can only perform \emph{exact} stationarity testing. That is to say, if the objective function is $x\mapsto |x|$, then the algorithm in \cite{yun2018efficiently} will certify approximate stationarity only if the given point is exactly equal to zero. Our results resolve the open problem on \emph{robust} testing mentioned in \cite[Section 5]{yun2018efficiently} for more general \PA{} functions. %

\paragraph{Nonsmooth Optimization.}
The convergence of subgradient-type methods to Clarke stationary points is primarily analyzed from a continuous-time perspective, as studied in \cite{benaim2005stochastic,majewski2018analysis,davis2020stochastic}. These results are only asymptotic, lacking any oracle complexity guarantees. The work \cite{tian2022no} shows that this absence is fundamental; even for \PA{} functions, the trajectory of the subgradient method cannot bypass an approximate stationary point in any \emph{a priori} finite time. 
For the nonasymptotic aspect, the development of iterative methods for solving nonconvex nonsmooth optimization problems is still in its early stages. The work \citep{davis2019stochastic} shows that for weakly convex, Lipschitz functions, $(\epsilon, \delta)$-NAS points can be obtained using a subgradient-type method. However, the requirement of weak convexity is somewhat stringent and excludes general \PA{} functions.
Recently, there has been a surge of research aimed at obtaining oracle complexity results for general Lipschitz functions; see, e.g., \citep{zhang2020complexity,tian2022finite,davis2021gradient}. An important notion of approximate stationarity adopted in these works is the so-called Goldstein approximate stationarity (GAS). Formally, a point $\bm{x}$ is called an $(\epsilon, \delta)$-GAS point for a function $f$ if $\bm{0} \in \conv \partial f(\bm{x} + \delta\mathbb{B}) + \epsilon\mathbb{B}$.
It is easy to see that $(\epsilon,\delta)$-NAS, in the sense of \eqref{eq:intro-stat}, is a more stringent, and therefore more desirable, solution concept than $(\epsilon,\delta)$-GAS. However, the work \cite{kornowski2021oracle} establishes an exponential lower bound on the oracle complexity for computing $(\epsilon,\delta)$-NAS points by local algorithms. In comparison, the works \citep{zhang2020complexity,tian2022finite,davis2021gradient} propose randomized algorithms that compute $(\epsilon,\delta)$-GAS points with dimension-free oracle complexity. Interestingly, the works \cite{tian2022no,jordan2023deterministic} show that no deterministic algorithm can achieve the same for computing $(\epsilon,\delta)$-GAS points.
By restricting the function class with a so-called nonconvexity modulus, the work \cite{kong2023cost} presents a deterministic algorithm that computes $(\epsilon,\delta)$-GAS points with dimension-free oracle complexity.

\paragraph{Organization.} We introduce terminologies and preliminaries in \Cref{sec:prel}. In \Cref{sec:hardness}, we present our results on computational complexity. The regularity conditions for the validity of subdifferential sum rule for \PA{} functions and their relationships are discussed in \Cref{sec:cr}. We present the robust algorithms for testing near-approximate stationarity concepts in \Cref{sec:robust}. In \Cref{sec:app}, we discuss the applicability of our new results to a series of structured piecewise smooth functions. The paper concludes with remarks in \Cref{sec:concl}. %
Missing proofs and discussions are supplemented in the appendices.

\section{Preliminaries}\label{sec:prel}

Throughout this paper, scalars, vectors and matrices are denoted by lowercase letters, boldface lower case letters, and boldface uppercase letters, respectively.
The notation used in this paper is mostly standard: $\mathbb{R}_+:=[0,\infty)$ and $\mathbb{R}_{++}:=(0,\infty)$; $\bm{0}_n\in\mathbb{R}^n$ and $\bm{0}_{n\times m}\in\mathbb{R}^{n\times m}$ denote a vector and a matrix with all elements equal to zero (we may write $\bm{0}$ when the dimensionality is clear from the context);  $x_i$ denotes the $i$-th coordinate of vector $\bm{x}$; $A \times B\coloneqq \{(a,b):a \in A, b\in B\}$ denotes the Cartesian product of two sets $A$ and $B$; $\|\cdot\|_p$ denotes the $\ell_p$ norm (the subscript may be omitted when $p = 2$); $\mathbb{B}_\epsilon(\bm{x})\coloneqq\{\bm{v}:\|\bm{v} - \bm{x}\|\leq \epsilon\}$ with $\mathbb{B}\coloneqq \mathbb{B}_1(\bm{0})$; $[\bm{x},\bm{y}]\coloneqq \{\gamma\bm{x}+(1-\gamma)\bm{y}:\gamma \in [0,1]\}$; $\dist(\bm{x},S)\coloneqq\inf_{\bm{v}\in S} \|\bm{v}-\bm{x}\|$ for a closed set $S$;
the set $\ext(A)$ denotes the set of extreme points of a convex set $A$; the Minkowski addition of sets $A$ and $B$ is $A+B:=\{\bm{a}+\bm{b}: \bm{a} \in A, \bm{b} \in B\}$ (we may write $A-B$ as a shorthand of $A+(-B)$);
the sets $\spn(S),\conv(S)$, $\intt(S)$, $\cone(S), \parr(S):=\spn(S-S)$ denote the linear hull of, convex hull of, interior of, conic hull of, and subspace parallel to the set $S$, respectively; $\bm{e}_i$ denotes the $i$-th column of identity matrix; $[a] := \{1,\ldots,a\}$ for any integer $a\ge1$; for a set $S$ and a set-valued mapping $M$, $M(S) := \cup_{\bm{z} \in S} M(\bm{z})$;  $(x_i)_{i\in[n]}:=(x_1, \dots, x_n) \in \mathbb{R}^n$ for any integer $n \geq 1$; 
 for a directionally differentiable function $f$, $f'(\bm{x}; \bm{d})$ denotes its directional derivative at point $\bm{x}$ in direction $\bm{d}$; 
 given a vector $\bm{x}\in\mathbb{R}^n$ and $i \in [n]$, eliminating the $i$-th coordinate from $\bm{x}$ yields the vector $\bm{x}_{-i} \in \mathbb{R}^{n-1}$; $\mathcal{T}_C(\bm{x})$ and $\mathcal{N}_C(\bm{x})$ denote the tangent cone and normal cone of a convex set $C$ at the point $\bm{x}$, respectively.%

\subsection{Nonsmooth Analysis} 
 Let us review some notions on generalized differentiation for locally Lipschitz functions.
\begin{Definition}[Clarke subdifferential; cf.~{\citep[Theorem 2.5.1]{clarke1990optimization}}]\label{def:subd} Given a point $\bm{x} \in \mathbb{R}^d$, the Clarke subdifferential of a locally Lipschitz function $f:\mathbb{R}^d \to \mathbb{R}$ at $\bm{x}$ is defined by
	\[
	\partial f(\bm{x}) := \conv\big\{\bm{s}:\exists \bm{x}_n\to \bm{x}, \nabla f(\bm{x}_n) \textnormal{ exists}, \nabla f(\bm{x}_n)\to\bm{s}\big\}.
	\]
	We call $\bm{x}$ a (Clarke) stationary point of $f$ if $\bm{0} \in \partial f(\bm{x})$.
\end{Definition}

\begin{Definition}[cf.~{\cite[p.~299]{rockafellar2009variational}}] For $\tau > 0$ and $\bm{x} \in \mathbb{R}^d$, the difference quotient functions $\Delta_\tau f(\bm{x}):\mathbb{R}^d \to \mathbb{R}$ of a real-valued function $f:\mathbb{R}^d\to\mathbb{R}$ at $\bm{x}$ is defined by
	\[
	\Delta_\tau f(\bm{x})(\bm{d}):=\frac{f(\bm{x} + \tau\bm{d}) - f(\bm{x})}{\tau}.
	\]
\end{Definition}

\begin{Fact}[{cf.~\cite[p.~27]{clarke1990optimization} and \cite[Proposition 2.2.4, Proposition 2.2.7]{clarke1990optimization}}]\label{fct:clarke-dd}
	The Clarke generalized directional derivative of a locally Lipschitz function $f:\mathbb{R}^d\to \mathbb{R}$ at $\bm{x} \in \mathbb{R}^d$ in the direction $\bm{d}\in \mathbb{R}^d$, denoted $f^\circ (\bm{x}; \bm{d})$, is defined as 
	\[
	f^\circ (\bm{x}; \bm{d}):=\limsup_{\substack{\bm{x}'\to \bm{x}\\ t \searrow 0}} \Delta_t f(\bm{x}')(\bm{d})=\limsup_{\substack{\bm{x}'\to \bm{x}\\ t \searrow 0}}\frac{f(\bm{x}'+t\bm{d})-f(\bm{x}') }{t}.
	\]
	The sublinear function $f^\circ(\bm{x}; \cdot): \mathbb{R}^d \to \mathbb{R}$ is the support function of the set $\partial f(\bm{x})$; i.e.,   
	\[
	\partial f(\bm{x}) = \left\{\bm{s}: \bm{s}^\top \bm{d} \leq f^\circ (\bm{x}; \bm{d}) \text{ for all } \bm{d} \in \mathbb{R}^d\right\}.
	\]
	Moreover, when $f$ is continuously differentiable at $\bm{x}$, we have $\partial f(\bm{x})=\{\nabla f(\bm{x})\}$. If $f$ is convex, the set $\partial f(\bm{x})$ coincides with the subdifferential in the sense of convex analysis\footnote{The subdifferential in the sense of convex analysis of a convex function $f$ at point $\bm{x}$ is defined as $\partial f(\bm{x}):=\left\{\bm{g}: f(\bm{y}) \geq f(\bm{x}) + \bm{g}^\top (\bm{y} - \bm{x}),\forall \bm{y}\right\}$; see \cite[Definition D.1.2.1]{hiriart2004fundamentals}.} of $f$ at $\bm{x}$.
\end{Fact}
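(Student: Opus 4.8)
The plan is to verify the three assertions in the stated order, treating the subdifferential $\partial f(\bm{x})$ as the gradient-limit convex hull of \Cref{def:subd} and proving that its support function is $f^\circ(\bm{x};\cdot)$. The opening step is routine bookkeeping with Lipschitz estimates. Since $f$ is locally Lipschitz, there are $L,r>0$ with $|f(\bm{y})-f(\bm{z})|\le L\|\bm{y}-\bm{z}\|$ for $\bm{y},\bm{z}\in\mathbb{B}_r(\bm{x})$, so for $\bm{x}'$ near $\bm{x}$ and $t$ small the quotient $\Delta_t f(\bm{x}')(\bm{d})$ is bounded by $L\|\bm{d}\|$ in absolute value; hence the $\limsup$ defining $f^\circ(\bm{x};\bm{d})$ is finite and lies in $[-L\|\bm{d}\|,L\|\bm{d}\|]$. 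Positive homogeneity $f^\circ(\bm{x};\lambda\bm{d})=\lambda f^\circ(\bm{x};\bm{d})$ for $\lambda>0$ follows by rescaling $t$, and subadditivity follows by inserting the intermediate point $\bm{x}'+t\bm{d}_1$ so that $\Delta_t f(\bm{x}')(\bm{d}_1+\bm{d}_2)=\Delta_t f(\bm{x}'+t\bm{d}_1)(\bm{d}_2)+\Delta_t f(\bm{x}')(\bm{d}_1)$ and then using the subadditivity of $\limsup$ together with $\bm{x}'+t\bm{d}_1\to\bm{x}$. Thus $f^\circ(\bm{x};\cdot)$ is a finite sublinear function.

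Because a finite sublinear function is exactly the support function of a unique nonempty compact convex set, the substance of the second assertion is to identify that set with $\partial f(\bm{x})=\conv G$, where $G:=\{\bm{s}:\exists\,\bm{x}_n\to\bm{x},\ \nabla f(\bm{x}_n)\text{ exists},\ \nabla f(\bm{x}_n)\to\bm{s}\}$. By Rademacher's theorem $f$ is differentiable almost everywhere on $\mathbb{B}_r(\bm{x})$, so $G$ is nonempty and, every gradient having norm at most $L$, $\conv G$ is compact and convex. I would establish the support-function identity by two inequalities. For the ``$\le$'' direction, fix $\bm{s}\in G$ with $\nabla f(\bm{x}_n)\to\bm{s}$; since $\nabla f(\bm{x}_n)^\top\bm{d}=\lim_{t\searrow 0}\Delta_t f(\bm{x}_n)(\bm{d})$, a diagonal choice $t_n\searrow 0$ gives $\Delta_{t_n}f(\bm{x}_n)(\bm{d})\to\bm{s}^\top\bm{d}$ with $\bm{x}_n\to\bm{x}$, whence $\bm{s}^\top\bm{d}\le f^\circ(\bm{x};\bm{d})$; passing to convex combinations shows the support function of $\conv G$ is dominated by $f^\circ(\bm{x};\cdot)$.

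The main obstacle is the reverse ``$\ge$'' inequality, namely that for every $\bm{d}$ one can exhibit $\bm{s}\in G$ with $\bm{s}^\top\bm{d}\ge f^\circ(\bm{x};\bm{d})$. I would choose $\bm{x}_k'\to\bm{x}$ and $t_k\searrow 0$ realizing the $\limsup$, and write the quotient $\Delta_{t_k}f(\bm{x}_k')(\bm{d})$ as the average over $s\in[0,t_k]$ of the almost-everywhere-defined derivative $s\mapsto\nabla f(\bm{x}_k'+s\bm{d})^\top\bm{d}$, using absolute continuity of $f$ along the segment. By Rademacher's theorem the non-differentiability set has Lebesgue measure zero, so by Fubini one may perturb $\bm{x}_k'$ within an arbitrarily small ball so that $f$ is fully differentiable at almost every point of the perturbed segment; the averaging then selects a differentiability point $\bm{y}_k\to\bm{x}$ with $\nabla f(\bm{y}_k)^\top\bm{d}\ge\Delta_{t_k}f(\bm{x}_k')(\bm{d})-o(1)$. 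As $\{\nabla f(\bm{y}_k)\}$ is bounded by $L$, a convergent subsequence yields $\bm{s}\in G$ with $\bm{s}^\top\bm{d}\ge f^\circ(\bm{x};\bm{d})$, closing the gap. Combining the two inequalities identifies $\partial f(\bm{x})$ with $\{\bm{s}:\bm{s}^\top\bm{d}\le f^\circ(\bm{x};\bm{d})\ \text{for all }\bm{d}\}$, since a closed convex set is the intersection of the halfspaces cut out by its support function. This measure-theoretic selection is the only genuinely delicate point; I would be careful to keep the perturbation small enough that $\bm{y}_k\to\bm{x}$ while the quotient still converges to $f^\circ(\bm{x};\bm{d})$.

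For the two special cases the work is light. If $f$ is continuously differentiable at $\bm{x}$, then $\nabla f$ is continuous there, so every sequence defining an element of $G$ forces the limit $\nabla f(\bm{x})$, giving $G=\{\nabla f(\bm{x})\}$ and hence $\partial f(\bm{x})=\{\nabla f(\bm{x})\}$. If $f$ is convex, monotonicity of $t\mapsto\Delta_t f(\bm{x})(\bm{d})$ guarantees that the ordinary directional derivative $f'(\bm{x};\bm{d})$ exists and equals $\lim_{t\searrow 0}\Delta_t f(\bm{x})(\bm{d})$; a short argument combining this monotonicity with local Lipschitz continuity shows the outer $\limsup$ over $\bm{x}'\to\bm{x}$ does not exceed $f'(\bm{x};\bm{d})$, so $f^\circ(\bm{x};\bm{d})=f'(\bm{x};\bm{d})$. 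Since $f'(\bm{x};\cdot)$ is precisely the support function of the convex-analytic subdifferential, the Clarke and convex subdifferentials coincide, completing the proof.
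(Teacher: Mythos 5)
The paper states this as a cited Fact from Clarke's monograph and supplies no proof of its own, so there is no in-paper argument to compare against; your reconstruction is correct and follows the classical route of that reference (sublinearity of $f^\circ(\bm{x};\cdot)$, the gradient-limit characterization via Rademacher's theorem and the Fubini/averaging selection along perturbed segments, and regularity of smooth and convex functions for the two special cases). The only step needing care is the one you flag yourself: the perturbation of $\bm{x}_k'$ must be $o(t_k)$ so that the difference quotient is disturbed by only $o(1)$, and with that quantification the argument is complete.
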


For a locally Lipschitz function, the Clarke subdifferential is always non-empty, convex, and compact \cite[Proposition 2.1.2(a)]{clarke1990optimization}. Now, recall the directional derivative $f'$ of a directionally differentiable function $f$ at $\bm{x}$ in the direction $\bm{d}$ is defined as 
\[
f' (\bm{x}; \bm{d}):=\lim_{t \searrow 0} \Delta_t f(\bm{x})(\bm{d})=\lim_{t \searrow 0} \frac{f(\bm{x}+t\bm{d})-f(\bm{x}) }{t}.%
\]
While our main focus in this work is on the Clarke subdifferential and stationarity, some of our results have implications for, and our analysis requires, another commonly used generalized subdifferential, which we discuss below.
\begin{Definition}[Fr\'echet subdifferential; cf.~{\citep[Exercise 8.4]{rockafellar2009variational}}]\label{def:subd-f}
	Given a point $\bm{x} \in \mathbb{R}^d$, the Fr\'echet subdifferential of a locally Lipschitz and directionally differentiable function $f:\mathbb{R}^d \to \mathbb{R}$ at $\bm{x}$ is defined by
	\[
	\widehat{\partial} f (\bm{x}) := \left\{\bm{s}: \bm{s}^\top \bm{d} \leq f'(\bm{x};\bm{d}) \text{ for all } \bm{d} \in \mathbb{R}^d \right\}.
	\]
	We call $\bm{x}$ a Fr\'echet stationary point of $f$ if $\bm{0} \in \widehat{\partial} f(\bm{x})$.
\end{Definition}

In the following, we record a generalized Fermat's rule for optimality conditions and the relationship among the aforementioned two subdifferentials.
\begin{Fact}[Fermat's rule; cf.~{\cite[Theorem 8.6, 8.49, 10.1]{rockafellar2009variational}}]\label{fact:relation-fermat}
Given a locally Lipschitz and directionally differentiable function $f:\mathbb{R}^d\rightarrow\mathbb{R}$ and a point $\bm{x}\in\mathbb{R}^d$, we always have
 $\widehat{\partial} f(\bm{x}) \subseteq  \partial f(\bm{x})$. Moreover, if the point $\bm{x}$ is a local minimizer of the function $f$, it holds that $\bm{0} \in \widehat{\partial} f(\bm{x}) \subseteq  \partial f(\bm{x})$.
\end{Fact}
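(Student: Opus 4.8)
The plan is to reduce both assertions to a single pointwise comparison between the two directional-derivative objects that underlie the two subdifferentials, and then to read off each claim from the support-function characterizations already recorded in \Cref{def:subd-f} and \Cref{fct:clarke-dd}.

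First I would establish the key inequality $f'(\bm{x};\bm{d}) \leq f^\circ(\bm{x};\bm{d})$ for every direction $\bm{d} \in \mathbb{R}^d$. This is immediate from the definitions: the Clarke generalized directional derivative $f^\circ(\bm{x};\bm{d})$ is a $\limsup$ of the difference quotient $\Delta_t f(\bm{x}')(\bm{d})$ taken over all pairs with $\bm{x}'\to\bm{x}$ and $t\searrow 0$, whereas the ordinary directional derivative $f'(\bm{x};\bm{d})$ is the limit of the same quotient along the particular sequences with $\bm{x}'=\bm{x}$ held fixed. Since $f$ is directionally differentiable, this restricted limit exists and equals $f'(\bm{x};\bm{d})$, and a restricted limit can never exceed the full $\limsup$; local Lipschitzness guarantees $f^\circ(\bm{x};\bm{d})$ is finite, so no infinities intervene.

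With this inequality in hand, the inclusion $\widehat{\partial} f(\bm{x}) \subseteq \partial f(\bm{x})$ follows directly. Take any $\bm{s} \in \widehat{\partial} f(\bm{x})$. By \Cref{def:subd-f}, $\bm{s}^\top \bm{d} \leq f'(\bm{x};\bm{d})$ for all $\bm{d}$; chaining with the comparison above gives $\bm{s}^\top \bm{d} \leq f^\circ(\bm{x};\bm{d})$ for all $\bm{d}$, which is exactly the support-function description of $\partial f(\bm{x})$ in \Cref{fct:clarke-dd}. Hence $\bm{s} \in \partial f(\bm{x})$.

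For the local-minimizer claim, I would verify the defining inequality of \Cref{def:subd-f} with $\bm{s}=\bm{0}$. If $\bm{x}$ is a local minimizer, then for each fixed $\bm{d}$ and all sufficiently small $t > 0$ we have $f(\bm{x}+t\bm{d}) \geq f(\bm{x})$, so every difference quotient $\Delta_t f(\bm{x})(\bm{d})$ is nonnegative; passing to the limit $t \searrow 0$ yields $f'(\bm{x};\bm{d}) \geq 0 = \bm{0}^\top \bm{d}$ for all $\bm{d}$, i.e., $\bm{0} \in \widehat{\partial} f(\bm{x})$. Combining this with the inclusion already proved gives $\bm{0} \in \widehat{\partial} f(\bm{x}) \subseteq \partial f(\bm{x})$. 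The argument is almost entirely definitional; the only point requiring any care is the first step---ensuring that the ordinary directional derivative is genuinely dominated by the Clarke $\limsup$ rather than merely comparable to it---which is precisely where the directional-differentiability hypothesis enters, guaranteeing that the relevant restricted limit exists and thus lower-bounds the $\limsup$.
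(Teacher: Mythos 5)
Your argument is correct. Note, however, that the paper does not prove this statement at all: it is recorded as a \emph{Fact} with a citation to \cite[Theorems 8.6, 8.49, 10.1]{rockafellar2009variational}, so there is no in-paper proof to compare against. Your derivation is the standard one and is sound as written --- the comparison $f'(\bm{x};\bm{d}) \leq f^\circ(\bm{x};\bm{d})$ (valid because the ordinary directional derivative is the limit of the same difference quotients along the restricted family $\bm{x}'=\bm{x}$, hence dominated by the full $\limsup$, with finiteness supplied by local Lipschitzness), combined with the support-function descriptions in \Cref{def:subd-f} and \Cref{fct:clarke-dd}, gives the inclusion, and the nonnegativity of the difference quotients at a local minimizer gives $\bm{0}\in\widehat{\partial} f(\bm{x})$.
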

Consequently, various first-order stationarity concepts can be defined using different generalized subdifferential constructions. The set of Clarke (resp.~Fr\'echet) stationary points of the function $f$ can be written as $\{\bm{x}: \bm{0} \in \partial f(\bm{x})\}$ (resp.~$\{\bm{x}: \bm{0} \in \widehat{\partial} f(\bm{x})\}$). Unless explicitly mentioned, all (approximate) stationarity concepts in this paper are defined by Clarke subdifferential.
However, under certain regularity conditions, the Clarke subdifferential coincides with the Fr\'echet subdifferential,
so that the corresponding stationarity concepts are also equivalent. The following condition, named Clarke regularity, is classic to ensure this equivalence.
\begin{Definition}[Clarke regularity; cf.~{\cite[Definition 2.3.4]{clarke1990optimization}}]\label{def:regular}
	A locally Lipschitz function $f:\mathbb{R}^d \to \mathbb{R}$ is Clarke regular at $\bm{x}\in\mathbb{R}^d$ if for every $\bm{d} \in \mathbb{R}^d$, the directional derivative $f'$ exists and coincides with the Clarke generalized one:
	\[
	f'(\bm{x}; \bm{d}) = f^\circ(\bm{x}; \bm{d}).
	\]
	If $f$ is regular at every $\bm{x}\in\mathbb{R}^d$, then we say $f$ is regular.
\end{Definition}

With finite computational resources, for a continuous optimization problem, an algorithm usually computes an approximate solution rather than an exact one,\footnote{With algorithms for linear programming as a remarkable exception; see for instance \cite{wright1997primal}.} though the precision can sometimes be arbitrarily high. When the objective function $f$ is smooth, a point $\bm{x}$ is called $\epsilon$-stationary of function $f$ if $\|\nabla f(\bm{x}) \| \leq \epsilon$.
Similar to the notion of differentiation, if the function $f$ is nonsmooth, the notion of approximation becomes very subtle because different approximation schemes have varying degrees of computability.
\begin{Definition}[Approximate stationarity]\label{def:stat-concept}
	We call a point $\bm{x}$ an $\epsilon$-stationary point of $f$ if 
	\[
	\bm{0} \in \partial f(\bm{x}) + \epsilon\mathbb{B}. 
	\]
	If there exists a point $\bm{y} \in \mathbb{B}_\delta(\bm{x})$ such that $\bm{y}$ is $\epsilon$-stationary, we call $\bm{x}$ an $(\epsilon, \delta)$-near-approximate stationary (NAS) point. In other words, a point $\bm{x}$ is $(\epsilon, \delta)$-\NAS{} if
	\[
	\bm{0}\in \partial f(\bm{x} + \delta\mathbb{B}) + \epsilon\mathbb{B} = \bigcup\left\{ \partial f(\bm{y}) + \epsilon\mathbb{B}: \bm{y} \in \mathbb{B}_\delta(\bm{x})\right\}.
	\]
\end{Definition}

\begin{Remark}
Another commonly used generalized subdifferential, namely limiting (or Mordukhovich) subdifferential, is defined in \cite[Definition 8.3(b)]{rockafellar2009variational}. Limiting subdifferential is a robust version of Fr\'echet subdifferential. However, unlike these of Fr\'echet and Clarke, the limiting subdifferential set can be nonconvex or even discrete. Therefore, it might spark less interest in detecting the stationarity concept defined by the limiting subdifferential due to the evident computational challenges.
\end{Remark}

\subsection{Piecewise Differentiable/Affine Functions} 
We mostly follow the terminology in \cite[Section 4.1]{scholtes2012introduction}. A continuous function $f:\mathbb{R}^d \to \mathbb{R}$ is called \emph{piecewise differentiable} (or \emph{piecewise smooth}), if
there exist a finite number $N\in\mathbb{N}$ and a finite set of continuously differentiable functions $f_i:\mathbb{R}^d\to \mathbb{R}$ for $i\in[N]$, such that $f(\bm{w}) \in \{f_1(\bm{w}), \dots, f_N(\bm{w})\}$ holds for any point $\bm{w} \in \mathbb{R}^d$.  
In particular, we call a continuous function $f:\mathbb{R}^d \to \mathbb{R}$ \emph{piecewise affine} (see \cite[Section 2.2]{scholtes2012introduction}), if
there exist a finite number $N\in\mathbb{N}$ and vectors $(\bm{x}_i, a_i)\in \mathbb{R}^{d+1}$ for all $i\in[N]$, such that $f(\bm{w}) \in \{\bm{x}_1^\top\bm{w} + a_1, \dots, \bm{x}_N^\top\bm{w} + a_N\}$ holds for any point $\bm{w} \in \mathbb{R}^d$.  Every real-valued \PA{} function can be represented in some standard forms, which we describe below.

\begin{Fact}[Representations of PA functions; cf.~{\cite[Proposition 4]{melzer1986expressibility}, \cite{kripfganz1987piecewise},~and~\cite[Proposition 2.2.2]{scholtes2012introduction}}]\label{fct:pa-form}
	Let a \PA{} function $f:\mathbb{R}^d\to \mathbb{R}$ be given.	\begin{enumerate}[label=\textnormal{(\alph*)}]
		\item 
		There exist two convex \PA{} functions $h,g:\mathbb{R}^d \to \mathbb{R}$ such that for any $\bm{w} \in \mathbb{R}^d$, it holds
		\[
		f(\bm{w}) = h(\bm{w}) - g(\bm{w}). \tag{DC}
		\]
We call above representation a \DC{} form of the function $f$. \label{item:fct:pa-form-a}
\item There exist finite numbers $k,l \in \mathbb{N}$, data $(\bm{x}_j, a_j) \in \mathbb{R}^{d+1}$ for $j \in [k]$, and index sets $\mathcal{M}_i \subseteq [k]$ for  $i \in [l]$ such that for any $\bm{w} \in \mathbb{R}^d$, it holds
		\[
		f(\bm{w}) = \max_{1\leq i \leq l} \min_{j \in \mathcal{M}_i} \bm{w}^\top \bm{x}_j + a_j. \tag{\MaxMin}
		\]
		We call above representation a \MaxMin{} form of the function $f$.\label{item:fct:pa-form-b}
	\end{enumerate}
\end{Fact}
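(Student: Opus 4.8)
The plan is to establish part (b) first and then derive part (a) from it, since the difference-of-convex form follows cleanly from a \MaxMin{} form via an inclusion–exclusion identity. For part (b), I would start from the polyhedral subdivision associated with $f$ (cf.\ \cite[Proposition 2.2.3]{scholtes2012introduction}, quoted above): there are finitely many full-dimensional cells $R$, and on each cell $f$ coincides with one affine \emph{selection function} drawn from a finite list $\ell_1,\dots,\ell_k$, say $f|_R=\ell_{i(R)}$. For each cell $R$ I would collect the pieces that dominate the active one on that cell, $M_R:=\{\,j\in[k]:\ell_j(\bm w)\ge \ell_{i(R)}(\bm w)\ \text{for all}\ \bm w\in R\,\}$, and propose the candidate formula $f=\max_R \min_{j\in M_R}\ell_j$. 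One inequality is immediate: at any $\bm w$ lying in a cell $R_0$ one has $\ell_j(\bm w)\ge \ell_{i(R_0)}(\bm w)=f(\bm w)$ for every $j\in M_{R_0}$, and $i(R_0)\in M_{R_0}$, so $\min_{j\in M_{R_0}}\ell_j(\bm w)=f(\bm w)$, whence the right-hand side is at least $f(\bm w)$.

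The reverse inequality — that $\min_{j\in M_R}\ell_j(\bm w)\le f(\bm w)$ for \emph{every} cell $R$ and every point $\bm w$ — is the technical crux, and I expect it to be the main obstacle. The difficulty is that a piece dominating $\ell_{i(R)}$ on $R$ may drop below $f$ far away from $R$, so one must rule out the bad case using continuity of $f$ together with connectivity of the subdivision. Concretely, I would travel along the segment from an interior point of $R$ to the target $\bm w$, use that two adjacent cells share a facet on which their selection functions agree (this is exactly continuity of $f$), and track how the active piece changes across successive facets in order to exhibit some $j\in M_R$ with $\ell_j(\bm w)\le f(\bm w)$. This is precisely the content of the classical lattice (max-min) representation theorem, so the argument here mirrors the proof underlying \cite[Proposition 2.2.2]{scholtes2012introduction} and Ovchinnikov's lattice representation; I would either adapt that argument or invoke it directly. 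Reindexing the relevant cells as $\{M_i\}_{i\in[l]}$ then delivers the stated \MaxMin{} form.

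For part (a), I would feed the \MaxMin{} form into two elementary closure facts. First, the inclusion–exclusion identity $\min_{j\in M}\ell_j=\sum_{\emptyset\ne S\subseteq M}(-1)^{|S|+1}\max_{j\in S}\ell_j$ expresses each inner minimum as a signed sum of functions $\max_{j\in S}\ell_j$, each of which is convex \PA; grouping the positively and negatively signed terms writes $\min_{j\in M_i}\ell_j=H_i-G_i$ with $H_i,G_i$ convex \PA. Second, the difference-of-convex class is closed under finite maxima: if $f_i=H_i-G_i$ with $H_i,G_i$ convex and $G:=\sum_i G_i$, then $\max_i f_i=\max_i\bigl(H_i+\sum_{i'\ne i}G_{i'}\bigr)-G$, which is a maximum of convex \PA{} functions minus a convex \PA{} function, hence again convex-minus-convex. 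Applying this to $f=\max_{i\in[l]}(H_i-G_i)$ yields the desired \DC{} decomposition $f=h-g$.

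In summary, the entire difficulty of the statement is concentrated in the reverse inequality for the max-min representation in part (b); once the \MaxMin{} form is in hand, part (a) is routine, requiring only the min/max inclusion–exclusion identity and the stability of the difference-of-convex structure under finite maxima. I would therefore invest the bulk of the effort in the connectivity/continuity argument for the upper bound, and present part (a) as a short corollary.
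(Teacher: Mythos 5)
The paper does not prove this statement: it is imported verbatim as a Fact, with the \MaxMin{} form credited to \cite[Proposition 2.2.2]{scholtes2012introduction} and the \DC{} form to \cite{melzer1986expressibility,kripfganz1987piecewise}. Your proposal correctly reconstructs exactly those standard arguments --- the cell-wise dominating index sets $M_R$ with the segment/connectivity argument for the reverse inequality in (b) is Scholtes' proof, and deriving (a) from (b) via the min--max inclusion--exclusion identity together with closure of the DC class under finite maxima is the standard route in the cited references --- and both of the elementary identities you invoke in part (a) check out. You also correctly locate the only genuinely nontrivial step (the reverse inequality in (b)); just be aware that the naive induction along the segment does not close as stated --- a piece that enters a new cell dominated by $f$ may leave it undominated, so the inductive selection of the witnessing index at each facet crossing requires the more careful bookkeeping of the classical proof you propose to invoke.
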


When discussing convex \PA{} functions, we consider the following representation with an $n$-layer multi-composite structure, termed $n$-\MC{} form. 

\begin{Definition}[Multi-composite (MC) function] \label{def:mc}
We call a convex \PA{} function $f:\mathbb{R}^d\to\mathbb{R}$ an $n$-\MC{} function if it is given by
	\[
f(\bm{w}):=\sum_{1\leq j_n \leq J_n} \max_{1\leq i_{n} \leq I_n} \cdots \sum_{1\leq j_1 \leq J_1} \max_{1\leq i_1 \leq I_1} \Big(\bm{w}^\top \bm{x}_{i_1,j_1,\dots,i_n,j_n} + a_{i_1,j_1,\dots,i_n,j_n}\Big).
\]
\end{Definition}
From \cite[Theorem 2.49]{rockafellar2009variational}, every convex \PA{} function can be written in $1$-\MC{} form.
By modeling with a more general $n$-\MC{} form, we can compactly represent convex \PA{} functions. Indeed, even when the depth $n$ is fixed, an $n$-\MC{} function may still have an exponentially large number of pieces.

Besides, the directional derivative of the \PA{} function enjoys favorable properties.

\begin{Fact}[cf.~{\cite[Proposition 4.4.3(c), Proposition 4.1.2(a)]{cui2021modern}}]\label{fct:pa-dd}
Let a \PA{} function $f:\mathbb{R}^d\to \mathbb{R}$ with \DC{} components $h,g:\mathbb{R}^d\to \mathbb{R}$ be given. For any $\bm{w}\in\mathbb{R}^d$ and any $\bm{w}'$ sufficiently near $\bm{w}$, we have
$f(\bm{w}') = f(\bm{w}) + f'(\bm{w}; \bm{w}' - \bm{w})$ and $f'(\bm{w}; \cdot) = h'(\bm{w}; \cdot)-g'(\bm{w}; \cdot)$.
\end{Fact}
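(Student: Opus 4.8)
The plan is to handle the two assertions separately, beginning with the identity $f'(\bm{w};\cdot) = h'(\bm{w};\cdot) - g'(\bm{w};\cdot)$, which is the softer of the two. Since $h$ and $g$ are convex, they are directionally differentiable at every point, so $h'(\bm{w};\bm{d})$ and $g'(\bm{w};\bm{d})$ exist for all $\bm{d}$. Writing the difference quotient for $f = h-g$ as
\[
\frac{f(\bm{w}+t\bm{d}) - f(\bm{w})}{t} = \frac{h(\bm{w}+t\bm{d}) - h(\bm{w})}{t} - \frac{g(\bm{w}+t\bm{d}) - g(\bm{w})}{t},
\]
and letting $t \searrow 0$, each term on the right converges, so the limit on the left exists and equals $h'(\bm{w};\bm{d}) - g'(\bm{w};\bm{d})$. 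This simultaneously establishes the directional differentiability of $f$ and the claimed decomposition.

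For the local first-order identity $f(\bm{w}') = f(\bm{w}) + f'(\bm{w};\bm{w}'-\bm{w})$, I would exploit the polyhedral subdivision structure of \PA{} functions (cf.~\cite{scholtes2012introduction}): any \PA{} function $f$ induces a finite polyhedral subdivision $\{P_i\}$ of $\mathbb{R}^d$ on each cell of which $f$ agrees with an affine selection $\bm{y}\mapsto \bm{x}_i^\top\bm{y} + a_i$. Fixing $\bm{w}$, I would split the cells into those containing $\bm{w}$ and those not; the latter form a finite union of closed sets avoiding $\bm{w}$ and hence lie at a positive distance $r>0$ from $\bm{w}$. Restricting attention to the ball $\mathbb{B}_r(\bm{w})$, only the cells through $\bm{w}$ are relevant, and for any $\bm{w}'$ in this ball one may pick a cell $P_i \ni \bm{w}$ with $\bm{w}' \in P_i$, giving
\[
f(\bm{w}') = \bm{x}_i^\top\bm{w}' + a_i = (\bm{x}_i^\top\bm{w} + a_i) + \bm{x}_i^\top(\bm{w}'-\bm{w}) = f(\bm{w}) + \bm{x}_i^\top(\bm{w}'-\bm{w}).
\]
Along the same direction $\bm{d} = \bm{w}'-\bm{w}$, the directional derivative computes as $f'(\bm{w};\bm{d}) = \lim_{t\searrow 0}\bm{x}_i^\top(t\bm{d})/t = \bm{x}_i^\top\bm{d}$, so matching the two expressions yields the identity.

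The hard part will be the bookkeeping that makes the previous display legitimate: I must ensure that the affine piece $\bm{x}_i$ realizing $f$ at the nearby point $\bm{w}'$ is the \emph{same} piece whose slope computes $f'(\bm{w};\bm{w}'-\bm{w})$. This hinges on the compatibility between each global cell $P_i$ and its tangent cone $\mathcal{T}_{P_i}(\bm{w})$ at $\bm{w}$ — precisely, that near $\bm{w}$ the polyhedron $P_i$ coincides with $\bm{w} + \mathcal{T}_{P_i}(\bm{w})$, so that the slope governing small displacements into $P_i$ is exactly the slope of the cell containing $\bm{w}'$. I expect this local-cone description, valid for polyhedra, to be the crux; once it is pinned down, both displayed equalities collapse to the single linear quantity $\bm{x}_i^\top(\bm{w}'-\bm{w})$, and the remaining ingredients are routine finiteness and continuity arguments. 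An alternative, cleaner route would be to invoke the known fact that a \PA{} function is, in a neighborhood of $\bm{w}$, equal to $f(\bm{w})$ plus a positively homogeneous \PA{} function of the displacement, and to identify that homogeneous part with $f'(\bm{w};\cdot)$ directly.
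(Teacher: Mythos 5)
The paper does not prove this statement at all: it is recorded as a \emph{Fact} and delegated to the cited reference (Cui--Pang, Propositions 4.1.2(a) and 4.4.3(c)), so there is no in-paper argument to compare against. Your blind proof is a correct, self-contained reconstruction. The first identity $f'(\bm{w};\cdot)=h'(\bm{w};\cdot)-g'(\bm{w};\cdot)$ is handled exactly as one should: convexity of $h$ and $g$ gives existence of their directional derivatives, and the difference quotient of $f$ splits termwise, so the limit exists and equals the difference. For the local first-order expansion, your polyhedral-subdivision argument is the standard one (and the subdivision fact you invoke is precisely \cite[Proposition 2.2.3]{scholtes2012introduction}, which the paper itself uses elsewhere): finitely many closed cells not containing $\bm{w}$ lie at positive distance from it, so for small enough $r$ every $\bm{w}'\in\mathbb{B}_r(\bm{w})$ lies in a cell $P_i$ that also contains $\bm{w}$.

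The one place where you overestimate the difficulty is the ``hard part'' you flag at the end. You do not need the local identification of $P_i$ with $\bm{w}+\mathcal{T}_{P_i}(\bm{w})$: since $P_i$ is convex and contains both $\bm{w}$ and $\bm{w}'$, it contains the entire segment $[\bm{w},\bm{w}']$, so $f$ agrees with the single affine function $\bm{y}\mapsto \bm{x}_i^\top\bm{y}+a_i$ along that segment. Hence $\Delta_t f(\bm{w})(\bm{w}'-\bm{w})=\bm{x}_i^\top(\bm{w}'-\bm{w})$ for all $t\in(0,1]$, which simultaneously yields $f'(\bm{w};\bm{w}'-\bm{w})=\bm{x}_i^\top(\bm{w}'-\bm{w})$ and $f(\bm{w}')-f(\bm{w})=\bm{x}_i^\top(\bm{w}'-\bm{w})$ with no further bookkeeping. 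With that observation in place, your argument closes cleanly; your proposed ``alternative route'' via the positively homogeneous local representation is in fact the form in which the cited reference states the result.
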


The following important fact elucidates the continuity and directional differentiability of piecewise differentiable functions.
\begin{Fact}[cf.~{\cite[Corollary 4.1.1, Proposition 4.1.3]{scholtes2012introduction}}]
Every piecewise differentiable function is locally Lipschitz continuous and directionally differentiable. 	
\end{Fact}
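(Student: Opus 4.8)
The plan is to work directly from the defining continuous selection structure: write $f$ as a continuous function with finitely many $C^1$ selection functions $f_1,\dots,f_N:\mathbb{R}^d\to\mathbb{R}$ satisfying $f(\bm{w})\in\{f_1(\bm{w}),\dots,f_N(\bm{w})\}$ for every $\bm{w}$, and denote by $A(\bm{w}):=\{i\in[N]:f(\bm{w})=f_i(\bm{w})\}$ the active index set. For \emph{local Lipschitz continuity}, fix $\bm{w}_0$ and a compact convex neighborhood $K$. Since each $f_i$ is $C^1$, its gradient is bounded on $K$, so $f_i$ is $L_i$-Lipschitz on $K$ by the mean value theorem; put $L:=\max_i L_i$. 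I would then reduce the claim ``$f$ is $L$-Lipschitz on $K$'' to a one-dimensional statement by restricting to the segment $[\bm{u},\bm{v}]\subseteq K$: the map $t\mapsto f(\bm{u}+t(\bm{v}-\bm{u}))$ is a continuous selection of the functions $t\mapsto f_i(\bm{u}+t(\bm{v}-\bm{u}))$, each of which is $L\|\bm{v}-\bm{u}\|$-Lipschitz on $[0,1]$.

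The one-dimensional core is the lemma: a continuous selection $g$ of finitely many $\ell$-Lipschitz functions $g_1,\dots,g_N$ on an interval is itself $\ell$-Lipschitz. I would prove this by passing to $u_i(t):=g_i(t)-\ell t$, which are \emph{non-increasing}, and showing that a continuous selection $u$ of finitely many non-increasing continuous functions is non-increasing; the symmetric argument applied to $g_i(t)+\ell t$ then yields the two-sided bound. The monotonicity of $u$ follows from a short extremal argument: if $u(p)<u(q)$ for some $p<q$, set $r:=\sup\{t\in[p,q]:u(t)\le v\}$ for an intermediate value $v\in(u(p),u(q))$; picking a subsequence $t_n\searrow r$ along which the selected index is a constant $j$, continuity forces $u_j(r)=u(r)$ while the strict inequality $u(t_n)>u(r)$ gives $u_j(t_n)>u_j(r)$ for $t_n>r$, contradicting that $u_j$ is non-increasing. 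This is the first delicate point, but it is self-contained.

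For \emph{directional differentiability}, fix $\bm{x}$ and a direction $\bm{d}$ and study $\phi(t):=f(\bm{x}+t\bm{d})$, a continuous selection of $\phi_i(t):=f_i(\bm{x}+t\bm{d})$, where $\phi_i\in C^1$ and $\phi_i'(0)=\nabla f_i(\bm{x})^\top\bm{d}=:c_i$. By continuity, indices with $\phi_i(0)\neq\phi(0)$ are excluded on a right-neighborhood of $0$, so only active indices are selected for small $t>0$; moreover the Lipschitz property already established bounds the difference quotient $(\phi(t)-\phi(0))/t$. \textbf{The main obstacle} is that distinct active pieces may be selected along different sequences $t\searrow 0$, so the quotient could a priori oscillate; one cannot hope that a single piece is active on an entire interval $(0,\varepsilon)$ (e.g. $\max\{t^3\sin(1/t),0\}$ switches pieces infinitely often near $0$). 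I would resolve this by a pigeonhole argument over the finitely many slopes $\{c_i\}$: suppose the lower and upper right Dini derivatives satisfy $d:=\liminf_{t\searrow0}(\phi(t)-\phi(0))/t<\limsup_{t\searrow0}(\phi(t)-\phi(0))/t=:D$, and fix any $\mu\in(d,D)$. Then $w(t):=\phi(t)-\phi(0)-\mu t$ has $w(0)=0$ and $w(t)/t$ of both signs for arbitrarily small $t$, so by the intermediate value theorem $w$ has zeros accumulating at $0$; since there are only $N$ pieces, some fixed index $k$ is selected at infinitely many such zeros, whence $\phi_k(t_n)-\phi(0)=\mu t_n$ forces $c_k=\phi_k'(0)=\mu$. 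As $\mu$ ranges over the whole interval $(d,D)$ while $\{c_k\}$ is finite, this is a contradiction; hence $D=d$ and $f'(\bm{x};\bm{d})=c_{i^\ast}$ for some active (indeed essentially active) index $i^\ast$. Continuity of $f$ is immediate from that of the $f_i$ together with local Lipschitzness, completing the argument.
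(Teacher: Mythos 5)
The paper offers no proof of this Fact; it is quoted directly from \cite[Corollary 4.1.1, Proposition 4.1.3]{scholtes2012introduction}, so there is nothing internal to compare against. Judged on its own merits, your self-contained argument is correct. The reduction of Lipschitz continuity to the one-dimensional statement is legitimate (the restriction of $f$ to a segment in the convex compact $K$ is indeed a continuous selection of the restrictions of the $f_i$), and your monotone-selection lemma is sound: at $r=\sup\{t\in[p,q]:u(t)\le v\}$ closedness of the sublevel set gives $u(r)\le v$, the pigeonholed index $j$ satisfies $u_j(r)=\lim_n u_j(t_n)=\lim_n u(t_n)=u(r)$ by continuity, and $u_j(t_n)=u(t_n)>v\ge u(r)=u_j(r)$ contradicts monotonicity. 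The two-sided bound from $g_i\mp\ell t$ then gives the Lipschitz modulus $L=\max_i L_i$, which is exactly the constant in Scholtes' Proposition 4.1.2. For directional differentiability, the pigeonhole-over-slopes argument is the right fix for the genuine obstacle you identify (infinitely many switches near $0$): at each zero $r_n\searrow 0$ of $w$ you get $\phi_{k}(r_n)=\phi(0)+\mu r_n$ for a fixed index $k$ along a subsequence, continuity yields $\phi_k(0)=\phi(0)$, and differentiability of $\phi_k$ forces $c_k=\mu$; since this would have to hold for every $\mu$ in the nonempty open interval $(d,D)$ while only $N$ slopes exist, $d=D$ follows. The only cosmetic overreach is the parenthetical claim that the realizing index is \emph{essentially} active, which you do not justify and do not need for the stated Fact. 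Compared with the textbook treatment, your route replaces Scholtes' structural machinery with two elementary devices (the shift to monotone functions and the intermediate-value/pigeonhole step), at the cost of being purely one-dimensional; what it buys is a proof requiring nothing beyond the mean value theorem and continuity.
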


\paragraph{Stationarity Concepts.} Here, we discuss some properties and give examples of different kinds of stationary points of piecewise smooth functions. First, for \PA{} functions, an important fact is that Fr\'echet stationary points are local minima.

\begin{Fact}[cf.~{\cite[Proposition 6.1.1]{cui2021modern}}]\label{fct:frechetloca}
For a \PA{} function $f:\mathbb{R}^d\to \mathbb{R}$ and a point $\bm{x}\in\mathbb{R}^d$, we have $\bm{0} \in \widehat{\partial} f(\bm{x})$ if and only if the point $\bm{x}$ is a local minimizer of the function $f$. 	
\end{Fact}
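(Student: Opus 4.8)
The plan is to treat the two implications separately, with the ``only if'' direction coming from the generic Fermat's rule and the substantive ``if'' direction exploiting the local conical structure specific to \PA{} functions.

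For the direction asserting that local minimality implies Fr\'echet stationarity, I would simply invoke the generalized Fermat's rule recorded in \Cref{fact:relation-fermat}: if $\bm{x}$ is a local minimizer of $f$, then $\bm{0} \in \widehat{\partial} f(\bm{x})$. Alternatively, one argues directly from the definitions: for any direction $\bm{d}$, local minimality gives $f(\bm{x}+t\bm{d}) \geq f(\bm{x})$ for all sufficiently small $t > 0$, hence $\Delta_t f(\bm{x})(\bm{d}) \geq 0$, and passing to the limit $t \searrow 0$ yields $f'(\bm{x};\bm{d}) \geq 0$; since this holds for every $\bm{d}$, the characterization in \Cref{def:subd-f} gives $\bm{0} \in \widehat{\partial} f(\bm{x})$. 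This direction requires nothing about the piecewise affine structure and holds for any locally Lipschitz, directionally differentiable function.

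For the converse, I first unpack the stationarity condition. By \Cref{def:subd-f}, the membership $\bm{0} \in \widehat{\partial} f(\bm{x})$ is equivalent to $f'(\bm{x}; \bm{d}) \geq 0$ for every direction $\bm{d} \in \mathbb{R}^d$. The key step is then to upgrade this pointwise-in-direction first-order information into a genuine local lower bound on the function values. Here I would invoke \Cref{fct:pa-dd}, which is the distinctive feature of \PA{} functions: for every $\bm{w}'$ sufficiently close to $\bm{x}$ one has the exact first-order identity $f(\bm{w}') = f(\bm{x}) + f'(\bm{x}; \bm{w}' - \bm{x})$. Combining this identity with the nonnegativity $f'(\bm{x}; \bm{w}' - \bm{x}) \geq 0$ immediately gives $f(\bm{w}') \geq f(\bm{x})$ for all $\bm{w}'$ in a neighborhood of $\bm{x}$, which is precisely the statement that $\bm{x}$ is a local minimizer.

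The only point deserving emphasis---and the place where the argument genuinely uses that $f$ is \PA{} rather than merely locally Lipschitz---is the passage from directional information to a neighborhood-wide inequality. For a general locally Lipschitz function, $\bm{0} \in \widehat{\partial} f(\bm{x})$ does \emph{not} imply local minimality, because the first-order expansion need not control the function in a full neighborhood. What rescues the \PA{} case is exactly \Cref{fct:pa-dd}: near $\bm{x}$ the function coincides with its own first-order expansion $f(\bm{x}) + f'(\bm{x}; \cdot)$, so there is no higher-order remainder to worry about. Thus the main (and essentially only) obstacle is to recognize that this local conical/affine representation is the correct tool; once it is in hand, the converse reduces to a one-line chain of inequalities.
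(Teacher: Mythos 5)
Your proof is correct and is exactly the intended argument: the paper states this as a cited Fact without its own proof, and the standard derivation is precisely your combination of Fermat's rule (\Cref{fact:relation-fermat}) for the easy direction with the exact first-order identity $f(\bm{w}') = f(\bm{x}) + f'(\bm{x};\bm{w}'-\bm{x})$ from \Cref{fct:pa-dd} for the converse. Your remark that the \PA{} structure is what eliminates the higher-order remainder, and that the converse fails for general locally Lipschitz functions, correctly identifies the only substantive point.
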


As shown in \Cref{fct:pa-form}, every \PA{} function can be written as the difference of two convex \PA{} functions. A solution notion tolerated for \DC{} functions, termed \DC-criticality, is classic in the \DC{} programming literature; see \cite{le2018dc,de2020abc}.

\begin{Definition}[DC-criticality; cf.~{\cite[p.~8]{le2018dc}}]\label{def:dc-crit}
 Let two convex functions $h,g:\mathbb{R}^d \to \mathbb{R}$ be given.
	We call a point $\bm{x} \in \mathbb{R}^d$ a \DC-critical point of $h-g$ if 
	\[
	\bm{0} \in \partial h(\bm{x}) - \partial g(\bm{x}), \qquad\text{or equivalently,} \qquad
	\emptyset\neq \partial h(\bm{x})\cap \partial g(\bm{x}).
	\]
\end{Definition}

One can readily see from the weak form sum rule in \Cref{fct:wsum} that \DC-criticality is a weaker notion of stationarity than that of Clarke. Besides, the definition of \DC-criticality depends on the \DC-decomposition $h,g$, which is not unique. In contrast, Clarke stationarity is a purely analytic notion and  independent of the \DC{} representation. To provide intuition, we give examples of different kinds of stationary points as follows.

\begin{Example}\label{ex:stat}
	In the left subfigure of \Cref{fig:stat-notion}: the set of local minima is $\{x_1,x_3\}$; the set of Fr\'echet stationary points is $\{x_1,x_3\}$; the set of Clarke stationary points is
		$\{x_1,x_2,x_3,x_4\}$. In the right subfigure, we consider a simple \DC{} function 
	$f_2(x):=h(x) - h(-x) = x$ with convex component $h(x):=\max\{x,0\}$. Then the point $x_5=0$ is \DC-critical while it is obviously not stationary in any conventional sense, as $f_2$ is continuously differentiable at $0$ with $\nabla f_2(0) = 1$.
\end{Example}

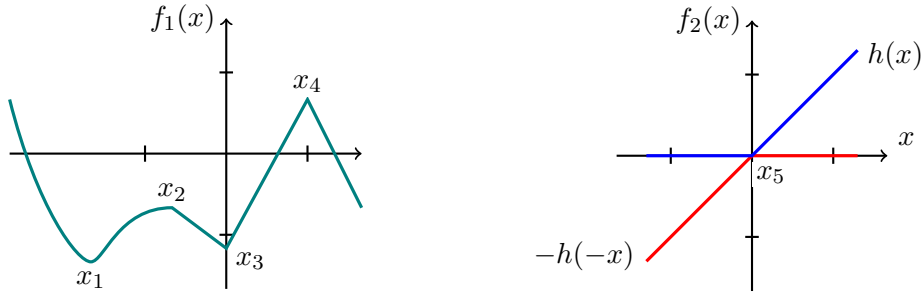
\begin{figure}[thb]
	\centering
\begin{tikzpicture}[scale=0.36] %
	\draw[thick, black, ->] (-8, 0) -- (5, 0);
	\foreach \x in {-3, 3}
	\draw[thick] (\x, 0.3) -- (\x, -0.3);
	\draw[thick, black, ->] (0, -5) -- (0, 5)
	node[left] {$f_1(x)$};
	\foreach \y in {-3, 3}
	\draw[thick] (-0.25, \y) -- (0.25, \y);
	\draw[very thick, teal, -] (-8, 2) .. controls (-7,-2) and (-5.5,-4) .. (-5,-4) .. controls (-4.5,-4) and (-4,-2) .. (-2, -2)--(0,-3.5)--(3,2)--(5,-2);
	
	\node[anchor=north west,fill=white, inner sep=0.5mm] at (0.2,-3.5) {\textcolor{black}{$x_3$}};
	\node[anchor=south,fill=white, inner sep=0.5mm] at (-2,-1.9) {\textcolor{black}{$x_2$}};
	\node[anchor=south,fill=white, inner sep=0.5mm] at (3,2.1) {\textcolor{black}{$x_4$}};
	\node[anchor=north,fill=white, inner sep=0.5mm] at (-5,-4.2) {\textcolor{black}{$x_1$}};

\end{tikzpicture}  
\hspace{2cm}
\begin{tikzpicture}[scale=0.36] %
	
	\draw[thick, black, ->] (-5, 0) -- (5, 0)
	node[anchor=south west] {$x$};
	\foreach \x in {-3, 3}
	\draw[thick] (\x, 0.3) -- (\x, -0.3);
	\draw[thick, black, ->] (0, -5) -- (0, 5)
	node[left] {$f_2(x)$};
	\foreach \y in {-3, 3}
	\draw[thick] (-0.25, \y) -- (0.25, \y);
	\node [fill=white,inner sep=0.5mm] at (0.7,-0.7) {\textcolor{black}{$x_5$}} ;
	\draw[very thick, color=red, -] (-3.9, -3.9) --(0, 0) --(3.9, 0);
	\node[fill=white, inner sep=0.1mm] at (-6.2,-3.7) {$-h(-x)$};
	
	\draw[very thick, blue, -] (3.9, 3.9) --(0, 0)--(-3.9, 0);
	\node[fill=white,inner sep=0.1mm] at (5.3,3.7) {$h(x)$};
	
\end{tikzpicture} 
\caption{Illustration of various stationarities (better viewed in color); see \Cref{ex:stat}. 
}\label{fig:stat-notion}
\end{figure}

\subsection{Facts from Polyhedral Geometry}

A set in $\mathbb{R}^d$ is called (convex) \emph{polyhedral} if it can be represented by the intersection of a finite family of closed halfspaces. 
If a polyhedral set is bounded, we call it a (convex) \emph{polytope}. 
Similar to PA functions,
two equivalent representations of polytopes are popular in the literature.
	We call a set \emph{$\mathcal{V}$-polytope} if it is represented as the convex hull of a finite set of points in $\mathbb{R}^d$.
	An \emph{$\mathcal{H}$-polyhedron} is defined as an intersection of finitely many closed halfspaces in $\mathbb{R}^d$. When an $\mathcal{H}$-polyhedron is bounded, we call it an \emph{$\mathcal{H}$-polytope}. 
	By the Minkowski-Weyl Theorem for polytopes (see \cite[Theorem 2.15]{ziegler2012lectures}), a polytope is a set in $\mathbb{R}^d$ which can be represented either as a $\mathcal{V}$-polytope or (equivalently) as an $\mathcal{H}$-polytope.

\emph{Zonotopes}, as an especially interesting subclass of polytopes, have close relation with linear hyperplane arrangement \cite[Section 7]{ziegler2012lectures}.
  A zonotope $Z\subseteq \mathbb{R}^d$ can be described as the Minkowski sum of line segments or the projection of a cube. %
\begin{Definition}[Zonotope; cf.~{\cite[Definition 7.13]{ziegler2012lectures} and \cite[p.~323]{grunbaum2003convex}}]\label{def:zonotope}
	A \emph{zonotope} is the image of a cube under an affine projection, that is, a polytope $Z\subseteq \mathbb{R}^d$ of the form
	\[
	Z := \bm{V} [-1,1]^p + \bm{z}  =  \left\{ \bm{z} + \sum_{i=1}^p x_i\cdot \bm{v}_i: x_j \in [-1,1], j \in [p] \right\},
	\]
	for some integer $p\in\mathbb{N}$, vector $\bm{z}\in\mathbb{R}^d$, and matrix $\bm{V} = (\bm{v}_1, \dots, \bm{v}_p) \in \mathbb{R}^{d\times p}$. 
	When the matrix $\bm{V}$ has full column rank, we call the zonotope $Z$ a parallelotope.
\end{Definition}

When dealing with generalized differentiations of piecewise affine functions, we need to characterize the solvability of some system of linear inequalities. To this end, the following Farkas-type theorem of alternatives is useful.
\begin{Lemma}[Gordan; {cf.~\cite[Exercise 4.26]{bertsimas1997introduction}}]\label{lem:gordan}
	Let a matrix $\bm{A}\in\mathbb{R}^{n\times m}$ be given. Then, exactly one of the following statements is true:
	\begin{itemize}
		\item There exists an $\bm{x}\in \mathbb{R}^m$ such that $\bm{Ax} < \bm{0}$.
		\item There exists a $\bm{y} \in \mathbb{R}^n$ such that $\bm{A}^\top \bm{y} = \bm{0}$ with $\bm{y} \geq \bm{0}, \bm{y}\neq \bm{0}$.
	\end{itemize}
\end{Lemma}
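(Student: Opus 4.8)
The plan is to prove the dichotomy in the usual two-step manner: first that the two alternatives cannot both hold, and then that at least one of them must hold. The first step is a one-line contradiction. Suppose there were an $\bm{x}$ with $\bm{Ax} < \bm{0}$ together with a $\bm{y} \geq \bm{0}$, $\bm{y} \neq \bm{0}$ satisfying $\bm{A}^\top \bm{y} = \bm{0}$. Then on one hand $\bm{x}^\top(\bm{A}^\top \bm{y}) = 0$, while on the other hand $\bm{x}^\top(\bm{A}^\top \bm{y}) = (\bm{Ax})^\top \bm{y} < 0$, since every coordinate of $\bm{Ax}$ is strictly negative and $\bm{y}$ is nonnegative with at least one strictly positive coordinate. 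This contradiction shows at most one alternative holds.

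The substantive direction is to show that if the second alternative fails, the first holds. I would pass to the geometric picture: writing $\bm{a}_1^\top,\dots,\bm{a}_n^\top \in \mathbb{R}^m$ for the rows of $\bm{A}$, the condition $\bm{A}^\top \bm{y} = \bm{0}$ with $\bm{y}\geq\bm{0}$, $\bm{y}\neq\bm{0}$ is equivalent, after normalizing $\bm{y}$ to have unit coordinate sum, to $\bm{0} \in \conv\{\bm{a}_1,\dots,\bm{a}_n\}$. Hence failure of the second alternative means precisely $\bm{0} \notin \conv\{\bm{a}_1,\dots,\bm{a}_n\}$. Since the convex hull of finitely many points is a compact polytope, I may strictly separate the point $\bm{0}$ from it: there is a vector $\bm{x}\in\mathbb{R}^m$ with $\bm{a}_i^\top \bm{x} < 0$ for every $i\in[n]$, that is, $\bm{Ax} < \bm{0}$. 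This is exactly the first alternative, so that one of the two alternatives always holds, completing the proof.

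The main obstacle, modest as it is, is obtaining the \emph{strict} inequalities $\bm{Ax} < \bm{0}$ rather than weak ones; this is exactly why the compactness (equivalently, closedness) of $\conv\{\bm{a}_1,\dots,\bm{a}_n\}$ is essential, since strict separation of a point from a convex set that does not contain it is what produces the strict sign. An alternative route, more in the spirit of the cited source \cite[Exercise 4.26]{bertsimas1997introduction}, avoids separation entirely via LP duality: introduce the auxiliary program $\max\{t : \bm{Ax} + t\bm{1} \leq \bm{0},\ t \leq 1\}$ over free variables $(\bm{x},t)$, where $\bm{1}$ is the all-ones vector. It is feasible at the origin and bounded above by $1$, so by strong duality its optimal value $t^*\in[0,1]$ equals that of the dual $\min\{s : \bm{A}^\top \bm{y} = \bm{0},\ \bm{1}^\top \bm{y} + s = 1,\ \bm{y}\geq\bm{0},\ s\geq 0\}$. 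One checks that the first alternative holds iff $t^* > 0$; and when $t^* = 0$ the optimal dual solution has $s = 0$, hence $\bm{1}^\top \bm{y} = 1$, yielding a $\bm{y}\geq\bm{0}$, $\bm{y}\neq\bm{0}$ with $\bm{A}^\top \bm{y} = \bm{0}$, which is the second alternative. Either route suffices, and I would favor the separation argument for its brevity.
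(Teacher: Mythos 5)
The paper does not prove this lemma at all: it is quoted as a classical fact with a pointer to \cite[Exercise 4.26]{bertsimas1997introduction}, so there is no in-paper argument to compare against. Your proof is correct and standard on both counts: the one-line contradiction $0=\bm{x}^\top(\bm{A}^\top\bm{y})=(\bm{Ax})^\top\bm{y}<0$ settles mutual exclusivity, and the reduction of the second alternative to $\bm{0}\in\conv\{\bm{a}_1,\dots,\bm{a}_n\}$ followed by strict separation from this compact convex set delivers the strict inequalities $\bm{Ax}<\bm{0}$ exactly as you say. The LP-duality variant is also sound (the dual you write down is the correct one, and $t^*=0$ forces $s=0$ and hence $\bm{1}^\top\bm{y}=1$), and it is indeed the route intended by the cited exercise; either argument would serve as a complete proof.
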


Subdifferential sum rule pertains to the geometry of the Minkowski sum of subdifferentials of elemental functions. For polytopes, the following result gives a characterization of the extreme points by \emph{Minkowski decomposition} of Minkowski summands.
\begin{Lemma}[cf.~{\cite[Proposition 2.1, Corollary 2.2]{fukuda2004zonotope}}, {\cite[Lemma 2.1.2]{scholtes2012introduction}}]\label{lem:fukuda}
Let $P_1, \dots, P_k$ be polytopes in $\mathbb{R}^d$ and let $P := \sum_{i=1}^k P_i$. We have $\bm{v} \in \ext(P)$ if and only if $\bm{v}$ has a Minkowski decomposition $\bm{v} = \sum_{i=1}^k \bm{v}_i$ with $\bm{v}_i\in \ext(P_i)$ and there exists a vector $\bm{h} \in \mathbb{R}^d$ such that
$
\{\bm{v}_i\} = \argmax_{\bm{z} \in P_i} \langle \bm{h}, \bm{z} \rangle
$  uniformly
over all $i\in[k]$. Moreover, this Minkowski decomposition is unique for any $\bm{v} \in \ext(P)$.
\end{Lemma}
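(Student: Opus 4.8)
The plan is to reduce everything to the standard variational characterization of a vertex together with the behavior of support functions under Minkowski addition. Recall that a point $\bm{v}$ of a polytope $P$ is extreme if and only if there is a direction $\bm{h} \in \mathbb{R}^d$ that is \emph{uniquely} maximized at $\bm{v}$ over $P$, i.e.\ $\{\bm{v}\} = \argmax_{\bm{z}\in P}\langle \bm{h},\bm{z}\rangle$. The second ingredient is the elementary identity for the face exposed by $\bm{h}$: for any direction $\bm{h}$,
\[
\argmax_{\bm{z}\in P}\langle\bm{h},\bm{z}\rangle = \sum_{i=1}^k \argmax_{\bm{z}\in P_i}\langle\bm{h},\bm{z}\rangle,
\]
which follows from additivity of the support function, $\max_{\bm{z}\in P}\langle\bm{h},\bm{z}\rangle = \sum_i \max_{\bm{z}\in P_i}\langle\bm{h},\bm{z}\rangle$, valid because $P = \sum_i P_i$ and each $P_i$ is compact so the maxima are attained. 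I would establish this identity first, since both directions of the equivalence rest on it.

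For the forward implication, I would take $\bm{v}\in\ext(P)$ and pick $\bm{h}$ with $\{\bm{v}\}=\argmax_{\bm{z}\in P}\langle\bm{h},\bm{z}\rangle$. By the displayed identity the singleton $\{\bm{v}\}$ equals the Minkowski sum $\sum_i \argmax_{\bm{z}\in P_i}\langle\bm{h},\bm{z}\rangle$. The key observation here is that a Minkowski sum of nonempty sets is a singleton only if every summand is a singleton (if some factor contained two distinct points, translating by the remaining fixed summands would produce two distinct points in the sum). Hence each $\argmax_{\bm{z}\in P_i}\langle\bm{h},\bm{z}\rangle = \{\bm{v}_i\}$ is a singleton, so $\bm{v}_i\in\ext(P_i)$, and $\bm{v}=\sum_i\bm{v}_i$ is the required decomposition realized by the common direction $\bm{h}$. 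For the reverse implication, given $\bm{v}=\sum_i\bm{v}_i$ with $\{\bm{v}_i\}=\argmax_{\bm{z}\in P_i}\langle\bm{h},\bm{z}\rangle$ for a \emph{single} $\bm{h}$, the same identity shows $\argmax_{\bm{z}\in P}\langle\bm{h},\bm{z}\rangle=\sum_i\{\bm{v}_i\}=\{\bm{v}\}$, so $\bm{h}$ is uniquely maximized at $\bm{v}$ and therefore $\bm{v}\in\ext(P)$.

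For uniqueness of the decomposition, I would fix $\bm{v}\in\ext(P)$ together with a witnessing direction $\bm{h}$ as above, and consider \emph{any} representation $\bm{v}=\sum_i\bm{w}_i$ with $\bm{w}_i\in P_i$. Summing the inequalities $\langle\bm{h},\bm{w}_i\rangle\le \max_{\bm{z}\in P_i}\langle\bm{h},\bm{z}\rangle$ gives $\langle\bm{h},\bm{v}\rangle\le\sum_i\max_{\bm{z}\in P_i}\langle\bm{h},\bm{z}\rangle=\langle\bm{h},\bm{v}\rangle$, forcing equality in every term; hence each $\bm{w}_i$ lies in $\argmax_{\bm{z}\in P_i}\langle\bm{h},\bm{z}\rangle=\{\bm{v}_i\}$, so $\bm{w}_i=\bm{v}_i$ for all $i$. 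This pins down the decomposition uniquely.

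The only genuinely nontrivial steps are the face-of-a-sum identity (additivity of support functions over Minkowski sums) and the ``a singleton Minkowski sum forces singleton summands'' lemma; everything else is bookkeeping. I expect the main obstacle to be stating these two facts cleanly, since they do all the work. I would also emphasize that boundedness of the $P_i$ is what guarantees the maxima are attained and the exposed faces are nonempty, which is precisely what lets the $\argmax$ identity hold with equality and lets the singleton argument go through.
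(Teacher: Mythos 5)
Your proof is correct and complete; the paper does not prove this lemma itself but cites it from \citet{fukuda2004zonotope} and \citet{scholtes2012introduction}, and your argument (additivity of support functions, the exposed-face-of-a-sum identity, and the singleton-sum observation) is essentially the standard proof given in those references. The only point worth flagging is that your opening fact---extreme points of a polytope are exactly the unique maximizers of some linear functional---relies on polytopes having no unexposed extreme points, which holds here but would fail for general convex bodies.
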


\section{Computational Complexity}\label{sec:hardness}

In this section, we discuss the computational complexity of testing stationarity concepts for different representations of \PA{} functions. %
The proofs of the main results are collected in \Cref{sec:prf-dc,sec:prf-maxmin}.

\subsection{DC Representation}\label{sec:hardness-DC}

If the \PA{} function is provided in form of \DC{} representation (see \Cref{fct:pa-form}\ref{item:fct:pa-form-a}), the primary result of this subsection is the following.
\begin{Theorem}
	[DC-hardness]\label{thm:hard-dc} 
Fix any $n\geq 2$. Let two convex \PA{} function $h,g:\mathbb{R}^d\rightarrow \mathbb{R}$ be given in $n$-\MC{} form
with integer data.  The following hold:
\begin{enumerate}[label=\textnormal{(\alph*)}]
		\item Fix any $\epsilon \in [0, \infty)$, checking whether $\bm{0}\in \widehat{\partial} (h-g)(\bm{0}) + \epsilon\mathbb{B}$ is strongly co-$\cNP$-hard.\label{item:thm:hard-dc-a}
		\item Fix any $\epsilon \in [0, 1/2)$, checking whether $\bm{0} \in \partial (h-g)(\bm{0})+ \epsilon\mathbb{B}$ is strongly $\cNP$-hard. \label{item:thm:hard-dc-b}%
	\end{enumerate}
	\end{Theorem}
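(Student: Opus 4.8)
The plan is to reduce, in both parts, from the strongly $\cNP$-hard problem of maximizing the $\ell_1$-norm over a centrally symmetric parallelotope (PAR$\{-1,0,1\}$MAX$_1$; cf.~\cite[Theorem 12]{bodlaender1990computational}): given a full-rank integer matrix $\bm V=(\bm v_1,\dots,\bm v_d)$ and an integer $r$, decide whether $\max_{\bm y\in[-1,1]^d}\|\bm V\bm y\|_1>r$. Writing $P:=\bm V[-1,1]^d=\sum_i[-\bm v_i,\bm v_i]$ and $B_1:=\{\bm s:\|\bm s\|_1\le 1\}$, this asks whether $P\subseteq rB_1$ fails. For part~\ref{item:thm:hard-dc-a} I would take $g_{\sf F}(\bm x):=\sum_{i=1}^d|\bm v_i^\top\bm x|$ and $h_{\sf F}(\bm x):=r\|\bm x\|_\infty$, which have integer data, are expressible in $n$-\MC{} form for every $n\ge 2$, and satisfy $\partial g_{\sf F}(\bm 0)=P$ and $\partial h_{\sf F}(\bm 0)=rB_1$. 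The first step is a clean reformulation of Fr\'echet stationarity of $f_{\sf F}:=h_{\sf F}-g_{\sf F}$: combining \Cref{fct:pa-dd} with the support-function description of convex subdifferentials in \Cref{fct:clarke-dd} gives $f_{\sf F}'(\bm 0;\bm d)=\sigma_{\partial h_{\sf F}(\bm 0)}(\bm d)-\sigma_{\partial g_{\sf F}(\bm 0)}(\bm d)$, and then additivity of support functions over Minkowski sums yields $\widehat\partial f_{\sf F}(\bm 0)=\{\bm s:\{\bm s\}+\partial g_{\sf F}(\bm 0)\subseteq\partial h_{\sf F}(\bm 0)\}=\{\bm s:\bm s+P\subseteq rB_1\}$.

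The key observation, which yields hardness simultaneously for \emph{every} $\epsilon\in[0,\infty)$, is that $P$ and $rB_1$ are both symmetric about $\bm 0$. Testing $\bm s+P\subseteq rB_1$ against the pair $\bm d,-\bm d$ forces $\sigma_P(\bm d)\le r\|\bm d\|_\infty$ for all $\bm d$, i.e.\ $P\subseteq rB_1$; conversely $P\subseteq rB_1$ admits $\bm s=\bm 0$. Hence $\widehat\partial f_{\sf F}(\bm 0)$ either contains $\bm 0$ (when $P\subseteq rB_1$) or is \emph{empty} (when $P\not\subseteq rB_1$), so $\bm 0\in\widehat\partial f_{\sf F}(\bm 0)+\epsilon\mathbb{B}$ holds for all $\epsilon\ge 0$ exactly on the NO-instances. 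This makes the test the complement of a strongly $\cNP$-hard problem, proving strong co-$\cNP$-hardness; note that emptiness of the Fr\'echet subdifferential—impossible for the Clarke one—is precisely what decouples the bound from $\epsilon$.

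For part~\ref{item:thm:hard-dc-b} I would reuse $f_{\sf F}$ as a trigger inside a ``seesaw'' gadget. Since $f_{\sf F}$ is positively homogeneous and even, $\bm 0$ is \emph{always} Clarke stationary for $f_{\sf F}$, so the gadget must read off the \emph{sign of the directional derivative} of $f_{\sf F}$, not its Clarke subdifferential. A natural candidate is $F:=h_{\sf C}-g_{\sf C}$ with $h_{\sf C}(\bm x):=\bm a^\top\bm x+\max\{h_{\sf F}(\bm x),g_{\sf F}(\bm x)\}$ and $g_{\sf C}(\bm x):=h_{\sf F}(\bm x)$, for a tilt $\bm a$ with $\|\bm a\|\ge 1/2$ fixed from the instance; both pieces are convex \PA{}, and the extra $\max$ layer makes them genuinely $2$-\MC{} (explaining the regime $n\ge 2$). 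Using $\max\{h_{\sf F},g_{\sf F}\}-h_{\sf F}=\max\{0,-f_{\sf F}\}$ we get $F(\bm x)=\bm a^\top\bm x+q(\bm x)$ with $q:=\max\{0,-f_{\sf F}\}\ge 0$. In the local-minimum case $f_{\sf F}\ge 0$ everywhere, so $q\equiv 0$ and $F$ is the affine map $\bm a^\top\bm x$ near $\bm 0$ with $\|\nabla F(\bm 0)\|=\|\bm a\|\ge 1/2>\epsilon$, giving $\bm 0\notin\partial F(\bm 0)+\epsilon\mathbb{B}$ for $\epsilon\in[0,1/2)$; otherwise $f_{\sf F}$ is negative on an open cone, which ``opens a flat passage'' by introducing a kink in $q$.

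Concretely, since $\bm a^\top\bm x$ is smooth the sum rule is exact and $\partial F(\bm 0)=\bm a+\partial q(\bm 0)$, where the \PA{} gradient formula gives $\partial q(\bm 0)=\conv\big(\{\bm 0\}\cup\{\lim\nabla f_{\sf F}\ \text{over the cone}\ \{f_{\sf F}<0\}\}\big)$, a centrally symmetric polytope by evenness of $f_{\sf F}$. The reduction then demands $-\bm a\in\partial q(\bm 0)$ precisely in the non-minimum case, so that $\bm 0\in\partial F(\bm 0)$ (hence $\bm 0\in\partial F(\bm 0)+\epsilon\mathbb{B}$ for all $\epsilon\ge 0$) holds exactly on the YES-instances, yielding strong $\cNP$-hardness. \textbf{I expect the design and verification of this gadget to be the main obstacle.} The difficulty is twofold: first, eliminating \emph{spurious} nonsmoothness so that $F$ is truly affine in the local-minimum case—the choice $q=\max\{0,-f_{\sf F}\}$ secures this, whereas many natural seesaws built from $\pm$-copies of a fresh coordinate fail, since an even trigger forces a symmetric kink at $\bm 0$ in every instance; and second, guaranteeing that the fixed, polynomially bounded integer tilt $\bm a$ with $\|\bm a\|\ge 1/2$ falls inside $\partial q(\bm 0)$ whenever the latter is nontrivial. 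I would handle the second point by scaling $(\bm V,r)$ by a polynomially bounded integer—integrality makes a YES-instance's violation margin at least one, which inflates $\partial q(\bm 0)$—and by arranging the embedding so that $\partial q(\bm 0)$ is full-dimensional about $\bm 0$. Controlling this shape while keeping all data integral and of polynomial magnitude (for \emph{strong} hardness) and the components in $n$-\MC{} form for each fixed $n\ge 2$ is where the real work lies.
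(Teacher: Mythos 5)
Your treatment of part~\ref{item:thm:hard-dc-a} is correct and essentially the paper's argument: the paper also reduces from PAR$\{-1,0,1\}$MAX$_1$, recasts Fr\'echet stationarity of $h_{\sf F}-g_{\sf F}$ as a polytope containment $Y\subseteq X$ between the parallelotope $\sum_i[-1,1]\bm y_i$ and the scaled cross-polytope $\{\|\cdot\|_1\le r\}$, and exploits exactly the dichotomy you identify --- the Fr\'echet subdifferential either contains $\bm 0$ or is \emph{empty}, which is what makes the hardness uniform in $\epsilon$. Your construction drops the outer $\max\{r\|\bm d\|_\infty,\cdot\}$ that the paper wraps around $g_{\sf F}$ (cf.\ \Cref{prop:DCF}), and your support-function identity $\widehat\partial(h-g)(\bm 0)=\{\bm s:\bm s+\partial g(\bm 0)\subseteq\partial h(\bm 0)\}$ together with the central-symmetry averaging argument is a clean substitute for the paper's case analysis in \Cref{lem:dc-np-hard}; this part stands.

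Part~\ref{item:thm:hard-dc-b} has a genuine gap, and it is exactly the one you flag. Your gadget gives $F=\bm a^\top\bm x+q$ with $q=\max\{0,-f_{\sf F}\}$, so in the YES case you must certify $-\bm a\in\partial q(\bm 0)=\conv\bigl(\{\bm 0\}\cup S\bigr)$, where $S$ consists of limits of $\nabla(-f_{\sf F})$ over the cone $\{f_{\sf F}<0\}$. These limits have the form $\bm y^*-r\,\sgn(d_{i^*})\bm e_{i^*}$ for an instance-dependent vertex $\bm y^*\in\ext(Y)$ and coordinate $i^*$; the cone they generate can be an arbitrarily thin, instance-dependent cone, so no fixed integer tilt $\bm a$ with $\|\bm a\|\ge 1/2$ can be guaranteed to satisfy $-\bm a\in\cone(\conv S)$. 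Your two proposed repairs do not close this: scaling $(\bm V,r)$ by an integer $\lambda$ dilates $S$ to $\lambda S$ but leaves $\cone(\conv S)$ unchanged, so membership of $-\bm a$ is still instance-dependent; and ``arranging $\partial q(\bm 0)$ to be full-dimensional about $\bm 0$'' would require $\bm 0\in\intt\,\partial q(\bm 0)$, which you have no mechanism to enforce (indeed $q\ge 0$ with $q(\bm 0)=0$ only guarantees $\bm 0\in\partial q(\bm 0)$).

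The paper's gadget (\Cref{prop:DCC}, \Cref{lem:dc-np-hard-C}) sidesteps this entirely by never trying to hit a prescribed subgradient. It sets $h_{\sf C}-g_{\sf C}=\tfrac{\bm d^\top\bm e_1}{2}+\max\{f_{\sf F}(\bm d),-|\bm d^\top\bm e_1|/2\}$, adding the kink $|\bm d^\top\bm e_1|/2$ to \emph{both} convex components so that in the YES case there is a direction $\bm y\in\{-1,1\}^n$ with $y_1=1$ on whose neighborhood the inner max is attained by $-|\bm d^\top\bm e_1|/2$ and the function is \emph{identically} $\tfrac{\bm d^\top\bm e_1-|\bm d^\top\bm e_1|}{2}=0$; the gradient there is exactly $\bm 0$, and positive homogeneity pushes it into $\partial(h_{\sf C}-g_{\sf C})(\bm 0)$. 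In the NO case $f_{\sf F}\equiv 0$ dominates the max and the function reduces to the affine tilt with gradient norm $1/2$. This ``flat passage'' is the missing idea: your $q=\max\{0,-f_{\sf F}\}$ creates a kink whose subdifferential you cannot control, whereas the paper's $\max\{f_{\sf F},-|\bm d^\top\bm e_1|/2\}$ creates an open region where the difference is constant, which is robust to the instance.
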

	
	We compare \Cref{thm:hard-dc} with the classic hardness result of \citet{murty1987some}. In \cite{murty1987some}, checking the local optimality of a point for a simply constrained indefinite quadratic problem \cite[Problem 1]{murty1987some}  and for an unconstrained quartic polynomial objective \cite[Problem 11]{murty1987some} are both co-$\cNP$-complete. However, these hardness results are inapplicable for checking first-order necessary conditions. %
In \Cref{thm:hard-dc}, we show that, for unconstrained optimization problems with \PA{} objectives, even approximately testing the first-order necessary condition at a specific point $\bm{x}=\bm{0}$, with $h$ and $g$ provided in fixed-depth \MC{} form, is computationally intractable unless $\cP = \cNP$. To the best of our knowledge, this is the first hardness result concerning the testing of a non-minimizing first-order optimality condition.

The technique for proving \cite[Theorem 2]{murty1987some}---the hardness result for detecting local minima of polynomials---cannot be applied here mainly due to two reasons. First, the proof in \cite[Theorem 2]{murty1987some} requires degree-$p$ polynomials for $p \geq 4$, which is tight according to \cite[Theorem 3.3]{ahmadi2022AIM}. Despite the piecewise nature, \PA{} functions are merely piecewise degree-$1$ polynomials. Second, when considering Fr\'echet stationary points of smooth functions $f$, such as polynomials, they coincide with the set $\{\bm{x}: \nabla f(\bm{x}) = \bm{0}\}$, whose verification is known to be in the class $\cP$ \cite{ahmadi2022AIM}.

Notably, \citet{nesterov2013gradient} presents a \PA{} construction demonstrating that detecting a descent direction is co-$\cNP$-hard, which implies that testing local minimizers of a \PA{} function is generally computationally intractable. However, the reduction used in \cite[Lemma 1]{nesterov2013gradient} is from the $2$-PARTITION problem, which is known to be only weakly $\cNP$-hard and solvable in pseudo-polynomial time via a dynamic programming algorithm. In contrast, in \Cref{thm:hard-dc}\ref{item:thm:hard-dc-a}, we show that detecting such local minimality is actually strongly co-$\cNP$-hard, thereby ruling out the possibility of any pseudo-polynomial-time algorithms.

	When functions $h$ and $g$ are represented in the $n$-\MC{} form, we highlight the complexity distinction between testing for a local minimum, i.e., co-$\cNP$-hardness for $\bm{0} \in \widehat{\partial}(h-g)(\bm{x})$, and for a stationary point, i.e., $\cNP$-hardness for $\bm{0} \in \partial f(\bm{x})$. This distinction appears to be fundamental. In fact, for $\epsilon=0$, even when the depth $n$ is treated as an input (rather than fixed), we have the following membership results.\footnote{In our hardness result (\Cref{thm:hard-dc}), we restrict the functions $h$ and $g$ to have fixed depth, whereas in the membership result (\Cref{thm:mem-dc}), we allow the depth of $h$ and $g$ to be treated as an input. These choices only strengthen both the positive and negative results.}

\begin{Theorem}
	[DC-completeness]\label{thm:mem-dc} 
Let two convex \PA{} function $h,g:\mathbb{R}^d\rightarrow \mathbb{R}$ be given  in $n$-\MC{} form
with integer data and the depth $n$ as an input. %
The following hold:	
\begin{enumerate}[label=\textnormal{(\alph*)}]
		\item Checking whether $\bm{0}\in \widehat{\partial} (h-g)(\bm{0})$ is co-$\cNP$-complete. \label{item:thm:mem-dc-a} %
		\item Checking whether $\bm{0} \in \partial (h-g)(\bm{0})$ is $\cNP$-complete. \label{item:thm:mem-dc-b} %
	\end{enumerate}
	\end{Theorem}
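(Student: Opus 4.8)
\textbf{Proof plan for \Cref{thm:mem-dc}.}

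The plan is to prove the two membership statements; the matching hardness directions are supplied by \Cref{thm:hard-dc} (taking $\epsilon=0$ there), so it remains to show that checking $\bm{0}\in\widehat{\partial}(h-g)(\bm{0})$ lies in co-$\cNP$ and checking $\bm{0}\in\partial(h-g)(\bm{0})$ lies in $\cNP$. The key object is the \emph{essentially active index set} of an $n$-\MC{} function at $\bm{0}$, in the sense of \cite[p.~92]{scholtes2012introduction}. An $n$-\MC{} function $h$ is a deep alternation of sums and pointwise maxima over affine pieces indexed by a tuple $(i_1,j_1,\dots,i_n,j_n)$; each leaf contributes an affine function $\bm{w}\mapsto \bm{w}^\top\bm{x}_I + a_I$. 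An index $I$ is essentially active at $\bm{0}$ if it attains the relevant inner maxima \emph{and} this activity persists on a full-dimensional cone (not merely on a lower-dimensional face). The first step is to give an NP-style certificate for essential activity: a polynomially long witness consisting of the active tuple together with a direction $\bm{d}$ exhibiting that the piece remains selected on an open set, whose validity can be checked in polynomial time by evaluating the nested max/sum structure along $\bm{d}$. Because $h$ and $g$ are convex \MC{} functions, I would use \Cref{fct:clarke-dd} (the support-function description of $\partial h(\bm{0})$ via $h^\circ(\bm{0};\cdot)=h'(\bm{0};\cdot)$, using convexity and \Cref{fct:pa-dd}) to express the subdifferentials $\partial h(\bm{0})$ and $\partial g(\bm{0})$ as convex hulls of the gradients $\bm{x}_I$ ranging over essentially active $I$.

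For part \ref{item:thm:mem-dc-a}, recall from \Cref{fct:frechetloca} that $\bm{0}\in\widehat{\partial}(h-g)(\bm{0})$ iff $\bm{0}$ is a local minimizer of $h-g$, which by \Cref{fct:pa-dd} is equivalent to $(h-g)'(\bm{0};\bm{d})=h'(\bm{0};\bm{d})-g'(\bm{0};\bm{d})\geq 0$ for all $\bm{d}$. The complement problem therefore asks for a single descent direction $\bm{d}$, i.e.\ a direction along which some essentially active piece of $g$ beats every relevant piece of $h$. A certificate for the complement is then the pair consisting of $\bm{d}$ (which may be taken rational of polynomial bit-length, since the pieces are integer-data affine functions and the feasible region of directions is polyhedral) together with the active pieces realizing $h'(\bm{0};\bm{d})$ and $g'(\bm{0};\bm{d})$; one verifies in polynomial time that $h'(\bm{0};\bm{d})<g'(\bm{0};\bm{d})$ by propagating $\bm{d}$ through the nested sum/max evaluation. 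This puts the complement in $\cNP$, hence the problem in co-$\cNP$.

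For part \ref{item:thm:mem-dc-b}, I would use the support-function characterization to write $\bm{0}\in\partial(h-g)(\bm{0})$, via \Cref{fct:clarke-dd}, as the statement that $(h-g)^\circ(\bm{0};\bm{d})\geq 0$ for all $\bm{d}$, and then exhibit a \emph{positive} certificate directly: a vector $\bm{s}\in\partial h(\bm{0})\cap\partial g(\bm{0})$ realized as a convex combination of essentially active gradients $\bm{x}_I$ of $h$ equal to one of essentially active gradients of $g$. Concretely, the certificate guesses a polynomial-size list of essentially active indices for $h$ and for $g$ (each accompanied by its essential-activity witness $\bm{d}$), and then solves the ensuing LP feasibility problem (matching a convex combination from one list to a convex combination from the other) in polynomial time. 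By Carathéodory's theorem one never needs more than $d+1$ indices per side, so the guess is polynomially bounded, and correctness follows because $\partial(h-g)(\bm{0})$, while in general a strict subset of $\partial h(\bm{0})-\partial g(\bm{0})$, is described exactly through the essentially active pieces of the \MC{} representation. This places the problem in $\cNP$.

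The main obstacle I anticipate is the essential-activity certification itself: because an $n$-\MC{} function can have exponentially many pieces, I must show both that a \emph{succinct} witness of essential activity exists (that the defining cone is full-dimensional can be certified by one interior direction $\bm{d}$ of polynomial bit-complexity, which requires a careful bit-size bound propagated through the $n$ nested operations using the integer data) and that this witness is \emph{verifiable} in time polynomial in the input length without enumerating pieces. Establishing the polynomial bit-length bound on the witnessing direction $\bm{d}$ through a depth-$n$ composition—so that checking which leaves are selected along $\bm{d}$ can be done by a single bottom-up evaluation—is the delicate technical core on which both membership proofs rest.
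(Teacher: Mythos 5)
Your part \ref{item:thm:mem-dc-a} is sound and takes a slightly different route from the paper: you certify the complement by a rational descent direction $\bm{d}$ with $(h-g)'(\bm{0};\bm{d})<0$, verified by one bottom-up evaluation of the nested sum/max structure, whereas the paper's certificate is a vertex $\bm{g}\in(\ext\partial g(\bm{0}))\backslash\partial h(\bm{0})$, verified by LPs over the lifted polyhedral description of the two subdifferentials. Your route is essentially the one the paper uses for the \MaxMin{} case; to make the bit-length claim rigorous you should note that $\{\bm{d}:(h-g)'(\bm{0};\bm{d})<0\}$ is \emph{not} itself polyhedral, but after fixing the active pattern of a witnessing direction one obtains a consistent linear inequality system with polynomially many constraints, and a vertex solution of that system has polynomial encoding length.

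Part \ref{item:thm:mem-dc-b}, however, has a genuine gap: the certificate you describe --- a convex combination of essentially active gradients of $h$ equal to a convex combination of essentially active gradients of $g$ --- certifies only that $\partial h(\bm{0})\cap\partial g(\bm{0})\neq\emptyset$, i.e.\ DC-criticality, which is strictly weaker than $\bm{0}\in\partial(h-g)(\bm{0})$. The paper's Example~\ref{ex:stat} already breaks your verifier: for $h(x)=\max\{x,0\}$ and $g(x)=h(-x)$ we have $h-g=\mathrm{id}$, so $\partial(h-g)(0)=\{1\}\not\ni 0$, yet $0\in\partial h(0)\cap\partial g(0)=\{0\}$ and both zero gradients come from pieces that are essentially active for $h$ and for $g$ separately, so your verifier accepts a ``no'' instance. (Indeed, \Cref{coro:dc-critical} shows the entire $\cNP$-hardness lives precisely in the gap between these two conditions, so no certificate that only sees $\partial h(\bm{0})$ and $\partial g(\bm{0})$ as separate polytopes can work.) What is actually needed is a certificate of essential activity for \emph{pairs} with respect to the difference function $h-g$: the paper shows each extreme certificate gradient has the form $\bm{g}_h^*-\bm{g}_g^*$ where $\bm{g}_h^*\in\ext\partial h(\bm{0})$ and $\bm{g}_g^*\in\ext\partial g(\bm{0})$ are \emph{simultaneously and uniquely} maximizing over a common open cone of directions, i.e.\ $\intt(\mathcal{N}_{\partial h(\bm{0})}(\bm{g}_h^*))\cap\intt(\mathcal{N}_{\partial g(\bm{0})}(\bm{g}_g^*))\neq\emptyset$, and the delicate step (the paper's \Cref{lem:normal-cone-inters-poly}) is producing a polynomial-bit-size rational point in that open intersection so the verifier can confirm simultaneity by LPs. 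You correctly identified witness bit-length as the technical core, but the witness must couple the two functions through a single direction rather than certify them independently.
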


An immediate  corollary of \Cref{thm:mem-dc} is that verifying whether $\bm{0}\in\partial (h-g)(\bm{0})$ is unlikely to be co-$\cNP$-hard. Suppose, conversely, that it were proven that checking $\epsilon$-stationarity is co-$\cNP$-hard. Then, due to its $\cNP$-completeness from \Cref{thm:mem-dc}\ref{item:thm:mem-dc-b}, every problem in co-$\cNP$ can be reduced to a problem in $\cNP$ in polynomial time, hence implying co-$\cNP\subseteq \cNP$. Consequently, we would arrive at co-$\cNP= \cNP$, which implies the collapse of the Polynomial Hierarchy (\textsf{PH}) to the second level, a scenario many believe to be unlikely.

\begin{Remark}
The $n$-\MC{} structure of the \DC{} components $h$ and $g$ provides highly efficient expressiveness, allowing a wide range of constructions in the \PA{} function $h-g$. This rich expressiveness only strengthens our membership results in \Cref{thm:mem-dc}. Meanwhile, we emphasize that our hardness result in \Cref{thm:hard-dc} holds for fairly simple and non-pathological functions; see \Cref{prop:DCF,prop:DCC} in \Cref{sec:hardness-DC}.
Notably, the source of hardness does not stem from difficulties in characterizing the convex subdifferentials $\partial h(\bm{0})$ and $\partial g(\bm{0})$. In fact, the sets $\partial h(\bm{0})$ and $\partial g(\bm{0})$ are representable by polynomial-size LPs, and verifying $\bm{0} \in \partial h(\bm{0})-\partial g(\bm{0}) + \epsilon\mathbb{B}$ can be done in polynomial time; see \Cref{sec:nMC}. Instead, the hardness arises intrinsically from the complex interaction between the convex and concave parts of $h-g$, which prevents efficient computation of $\partial (h-g)(\bm{0})$.
In \Cref{sec:cr}, we will examine the subdifferential formula for $\partial (h-g)(\bm{0})$ and provide a complete characterization of the case where $\partial (h-g)(\bm{0})=\partial h(\bm{0})-\partial g(\bm{0})$.
\end{Remark}

\subsection{Max-Min Representation}

If the \PA{} function is provided in form of \MaxMin{} representation (see \Cref{fct:pa-form}\ref{item:fct:pa-form-b}), we have the following hardness results.
\begin{Theorem}
	[Max-Min-hardness]\label{thm:hard-general} 
Let a \PA{} function $f:\mathbb{R}^d\rightarrow \mathbb{R}$ in the form of \MaxMin{} representation
with integer data be given. %
The following hold:	
\begin{enumerate}[label=\textnormal{(\alph*)}]
		\item Fix any $\epsilon \in [0, \infty)$, checking whether $\bm{0}\in \widehat{\partial} f(\bm{0}) + \epsilon\mathbb{B}$ is strongly co-$\cNP$-hard. \label{item:thm:hard-general-a}%
		\item Fix any $\epsilon \in [0, 1/2)$, checking whether $\bm{0} \in \partial f(\bm{0})+ \epsilon\mathbb{B}$ is strongly $\cNP$-hard. \label{item:thm:hard-general-b}%
	\end{enumerate}
	\end{Theorem}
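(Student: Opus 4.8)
The plan is to reduce from 3SAT, following the strategy sketched in the Techniques section. The key structural difference from the DC case (\Cref{thm:hard-dc}) is that the \MaxMin{} representation can have only polynomially many affine pieces, so I cannot afford the exponentially-large gadgets that the $n$-\MC{} form affords. Instead I would engineer a single \PA{} function $f_{\sf F}$ in \MaxMin{} form, over a variable $\bm{x}\in\mathbb{R}^d$ whose coordinates encode the truth assignment, such that the satisfiability of the 3SAT instance is detected directly through the directional-derivative structure at $\bm{0}$. Concretely, for part \ref{item:thm:hard-general-a}, I would arrange things so that $\widehat{\partial} f_{\sf F}(\bm{0})$ contains a vector of small norm (equivalently, $\bm{0}\in\widehat{\partial}f_{\sf F}(\bm{0})+\epsilon\mathbb{B}$, i.e.\ $\bm{0}$ is a local minimizer up to $\epsilon$ by \Cref{fct:frechetloca}) \emph{if and only if} the 3SAT instance is \emph{unsatisfiable}. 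The natural encoding is to let each clause contribute an inner $\min$ over its three literals and take the outer $\max$ over clauses (or a closely related maximin), so that a satisfying assignment corresponds to a descent direction from $\bm{0}$, i.e.\ to $f'(\bm{0};\bm{d})<0$ for some feasible direction $\bm{d}$, which by \Cref{fct:pa-dd} and \Cref{def:subd-f} obstructs $\bm{0}\in\widehat{\partial}f_{\sf F}(\bm{0})$.

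The main verification obligation for part \ref{item:thm:hard-general-a} is to translate the combinatorial statement ``the formula is unsatisfiable'' into the analytic statement ``$\widehat{\partial}f_{\sf F}(\bm{0})+\epsilon\mathbb{B}\ni\bm{0}$'' and make it robust to the slack $\epsilon$. Since \Cref{thm:hard-general}\ref{item:thm:hard-general-a} is claimed for \emph{every} fixed $\epsilon\in[0,\infty)$, the construction must keep a uniform, $\epsilon$-independent gap: when the instance is satisfiable there must exist a descent direction $\bm{d}$ (of unit norm) along which $f_{\sf F}'(\bm{0};\bm{d})$ is bounded away from $0$ by a constant that can be scaled to dominate $\epsilon$ by rescaling the integer data. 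I would therefore build in a tunable slope (a multiplicative weight on the affine pieces) so that $\|\bm{s}\|$ for every $\bm{s}\in\widehat{\partial}f_{\sf F}(\bm{0})$ is forced to exceed any prescribed $\epsilon$ in the satisfiable case, while in the unsatisfiable case $\bm{0}$ itself lies in $\widehat{\partial}f_{\sf F}(\bm{0})$. The strong co-$\cNP$-hardness (as opposed to weak) follows because all data remain polynomially bounded integers and the gap is a fixed constant independent of the instance size.

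For part \ref{item:thm:hard-general-b}, the Clarke case, I would reuse the seesaw-type gadget that \Cref{thm:hard-dc}\ref{item:thm:hard-dc-b} uses in the DC setting, but re-implemented in \MaxMin{} form. The idea is to couple $f_{\sf F}$ with an auxiliary coordinate and a gadget function $f_{\sf C}$ whose Fr\'echet/Clarke behavior at $\bm{0}$ is triggered by the Fr\'echet-stationarity status of $f_{\sf F}$: when $\bm{0}\in\widehat{\partial}f_{\sf F}(\bm{0})$ (unsatisfiable case) the combined function is affine near $\bm{0}$ with gradient of norm at least $1/2$, so $\bm{0}\notin\partial f_{\sf C}(\bm{0})+\epsilon\mathbb{B}$ for $\epsilon<1/2$; when $\bm{0}\notin\widehat{\partial}f_{\sf F}(\bm{0})$ (satisfiable case) the gadget opens a flat passage forcing $\bm{0}\in\partial f_{\sf C}(\bm{0})$. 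The technical subtlety here is expressing the gadget purely as a maximin of affine functions while keeping only polynomially many pieces; this is where the limited expressiveness of the \MaxMin{} form bites hardest, and I expect the bookkeeping of the gadget pieces—ensuring the outer $\max$/inner $\min$ structure correctly realizes both the ``affine, large gradient'' and the ``flat passage'' regimes without blowing up the piece count—to be the main obstacle. Membership (the matching completeness statements and the companion $\cNP$-hardness packaging) is addressed in \Cref{thm:mem-general}; here I only need the two reductions above to be polynomial-time and to preserve the promised constant gaps.
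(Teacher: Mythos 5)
Your overall strategy is the paper's: reduce from 3SAT, encode each clause via an inner nonsmooth combination of its three literal directions with an outer $\max$ over clauses so that a satisfying assignment yields a descent direction at $\bm{0}$, and then lift to the Clarke case by re-implementing the seesaw gadget of \Cref{thm:hard-dc}\ref{item:thm:hard-dc-b} in \MaxMin{} form. However, the proposal leaves unexecuted exactly the steps that carry the weight, and in one place chooses a mechanism that is workable but strictly harder than what the paper does.

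First, on handling arbitrary $\epsilon\in[0,\infty)$ in part \ref{item:thm:hard-general-a}: you propose a tunable slope so that every $\bm{s}\in\widehat{\partial}f_{\sf F}(\bm{0})$ has norm exceeding $\epsilon$. This can be made rigorous (a unit descent direction with $f'(\bm{0};\bm{d})\leq -1$ on $\bm{d}\in\{-1,1\}^m$ forces $\|\bm{s}\|\geq 1/\sqrt{m}$ by Cauchy--Schwarz, and an integer rescaling by $M=O(\epsilon\sqrt{m})$ keeps magnitudes polynomially bounded), but note the lower bound is instance-dependent, so you must actually do the rescaling inside the reduction. The paper avoids this entirely by choosing the clause gadget $-\sum_{j=1}^3\max\{\bm{d}^\top\bm{y}_{3(i-1)+j},0\}$, which makes $f_{\sf F}\leq 0=f_{\sf F}(\bm{0})$ globally; then any $\bm{g}\in\widehat{\partial}f_{\sf F}(\bm{0})$ satisfies $\|\bm{g}\|^2\leq f_{\sf F}(\bm{g})\leq 0$, so $\bm{g}=\bm{0}$, which is impossible when a descent direction exists --- hence $\widehat{\partial}f_{\sf F}(\bm{0})=\emptyset$ in the satisfiable case and the claim holds for every $\epsilon$ at once. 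A bare $\max_i\min_j$ over literals, as you suggest, is not globally nonpositive, so this emptiness trick would not apply to it; you would be committed to the rescaling route.

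Second, for part \ref{item:thm:hard-general-b} you correctly identify the seesaw gadget and correctly flag the polynomial-size \MaxMin{} representability of the gadget as ``the main obstacle,'' but that obstacle is precisely the content of the proof, not a detail: the paper's gadget is $f_{\sf C}(\bm{d})=\tfrac{\bm{d}^\top\bm{e}_1}{2}+\max\{f_{\sf F}(\bm{d})+f_{\sf F}(-\bm{d}),-|\bm{d}^\top\bm{e}_1|/2\}$, and one must verify (i) that the \emph{sum} of two polynomial-size \MaxMin{} functions is again polynomial-size (via $\sum_i\min_j a_{i,j}=\min$ over at most $J^I$ combinations, polynomial since each clause has only $3$ inner terms, together with distributing $\max+\max$ over the index product), and (ii) that the symmetrization $f_{\sf F}(\bm{d})+f_{\sf F}(-\bm{d})$ is used --- not $f_{\sf F}$ alone --- so that the flat-passage argument goes through after replacing the witness $\bm{d}$ by $\sgn(d_1)\cdot\bm{d}$, which is what cancels the linear term $\bm{d}^\top\bm{e}_1/2$ and produces a neighborhood where $\nabla f_{\sf C}=\bm{0}$. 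Your sketch omits the symmetrization, and without it the ``satisfiable $\Rightarrow\bm{0}\in\partial f_{\sf C}(\bm{0})$'' direction does not close. As written, the proposal is a correct plan that coincides with the paper's route, but these three items (global nonpositivity or explicit rescaling; the symmetrized gadget; closure of polynomial-size \MaxMin{} forms under addition) must be supplied before it constitutes a proof.
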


	It is notable that a \PA{} function presented in \MaxMin{} form can have only polynomially many affine pieces, which necessitates techniques different from those used in the proof of \Cref{thm:hard-dc}. Recall the notation used in the \MaxMin{} representation in \Cref{fct:pa-form}\ref{item:fct:pa-form-b}. Fix $ i \in [k] $ and a point $\bm{w}$. Determining whether the selection function \(\bm{w} \mapsto \bm{w}^\top\bm{x}_i + a_i\) is essentially active \cite[p.~92]{scholtes2012introduction} at $\bm{w}$ is \(\cNP\)-hard. Otherwise, we could decide the entire essentially active index set and consequently determine the set \(\partial f(\bm{0})\) in polynomial time, which would contradict \Cref{thm:hard-general}.
Another interpretation of the source of hardness arises from the perspective of nonsmooth analysis. For convex functions $\{f_i: \mathbb{R}^d \to \mathbb{R}, i \in [l]\}$, by the exact chain rule \cite[Corollary D.4.3.2]{hiriart2004fundamentals}, we have  
\[
\partial \left[\max_{1 \leq i \leq l} f_i\right](\bm{w}) = \conv \bigcup \left\{\partial f_j(\bm{w}) : \max_{1 \leq i \leq l} f_i(\bm{w}) = f_j(\bm{w}), j \in [l]\right\}.
\]
Although the \MaxMin{} form has a similar form \( f = \max_{1 \leq i \leq l} g_i \) with \( g_i(\bm{w}) := \min_{1 \leq j \leq M_i} \bm{w}^\top \bm{x}_j + a_j \), the functions $\{g_i: \mathbb{R}^d \to \mathbb{R}, i \in [l]\}$ are concave rather than convex. Hence, the exact chain rule in \cite[Corollary D.4.3.2]{hiriart2004fundamentals} does not apply. Therefore, while \(\partial g_i(\bm{w})\) can be computed easily, deriving an efficiently computable formula for \(\partial f(\bm{w})\) is impossible unless \(\cP = \cNP\).

	Similar to the \DC{} representation, the distinction in complexity between testing for a local minimum and a stationary point remains fundamental for the \MaxMin{} form, as illustrated by the following membership results.

\begin{Theorem}
	[\MaxMin-completeness]\label{thm:mem-general} 
Let a \PA{} function $f:\mathbb{R}^d\rightarrow \mathbb{R}$ in the form of \MaxMin{} representation
with integer data be given. %
The following hold:	
\begin{enumerate}[label=\textnormal{(\alph*)}]
		\item Checking whether $\bm{0}\in \widehat{\partial} f(\bm{0})$ is  co-$\cNP$-complete.%
		\item Checking whether $\bm{0} \in \partial f(\bm{0})$ is $\cNP$-complete.%
	\end{enumerate}
	\end{Theorem}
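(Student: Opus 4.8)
The hardness directions are already in hand: specializing \Cref{thm:hard-general}\ref{item:thm:hard-general-a} and \ref{item:thm:hard-general-b} to $\epsilon=0$ gives co-$\cNP$-hardness of testing $\bm{0}\in\widehat{\partial} f(\bm{0})$ and $\cNP$-hardness of testing $\bm{0}\in\partial f(\bm{0})$. Thus the plan is to establish the two matching \emph{membership} claims, namely that $\bm{0}\in\widehat{\partial} f(\bm{0})$ lies in co-$\cNP$ and $\bm{0}\in\partial f(\bm{0})$ lies in $\cNP$. The common first step is to pass to the local, positively homogeneous model of $f$ at $\bm{0}$. Writing $f(\bm{w})=\max_{i\in[l]}\min_{j\in\mathcal{M}_i}(\bm{w}^\top\bm{x}_j+a_j)$ and setting $\alpha_i:=\min_{j\in\mathcal{M}_i}a_j$, $v:=f(\bm{0})=\max_i\alpha_i$, $\mathcal{G}:=\{i:\alpha_i=v\}$, and $\mathcal{A}_i:=\{j\in\mathcal{M}_i:a_j=\alpha_i\}$, a short evaluation of the limit defining the directional derivative shows that, for every $\bm{d}$, $f'(\bm{0};\bm{d})=\hat{f}(\bm{d}):=\max_{i\in\mathcal{G}}\min_{j\in\mathcal{A}_i}\bm{x}_j^\top\bm{d}$. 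All the index sets $\mathcal{G}$ and $\mathcal{A}_i$ are computable in polynomial time by sorting the integers $a_j$, and near $\bm{0}$ we have $f(\bm{w})=v+\hat{f}(\bm{w})$ by \Cref{fct:pa-dd}; in particular $\partial f(\bm{0})=\partial\hat{f}(\bm{0})$ and $\widehat{\partial} f(\bm{0})=\widehat{\partial}\hat{f}(\bm{0})$, so both tests reduce to the max-min of linear functions $\hat f$.

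For the co-$\cNP$ membership in part (a), I would invoke \Cref{fct:frechetloca} to rewrite $\bm{0}\in\widehat{\partial} f(\bm{0})$ as ``$\bm{0}$ is a local minimizer of $f$'', which by the identity $f=v+\hat{f}$ and the positive homogeneity of $\hat{f}$ is equivalent to $\hat{f}(\bm{w})\geq 0$ for \emph{all} $\bm{w}\in\mathbb{R}^d$. It then suffices to place the complement in $\cNP$. Now $\hat{f}(\bm{w})<0$ for some $\bm{w}$ if and only if there is a \emph{selection} $j(i)\in\mathcal{A}_i$ for each $i\in\mathcal{G}$ together with a single $\bm{w}$ satisfying $\bm{x}_{j(i)}^\top\bm{w}<0$ for all $i\in\mathcal{G}$: the ``if'' is immediate, and the ``only if'' follows by taking $j(i)\in\argmin_{j\in\mathcal{A}_i}\bm{x}_j^\top\bm{w}$. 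Hence a polynomial certificate for $\bm{0}\notin\widehat{\partial} f(\bm{0})$ consists of such a selection, whose induced strict system $\{\bm{w}:\bm{x}_{j(i)}^\top\bm{w}<0,\ \forall i\in\mathcal{G}\}\neq\emptyset$ is decidable in polynomial time by \Cref{lem:gordan} (or an LP). This puts the complement in $\cNP$, so part (a) is in co-$\cNP$.

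For the $\cNP$ membership in part (b), I would use that $\partial f(\bm{0})=\partial\hat{f}(\bm{0})=\conv\{\bm{x}_j:j\text{ essentially active at }\bm{0}\}$, where $j$ is essentially active exactly when $\hat{f}$ coincides with the linear piece $\bm{w}\mapsto\bm{x}_j^\top\bm{w}$ on some full-dimensional polyhedral cone with apex $\bm{0}$. The key structural observation is that each such cone is cut out by a selection: fixing a group $i$ with $j\in\mathcal{A}_i$ and one piece $j(i')\in\mathcal{A}_{i'}$ per group $i'\in\mathcal{G}$, the cone
\[
K=\big\{\bm{w}:(\bm{x}_j-\bm{x}_{j'})^\top\bm{w}\leq 0\ \forall j'\in\mathcal{A}_i;\ (\bm{x}_{j(i')}-\bm{x}_{j''})^\top\bm{w}\leq 0\ \forall j''\in\mathcal{A}_{i'},\,i'\in\mathcal{G};\ (\bm{x}_{j(i')}-\bm{x}_j)^\top\bm{w}\leq 0\ \forall i'\in\mathcal{G}\big\}
\]
satisfies $\hat{f}|_K=\bm{x}_j^\top(\cdot)$, and $j$ is essentially active if and only if some selection renders $K$ full-dimensional (equivalently $\intt K\neq\emptyset$, decidable by \Cref{lem:gordan}/LP). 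Therefore, by Carathéodory's theorem, a polynomial certificate for $\bm{0}\in\partial f(\bm{0})$ consists of at most $d+1$ pieces $j_0,\dots,j_d$, one full-dimensionality-witnessing selection for each, and convex multipliers $\lambda_0,\dots,\lambda_d\geq 0$ with $\sum_k\lambda_k\bm{x}_{j_k}=\bm{0}$; each ingredient is verified in polynomial time (the multipliers by LP). This places part (b) in $\cNP$.

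The main obstacle I expect is not the feasibility bookkeeping but justifying the selection-cone characterization of essential activity—establishing both its \emph{soundness} (a full-dimensional $K$ genuinely forces $\hat{f}=\bm{x}_j^\top(\cdot)$ on an open set, so $\bm{x}_j$ is a limiting gradient at $\bm{0}$) and its \emph{completeness} (every full-dimensional cell of the polyhedral subdivision on which $f$ is affine with gradient $\bm{x}_j$ is contained in such a $K$). The exponentially many possible selections pose no difficulty for membership, since $\cNP$ (resp.~co-$\cNP$) only needs to guess one; and the reduction to $\mathcal{G}$ and the $\mathcal{A}_i$ is precisely what tames the ties among the constants $a_j$ and makes the guessed linear systems homogeneous, so that \Cref{lem:gordan} applies cleanly.
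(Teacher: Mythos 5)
Your proposal is correct and follows essentially the same route as the paper's proof: reduce to the positively homogeneous local model, use the Fr\'echet-stationarity-equals-local-minimality fact for (a), and for (b) combine the essentially-active-index formula for $\partial f(\bm{0})$ with Carath\'eodory and polyhedral/Gordan-type feasibility checks, including the same hyperplane-avoidance perturbation to obtain a selection with unique argmins. The only (cosmetic) difference is that your certificates are combinatorial selections verified by solving an LP, whereas the paper's certificates are rational vertex solutions of the corresponding inequality systems whose encoding length must be bounded; both yield polynomial-size certificates and polynomial-time verification, provided you discard the identically-zero rows (e.g., when $\bm{x}_{j(i')}=\bm{x}_j$) before testing that your cone $K$ has nonempty interior.
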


We end this subsection by quoting \cite[p.~23]{scholtes2012introduction} that ``the construction of a handy max-min form is often very difficult, if at all possible.''
 
 \subsection{Discussion}
 
 We present three noteworthy corollaries of \Cref{thm:hard-general,thm:hard-dc}.

 \paragraph{DC-criticality.} %
 The DC representation naturally provides a \DC{} decomposition of the concerned \PA{} function. Therefore, it is legitimate to discuss the concept of DC-criticality (see \Cref{def:dc-crit}), a solution notion tailored for DC programming \cite{le2018dc,de2020abc}.
 It is well-known that DC-criticality represents a weaker notion than Clarke stationarity. In \Cref{coro:dc-critical}, for the first time, it is shown that determining whether a DC-critical point is Clarke stationary is $\cNP$-hard. Therefore, distinguishing between these two solution concepts is computationally intractable.
 
\begin{Corollary}\label{coro:dc-critical}
There exists a family of convex \PA{} functions, with $h,g:\mathbb{R}^d\to\mathbb{R}$ being two instances, such that the following hold.
\begin{itemize}
	\item Both $\partial h(\bm{0})$ and $\partial g(\bm{0})$ are efficiently LP representable.
	\item We always have $\bm{0} \in \partial h(\bm{0}) - \partial g(\bm{0})$.
	\item Checking whether $\bm{0} \in \partial (h-g)(\bm{0})$ is $\cNP$-hard.
\end{itemize}
\end{Corollary}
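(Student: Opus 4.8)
The plan is to exhibit the family of convex \PA{} functions directly from the hardness construction underlying \Cref{thm:hard-dc}\ref{item:thm:hard-dc-b}, and then verify that the three bulleted properties hold. Recall that \Cref{thm:hard-dc}\ref{item:thm:hard-dc-b} produces, for each instance of a strongly \cNP-hard decision problem, two convex \PA{} functions $h_{\sf C}, g_{\sf C}: \mathbb{R}^d \to \mathbb{R}$ in $n$-\MC{} form such that checking $\bm{0} \in \partial(h_{\sf C} - g_{\sf C})(\bm{0})$ is \cNP-hard. I would take $h := h_{\sf C}$ and $g := g_{\sf C}$ to be exactly these functions, so that the third bullet is immediate by construction.

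\medskip

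The first bullet requires that both convex subdifferentials $\partial h(\bm{0})$ and $\partial g(\bm{0})$ admit efficient LP representations. This is a structural property of the $n$-\MC{} form and is essentially the content of the discussion in the Remark following \Cref{thm:mem-dc} (pointing to \Cref{sec:nMC}): the convex subdifferential of an $n$-\MC{} function at a point can be written as a polynomial-size projection of a product of simplices and line segments, yielding a polynomial-size extended LP formulation. I would simply invoke this fact. For the second bullet, I need $\bm{0} \in \partial h(\bm{0}) - \partial g(\bm{0})$, i.e.\ that $\bm{0}$ is always \DC-critical regardless of the problem instance. The natural route is to observe that the seesaw-type gadget is engineered so that in the ``off'' state the function $h-g$ is affine near $\bm{0}$, and in either state the convex parts $h$ and $g$ share a common subgradient at $\bm{0}$, so $\partial h(\bm{0}) \cap \partial g(\bm{0}) \neq \emptyset$, which by \Cref{def:dc-crit} is equivalent to $\bm{0} \in \partial h(\bm{0}) - \partial g(\bm{0})$. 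I would check this directly from the explicit data of the gadget construction; it should reduce to exhibiting one vector lying in both subdifferentials.

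\medskip

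The main obstacle is the second bullet: I must confirm that the specific construction from \Cref{thm:hard-dc}\ref{item:thm:hard-dc-b} genuinely satisfies $\partial h(\bm{0}) \cap \partial g(\bm{0}) \neq \emptyset$ for \emph{every} instance, not just generically. This is exactly the property that makes the reduction meaningful: the hardness of deciding Clarke stationarity at a point that is \emph{always} \DC-critical is what demonstrates the gap between the two solution concepts. Concretely, I expect that the gadget functions $h_{\sf C}$ and $g_{\sf C}$ are built by adding a common convex ``base'' term to the Fr\'echet gadget components $h_{\sf F}, g_{\sf F}$, and that this shared structure forces a common subgradient at $\bm{0}$ irrespective of whether the flat passage opens. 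Verifying this amounts to reading off the subgradients from the explicit affine pieces active at $\bm{0}$ and exhibiting the intersection point; the computation is routine once the construction is recalled, but care is needed to ensure the intersection is nonempty in both the satisfiable and unsatisfiable cases. Combining the three verified bullets yields the corollary.
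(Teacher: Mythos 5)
Your proposal is correct and follows essentially the same route as the paper: take $h:=h_{\sf C}$, $g:=g_{\sf C}$ from \Cref{prop:DCC}, invoke the LP representability of subdifferentials of $n$-\MC{} functions from \Cref{sec:nMC}, and use \Cref{lem:dc-np-hard-C} for the hardness bullet. The verification you defer for the second bullet goes through immediately: both $h_{\sf C}$ and $g_{\sf C}$ are nonnegative and vanish at $\bm{0}$, so $\bm{0}$ is a global minimizer of each convex function, whence $\bm{0}\in\partial h_{\sf C}(\bm{0})\cap\partial g_{\sf C}(\bm{0})$, which is exactly the observation the paper makes.
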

\begin{proof}
See \Cref{sec:prf-dc-critical}.
\end{proof}

 Importantly, this corollary highlights that the emergence of $\cNP$-hardness does not stem from difficulties in representing $\partial h(\bm{0})$ or $\partial g(\bm{0})$. Instead, the hardness arises from the complicated interaction between the convex and concave parts.

 \paragraph{Abs-Normal Form.}
 Nonsmooth functions in real-world applications usually contain structures that can be exploited in theoretical analysis and algorithmic design. A subclass of piecewise differentiable functions, termed $C^d_{\textnormal{abs}}$ or functions representable in abs-normal form \cite[Definition 2.1]{griewank2019relaxing}, and defined as the composition of smooth functions and the absolute value function, is introduced by \citet{griewank2013stable}; see \Cref{sec:prf-abs} for a quick and informal review.
An important corollary of our hard construction concerns the complexity of checking an optimality condition, called first-order minimality \cite[Equation (2)]{griewank2019relaxing}, for functions in $C^d_{\textnormal{abs}}$. The following result gives an affirmative answer to a conjecture of \citet[p.~284]{griewank2019relaxing}:

\begin{Corollary}[FOM of abs-normal form]\label{coro:abs-norm-hard}
	Testing First-Order Minimality (FOM) for a piecewise differentiable  function given in the abs-normal form is co-$\cNP$-complete. 
\end{Corollary}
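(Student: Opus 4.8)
The plan is to establish both membership in co-$\cNP$ and co-$\cNP$-hardness, exploiting the fact that for \PA{} functions the FOM condition coincides with Fréchet stationarity, which in turn characterizes local minimality via \Cref{fct:frechetloca}.

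\textbf{Hardness.} First I would reduce from the Fréchet stationarity testing problem shown strongly co-$\cNP$-hard in \Cref{thm:hard-dc}\ref{item:thm:hard-dc-a} with $\epsilon = 0$. Given the hard instance consisting of convex \PA{} functions $h,g$ in $n$-\MC{} form, I form $f := h - g$ and rewrite it in abs-normal form by repeatedly applying the identity $\max\{a,b\} = \tfrac{1}{2}(a+b) + \tfrac{1}{2}|a-b|$, which converts every binary maximum into an affine combination plus one absolute value; a $k$-ary maximum is handled by a balanced binary tree using $k-1$ absolute-value evaluations. Since each $\max$/sum node of the \MC{} representation expands into only polynomially many smooth (affine) and absolute-value operations, the resulting abs-normal representation of $f$ has polynomial size. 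By \Cref{fct:frechetloca}, $\bm{0} \in \widehat{\partial} f(\bm{0})$ holds if and only if $\bm{0}$ is a local minimizer of $f$, and Griewank's FOM is precisely this first-order necessary condition for \PA{} functions; hence testing FOM at $\bm{0}$ inherits the strong co-$\cNP$-hardness.

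\textbf{Membership.} For the upper bound, I would argue that the complement problem---deciding that a given point is \emph{not} FOM---lies in $\cNP$. A ``not-FOM'' certificate is a descent direction $\bm{d}$ witnessing $f'(\bm{0}; \bm{d}) < 0$, together with the combinatorial data (the active branch selected at each absolute-value node) needed to evaluate $f'(\bm{0}; \cdot)$ along $\bm{d}$. Because the abs-normal structure lets one propagate directional derivatives layer by layer---each absolute-value node contributing a term determined by the sign of its argument---the value $f'(\bm{0}; \bm{d})$ is computable in polynomial time, so the certificate is verifiable in polynomial time. This mirrors the essentially-active-index-set protocol used in the membership proofs of \Cref{thm:mem-dc,thm:mem-general}. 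Consequently testing FOM is in co-$\cNP$, and combined with the hardness above, it is co-$\cNP$-complete.

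\textbf{Main obstacle.} The crux is verifying that Griewank's FOM condition is genuinely equivalent to Fréchet stationarity $\bm{0} \in \widehat{\partial} f(\bm{0})$ for the functions produced by the abs-normal conversion, and that this equivalence is preserved exactly under the $\max$-to-$|\cdot|$ rewriting. In particular, one must confirm that the LIKQ condition of \cite{griewank2016first}---under which FOM is \emph{tractable}---fails on the hard instance, so that no contradiction with the known tractable regime arises; this amounts to checking that the absolute-value kinks in the construction are degenerate (linearly dependent active gradients) precisely where the co-$\cNP$-hard combinatorial structure is embedded.
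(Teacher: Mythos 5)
Your proposal is correct in its essential logic and follows the same overall strategy as the paper---embed one of the paper's own co-$\cNP$-hard Fr\'echet-stationarity instances into abs-normal form via the $\max$-to-$|\cdot|$ identities, invoke the equivalence of FOM with $\bm{0}\in\widehat{\partial}f(\bm{0})$ for \PA{} functions, and certify the complement in $\cNP$---but it differs in two concrete choices. For hardness, the paper reduces from the \MaxMin{}-form construction $f_{\sf F}$ of \Cref{prob:plt} (a max of sums of ReLU terms), which admits an essentially direct abs-normal encoding with one switching variable per $\max\{\cdot,0\}$ and per binary max in the outer tree; you instead reduce from the \DC{} instance of \Cref{thm:hard-dc}\ref{item:thm:hard-dc-a}, which also works but forces you to abs-normalize $r\|\bm{d}\|_\infty$ and a nested max of sums, i.e., a slightly bulkier (though still polynomial-size) encoding. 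For membership, the paper's certificate is a signature vector $\bm{\sigma}\in\{-1,1\}^s$ for the switching variables, after which non-minimality on the corresponding branch reduces to feasibility of a polynomial-size linear system; your certificate is a descent direction $\bm{d}$ with $f'(\bm{0};\bm{d})<0$ plus the active branch data. These are two sides of the same coin, but note that your version carries a proof obligation you only gesture at: you must argue that some such $\bm{d}$ has polynomially bounded encoding length, which is obtained exactly as in \Cref{thm:mem-dc}\ref{item:thm:mem-dc-a} by taking a vertex solution of the rational linear system cut out by a fixed signature---at which point the signature itself is the more economical certificate. Finally, your ``main obstacle'' about LIKQ is a reasonable sanity check but not a proof obligation: the tractability of FOM under LIKQ in \cite{griewank2016first} is not contradicted by general hardness, since the hard instances need not (and, unless $\cP=\cNP$, cannot all) satisfy LIKQ; nothing in the reduction requires you to verify its failure.
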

\begin{proof}%
	See \Cref{sec:prf-abs}.
\end{proof}

\paragraph{Convolutional Neural Networks.}
Another notable corollary is on the complexity of detecting an approximate (Clarke) stationary point for the loss of a Convolutional Neural Networks (CNNs). CNNs are one of the most popular network architectures for image classification.
\begin{Corollary}[CNNs]\label{coro:nnhard}
Let $f:\mathbb{R}^d\rightarrow\mathbb{R}$ be the loss function of a shallow CNN with \textnormal{ReLU} activation function and max-pooling operator. Fix any $\epsilon \in [0, 1/2)$.
 Then, testing the (Clarke) $\epsilon$-stationarity $\bm{0} \in \partial f(\bm{\theta})+  \epsilon\mathbb{B}$ for given network parameters $\bm{\theta}\in\mathbb{Q}^d$ is $\cNP$-hard. %
\end{Corollary}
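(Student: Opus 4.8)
The plan is to reduce from the strongly $\cNP$-hard problem of \Cref{thm:hard-dc}\ref{item:thm:hard-dc-b}: given two convex \PA{} functions $h,g:\mathbb{R}^d\to\mathbb{R}$ in $2$-\MC{} form with integer data, decide whether $\bm{0}\in\partial(h-g)(\bm{0})+\epsilon\mathbb{B}$. The key observation is that the two elemental operations appearing in the $n$-\MC{} representation, namely the pointwise maximum and the nonnegative summation of affine pieces, are exactly the operations a CNN realizes through its max-pooling operator and its (convolutional and linear) layers, while the concave summand $-g$ is supplied by the sign pattern of a fixed linear read-out. Thus I would build, in polynomial time, a shallow CNN whose loss $f(\bm{\theta})$, regarded as a function of the trainable filter $\bm{\theta}\in\mathbb{Q}^d$, agrees with $h-g$ on a neighborhood of $\bm{\theta}=\bm{0}$, so that $\partial f(\bm{0})=\partial(h-g)(\bm{0})$ and the two $\epsilon$-stationarity tests coincide.

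Concretely, I would first turn the weight sharing of CNNs to my advantage: in the $2$-\MC{} form $\sum_{j_2}\max_{i_2}\sum_{j_1}\max_{i_1}(\bm{w}^\top\bm{x}_{i_1,j_1,i_2,j_2}+a_{i_1,j_1,i_2,j_2})$ the \emph{same} variable $\bm{w}$ multiplies every affine piece, which matches precisely a single shared convolutional filter $\bm{w}=\bm{\theta}$ swept across spatial locations. I would arrange a single input tensor whose receptive-field patches enumerate the data vectors $\bm{x}_{i_1,j_1,i_2,j_2}$ and fold the integer offsets $a_{i_1,j_1,i_2,j_2}$ into fixed bias terms, so that the convolution output at each location equals the corresponding affine piece. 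The inner $\max_{i_1}$ then becomes a max-pooling over the appropriate window, the sum $\sum_{j_1}$ a fixed summation ($1\times 1$) layer, the outer $\max_{i_2}$ a second max-pooling, and $\sum_{j_2}$ a final summation; ReLU units ($\max\{\cdot,0\}$, i.e.\ pooling against a fixed zero feature) handle any rectification needed to align with the pieces. Running this block with read-out weight $+1$ produces $h$ and with read-out weight $-1$ produces $-g$, so the signed linear read-out yields $h-g$. Keeping the read-out and bias parameters fixed, with only $\bm{\theta}$ free, guarantees that $f$ is genuinely \PA{} in $\bm{\theta}$ rather than piecewise polynomial.

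I would then verify correctness and efficiency. Correctness reduces to checking that on a small ball around $\bm{\theta}=\bm{0}$ no pooling tie is resolved in a way that alters the active selection, so that the loss coincides with $h-g$ up to an additive constant (which does not affect the subdifferential); hence $\partial f(\bm{0})=\partial(h-g)(\bm{0})$ and $\bm{0}\in\partial f(\bm{0})+\epsilon\mathbb{B}\iff\bm{0}\in\partial(h-g)(\bm{0})+\epsilon\mathbb{B}$. Efficiency follows because the number of channels, pooling windows, and input entries is linear in the description length of $(h,g)$, and the integer data guaranteed by \Cref{thm:hard-dc} keep every coordinate polynomially bounded, so the reduction is strongly polynomial and transfers the $\cNP$-hardness.

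The main obstacle I anticipate is faithfully emulating the full two-level max--sum structure of the $2$-\MC{} form within the rigid CNN template while simultaneously (i) producing the concave part $-g$ using only admissible operations (the signed read-out), (ii) keeping the loss piecewise affine by avoiding products of trainable parameters, and (iii) ensuring the local identity $f=h-g$ near $\bm{0}$ holds exactly so that the Clarke subdifferentials match. The delicate point is laying out the input tensor and the pooling windows so that weight sharing reproduces the intended affine pieces without accidentally coupling pieces that must remain independent; once this layout and the bias bookkeeping are fixed correctly, the remaining verification is routine.
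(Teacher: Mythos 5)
Your reduction is sound in outline, but it takes a genuinely different route from the paper's. The paper does \emph{not} go through \Cref{thm:hard-dc}: it reduces directly from 3SAT via the \MaxMin{} gadget of \Cref{lem:plt-np-hard}, defining $f_{\sf NF}(\bm{u},\bm{w})=\max_{1\leq i\leq n}\sum_{j=1}^3 u_j\cdot\max\{\bm{w}^\top\bm{y}_{3(i-1)+j},0\}$ and a seesaw function $f_{\sf NC}$ (\Cref{prob:nnt}, \Cref{lem:nnt-co-np-hard}), and then realizes $f_{\sf NC}$ as a conv$\to$ReLU$\to$conv$\to$max-pool network in which \emph{both} convolutional layers are trainable; stationarity is tested at the point $(-\bm{1}_3,-\bm{1}_3,\bm{0}_m)$, and the bilinear coupling $u_j\cdot\max\{\bm{d}^\top\bm{y},0\}$ between the two layers is precisely what lets a single ReLU stage and a single pooling stage suffice (at the price of the loss being only piecewise smooth, not \PA{}, in the joint parameters). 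Your route instead freezes everything except the first-layer filter and emulates the full $2$-\MC{} structure of $h_{\sf C}-g_{\sf C}$, which forces a conv$\to$pool$\to$conv(fixed)$\to$pool$\to$linear(fixed) stack with two pooling stages and fixed intermediate summation weights; what you buy is a loss that is exactly \PA{} in $\bm{\theta}$ and a black-box reuse of \Cref{thm:hard-dc}\ref{item:thm:hard-dc-b}, while the paper buys a more standard architecture in which every layer is trainable. Three small points to tighten: (i) standard convolutional layers have one shared bias per channel rather than per-location biases, so you cannot literally ``fold the offsets $a_{i_1,j_1,\dots}$ into fixed bias terms''---but this is moot because the hard instances $h_{\sf C},g_{\sf C}$ of \Cref{prop:DCC} are positively homogeneous, so you should simply take all offsets to be zero; (ii) since the network then computes $h_{\sf C}-g_{\sf C}$ \emph{exactly} as a function of $\bm{\theta}$, the worry about pooling ties is vacuous and the identity $\partial f(\bm{0})=\partial(h_{\sf C}-g_{\sf C})(\bm{0})$ is immediate; (iii) you should state explicitly that the ``loss'' is the identity read-out on a single data point (as the paper does), since otherwise an outer loss $\ell$ would destroy piecewise affineness.
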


\begin{proof}%
	See \Cref{sec:cnn-hard}.
\end{proof}

\Cref{coro:nnhard} shows a computational tractability separation for the stationarity test between smooth and nonsmooth networks. In the smooth setting, given the gradient of every component function, we can compute the gradient norm of the loss function by iteratively applying the calculus rule. But in the nonsmooth case, while the (Clarke) subdifferential of every elemental function can be computed easily, the validity of the subdifferential calculus rule is not justified, which turns out to be a serious computational hurdle in the stationarity test (strong $\cNP$-hardness).

\subsection{Proofs of \Cref{thm:hard-dc,thm:mem-dc} (DC)}\label{sec:prf-dc}

We formulate the following decision problems for the DC representation.

 \begin{Problem}[PAR$\{-1,0,1\}$MAX$_1$; cf.~{\cite[Theorem 12]{bodlaender1990computational}}]\label{prop:PCP}
 	Suppose that positive integers $n, m \leq n, \alpha$, and $m$ linearly independent vectors $\bm{y}_1, \dots, \bm{y}_m$ in $\{-1,0,1\}^n$ are given. 
 	Determine whether there exist $\epsilon_1, \dots, \epsilon_m \in \{-1,1\}$ such that
 	\[
 	\left\|\sum_{i=1}^m \epsilon_i\bm{y}_i \right\|_1 \geq \alpha.
 	\]
 \end{Problem}
 
 \begin{Problem}\label{prop:DCF}%
 	Fix $\epsilon \in [0, \infty)$. Suppose the input data $r\in\mathbb{N}$ and $\{\bm{y}_i\}_{i=1}^m \subseteq \{-1,0,1\}^n$ are given. Let us define two convex PA functions $h_{\sf F},g_{\sf F}:\mathbb{R}^n \to \mathbb{R}$ as 
 	\[
 	h_{\sf F}(\bm{d}):= r\|\bm{d}\|_\infty,\qquad g_{\sf F}(\bm{d}):=
 	\max\left\{r\|\bm{d}\|_\infty, \sum_{i=1}^m \left| \bm{d}^\top \bm{y}_i \right| \right\}.
 	\]
  Determine whether $\bm{0} \in \widehat{\partial} (h_{\sf F}-g_{\sf F})(\bm{0})+\epsilon \mathbb{B}$.
 \end{Problem}
 \begin{Problem}\label{prop:DCC} 
	Fix $\epsilon \in [0, 1/2)$. Suppose the input data $r\in\mathbb{N}$ and $\{\bm{y}_i\}_{i=1}^m \subseteq \{-1,0,1\}^n$ in the statement of \Cref{prop:DCF} are given. Let us define two convex \PA{} functions $h_{\sf C}, g_{\sf C}:\mathbb{R}^n \to \mathbb{R}$ as
	\[
	h_{\sf C}(\bm{d}):= \frac{\bm{d}^\top \bm{e}_1}{2} + \max\left\{h_{\sf F}(\bm{d}) + \frac{|\bm{d}^\top \bm{e}_1|}{2}, g_{\sf F}(\bm{d})\right\},\qquad
	g_{\sf C}(\bm{d}):= g_{\sf F}(\bm{d}) + \frac{|\bm{d}^\top \bm{e}_1|}{2},
	\]
	where the convex \PA{} functions $h_{\sf F},g_{\sf F}:\mathbb{R}^n \to \mathbb{R}$ are defined in the \Cref{prop:DCF}.
	Determine whether  
	$
	\bm{0} \in \partial (h_{\sf C} - g_{\sf C})(\bm{0}) + \epsilon\mathbb{B}.
	$
\end{Problem}

\subsubsection{Hardness}

The proof of hardness results in \Cref{thm:hard-dc} is built on the following two lemmas.

\begin{Lemma}\label{lem:dc-np-hard}
	\Cref{prop:DCF} is strongly co-{\sf NP}-hard.
\end{Lemma}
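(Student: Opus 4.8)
The plan is to reduce from PAR$\{-1,0,1\}$MAX$_1$ (\Cref{prop:PCP}), which is strongly $\cNP$-hard, and to arrange the reduction so that a \emph{yes}-instance of \Cref{prop:DCF} corresponds exactly to a \emph{no}-instance of the norm-maximization problem. Writing $f := h_{\sf F} - g_{\sf F}$, the first step is to put $f$ in a transparent form: since $g_{\sf F}(\bm d) = \max\{r\|\bm d\|_\infty, \sum_i |\bm d^\top \bm y_i|\}$, we get
\[
f(\bm d) = r\|\bm d\|_\infty - \max\Big\{r\|\bm d\|_\infty,\ \textstyle\sum_i |\bm d^\top \bm y_i|\Big\} = \min\Big\{0,\ r\|\bm d\|_\infty - \textstyle\sum_i |\bm d^\top \bm y_i|\Big\} \le 0 = f(\bm 0).
\]
Thus $\bm 0$ is automatically a global maximizer of $f$, and it is a local minimizer precisely when $f$ vanishes near $\bm 0$. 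I would exploit that $f$ is positively homogeneous of degree $1$ (a difference of two such functions) and even; convexity and piecewise affineness of $h_{\sf F},g_{\sf F}$ are immediate from their max/sum-of-norms structure.

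Next I would pin down $\widehat{\partial} f(\bm 0)$ via a dichotomy that makes the $\epsilon$-ball irrelevant. Homogeneity gives $f'(\bm 0; \bm d) = f(\bm d)$ for every $\bm d$, so by \Cref{def:subd-f} we have $\widehat{\partial} f(\bm 0) = \{\bm s : \bm s^\top \bm d \le f(\bm d)\ \text{for all } \bm d\}$. If $\bm 0$ is a local minimizer then $f \equiv 0$ by homogeneity, whence $\widehat{\partial} f(\bm 0) = \{\bm 0\}$ and trivially $\bm 0 \in \widehat{\partial} f(\bm 0) + \epsilon\mathbb{B}$. If $\bm 0$ is \emph{not} a local minimizer, pick $\bm d_0$ with $f(\bm d_0) < 0$; using $f(-\bm d_0) = f(\bm d_0)$ and testing the defining inequality at both $\bm d_0$ and $-\bm d_0$ yields $\bm s^\top \bm d_0 \le f(\bm d_0)$ and $-\bm s^\top \bm d_0 \le f(\bm d_0)$, so $0 \le 2 f(\bm d_0) < 0$, a contradiction; hence $\widehat{\partial} f(\bm 0) = \emptyset$ and $\bm 0 \notin \widehat{\partial} f(\bm 0) + \epsilon\mathbb{B}$ for \emph{every} $\epsilon \in [0,\infty)$. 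The upshot is that, regardless of the fixed $\epsilon$, the condition $\bm 0 \in \widehat{\partial}(h_{\sf F} - g_{\sf F})(\bm 0) + \epsilon\mathbb{B}$ holds if and only if $\bm 0$ is a local minimizer of $f$ (consistent with \Cref{fct:frechetloca}).

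It then remains to identify local minimality with the complement of \Cref{prop:PCP}. By homogeneity, $\bm 0$ is a local minimizer iff $r\|\bm d\|_\infty \ge \sum_i |\bm d^\top \bm y_i|$ for all $\bm d$, i.e., iff $\max_{\|\bm d\|_\infty \le 1} \sum_i |\bm d^\top \bm y_i| \le r$. The key identity is the duality computation
\[
\max_{\|\bm d\|_\infty \le 1} \sum_{i=1}^m |\bm d^\top \bm y_i| = \max_{\|\bm d\|_\infty \le 1}\ \max_{\bm \epsilon \in \{-1,1\}^m} \bm d^\top\Big(\textstyle\sum_i \epsilon_i \bm y_i\Big) = \max_{\bm \epsilon \in \{-1,1\}^m} \Big\|\textstyle\sum_i \epsilon_i \bm y_i\Big\|_1,
\]
where I use $|t| = \max_{\epsilon \in \{-1,1\}}\epsilon t$ and $\max_{\|\bm d\|_\infty \le 1}\bm d^\top \bm v = \|\bm v\|_1$. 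Hence, setting the PAR-MAX threshold $\alpha := r + 1$ (so $r = \alpha - 1 \in \mathbb{N}$), the point $\bm 0$ is a local minimizer iff $\max_{\bm\epsilon}\|\sum_i \epsilon_i \bm y_i\|_1 \le r < \alpha$, i.e., iff the given PAR$\{-1,0,1\}$MAX$_1$ instance is a \emph{no}-instance. This is a polynomial-time reduction from the complement of \Cref{prop:PCP}; since the latter is strongly co-$\cNP$-hard (all data lie in $\{-1,0,1\}$ and $\alpha \le mn+1$ is polynomially bounded), so is \Cref{prop:DCF}.

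I expect the only delicate point to be uniformity in $\epsilon$: the reduction must survive the $\epsilon$-relaxation for \emph{every} fixed $\epsilon$, which is exactly why the emptiness of $\widehat{\partial} f(\bm 0)$ in the non-minimizing case is essential---without it, a large $\epsilon$ could spuriously certify approximate stationarity and collapse the reduction. Everything else, namely the homogeneity bookkeeping and the two short dual identities, is routine.
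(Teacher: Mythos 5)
Your proof is correct and follows essentially the same route as the paper: the same reduction from PAR$\{-1,0,1\}$MAX$_1$ with threshold $r=\alpha-1$, the same dichotomy that $\widehat{\partial}(h_{\sf F}-g_{\sf F})(\bm 0)$ is either $\{\bm 0\}$ or empty (which is what neutralizes the $\epsilon$-ball), and the same norm-duality identification with integrality closing the gap between $>r$ and $\ge r+1=\alpha$. The only cosmetic differences are that you establish emptiness of the Fr\'echet subdifferential via evenness of $f$ rather than by testing $\bm d=\bm g$, and you carry out the $\ell_\infty$/$\ell_1$ duality directly instead of phrasing it as a polytope containment $Y\subseteq X$ via support functions; both variants are sound.
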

\begin{proof}
From \cite[Theorem 12]{bodlaender1990computational} and \cite[p.~220]{bodlaender1990computational}, we know that \Cref{prop:PCP} is strongly $\cNP$-hard.
We present a polynomial-time reduction with polynomially bounded magnitude from PAR$\{-1,0,1\}$MAX$_1$ in \Cref{prop:PCP} to $\overline{\text{\Cref{prop:DCF}}}$.
We start with some preparatory moves.
	Let vectors $\bm{y}_1, \dots, \bm{y}_m$ of \Cref{prop:DCF} be these in \Cref{prop:PCP}. Set $r:=\alpha -1$ and we know $r \in \mathbb{N}$ from positivity of $\alpha$.
 Let us define polytopes $X:=\{\bm{d}:\|\bm{d}\|_1 \leq r\}$ and $Y:=\sum_{i=1}^m [-1,1]\bm{y}_i$. By linear independence of vectors $\bm{y}_1, \dots, \bm{y}_m$, we see that $Y$ is a parallelotope in $\mathbb{R}^n$. Note that
 \begin{align*}
 \sum_{i=1}^m \left| \bm{d}^\top \bm{y}_i \right| &= \sum_{i=1}^m \max\{ \bm{d}^\top \bm{y}_i, -\bm{d}^\top \bm{y}_i\} \\
 & = \sum_{i=1}^m \max_{\bm{y} \in Y_i:=[-1,1]\bm{y}_i} \bm{d}^\top \bm{y} \tag{\cite[Proposition C.2.2.1]{hiriart2004fundamentals}} \\
 & = \max_{\bm{y} \in Y=\sum_{i=1}^m  Y_i}\bm{d}^\top \bm{y}. \tag{\cite[Corollary 16.4.1]{rockafellar1970convex}}
 \end{align*}
Using the duality between  $\|\cdot\|_\infty$ and $\|\cdot\|_1$, we can rewrite the \PA{} function $h_{\sf{F}} - g_{\sf{F}}$ as
 \[
 \begin{aligned}
  h_{\sf{F}}(\bm{d}) - g_{\sf{F}}(\bm{d}) 
  &= \min\left\{0, r\|\bm{d}\|_\infty - \sum_{i=1}^m \left| \bm{d}^\top \bm{y}_i \right| \right\}\\
  &= \min\left\{0, \left(\max_{\bm{x} \in X} \bm{d}^\top \bm{x}\right) - \left(\max_{\bm{y} \in Y} \bm{d}^\top \bm{y} \right)\right\} \\
  &= \min\left\{0, \sigma_X(\bm{d})- \sigma_Y(\bm{d})\right\},
 \end{aligned}
 \]
 where $\sigma_X,\sigma_Y:\mathbb{R}^n\to \mathbb{R}$ are support functions of polytopes $X$ and $Y$. Fix some $\epsilon \in [0,\infty)$.
In the sequel, we show that $\bm{0} \notin \widehat{\partial} (h_{\sf{F}} - g_{\sf{F}})(\bm{0}) + \epsilon\mathbb{B}$ if and only if there exist $\epsilon_1, \dots, \epsilon_m \in \{-1,1\}$ such that $\left\|\sum_{i=1}^m \epsilon_i\bm{y}_i \right\|_1 \geq \alpha=r+1$.

	(``If'') Let $\bm{y}' = \sum_{i=1}^m \epsilon_i\bm{y}_i$ be such that $\|\bm{y}'\|_1 \geq r+1$ and $\epsilon_i \in \{-1,1\}$.  Note that $\bm{y}' \in Y$. Let $\bm{d}':=\sgn(\bm{y}') \in \{-1,1\}^n$, where we set $d'_i \in \{-1,1\}$ arbitrarily if $y'_i = 0$. Compute
	\[
	(h_{\sf{F}} - g_{\sf{F}})(\bm{d}')=\min\left\{0,r-\max_{\bm{y} \in Y} \bm{d}'^\top \bm{y} \right\} \leq \min\left\{0,r - \bm{d}'^\top \bm{y}'\right\} = \min\{0,r - \|\bm{y}'\|_1\}\leq -1.
	\]
	Note that $(h_{\sf{F}} - g_{\sf{F}})(\bm{d}) = (h_{\sf{F}} - g_{\sf{F}})'(\bm{0}; \bm{d})=h_{\sf{F}}'(\bm{0}; \bm{d}) - g_{\sf{F}}'(\bm{0}; \bm{d})$ for any $\bm{d} \in\mathbb{R}^n$. 
	We proceed to prove that $\widehat{\partial} (h_{\sf{F}} - g_{\sf{F}})(\bm{0})=\emptyset$, hence $\bm{0} \notin \widehat{\partial} (h_{\sf{F}} - g_{\sf{F}})(\bm{0}) + \epsilon\mathbb{B}$ for any $\epsilon \in [0,\infty)$.
	Suppose conversely that there exists a vector $\bm{g} \in \widehat{\partial} (h_{\sf{F}} - g_{\sf{F}})(\bm{0})$, so that, by definition, $\bm{g}^\top \bm{d} \leq (h_{\sf{F}} - g_{\sf{F}})'(\bm{0}; \bm{d}) = (h_{\sf{F}} - g_{\sf{F}})(\bm{d})$ for any $\bm{d}$. Examining $\bm{d}:=\bm{g}$, we find that $\|\bm{g}\|^2 \leq (h_{\sf{F}} - g_{\sf{F}})(\bm{g}) \leq 0$. It follows $\bm{g}=\bm{0}$, which implies $(h_{\sf{F}} - g_{\sf{F}})(\bm{d}) \geq \bm{g}^\top \bm{d} = 0$ for any $\bm{d}$, a contradiction to $(h_{\sf{F}} - g_{\sf{F}})(\bm{d}') \leq -1$. 
	
	(``Only if'') Note that $\bm{0} \notin \widehat{\partial} (h_{\sf{F}} - g_{\sf{F}})(\bm{0}) + \epsilon\mathbb{B}$ for some $\epsilon \in [0,\infty)$ implies the existence of $\bm{d}' \in \mathbb{R}^n$ such that $(h_{\sf{F}} - g_{\sf{F}})(\bm{d}')<0$, as otherwise $(h_{\sf{F}} - g_{\sf{F}})(\bm{d})=0$ for all $\bm{d}$, so that $\{\bm{0}\} = \widehat{\partial} (h_{\sf{F}} - g_{\sf{F}})(\bm{0}).$ Therefore, we have $\sigma_X(\bm{d}') - \sigma_Y(\bm{d}') < 0$ from above reformulation. Using \cite[Corollary 13.1.1]{rockafellar1970convex}, one has $Y \not\subseteq X$. Thus, there exists $\bm{y} \in \ext(Y)$ such that $\bm{y}\notin X$. In other words, $\|\bm{y}\|_1 > r$. From \cite[Corollary 18.3.1]{rockafellar1970convex}, we know $\ext(Y) \subseteq \{\sum_{i=1}^m \epsilon_i\bm{y}_i: \epsilon_i\in\{-1,1\}\}$. As $\bm{y}_i\in\{-1,0,1\}^n$ for any $i \in [m]$, by integrality, we have $\|\bm{y}\|_1\geq r+1$. This yields the existence of $\epsilon_1, \dots, \epsilon_m \in \{-1,1\}$ such that 
	$\left\|\sum_{i=1}^m \epsilon_i\bm{y}_i \right\|_1 = \|\bm{y}\|_1 \geq r+1 = \alpha$, as desired.	
\end{proof}

\begin{Lemma}\label{lem:dc-np-hard-C}
	\Cref{prop:DCC} is strongly {\sf NP}-hard.
\end{Lemma}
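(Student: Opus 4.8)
The plan is to give a direct polynomial-time reduction from the strongly $\cNP$-hard problem PAR$\{-1,0,1\}$MAX$_1$ (\Cref{prop:PCP}) to \Cref{prop:DCC}, reusing the data $r:=\alpha-1$ and $\{\bm y_i\}$ together with the polytopes $X=\{\bm d:\|\bm d\|_1\leq r\}$ and $Y=\sum_{i=1}^m[-1,1]\bm y_i$ from the proof of \Cref{lem:dc-np-hard}. First I would simplify the target function: writing $\phi:=h_{\sf F}-g_{\sf F}=\min\{0,\sigma_X-\sigma_Y\}\leq 0$ and $t:=\bm d^\top\bm e_1$, a short computation cancels the $g_{\sf F}$ and $\tfrac{|t|}{2}$ terms and yields the clean expression $f_{\sf C}(\bm d):=(h_{\sf C}-g_{\sf C})(\bm d)=\min\{t,0\}+\max\{\phi(\bm d)+\tfrac{|t|}{2},0\}$. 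I would record two structural facts used throughout: $\phi$ is even (as $X,Y$ are symmetric) and positively homogeneous, hence $f_{\sf C}$ is a positively homogeneous \PA{} function, so its Clarke subdifferential at $\bm 0$ is the convex hull of the gradients of its affine pieces and is governed entirely by the behavior of $f_{\sf C}$ on cones through the origin.

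The easy direction is the ``unsatisfiable'' case. If every sign choice gives $\|\sum_i\epsilon_i\bm y_i\|_1\leq\alpha-1=r$, then, since $\ext(Y)\subseteq\{\sum_i\epsilon_i\bm y_i:\epsilon_i\in\{-1,1\}\}$ (as in \Cref{lem:dc-np-hard}) and these points all lie in $X$ by integrality, we get $Y\subseteq X$, so $\sigma_Y\leq\sigma_X$ and $\phi\equiv 0$. Then $f_{\sf C}(\bm d)=\min\{t,0\}+\tfrac{|t|}{2}=\tfrac{t}{2}$ is affine with constant gradient $\tfrac12\bm e_1$, so $\partial f_{\sf C}(\bm 0)=\{\tfrac12\bm e_1\}$. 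As $\epsilon<\tfrac12$, this gives $\bm 0\notin\partial f_{\sf C}(\bm 0)+\epsilon\mathbb{B}$.

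The heart of the argument is the ``satisfiable'' case, where the seesaw gadget opens a flat passage. Given signs with $\bm y':=\sum_i\epsilon_i\bm y_i$ and $\|\bm y'\|_1\geq r+1$, the computation from the ``if'' part of \Cref{lem:dc-np-hard} shows $\bm d':=\sgn(\bm y')\in\{-1,1\}^n$ satisfies $\phi(\bm d')\leq -1$; using evenness of $\phi$ to flip signs if necessary, I may assume its first coordinate equals $1$. The crucial point is that the \emph{integral} margin $\phi(\bm d')\leq -1$ beats the penalty $\tfrac{|d'_1|}{2}=\tfrac12$, so that $\phi(\bm d')+\tfrac{|d'_1|}{2}\leq-\tfrac12<0$. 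Hence $\bm d'$ lies in the open cone $U:=\{\bm d:d_1>0,\ \phi(\bm d)+\tfrac{d_1}{2}<0\}$ (open by continuity of $\phi$, a cone by homogeneity), on which both terms of $f_{\sf C}$ vanish, i.e.\ $f_{\sf C}\equiv 0$. Taking $t\bm d'\to\bm 0$ inside $U$ gives differentiability points with gradient $\bm 0$, whence $\bm 0\in\partial f_{\sf C}(\bm 0)\subseteq\partial f_{\sf C}(\bm 0)+\epsilon\mathbb{B}$.

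Combining the two cases shows the PAR$\{-1,0,1\}$MAX$_1$ instance is a yes-instance if and only if $\bm 0\in\partial(h_{\sf C}-g_{\sf C})(\bm 0)+\epsilon\mathbb{B}$; since the construction is computable in polynomial time with polynomially bounded integer data, strong $\cNP$-hardness transfers. The main obstacle I anticipate is precisely the ``satisfiable'' case: a bare existence of a descent direction $\phi(\bm d')<0$ is \emph{not} enough to open the passage, because the additive penalty $\tfrac{|d_1|}{2}$ could swallow a small margin; it is essential to exploit the integrality of PAR$\{-1,0,1\}$MAX$_1$ to secure a margin of at least $1$ against a penalty of exactly $\tfrac12$, which is also exactly what forces the threshold $\epsilon<\tfrac12$ and dictates the coefficient $\tfrac12$ in the definitions of $h_{\sf C},g_{\sf C}$.
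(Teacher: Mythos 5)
Your proof is correct and follows essentially the same route as the paper's: the same reduction from PAR$\{-1,0,1\}$MAX$_1$, the same seesaw gadget analysis (your rewriting $\min\{t,0\}+\max\{\phi+|t|/2,0\}$ is algebraically identical to the paper's $\tfrac{t}{2}+\max\{\phi,-|t|/2\}$), the same use of evenness of $h_{\sf F}-g_{\sf F}$ to orient the first coordinate, the same integrality margin $1$ versus penalty $\tfrac12$, and the same limit-of-gradients argument on the open cone where the function vanishes. No gaps.
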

\begin{proof}
By using the construction of convex \PA{} functions $h_{\sf F},g_{\sf F}$ in the proof of \Cref{lem:dc-np-hard}, we present a polynomial-time reduction with polynomially bounded magnitude from the strongly $\cNP$-hard problem PAR$\{-1,0,1\}$MAX$_1$ in \Cref{prop:PCP} to \Cref{prop:DCC}. %

Fix some $\epsilon \in [0,1/2)$.
In the sequel, we show that $\bm{0} \in \partial (h_{\sf{C}} - g_{\sf{C}})(\bm{0}) + \epsilon\mathbb{B}$ if and only if there exist $\epsilon_1, \dots, \epsilon_m \in \{-1,1\}$ such that $\left\|\sum_{i=1}^m \epsilon_i\bm{y}_i \right\|_1 \geq \alpha=r+1$. We can write $h_{\sf{C}} - g_{\sf{C}}$ as
\[
(h_{\sf{C}} - g_{\sf{C}})(\bm{d})= 
\frac{\bm{d}^\top \bm{e}_1}{2} + \max\left\{h_{\sf F}(\bm{d}) - g_{\sf F}(\bm{d}), -\frac{|\bm{d}^\top \bm{e}_1|}{2}\right\}.
\]

(``If'') If the answer to \Cref{prop:PCP} is ``yes'', by the argument in the (``If'') proof of \Cref{lem:dc-np-hard}, we know that there exists a vector $\bm{d} \in \{-1,1\}^n$ such that $(h_{\sf F} - g_{\sf F})(\bm{d}) \leq -1$. Note that $|\bm{d}^\top \bm{e}_1| = |d_1| = 1$ by construction. It follows that
\[
0>-\left|\frac{\bm{d}^\top \bm{e}_1}{2}\right| = -\frac{1}{2} > -1 \geq  (h_{\sf F} - g_{\sf F})(\bm{d}).
\]
Let $\bm{y}:= \sgn(d_1)\cdot \bm{d} \in \{-1,1\}^n$. By $(h_{\sf F} - g_{\sf F})(\bm{d})=(h_{\sf F} - g_{\sf F})(-\bm{d})=(h_{\sf F} - g_{\sf F})(\bm{y}) < -1/2$, we have 
\[
(h_{\sf C} - g_{\sf C})(\bm{y}) = \frac{\bm{y}^\top \bm{e}_1 - |\bm{y}^\top \bm{e}_1|}{2}  = \frac{\sgn(d_1)\cdot d_1 - |d_1|}{2}  = 0.
\]
From the continuity of $h_{\sf F} - g_{\sf F}$, we know that $-\left|\bm{y}'^\top \bm{e}_1\right|/2 > (h_{\sf F} - g_{\sf F})(\bm{y}')$ for any $\bm{y}'$ near $\bm{y}$, so that $(h_{\sf C} - g_{\sf C})(\bm{y}')=0$ and $h_{\sf C} - g_{\sf C}$ is continuously differentiable at $\bm{y}$ with $\nabla (h_{\sf C} - g_{\sf C})(\bm{y}) = \bm{0}$. By positive homogeneity of $h_{\sf C} - g_{\sf C}$ and considering $t \bm{y}$ with $t \searrow 0 $, we get 
\[
\bm{0} = \lim_{t \searrow 0}\nabla (h_{\sf C} - g_{\sf C})(t\bm{y}) \in \partial (h_{\sf C} - g_{\sf C})(\bm{0}).
\]

(``Only if'') Conversely, if the answer to \Cref{prop:PCP} is ``no'', by the conclusion in the (``Only if'') proof of \Cref{lem:dc-np-hard}, we know $(h_{\sf{F}} - g_{\sf{F}})(\bm{d})=0$ for all direction $\bm{d} \in \mathbb{R}^n$. Therefore, $0=(h_{\sf{F}} - g_{\sf{F}})(\bm{d})>-|\bm{d}^\top\bm{e}_1|/2=-\frac{1}{2}$ always, so that 
\[
\dist\Big(\bm{0}, \partial (h_{\sf C} - g_{\sf C})(\bm{0}) \Big) = \frac{\|\bm{e}_1\|}{2} = \frac{1}{2} > \epsilon,
\]
as desired.
Hence, \Cref{prop:DCC} is strongly $\cNP$-hard.
\end{proof}

Now, we are ready for the proof of the hardness results in \Cref{thm:hard-dc}.

\begin{proof}[Proof of \Cref{thm:hard-dc}]
	Using \Cref{lem:dc-np-hard,lem:dc-np-hard-C}, we only need to show that the functions $h_{\sf F},g_{\sf F},h_{\sf C}$, and $g_{\sf C}$ in \Cref{prop:DCF,prop:DCC} have polynomial-size, polynomial-time computable representations in $2$-\MC{} form, with magnitudes that are also polynomially bounded in the input size of \Cref{prop:DCF,prop:DCC}, respectively. The proof is straightforward.
\end{proof}

\subsubsection{Membership}

We write the two given functions $h,g:\mathbb{R}^d\to \mathbb{R}$ in the $n$-\MC{} form as
\[
\begin{aligned}
h(\bm{w})&:=\sum_{1\leq j_n \leq J_n^h} \max_{1\leq i_{n} \leq I_n^h} \cdots \sum_{1\leq j_1 \leq J_1^h} \max_{1\leq i_1 \leq I_1^h} \Big(\bm{w}^\top \bm{x}_{i_1,j_1,\dots,i_n,j_n} + a_{i_1,j_1,\dots,i_n,j_n}\Big), \\
g(\bm{w})&:=\sum_{1\leq j_n \leq J_n^g} \max_{1\leq i_{n} \leq I_n^g} \cdots \sum_{1\leq j_1 \leq J_1^g} \max_{1\leq i_1 \leq I_1^g} \Big(\bm{w}^\top \bm{y}_{i_1,j_1,\dots,i_n,j_n} + b_{i_1,j_1,\dots,i_n,j_n}\Big).
\end{aligned}
\]
Our membership proof will rely on the following technical lemma:
\begin{Lemma}\label{lem:normal-cone-inters-poly}
Let two vectors $\bm{v}_h \in \ext \partial h(\bm{0})$ and $\bm{v}_g \in \ext \partial g(\bm{0})$ be given and suppose that the open set $\intt\left(\mathcal{N}_{\partial h(\bm{0})}(\bm{v}_h)\right)\cap \intt\left( \mathcal{N}_{\partial g(\bm{0})}(\bm{v}_g)\right)$ is non-empty. We can find, in polynomial time, a vector $\bm{p}\in\mathbb{Q}^d$ with polynomial size such that
$
\bm{p} \in \intt\left(\mathcal{N}_{\partial h(\bm{0})}(\bm{v}_h)\right)\cap \intt\left( \mathcal{N}_{\partial g(\bm{0})}(\bm{v}_g)\right).$
\end{Lemma}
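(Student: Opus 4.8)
The plan is to reduce the construction of $\bm{p}$ to the feasibility of a polynomial-size system of homogeneous linear (in)equalities, thereby never explicitly enumerating the (possibly exponentially many) vertices of the polytopes $\partial h(\bm{0})$ and $\partial g(\bm{0})$. The guiding observation is that $\bm{p}\in\intt(\mathcal{N}_{\partial h(\bm{0})}(\bm{v}_h))$ holds precisely when $\bm{v}_h$ is the \emph{unique} maximizer of $\langle\,\cdot\,,\bm{p}\rangle$ over $\partial h(\bm{0})$, and that for an $n$-\MC{} function this uniqueness can be certified compositionally along the $\max$/$\sum$ tree of $h$ rather than globally over all vertices. The same encoding applied to $g$ and conjoined will describe exactly the intersection of the two normal-cone interiors, which is nonempty by hypothesis.

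First I would run a bottom-up pass evaluating $h$ at $\bm{0}$ to record, at every $\max$ node, its active index set (the children attaining the max value); these are computable in polynomial time, and since the directional derivative $h'(\bm{0};\cdot)=\sigma_{\partial h(\bm{0})}(\cdot)$ is positively homogeneous, the affine constants $a_{\cdots}$ enter only in this step and drop out of everything that follows. Next, using the uniqueness of the Minkowski/selection decomposition of an extreme point (\Cref{lem:fukuda}), I would recover from $\bm{v}_h$ the consistent selection data: the selected active child at each $\max$ node and the selected gradient at each affine leaf, so that $\bm{v}_h$ equals the induced sum of leaf gradients; likewise for $\bm{v}_g$.

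The core step is to encode ``$\bm{v}_h$ is the unique maximizer in direction $\bm{p}$'' by a recursion attaching to each node an auxiliary scalar that upper-bounds its directional derivative $f'(\bm{0};\bm{p})=\sigma_{\partial f(\bm{0})}(\bm{p})$: a leaf $\bm{w}^\top\bm{x}+a$ contributes the linear form $\bm{p}^\top\bm{x}$; a sum node contributes the sum of its children's bounds; and a $\max$ node introduces a fresh variable constrained to be $\geq$ each active child's bound. Along the selected branch this bound is \emph{exactly} linear, equal to $\bm{p}^\top\bm{v}_{c^*}$ for the corresponding sub-vertex $\bm{v}_{c^*}$, whereas for every non-selected active child at a $\max$ node I would impose the strict inequality $\bm{p}^\top\bm{v}_{c^*} > (\text{that child's upper-bound variable})$. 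The crucial exactness claim is that, for any fixed $\bm{p}$, setting each auxiliary variable to its tightest feasible value makes it equal to the true directional derivative of its node; hence the lifted system is feasible if and only if all the genuine strict dominations hold simultaneously at every node, i.e.\ if and only if $\bm{p}\in\intt(\mathcal{N}_{\partial h(\bm{0})}(\bm{v}_h))$. The analogous system for $g$ is conjoined, giving one homogeneous system whose $\bm{p}$-projection is exactly $\intt(\mathcal{N}_{\partial h(\bm{0})}(\bm{v}_h))\cap\intt(\mathcal{N}_{\partial g(\bm{0})}(\bm{v}_g))$.

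Finally, to extract a rational $\bm{p}$ of polynomial bit-size in polynomial time, I would exploit homogeneity: scaling a feasible point lets me replace each strict inequality $\ell(\bm{p},\bm{s})<0$ by $\ell(\bm{p},\bm{s})\leq -1$ without destroying feasibility, turning the task into ordinary LP feasibility over a rational polyhedron with polynomially bounded integer data, which (when nonempty) admits a solution of polynomial bit-size computable in polynomial time---via \Cref{lem:gordan} for the purely strict homogeneous part together with the standard basic-solution/ellipsoid bounds in \cite{grotschel2012geometric}---whose $\bm{p}$-component is the desired vector. I expect the main obstacle to be establishing the exactness of this compositional encoding: proving that the recursive upper bounds, combined with the strict separations along the unique selected branch, capture the \emph{interior} (strict) normal-cone condition at \emph{all} levels at once, without ever unfolding the exponentially many vertices. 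Recovering the selection decomposition of the given $\bm{v}_h$ and $\bm{v}_g$ in polynomial time is a secondary technical point, underwritten by the uniqueness in \Cref{lem:fukuda}.
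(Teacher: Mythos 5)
Your overall strategy---reduce the search for $\bm{p}$ to a polynomial-size LP feasibility problem and take a vertex solution to control bit-size---is sound and is also what the paper does, and your starting observation that $\bm{p}\in\intt\bigl(\mathcal{N}_{\partial h(\bm{0})}(\bm{v}_h)\bigr)$ exactly when $\bm{v}_h$ is the \emph{unique} maximizer of $\langle\cdot,\bm{p}\rangle$ over $\partial h(\bm{0})$ is correct. The gap is in your ``crucial exactness claim.'' Requiring, at every $\max$ node of the selected subtree, that the selected child \emph{strictly} dominate every other active child describes only a proper subset of the normal-cone interior: it excludes every $\bm{p}$ for which two or more children tie at the top of a $\max$ node while nevertheless exposing the same singleton sub-vertex, and for such $\bm{p}$ the exposed face of the $\conv\bigcup$ node is still a singleton, so $\bm{p}$ does lie in the interior. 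This subset can be empty even when the target set is not. Concretely, take $h(\bm{w})=\max\{\max\{w_1,0\},\max\{w_2,0\}\}=\max\{w_1,w_2,0\}$ (a $2$-\MC{} function) and $\bm{v}_h=\bm{0}\in\ext\partial h(\bm{0})=\ext\conv\{\bm{0},\bm{e}_1,\bm{e}_2\}$: the normal-cone interior is the nonempty open quadrant $\{p_1<0,\,p_2<0\}$, but on all of it \emph{both} children of the outer $\max$ attain the support value $0=\bm{p}^\top\bm{0}$, so whichever child you select, your strict inequality forces $0>s_{c'}\geq\max\{p_i,0\}\geq 0$ and the system is infeasible. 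Relatedly, \Cref{lem:fukuda} gives uniqueness of the decomposition only at \emph{sum} (Minkowski) nodes; at $\max$ (convex-hull-of-union) nodes the ``selected active child'' need not be unique, as this example shows, so the recovery step is not underwritten by that lemma either. Repairing the encoding requires a disjunction at each $\max$ node (``either strictly dominated, or tied with the same singleton exposed face''), which is no longer a single LP.

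The paper sidesteps ties entirely by describing the cone through its \emph{generators} rather than through exposing directions: it takes the polynomial-size $\mathcal{H}$-lift $H$ of $\partial h(\bm{0})$ from \Cref{sec:nMC}, lifts $\bm{v}_h$ to a vertex $(\bm{v}_h,\bm{t}_h)$ of $H$, writes $\mathcal{N}_{\partial h(\bm{0})}(\bm{v}_h)$ as the image under a linear map $T^h$ of the cone $\Theta$ of nonnegative multipliers on active constraints, and reaches the \emph{interior} by restricting the multipliers to be at least $1$ on the support $\mathcal{I}'$ and invoking $T^h(\ri\Theta)=\ri(T^h(\Theta))$; a vertex of the LP-representable set $S_h\cap S_g$ is then the desired $\bm{p}$. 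If you want to keep your dual, tree-based viewpoint, you would need an exact inequality description of the normal-cone interior that tolerates ties between children; the compositional strict-domination system does not provide one.
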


\begin{proof}
We will show that $\bm{p}$ can be taken as a vertex solution of certain linear inequality system.
We begin by focusing on the function $h$.
From \eqref{eq:nMC-standard-LP} in \Cref{sec:nMC}, there exists an $\mathcal{H}$-polyhedron $H \subseteq \mathbb{R}^{d+t}$ defined by two polynomial-size matrices $\bm{M} \in \mathbb{Q}^{d \times m}, \bm{N} \in \mathbb{Q}^{d\times t}$, and a vector $\bm{c} \in \mathbb{Q}^m$ such that $\partial h(\bm{0}) = \{\bm{g}: (\bm{g}, \bm{t}) \in H\}$. The polyhedron $H$ is given in the $\mathcal{H}$-form as
$
H := \left\{(\bm{g}, \bm{t}): \bm{M}^\top \bm{g} + \bm{N}^\top\bm{t} \leq \bm{c} \right\}.
$
By \cite[Lemma 7.10, Proposition 2.3]{ziegler2012lectures} and $\bm{v}_h \in \ext \partial h(\bm{0})$, there exists $\bm{t}_h$ such that $(\bm{v}_h, \bm{t}_h) \in \ext H$, so that the encoding length of the vector $(\bm{v}_h, \bm{t}_h)$ is polynomial and can be found by computing a vertex solution of a linear feasibility problem in polynomial time.
From \cite[Example A.5.2.6(b)]{hiriart2004fundamentals}, the normal cone of the $\mathcal{H}$-polyhedron $H$ at the point $(\bm{v}_h, \bm{t}_h)$ is given by 
\[
\mathcal{N}_{H}\left(\begin{bNiceMatrix}
\bm{v}_h \\ \bm{t}_h
\end{bNiceMatrix}\right)=\cone\left\{
\begin{bNiceMatrix}
\bm{m}_i \\ \bm{n}_i
\end{bNiceMatrix}: i \in \mathcal{I}\right\}, \quad \text{where}\quad
 \mathcal{I} := \left\{j \in [m]:\bm{m}_j^\top \bm{v}_h + \bm{n}_j^\top \bm{t}_h = c_j\right\},
\]
the vectors $\bm{m}_i$ and $\bm{n}_i$ are the $i$-th column of $\bm{M}$ and $\bm{N}$, respectively. Using the calculus rule for normal cone in \cite[Proposition A.5.3.1(iii)]{hiriart2004fundamentals}, we know that
\[
\mathcal{N}_{\partial h(\bm{0})}(\bm{v}_h)
=
\left\{ \bm{g}:  
\begin{bNiceMatrix}
\bm{g} \\ \bm{0}
\end{bNiceMatrix} \in 
\mathcal{N}_{H}\left(\begin{bNiceMatrix}
\bm{v}_h \\ \bm{t}_h
\end{bNiceMatrix}\right)
\right\}=
\left\{
\sum_{i \in \mathcal{I}} \theta_i \bm{m}_i:  \bm{\theta} \in \Theta
\right\},
\]
where $\Theta:=\{\bm{\theta} \in \mathbb{R}_+^m:
\sum_{i \in \mathcal{I}} \theta_i \bm{n}_i=\bm{0}, (\forall i \in [m]\backslash\mathcal{I})\ \theta_i = 0\}$ is a convex cone.
To slightly tighten the representation, we study the intersection of the set $\Theta$ and the strictly positive hyperoctant $\mathbb{R}_{++}^m$ by considering the following index set:
\[
\mathcal{I}':=\left\{j \in \mathcal{I}: (\exists \bm{\theta}\in \Theta) \ \bm{e}_j^\top \bm{\theta} \geq 1\right\}.
\]
Note that the index set $\mathcal{I}'$ is computable by solving at most $m$ linear feasibility problems. 
Moreover, it is easy to see that 
\[
\Theta = \left\{\bm{\theta}: \sum_{i \in \mathcal{I}'}\theta_i \bm{n}_i=\bm{0}, (\forall j \in \mathcal{I}')\ \theta_j \geq 0
, (\forall i \in [m]\backslash\mathcal{I}')\ \theta_i =0\right\},\ \ 
\mathcal{N}_{\partial h(\bm{0})}(\bm{v}_h)=
\left\{
\sum_{i \in \mathcal{I}'} \theta_i \bm{m}_i:  \bm{\theta} \in \Theta
\right\}.
\]
Let us define the following LP representable sets
\[
\Theta_h:=\left\{\bm{\theta}: \bm{\theta} \in \Theta, (\forall j \in \mathcal{I}')\ \theta_j \geq 1\right\},\quad
S_h:=\left\{
\sum_{i \in \mathcal{I}'} \theta_i \bm{m}_i:  \bm{\theta} \in \Theta_h
\right\}.
\]
Similarly, for the function $g$, we can define analogous LP representable sets $\Theta_g$ and $S_g$ by repeating the arguments above.
We observe that: 
\begin{enumerate}[label=\textnormal{(\alph*)}]
	\item $S_h\cap S_g \neq \emptyset$;
	\item $S_h\cap S_g \subseteq \intt\left(\mathcal{N}_{\partial h(\bm{0})}(\bm{v}_h)\right)\cap \intt\left( \mathcal{N}_{\partial g(\bm{0})}(\bm{v}_g)\right)$.
\end{enumerate}
Therefore, the desired $\bm{p} \in \mathbb{Q}^d$ can be taken as a vertex solution of a linear feasibility problem over $S_h\cap S_g$, which has polynomial encoding length.

Now, we sketch the idea to prove (a) and (b). By hypothesis, there exist $\bm{z} \in \mathbb{R}^d$ and $\eta > 0$ such that 
$
\mathbb{B}_\eta(\bm{z}) \subseteq \mathcal{N}_{\partial h(\bm{0})}(\bm{v}_h)\cap \mathcal{N}_{\partial g(\bm{0})}(\bm{v}_g).
$
For any $j\in\mathcal{I}'$, by definition, let $\bm{\theta}_j \in \Theta$ be some vector such that $\bm{e}_j^\top \bm{\theta}_j \geq 1$. Define $\bm{\theta}^h := \sum_{j\in\mathcal{I}'} \bm{\theta}_j$ and linear mapping $T^h:=(\bm{\theta} \mapsto \sum_{i \in \mathcal{I}'} \theta_i \bm{m}_i)$. Since $\theta^h_i \geq \bm{e}_j^\top \bm{\theta}_j \geq 1$ for all $i \in \mathcal{I}'$ and $\theta^h_i =0$ otherwise, we have $\bm{\theta}^h \in \Theta_h$. Similarly, we define $\bm{\theta}^g \in \Theta_g$ and linear mapping $T^g:\Theta_g \to \mathbb{R}^d$.
For sufficiently small $t_h > 0$ and $t_g > 0$ such that $\bm{p}_h:=t_hT^h(\bm{\theta}_h) + (1-t_h)\bm{z} \in \mathbb{B}_\eta(\bm{z}),\bm{p}_g:=t_gT^g(\bm{\theta}_g) + (1- t_g)\bm{z} \in \mathbb{B}_\eta(\bm{z})$, and $\bm{p}:=\tfrac{1}{2}(\bm{p}_h + \bm{p}_g) \in \mathbb{B}_\eta(\bm{z})$, it can be shown that $t\bm{p} \in S_h\cap S_g$ for a sufficiently large $t > 0$, so that (a) holds. To show (b), fix any $\bm{g} \in S_h \cap S_g$. 
Note that $\Theta_h \subseteq \ri({\Theta})$.  Using \cite[Proposition A.2.1.12]{hiriart2004fundamentals}, we have
\[
S_h = T^h (\Theta_h) \subseteq T^h(\ri({\Theta})) = \ri(T^h(\Theta))
=\ri\left(\mathcal{N}_{\partial h(\bm{0})}(\bm{v}_h)\right)
=\intt\left(\mathcal{N}_{\partial h(\bm{0})}(\bm{v}_h)\right),
\]
where the last equality uses $\intt(\mathcal{N}_{\partial h(\bm{0})}(\bm{v}_h)) \neq \emptyset$. Similarly, it holds that $S_g \subseteq \intt(\mathcal{N}_{\partial g(\bm{0})}(\bm{v}_g))$, which completes the proof.
\end{proof}

We are ready for the proof of the membership in \Cref{thm:mem-dc}.
\begin{proof}[Proof of \Cref{thm:mem-dc}] 

To avoid introducing active sets and without loss of generality,\footnote{To see it, note that for any $\bm{w}'$ near $\bm{w}$, we have $(h-g)(\bm{w}') - (h-g)(\bm{w})=(h-g)'(\bm{w}; \bm{w}'-\bm{w})$.} we assume $a_{i_1,j_1, \dots, i_n, j_n}=0$ and $b_{i_1,j_1, \dots, i_n, j_n}=0$. Hence, all pieces are active at the point $\bm{0}$ and $h(\bm{0}) = g(\bm{0}) = 0$.

(a)
Note that $\bm{0}\notin \widehat{\partial}(h-g)(\bm{0})$ if and only if there exists a vector $\bm{d}\in\mathbb{R}^d$ such that $(h-g)'(\bm{0}; \bm{d}) < 0$. Checking whether $(h-g)'(\bm{0}; \bm{d}) < 0$ can be done, using \Cref{fct:pa-dd}, by computing $(h-g)'(\bm{0}; \bm{d}) = (h-g)(\bm{d})$ in polynomial time. As $h'(\bm{0};\cdot)$ and $g'(\bm{0};\cdot)$ are support functions of polytopes $\partial h(\bm{0})$ and $\partial g(\bm{0})$, respectively, if $\bm{0}\notin \widehat{\partial}(h-g)(\bm{0})$, then there exists $\bm{d}\in\mathbb{R}^d$ such that  $\sigma_{\partial h(\bm{0})}(\bm{d}) < \sigma_{\partial g(\bm{0})} (\bm{d})$, so that there is a point $\bm{g}\in \partial g(\bm{0})\backslash \partial h(\bm{0})$ by \cite[Corollary 13.1.1]{rockafellar1970convex}. Moreover, such a point can be chosen as $\bm{g} \in (\ext \partial g(\bm{0}))\backslash\partial h(\bm{0})$. There are only a finite number of vertices of polytope $\partial g(\bm{0})$, and they are all rational vectors of polynomial length relative to the input size; see \nref{eq:nMC-standard-LP} in \Cref{sec:nMC}. Given a certificate $\bm{g} \in \mathbb{Q}^d$, a verifier can check $\bm{g} \in \partial g(\bm{0})$ and $\bm{g} \notin \partial h(\bm{0})$ by solving LPs in polynomial time; see \Cref{prop:nMC-polynomiality}. 
Note that $\partial g(\bm{0}) \nsubseteq \partial h(\bm{0})$ implies, by \cite[Corollary 13.1.1]{rockafellar1970convex}, the existence of $\bm{d}\in\mathbb{R}^d$ such that $(h-g)'(\bm{0}; \bm{d}) < 0$, so that $\bm{0}\notin \widehat{\partial}(h-g)(\bm{0}).$
Hence, the original problem is in the class co-$\cNP$.

(b) 
As the function $h-g$ is piecewise affine, by definition and hypothesis, there exist a finite number $N\in\mathbb{N}$ and vectors $\bm{g}_1, \dots, \bm{g}_N \in \mathbb{R}^d$ such that $(h-g)(\bm{w}) \in \{\bm{w}^\top \bm{g}_1, \dots, \bm{w}^\top \bm{g}_N\}$ holds for any point $\bm{w} \in \mathbb{R}^d$.  
In other words, the function $h-g$ is a continuous selection from the set defined on the right-hand side.
We will construct short certificates for $\bm{0}\in \partial(h-g)(\bm{0})$ using the set of essentially active indices; see \cite[p.~92]{scholtes2012introduction}. According to \cite[Propsition 4.3.1]{scholtes2012introduction}, we have the following characterization of $\partial (h-g)(\bm{0})$:
\[
\partial (h-g)(\bm{0})=\conv\left\{\bm{g}_{i}: i \in \mathcal{I}_{h-g}^e(\bm{0})\right\},
\]
where $\mathcal{I}_{h-g}^e(\bm{0}) $ is the set of essentially active indices of function $h-g$ at the point $\bm{0}$, defined by
\[
\mathcal{I}_{h-g}^e(\bm{0}):=\left\{ i \in [N]: \bm{0} \in \cl\left(\intt\left\{\bm{d}: (h-g)(\bm{d}) = \bm{d}^\top \bm{g}_{i}\right\}\right) \right\}.
\]
Suppose that $\bm{0} \in \partial (h-g)(\bm{0}).$ By Carath\'eodory's Theorem \cite[Theorem 17.1]{rockafellar1970convex}, there exist $d+1$ indices $\mathcal{I}_1, \dots, \mathcal{I}_{d+1} \in \mathcal{I}_{h-g}^e(\bm{0})$ such that $\bm{0} \in \conv\left\{\bm{g}_{\mathcal{I}_1}, \dots, \bm{g}_{\mathcal{I}_{d+1}}\right\}$. Thus, we only need to prove the existence of short certificates for $\bm{g}_{\mathcal{I}_1}, \dots, \bm{g}_{\mathcal{I}_{d+1}} \in \{\bm{g}_i: i \in \mathcal{I}_{h-g}^e(\bm{0})\}$. Now, we focus on $\bm{g}_{\mathcal{I}_1}$. By the definition of $\mathcal{I}_1 \in \mathcal{I}_{h-g}^e(\bm{0})$, it follows that
\[
\bm{0} \in  \cl\left(\intt\left\{\bm{d}: (h-g)(\bm{d}) = \bm{d}^\top \bm{g}_{\mathcal{I}_1}\right\}\right).
\]
From the non-emptiness of above set and positive homogeneity of $h-g$, there exists $\bm{z}\in\mathbb{R}^d$ and $\eta>0$ such that $(h-g)(\bm{z}+\bm{d})=(\bm{z}+\bm{d})^\top \bm{g}_{\mathcal{I}_1}$ and $(h-g)(\bm{z}+\bm{d}) = (h-g)'(\bm{0}; \bm{z}+\bm{d})$, for any $\|\bm{d}\| \leq \eta$. To proceed, for any $\bm{d} \in  \eta\mathbb{B}$, we compute
\begin{align*}
(\bm{z}+ \bm{d})^\top \bm{g}_{\mathcal{I}_1} 
&= (h-g)'(\bm{0};\bm{z}+\bm{d}) \\
&= h'(\bm{0};\bm{z}+\bm{d}) - g'(\bm{0};\bm{z}+\bm{d}) \tag{\Cref{fct:pa-dd}} \\
&= \max_{\bm{g}_h \in \ext \partial h(\bm{0})} (\bm{z}+\bm{d})^\top \bm{g}_h - \max_{\bm{g}_g \in \ext \partial g(\bm{0})} (\bm{z}+\bm{d})^\top \bm{g}_g.
\end{align*}
Without loss of generality, we assume that neither $\partial h(\bm{0})$ nor $\partial h(\bm{0})$ is a singleton, as otherwise the \PA{} function $h-g$ (or $g-h$) would be locally convex around the point $\bm{0}$.
Let $\bm{g}_h^1, \bm{g}_h^2 \in \ext \partial h(\bm{0})$ be two distinct extreme points. We know that $(\bm{z}+\bm{d})^\top \bm{g}_h^1 = (\bm{z}+\bm{d})^\top \bm{g}_h^2$ if and only if $\bm{z}+\bm{d}\in\spn\left(\bm{g}_h^1 - \bm{g}_h^2\right)^\bot$. By finiteness $|\ext \partial h(\bm{0})| < \infty$ and $|\ext \partial g(\bm{0})| < \infty$ from polyhedrality, the interior of 
\[\textstyle
S:=(\bm{z}+\eta\mathbb{B})\backslash\left[\left( \bigcup_{\bm{g}_h^1 \neq \bm{g}_h^2 \in \ext \partial h(\bm{0})} \spn\left(\bm{g}_h^1 - \bm{g}_h^2\right)^\bot\right)\cup \left( \bigcup_{\bm{g}_g^1 \neq \bm{g}_g^2 \in \ext \partial g(\bm{0})} \spn\left(\bm{g}_g^1 - \bm{g}_g^2\right)^\bot\right)\right]
\]
is non-empty, so that there exist $\bm{z}' \in \bm{z} + \eta\mathbb{B}$ and $\eta' \in (0, \eta)$ such that $\bm{z}'+\eta'\mathbb{B}\subseteq\intt(S)$; see \Cref{fig:mem-prf}.
For some $\bm{d}' \in \eta'\mathbb{B}$, consider optimal solution sets:
\begin{equation}\label{eq:mem-dc-unique-H}
\mathcal{H}_{\mathcal{I}_1}:=\argmax_{\bm{g}_h \in \ext \partial h(\bm{0})} (\bm{z}'+\bm{d}')^\top \bm{g}_h, \qquad
\mathcal{G}_{\mathcal{I}_1}:=\argmax_{\bm{g}_g \in \ext \partial g(\bm{0})} (\bm{z}+\bm{d}')^\top \bm{g}_g.
\end{equation}
By continuity and definition of $\intt(S)$, we get $|\mathcal{H}_{\mathcal{I}_1}|=|\mathcal{G}_{\mathcal{I}_1}|=1$. Let $\mathcal{H}_{\mathcal{I}_1}=\{\bm{g}_h^*\}$ and $\mathcal{G}_{\mathcal{I}_1}=\{\bm{g}_g^*\}$. 
Since $\bm{z}'+\eta'\mathbb{B}\subseteq S \subseteq \bm{z} + \eta\mathbb{B}$, we see that $(\bm{z}'+\bm{d}')^\top (\bm{g}_{\mathcal{I}_1} - \bm{g}_h^* +\bm{g}_g^*)=0$ for all $\bm{d}' \in \eta'\mathbb{B}$, and consequently $\bm{g}_{\mathcal{I}_1} = \bm{g}_h^* - \bm{g}_g^*$.
Besides, from the first-order optimality conditions of \eqref{eq:mem-dc-unique-H}, we know that
$\mathbb{B}_{\eta'}(\bm{z}') \subseteq \mathcal{N}_{\partial h(\bm{0})}(\bm{g}_h^*)\cap \mathcal{N}_{\partial g(\bm{0})}(\bm{g}_g^*)$. Therefore, the open set $\intt\left(\mathcal{N}_{\partial h(\bm{0})}(\bm{g}_h^*)\cap \mathcal{N}_{\partial g(\bm{0})}(\bm{g}_g^*)\right) = \intt(\mathcal{N}_{\partial h(\bm{0})}(\bm{g}_h^*))\cap \intt( \mathcal{N}_{\partial g(\bm{0})}(\bm{g}_g^*))$ is non-empty.

\begin{figure}[t]
	\centering
	\includegraphics[width=.58\textwidth]{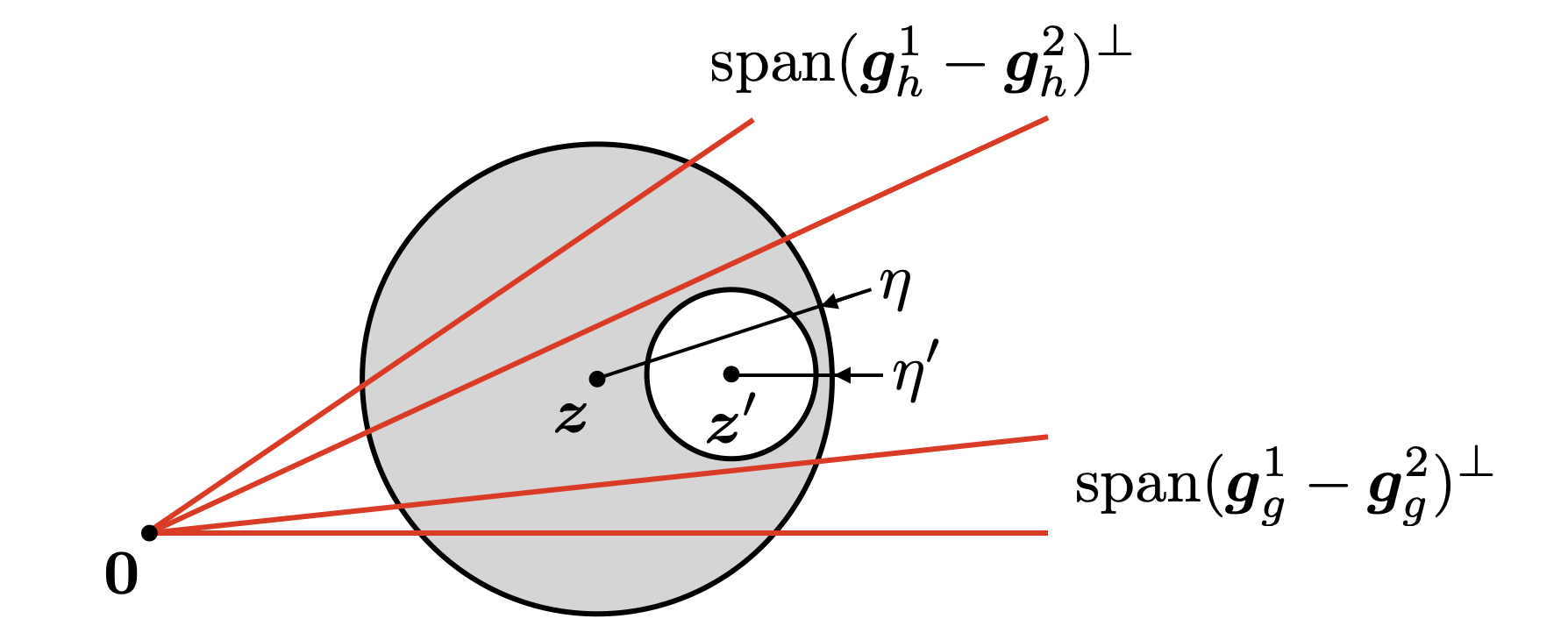}
	\caption{Illustration the proof of \Cref{thm:mem-dc}\ref{item:thm:mem-dc-b}.}\label{fig:mem-prf}
\end{figure}

Rational certificates $\bm{g}_h^*$ and $\bm{g}_g^*$ are of polynomial length as they are vertices of rational polytopes $\partial h(\bm{0})$ and $\partial g(\bm{0})$, whose encoding length is polynomial to the input size of function $h$ and $g$; see \nref{eq:nMC-standard-LP} in \Cref{sec:nMC}. As the length of the vector $\bm{z}'$ may not be polynomial, the task is now to find a polynomial length certificate $\bm{p}\in\mathbb{Q}^d$ such that for any $\bm{p}'$ near $\bm{p}$,
\begin{equation}\label{eq:mem-dc-unique}
	\bm{g}_h^* \in \argmax_{\bm{g}_h \in \ext \partial h(\bm{0})} (\bm{p}')^\top \bm{g}_h,\qquad
\bm{g}_g^* \in \argmax_{\bm{g}_g \in \ext \partial g(\bm{0})} (\bm{p}')^\top \bm{g}_g.
\end{equation}
The existence of the short certificate $\bm{p} \in \mathbb{Q}^d$ will certify, by continuity, polyhedrality,  and positive homogeneity, for any $\bm{p}'$ near $\bm{p}$ and $t>0$, that
\[
(h-g)(t\bm{p}') = (h-g)'(\bm{0}; t\bm{p}')=\max_{\bm{g}_h \in \ext \partial h(\bm{0})} (t\bm{p}')^\top \bm{g}_h - \max_{\bm{g}_g \in \ext \partial g(\bm{0})} (t\bm{p}')^\top \bm{g}_g
=(\bm{g}_h^*-\bm{g}_g^*)^\top(t\bm{p}')=\bm{g}_{\mathcal{I}_1}^\top (t\bm{p}'),
\]
confirming $\mathcal{I}_1 \in \mathcal{I}_{h-g}^e(\bm{0})$. To find such a certificate, by the first-order optimality conditions of \nref{eq:mem-dc-unique}, it suffices to locate a point $\bm{p}\in \mathbb{Q}^d$ in the set $\intt(\mathcal{N}_{\partial h(\bm{0})}(\bm{g}_h^*))\cap \intt( \mathcal{N}_{\partial g(\bm{0})}(\bm{g}_g^*))$, which has been shown to be non-empty.
Then, \Cref{lem:normal-cone-inters-poly} yields the existence of such a $\bm{p}$ with short encoding length and ensures that such a certificates can be computed from $\bm{g}_h^*, \bm{g}_g^*$.
Given certificates $\bm{g}_h^*, \bm{g}_g^* \in \mathbb{Q}^d$, a verifier can certify \nref{eq:mem-dc-unique} by solving LPs in polynomial time using \Cref{lem:normal-cone-inters-poly}. This verification can be done by the verifier for all $\bm{g}_{\mathcal{I}_1},\dots, \bm{g}_{\mathcal{I}_{d+1}}$ and, eventually, certify $\bm{0} \in \conv\left\{\bm{g}_{\mathcal{I}_1}, \dots, \bm{g}_{\mathcal{I}_{d+1}}\right\}$ by solving an LP in polynomial time. Therefore, the original problem is in the class $\cNP$.
\end{proof}

\subsection{Proofs of \Cref{thm:hard-general,thm:mem-general} (Max-Min)}\label{sec:prf-maxmin}

Let us first formulate the following decision problems.
\begin{Problem}[3SAT]\label{prob:3sat}
	Given a collection of clauses $\{C_i(\bm{x})\}_{i=1}^n$ on Boolean variables $\bm{x} \in \{0,1\}^m$ such that clause $C_i(\bm{x})$ is limited to a disjunction of three literals (e.g., $C_i(\bm{x})=x_p \vee x_q \vee \neg x_k$) for any $1 \leq i \leq n$. Let the formula of $C(\bm{x})$ in conjunctive normal form 
	$
	C(\bm{x}) \coloneqq \bigwedge_{i=1}^n  C_i(\bm{x})
	$
	be given.
	Determine whether $C(\bm{x}) = 1$ can be realize for some $\bm{x} \in \{0,1\}^m$. 
\end{Problem}

\begin{Problem}\label{prob:plt} Fix $\epsilon \in [0,\infty)$. Suppose that 
 the input data $\{\bm{y}_i\}_{i=1}^{3n} \subseteq \{-1,0,1\}^{m}$ are given. Let us define a PA function $f_{\sf{F}}: \mathbb{R}^m \rightarrow \mathbb{R}$ as
	\[
	f_{\sf{F}}(\bm{d})\coloneqq \max_{1\leq i \leq n}- \sum_{j=1}^3 \max\left\{ \bm{d}^\top \bm{y}_{3(i-1)+j},0 \right\}.
	\]
	Determine whether  
	$
	\bm{0} \in \widehat{\partial}f_{\sf{F}}(\bm{0}) + \epsilon\mathbb{B}.
	$
\end{Problem}

\begin{Problem}\label{prop:PLST}
	Fix $\epsilon \in [0, 1/2)$. Suppose that the input data $\{\bm{y}_i\}_{i=1}^{3n} \subseteq \{-1,0,1\}^m$ in the statement of \Cref{prob:plt} are given. Consider a function $f_{\sf C}:\mathbb{R}^m \to \mathbb{R}$ defined as
	\[
	f_{\sf C}(\bm{d}):=\frac{\bm{d}^\top \bm{e}_1}{2} + \max\left\{ f_{\sf F}(\bm{d}) + f_{\sf F}(-\bm{d}), -\left|\frac{\bm{d}^\top \bm{e}_1}{2}\right|  \right\},
	\]
	where the function $f_{\sf F}$ is defined in \Cref{prob:plt}. Determine whether  
	$
	\bm{0} \in \partial f_{\sf{C}}(\bm{0}) + \epsilon\mathbb{B}.
	$

\end{Problem}

\subsubsection{Hardness}
The main ingredients for the proof of \Cref{thm:hard-general} are the following two lemmas.

\begin{Lemma}\label{lem:plt-np-hard}
	\Cref{prob:plt} is strongly co-$\cNP$-hard.
\end{Lemma}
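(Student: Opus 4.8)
The plan is to reduce from 3SAT (\Cref{prob:3sat}), which is strongly $\cNP$-complete and involves no large numerical data, and to show that an instance $C$ is satisfiable if and only if $\bm{0}\notin\widehat{\partial}f_{\sf F}(\bm{0})+\epsilon\mathbb{B}$. Since the construction below produces only data in $\{-1,0,1\}$ and runs in polynomial time, this equivalence identifies \Cref{prob:plt} with deciding unsatisfiability and thus yields strong co-$\cNP$-hardness for every fixed $\epsilon\in[0,\infty)$.

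First I would fix the encoding. For each clause $C_i$ and each of its three literals, I set the associated vector $\bm{y}_{3(i-1)+j}$ to $\bm{e}_k$ if the $j$-th literal is $x_k$ and to $-\bm{e}_k$ if it is $\neg x_k$; all these vectors lie in $\{-1,0,1\}^m$. The purpose of this choice is that, reading a vector $\bm{d}\in\{-1,1\}^m$ as the assignment $x_k=\text{true}\iff d_k=1$, one has $\max\{\bm{d}^\top\bm{y}_{3(i-1)+j},0\}=1$ exactly when the $j$-th literal of $C_i$ is satisfied and $0$ otherwise. Hence $\sum_{j=1}^3\max\{\bm{d}^\top\bm{y}_{3(i-1)+j},0\}$ counts the satisfied literals of $C_i$, and $f_{\sf F}(\bm{d})=-\min_i\big(\text{number of satisfied literals in }C_i\big)$ on $\{-1,1\}^m$.

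Next I would record the structure of $\widehat{\partial}f_{\sf F}(\bm{0})$. Each summand $\max\{\bm{d}^\top\bm{y},0\}$ is nonnegative and positively homogeneous of degree one, so $f_{\sf F}$ is positively homogeneous with $f_{\sf F}(\bm{d})\le 0=f_{\sf F}(\bm{0})$ for all $\bm{d}$, and $f_{\sf F}'(\bm{0};\bm{d})=\lim_{t\searrow0}f_{\sf F}(t\bm{d})/t=f_{\sf F}(\bm{d})$. From \Cref{def:subd-f}, any $\bm{s}\in\widehat{\partial}f_{\sf F}(\bm{0})$ obeys $\bm{s}^\top\bm{d}\le f_{\sf F}(\bm{d})\le 0$ for every $\bm{d}$; applying this to both $\bm{d}$ and $-\bm{d}$ forces $\bm{s}^\top\bm{d}=0$ for all $\bm{d}$, so $\widehat{\partial}f_{\sf F}(\bm{0})\subseteq\{\bm{0}\}$. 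Moreover $\bm{0}\in\widehat{\partial}f_{\sf F}(\bm{0})$ iff $f_{\sf F}(\bm{d})\ge 0$ for all $\bm{d}$, i.e.\ iff $f_{\sf F}\equiv 0$. Thus $\widehat{\partial}f_{\sf F}(\bm{0})=\{\bm{0}\}$ when $f_{\sf F}\equiv 0$ and $\widehat{\partial}f_{\sf F}(\bm{0})=\emptyset$ when $f_{\sf F}(\bm{d})<0$ for some $\bm{d}$; correspondingly $\bm{0}\in\widehat{\partial}f_{\sf F}(\bm{0})+\epsilon\mathbb{B}$ in the former case and $\bm{0}\notin\widehat{\partial}f_{\sf F}(\bm{0})+\epsilon\mathbb{B}$ in the latter, for every $\epsilon\ge 0$.

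It remains to match this dichotomy with satisfiability. If $C$ is satisfiable, substituting the corresponding $\pm1$ assignment gives $f_{\sf F}(\bm{d})\le -1<0$, so $\widehat{\partial}f_{\sf F}(\bm{0})=\emptyset$. The converse is the step I expect to be the main obstacle, since it must extract a \emph{Boolean} assignment from an arbitrary real witness $\bm{d}^*$: suppose $f_{\sf F}(\bm{d}^*)<0$, so that every clause $i$ has some literal $j$ with $\bm{d}^{*\top}\bm{y}_{3(i-1)+j}>0$. Because each $\bm{y}$ equals $\pm\bm{e}_k$, this strict inequality determines the sign of the single relevant coordinate $d^*_k$, and rounding via $\hat d_k:=\sgn(d^*_k)$ (breaking ties arbitrarily, which is harmless since a coordinate with $d^*_k=0$ never witnesses a clause) produces an assignment under which that literal, and hence every clause, is satisfied. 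Therefore $f_{\sf F}\not\equiv 0$ implies $C$ satisfiable, completing the equivalence. As all data lie in $\{-1,0,1\}$ and the reduction is polynomial, \Cref{prob:plt} is strongly co-$\cNP$-hard.
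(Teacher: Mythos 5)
Your proof is correct and follows essentially the same route as the paper's: the identical reduction from 3SAT with the same $\pm\bm{e}_k$ encoding, the same sign-rounding to extract a Boolean assignment from a real witness, and the same use of positive homogeneity to identify $f_{\sf F}'(\bm{0};\cdot)$ with $f_{\sf F}$. Your preliminary observation that $\widehat{\partial}f_{\sf F}(\bm{0})$ is always either $\emptyset$ or $\{\bm{0}\}$ is a clean way to make the $\epsilon$-independence explicit, but the paper reaches the same conclusion inline (by setting $\bm{g}=\bm{0}$ in one direction and deducing $\bm{g}=\bm{0}$ from $f_{\sf F}(\bm{g})\geq\|\bm{g}\|^2$ in the other), so the substance is unchanged.
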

\begin{proof}
It is well known that the 3SAT in \Cref{prob:3sat} is (strongly) $\cNP$-complete \cite{garey1979computers}. We give a polynomial-time reduction with polynomially bounded magnitude from 3SAT to $\overline{\text{\Cref{prob:plt}}}$. Given any instance of 3SAT, we get clauses $\{C_i(\bm{x})\}_{i=1}^n$ for $\bm{x}\in \{0,1\}^m$. Literals in $C_t(\bm{x})$ are referred by their positions. For example, given $C_t(\bm{x}) = x_i \vee \neg x_j \vee x_k$, we say the literal $x_i$ occurs in $C_t(\bm{x})$ at position $1$, the literal $\neg x_j$ occurs in $C_t(\bm{x})$ at position $2$, and the literal $x_k$ occurs in $C_t(\bm{x})$ at position $3$. We construct the data $\{\bm{y}_i\}_{i=1}^{3n} \subseteq \{-1,0,1\}^m$ by
\begin{equation}\label{eq:hard-f-y}
	\bm{y}_i :=     \left\{ \begin{array}{rcl}
         \bm{e}_k & \mbox{if}
         & \textnormal{literal } x_k \textnormal{ occurs in } C_{\lfloor (i-1)/3 \rfloor+1}(\bm{x}) \textnormal{ at position } i - 3\lfloor (i-1)/3 \rfloor \\ 
         -\bm{e}_k & \mbox{if}
         & \textnormal{literal } \neg x_k \textnormal{ occurs in } C_{\lfloor (i-1)/3 \rfloor+1}(\bm{x}) \textnormal{ at position } i - 3\lfloor (i-1)/3 \rfloor 
                \end{array}\right.{}.
\end{equation}
Fix some $\epsilon \in [0,\infty)$.
 Note that, by definition, $\bm{0} \in \widehat{\partial}f_{\sf{F}}(\bm{0}) + \epsilon\mathbb{B}$ if and only if there exists a vector $\bm{g} \in\epsilon\mathbb{B}$ such that $f_{\sf{F}}(\bm{d}) = f_{\sf{F}}'(\bm{0};\bm{d}) \geq \langle \bm{g}, \bm{d}\rangle$ for any $\bm{d} \in \mathbb{R}^m$.
 In the sequel,
we show that the function $f_{\sf{F}}$ with input data $\{\bm{y}_i\}_{i=1}^{3n}$ defined in \nref{eq:hard-f-y} satisfies $\bm{0} \notin \widehat{\partial}f_{\sf{F}}(\bm{0}) + \epsilon\mathbb{B}$ if and only if the instance of 3SAT can be satisfied.

(``Only if'')	From the hypothesis, for any $0\leq \|\bm{g}\| \leq \epsilon$, there exists $\bm{d} \in \mathbb{R}^m$ such that $f_{\sf{F}}(\bm{d}) < \langle \bm{g}, \bm{d} \rangle$. We will exhibit an $\bm{x} \in \{0,1\}^m$ such that the given instance of 3SAT can be satisfied with $\bm{x}$.
Let $\bm{g} = \bm{0}$ and there  exists $\bm{d} \in \mathbb{R}^m$ such that $f_{\sf{F}}(\bm{d}) < 0$.
 For any $i \in [m]$, let
\[
x_i = \left\{ \begin{array}{rcl}
         1 & \mbox{if}
         & d_i > 0 \\ 
         0 & \mbox{if}
         & d_i \leq 0
                \end{array}\right..
\]
We claim $C(\bm{x})=1$. By $f_{\sf{F}}(\bm{d}) < 0$, for any $i \in [n],$ we get 
$
\sum_{j=1}^3 \max\left\{ \bm{d}^\top \bm{y}_{3(i-1)+j},0 \right\} > 0.
$
This implies that there exists a $j'_i \in \{1,2,3\}$ such that $\bm{d}^\top \bm{y}_{3(i-1)+j'_i} > 0$ for any $i \in [n]$. Fix $i \in [n]$.
Let the index of the Boolean literal occurs in  $C_i(\bm{x})$ at position $j'_i$ be $k_i$. Now we consider two cases. If $x_{k_i}$ occurs in  $C_i(\bm{x})$ at position $j'_i$, then $\bm{y}_{3(i-1)+j'_i} = \bm{e}_{k_i}$. We get $\bm{d}^\top \bm{y}_{3(i-1)+j'_i} = \bm{d}^\top\bm{e}_{k_i} = d_{k_i} > 0$. So, by definition, $x_{k_i} = 1$ which implies $C_i(\bm{x}) = 1$. Otherwise, if $\neg x_{k_i}$ occurs in  $C_i(\bm{x})$ at position $j'_i$, then $\bm{y}_{3(i-1)+j'_i} = -\bm{e}_{k_i}$. We get $\bm{d}^\top \bm{y}_{3(i-1)+j'_i} = -\bm{d}^\top\bm{e}_{k_i} = -d_{k_i} > 0$, so that $\neg x_{k_i} = 1$ by definition and it implies $C_i(\bm{x}) = 1$. It is clear that $C(\bm{x})=\bigwedge_{i=1}^n  C_i(\bm{x}) = 1$ and the given instance of 3SAT is satisfied with $\bm{x}$.

(``If'') Conversely, we show that if there exists a vector $\bm{g}$ such that $0\leq \|\bm{g}\| \leq \epsilon$ and  $f_{\sf{F}}(\bm{d}) \geq \langle \bm{g}, \bm{d} \rangle$ for any $\bm{d} \in \mathbb{R}^m$, then the given instance of 3SAT cannot be satisfied. Suppose to the contrary that there exists $\bm{x} \in \{0,1\}^m$ such that $C(\bm{x}) = 1$. We have $0 \geq f_{\sf{F}}(\bm{g}) \geq \|\bm{g}\|^2$ by setting $\bm{d}=\bm{g}$, so that we know $\bm{g}=\bm{0}$.
For any $i \in [m]$, let 
\[
d_i = \left\{ \begin{array}{rcl}
         1 & \mbox{if}
         & x_i = 1 \\ 
         -1 & \mbox{if}
         & x_i = 0
                \end{array}\right..
\]
As $\bigwedge_{i=1}^n  C_i(\bm{x}) = 1$, for any $i \in [n]$, there exists a literal of clause $C_i(\bm{x})$ that is satisfied. Let the index of this literal be $k'_i$ and the position of it in $C_i(\bm{x})$ be $j'_i$. We consider two cases. If literal $x_{k'_i}$ occurs in $C_i(\bm{x})$ at position $j'_i$, then $\bm{y}_{3(i-1)+j'_i}=\bm{e}_{k'_i}$. As $C_i(\bm{x})=1$ due to literal $x_{k'_i}$, we get  $x_{k'_i} = 1$ and $d_{k'_i}=1$ by definition. Then, for such $i\in[n]$, we get
$\sum_{j=1}^3 \max\left\{ \bm{d}^\top \bm{y}_{3(i-1)+j},0 \right\} \geq \max\left\{ \bm{d}^\top \bm{y}_{3(i-1)+j'_i},0 \right\} = 
  \max\{d_{k'_i},0\} = 1.
$ 
Otherwise, if literal $\neg x_{k'_i}$ occurs in $C_i(\bm{x})$ at position $j'_i$, then $\bm{y}_{3(i-1)+j'_i}=-\bm{e}_{k'_i}$. As $C_i(\bm{x})=1$ due to literal $\neg x_{k'_i}$, we get  $x_{k'_i} = 0$ and $d_{k'_i}=-1$ by definition. Then, for any $i\in[n]$, we get
$
\sum_{j=1}^3 \max\left\{ \bm{d}^\top \bm{y}_{3(i-1)+j},0 \right\} \geq \max\left\{ \bm{d}^\top \bm{y}_{3(i-1)+j'_i},0 \right\} = 
  \max\{-d_{k'_i},0\} = 1.
$ 
This gives 
$
0 = \langle \bm{g}, \bm{d} \rangle \leq f_{\sf{F}}(\bm{d}) \leq -1, 
$
a contradiction. 

Hence \Cref{prob:plt} is in the class co-$\cNP$-hard. Besides, as the numerical values in the input data are elements in $\{-1,0,1\}$, the largest numerator or denominator of them are clearly upper bounded by the number of bits. \Cref{prob:plt} is automatically strongly co-$\cNP$-hard; see \cite[p.~96]{garey1979computers} and \cite[Observation 5]{garey1978strong}.
\end{proof}

\begin{Lemma}\label{lem:plst-np-hard}
\Cref{prop:PLST} is strongly $\cNP$-hard.
\end{Lemma}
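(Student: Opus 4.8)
The plan is to mirror the seesaw-gadget reduction used for the DC representation in \Cref{lem:dc-np-hard-C}, now driven by the Fr\'echet-stationarity dichotomy for $f_{\sf F}$ established in \Cref{lem:plt-np-hard}. The role played there by the (automatically even) function $h_{\sf F}-g_{\sf F}$ is now taken by the symmetrization $\phi(\bm{d}):=f_{\sf F}(\bm{d})+f_{\sf F}(-\bm{d})$, so that $f_{\sf C}(\bm{d})=\tfrac{\bm{d}^\top\bm{e}_1}{2}+\max\{\phi(\bm{d}),-\tfrac{|\bm{d}^\top\bm{e}_1|}{2}\}$. First I would record the elementary structural facts about $\phi$: it is piecewise affine and positively homogeneous of degree $1$ (hence $\phi(\bm{0})=0$), it is even by construction, and it satisfies $\phi\le 0$ everywhere because every selection term of $f_{\sf F}$ is nonpositive. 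I would then translate \Cref{lem:plt-np-hard} into the following clean dichotomy driving the gadget: if the 3SAT instance (\Cref{prob:3sat}) is satisfiable, the assignment-direction $\bm{d}\in\{-1,1\}^m$ produced there satisfies $f_{\sf F}(\bm{d})\le -1$, whence $\phi(\bm{d})\le -1+0=-1$; and if it is unsatisfiable, then $f_{\sf F}\equiv 0$ and therefore $\phi\equiv 0$.

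For the ``satisfiable $\Rightarrow \bm{0}\in\partial f_{\sf C}(\bm{0})$'' direction I would take such a $\bm{d}$ with $\phi(\bm{d})\le -1$ and, using that $\phi$ is even and $d_1\in\{-1,1\}$, replace it by $\bm{y}:=\sgn(d_1)\,\bm{d}$, so that $\bm{y}^\top\bm{e}_1=1$ while $\phi(\bm{y})=\phi(\bm{d})\le -1<-\tfrac12=-\tfrac{|\bm{y}^\top\bm{e}_1|}{2}$. The strict inequality persists on a neighborhood of $\bm{y}$ by continuity, and there $\bm{y}'^\top\bm{e}_1>0$, so $f_{\sf C}(\bm{y}')=\tfrac{\bm{y}'^\top\bm{e}_1}{2}-\tfrac{|\bm{y}'^\top\bm{e}_1|}{2}=0$ identically; thus $f_{\sf C}$ is $C^1$ at $\bm{y}$ with $\nabla f_{\sf C}(\bm{y})=\bm{0}$. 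Positive homogeneity of $f_{\sf C}$ makes its gradient constant along the ray $\{t\bm{y}:t>0\}$, so letting $t\searrow 0$ gives $\bm{0}=\lim_{t\searrow 0}\nabla f_{\sf C}(t\bm{y})\in\partial f_{\sf C}(\bm{0})\subseteq \partial f_{\sf C}(\bm{0})+\epsilon\mathbb{B}$. For the converse, when $\phi\equiv 0$ the inner maximum is always $0$ (since $-\tfrac{|\bm{d}^\top\bm{e}_1|}{2}\le 0$), so $f_{\sf C}(\bm{d})=\tfrac{\bm{d}^\top\bm{e}_1}{2}$ is linear and differentiable at $\bm{0}$ with $\partial f_{\sf C}(\bm{0})=\{\tfrac12\bm{e}_1\}$; hence $\dist(\bm{0},\partial f_{\sf C}(\bm{0}))=\tfrac12>\epsilon$ for every $\epsilon\in[0,1/2)$, giving $\bm{0}\notin\partial f_{\sf C}(\bm{0})+\epsilon\mathbb{B}$.

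Finally I would settle strong hardness: since the data $\{\bm{y}_i\}$ lie in $\{-1,0,1\}^m$ and the construction of $f_{\sf C}$ from a 3SAT instance is computable in polynomial time with all numerical magnitudes polynomially bounded, and since 3SAT is strongly $\cNP$-complete, the reduction shows \Cref{prop:PLST} is strongly $\cNP$-hard. The step I expect to require the most care is the \emph{symmetrization}: unlike the DC gadget, where $h_{\sf F}-g_{\sf F}$ is even for free, here $f_{\sf F}$ is not symmetric, so I must verify both that $\phi$ inherits positive homogeneity, evenness, and nonpositivity and---crucially---that the satisfiable case still yields $\phi(\bm{d})\le -1$ at an integer direction with nonzero first coordinate, since this $-1$ versus $-\tfrac12$ margin is exactly what lets the flat passage of the seesaw open and forces $\bm{0}\in\partial f_{\sf C}(\bm{0})$. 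Confirming that $f_{\sf C}$ is genuinely positively homogeneous (so that the ray argument and the computation of $\partial f_{\sf C}(\bm{0})$ are valid) is the other routine-but-load-bearing detail.
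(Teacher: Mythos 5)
Your proposal is correct and follows essentially the same route as the paper's proof: the same dichotomy from \Cref{lem:plt-np-hard} (satisfiable $\Rightarrow$ $\phi(\bm{d})\le -1$ at an integer $\bm{d}\in\{-1,1\}^m$; unsatisfiable $\Rightarrow$ $f_{\sf F}\equiv 0$, hence $\phi\equiv 0$), the same sign-flip $\bm{y}=\sgn(d_1)\bm{d}$ exploiting evenness of the symmetrization, the same local-flatness-plus-positive-homogeneity ray argument to conclude $\bm{0}\in\partial f_{\sf C}(\bm{0})$, and the same $\dist(\bm{0},\partial f_{\sf C}(\bm{0}))=1/2>\epsilon$ computation in the unsatisfiable case. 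The only cosmetic difference is that you name the symmetrized function $\phi$ explicitly; the substance is identical.
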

\begin{proof}
We present a polynomial-time reduction with polynomially bounded magnitude  from 3SAT to \Cref{prop:PLST}  using part of the construction in the proof of \Cref{lem:plt-np-hard}. Fix $\epsilon \in [0,1/2)$. For any given instance of 3SAT, we first construct the corresponding $f_{\sf F}$ with input data $\{\bm{y}_i\}_{i=1}^{3n}$ defined in \nref{eq:hard-f-y}.  We show that the induced function $f_{\sf{C}}$ in \Cref{prop:PLST} satisfies  $\bm{0} \in \partial f_{\sf{C}}(\bm{0}) + \epsilon\mathbb{B}$ if and only if the instance of 3SAT can be satisfied. The argument is similar to the proof of \Cref{lem:dc-np-hard-C}.

(``If'') If the instance of 3SAT can be satisfied, by the argument in the (``If'') proof of \Cref{lem:plt-np-hard}, we know that there exists a vector $\bm{d} \in \{-1,1\}^m$ such that $f_{\sf F}(\bm{d}) \leq -1$, so that $f_{\sf F}(\bm{d}) + f_{\sf F}(-\bm{d}) \leq -1$ due to $f_{\sf F}(\cdot) \leq 0$. Note that $|\bm{d}^\top \bm{e}_1| = |d_1| = 1$ by construction. Then, it follows that
\[
0>-\left|\frac{\bm{d}^\top \bm{e}_1}{2}\right| = -\frac{1}{2} > -1 \geq f_{\sf F}(\bm{d}) + f_{\sf F}(-\bm{d}).
\]
Let $\bm{y}:= \sgn(d_1)\cdot \bm{d}$. Note that $f_{\sf F}(\bm{y}) + f_{\sf F}(-\bm{y}) = f_{\sf F}(\bm{d}) + f_{\sf F}(-\bm{d}) < -|\bm{d}^\top \bm{e}_1|/2<0$ by symmetry. Therefore, we have 
\[
f_{\sf C}(\bm{y}) = \frac{\bm{y}^\top \bm{e}_1 - |\bm{y}^\top \bm{e}_1|}{2}  = \frac{\sgn(d_1)\cdot d_1 - |d_1|}{2}  = 0.
\]

From the continuity of $f_{\sf C}$, we know that $-\left|\bm{y}'^\top \bm{e}_1\right|/2 > f_{\sf F}(\bm{y}') + f_{\sf F}(-\bm{y}')$ for any $\bm{y}'$ near $\bm{y}$, so that $f_{\sf C}(\bm{y}')=0$ and $f_{\sf C}$ is continuously differentiable at $\bm{y}$ with $\nabla f_{\sf C}(\bm{y}) = \bm{0}$. By homogeneity of $f_{\sf C}$ and considering $t \bm{y}$ with $t \searrow 0 $, we get $\bm{0} \in \partial f_{\sf C}(\bm{0})$.

(``Only if'') Conversely, if 3SAT cannot be satisfied, by the conclusion in the (``Only if'') proof of \Cref{lem:plt-np-hard}, we know that $\bm{0} \in \widehat{\partial} f_{\sf{F}}(\bm{0})$, which, by definition, indicates $0 \leq f_{\sf{F}}'(\bm{0}; \bm{d}) = f_{\sf F}(\bm{d})$ for any direction $\bm{d} \in \mathbb{R}^m$. We conclude that $f_{\sf F}(\cdot) = 0$. Thus, $f_{\sf C}(\bm{d}) = \frac{\bm{d}^\top \bm{e}_1}{2}$ always and 
\[
\dist\Big(\bm{0}, \partial f_{\sf C}(\bm{0}) \Big) = \frac{\|\bm{e}_1\|}{2} = \frac{1}{2} > \epsilon,
\]
as desired.
Hence, similarly, \Cref{prop:PLST} is automatically strongly $\cNP$-hard.
\end{proof}

Now, we are ready for the proof of \Cref{thm:hard-general}.

\begin{proof}[Proof of \Cref{thm:hard-general}]
	Using \Cref{lem:plt-np-hard} and \Cref{lem:plst-np-hard}, we only need to show the constructions $f_{\sf F}$ and $f_{\sf C}$ in \Cref{prob:plt} and \Cref{prop:PLST} have polynomial-size representations with polynomially bounded magnitude in \MaxMin{} form. Note the following elementary identity
	\[
	\sum_{i=1}^I \min_{1 \leq j \leq J} a_{i,j} = \min \left(S:=\left\{ \sum_{i=1}^I a_{i,j_i}: j_{i'} \in [J], i' \in [I]  \right\} \right).
	\]
	It is easy to check that $|S| \leq J^I$, which is polynomial with respect to $J$ when $I$ is a constant. Thus, $f_{\sf F}(\cdot)$ can be rewritten in the \MaxMin{} form with $l=n$, $k \leq 3n+1$, and $|\mathcal{M}_i| \leq 8$ for any $i \in [n]$. As for $f_{\sf C}(\cdot)$, we first note that 
	\[
	f_{\sf C}(\bm{d})= \max\left\{ \frac{\bm{d}^\top \bm{e}_1}{2} + f_{\sf F}(\bm{d}) + f_{\sf F}(-\bm{d}), \min\left\{\bm{d}^\top \bm{e}_1,0\right\} \right\}.
	\]
	Note that functions $\bm{d}\mapsto f_{\sf F}(-\bm{d})$ and $\bm{d}\mapsto \bm{d}^\top \bm{e}_1/2 + f_{\sf F}(\bm{d})$ have polynomial-size \MaxMin{} representations.
It remains to prove that the addition of two polynomial-size \MaxMin{} functions (i.e., $\bm{d}\mapsto \frac{\bm{d}^\top \bm{e}_1}{2} + f_{\sf F}(\bm{d}) + f_{\sf F}(-\bm{d})$) still has polynomial size. Let two \MaxMin{} functions $h$ and $g$ with data $\{(\bm{x}_j,a_j)\}_{j=1}^k$ and $\{(\bm{x}'_{j'},a'_{j'})\}_{j'=1}^{k'}$ be given, respectively. We compute
	\[
	\begin{aligned}
	h(\bm{w})+g(\bm{w})&=\left(\max_{1\leq i \leq l} \min_{j \in \mathcal{M}_i} \bm{w}^\top \bm{x}_j + a_j\right)+\left(\max_{1\leq i' \leq l'} \min_{j' \in \mathcal{M}'_{i'}} \bm{w}^\top \bm{x}'_{j'} + a'_{j'}\right)\\
	&=\max_{\substack{1\leq i \leq l \\ 1\leq i' \leq l'}}\left[ \left(\min_{j \in \mathcal{M}_i} \bm{w}^\top \bm{x}_j + a_j\right)+\left(\min_{j' \in \mathcal{M}'_{i'}} \bm{w}^\top \bm{x}'_{j'} + a'_{j'}\right) \right] \\
	&=\max_{\substack{1\leq i \leq l \\ 1\leq i' \leq l'}} \min_{\substack{j \in \mathcal{M}_i \\ j' \in \mathcal{M}'_{i'}}} \bm{w}^\top \left(\bm{x}_j + \bm{x}'_{j'}\right)+ a_j + a'_{j'},
	\end{aligned}
	\]
	which has polynomial size with respect to the size of $h$ and $g$ as desired. 
\end{proof}

\subsubsection{Membership}

To be specific, we write the \PA{} function $f:\mathbb{R}^d\to \mathbb{R}$ in \MaxMin{} form as
\[
		f(\bm{w}) = \max_{1\leq i \leq l} \min_{j \in \mathcal{M}_i} \bm{w}^\top \bm{x}_j + a_j,
		\]
where the data $(\bm{x}_j, a_j) \in \mathbb{R}^{d+1}$ for $j \in [k]$, and index sets $\mathcal{M}_i \subseteq [k]$ for  $i \in [l]$.

\begin{proof}[Proof of \Cref{thm:mem-general}]  To avoid introduce active sets and without loss of generality, we assume that $a_j=0$ for any $1\leq j\leq k$ and the vectors $\bm{x}_1, \dots, \bm{x}_k$ are non-redundant, i.e., $\bm{x}_i \neq \bm{x}_j$ if $i\neq j$.

(a)
Note that $\bm{0}\notin \widehat{\partial}f(\bm{0})$ if and only of there exists a vector $\bm{d}\in\mathbb{R}^d$ such that $f'(\bm{0}; \bm{d}) < 0$. Checking whether $f'(\bm{0}; \bm{d}) < 0$ can be done by computing 
\[
f'(\bm{0}; \bm{d})=\max_{1\leq i \leq l} \min_{j \in \mathcal{M}_i} \bm{d}^\top \bm{x}_j
\]
in polynomial time. Suppose that $\bm{0}\notin \widehat{\partial}f(\bm{0})$. By positive homogeneity, there exists $\bm{d}\in\mathbb{R}^d$ such that $f'(\bm{0}; \bm{d}) \leq  -1$. Thus, for any $i \in [l]$, we have $\min_{j \in \mathcal{M}_i} \bm{d}^\top \bm{x}_j \leq -1$. Let us define index sets $\mathcal{I}_i:=\arg\min_{j \in \mathcal{M}_i} \bm{d}^\top \bm{x}_j \subseteq [k]$ for all $i \in [l]$. Then, the following linear inequality system is consistent:
\[
\left\{ \begin{array}{rcl}
         \bm{d}^\top \bm{x}_{j^*} \leq -1 & \mbox{for}
         & i \in [l], j^* \in \mathcal{I}_i, \\ 
         \bm{d}^\top \bm{x}_{j^*} \leq \bm{d}^\top \bm{x}_{j}  & \mbox{for} & i \in [l], j^* \in \mathcal{I}_i, j \in \mathcal{M}_i\backslash\mathcal{I}_i.
                \end{array}\right.
\]
There exists a vertex solution, say $\bm{d}'$, of above inequality system with polynomial size.
By definition, we know that $f'(\bm{0}; \bm{d}') \leq -1$.
 Given certificates $\bm{d}' \in \mathbb{Q}^d$, %
 a verifier can check $f'(\bm{0}; \bm{d}')<0$ in polynomial time. Hence, the original problem is in the class co-$\cNP$.

(b) We will construct short certificates for $\bm{0}\in \partial f(\bm{0})$ using the set of essentially active indices; see \cite[p.~92]{scholtes2012introduction}. According to \cite[Propsition 4.3.1]{scholtes2012introduction}, we have the following characterization of $\partial f(\bm{0})$:
\[
\partial f(\bm{0})=\conv\left\{\bm{g}_i: i \in \mathcal{I}_{f}^e(\bm{0})\right\},
\]
where $\mathcal{I}_{f}^e(\bm{0})$ is the set of essentially active indices of function $f$ at point $\bm{0}$, defined by
\[
\mathcal{I}_{f}^e(\bm{0}):=\left\{ i \in [k]: \bm{0} \in \cl\left(\intt\left\{\bm{d}: f(\bm{d}) = \bm{d}^\top \bm{x}_i\right\}\right) \right\}.
\]
Suppose that $\bm{0} \in \partial f(\bm{0}).$ By Carath\'eodory's Theorem \cite[Theorem 17.1]{rockafellar1970convex}, there exist $d+1$ indices $i_1, \dots, i_{d+1} \in \mathcal{I}_{f}^e(\bm{0})$ such that $\bm{0} \in \conv\left\{\bm{x}_{i_1}, \dots, \bm{x}_{i_{d+1}}\right\}$. Thus, we only need to prove the existence of short certificates for $i_1, \dots, i_{d+1} \in \mathcal{I}_{f}^e(\bm{0})$. Now, we focus on $i_1$. By definition of $i_1 \in \mathcal{I}_{f}^e(\bm{0})$, it follows that
\[
\bm{0} \in  \cl\left(\intt\left\{\bm{d}: f(\bm{d}) = \bm{x}_{i_1}^\top \bm{d}\right\}\right).
\]
From the non-emptiness of above set and positive homogeneity of $f$, there exists $\bm{z}\in\mathbb{R}^d$ and $\eta>0$ such that $f(\bm{z}+\bm{d})=(\bm{z}+\bm{d})^\top \bm{x}_{i_1}$ and $f(\bm{z}+\bm{d}) = f'(\bm{0}; \bm{z}+\bm{d})$, for any $\|\bm{d}\| \leq \eta$. To proceed, for any $\bm{d} \in  \eta\mathbb{B}$, we compute
\[
\bm{d}^\top \bm{x}_{i_1} + \bm{z}^\top \bm{x}_{i_1} 
= f(\bm{z}+\bm{d})  %
= f'(\bm{0};\bm{z}+\bm{d})
= \max_{1\leq i \leq l} \min_{j \in \mathcal{M}_i} (\bm{z}+\bm{d})^\top \bm{x}_j.
\]
For any $i\neq j\in[k]$, we know that $(\bm{z}+\bm{d})^\top \bm{x}_i = (\bm{z}+\bm{d})^\top \bm{x}_j$ if and only if $\bm{z}+\bm{d}\in\spn\left(\bm{x}_i - \bm{x}_j\right)^\bot$. Similar to the proof of \Cref{thm:mem-dc}\ref{item:thm:mem-dc-b} and the illustration in \Cref{fig:mem-prf}, the interior of 
\[\textstyle
S:=(\bm{z}+\eta\mathbb{B})\backslash \left( \bigcup_{\bm{x}_i \neq \bm{x}_j} \spn\left(\bm{x}_i - \bm{x}_j\right)^\bot\right)\]
is non-empty, so that there exist $\bm{z}' \in \bm{z} + \eta\mathbb{B}$ and $0<\eta' < \eta$ such that $\bm{z}'+\eta'\mathbb{B}\subseteq\intt(S)$. For any $\bm{d}' \in \eta'\mathbb{B}$ and $i\in[l]$, consider
\[
\mathcal{I}_i:=\argmin_{j \in \mathcal{M}_i}{} (\bm{z}'+\bm{d}')^\top \bm{x}_j, \qquad\mathcal{I}:= \argmax_{1\leq i \leq l} \min_{j \in \mathcal{M}_i}{} (\bm{z}'+\bm{d}')^\top \bm{x}_j.
\]
By continuity and definition of $\intt(S)$, we get $|\mathcal{I}_i|=1$ for all $i\in[l]$ and $\mathcal{I}_j = \mathcal{I}_k = \{i_1\}$ for any $j,k \in \mathcal{I}$. 
Then, the following linear inequality system is consistent:
\begin{equation}\label{eq:proof-mem-maxmin}
\left\{ \begin{array}{rcl}
         \bm{d}^\top \bm{x}_{i_1} \geq \bm{d}^\top \bm{x}_{j} + 1 & \mbox{for}
         & j \in \cup_{i' \in [l]\backslash\mathcal{I}} \mathcal{I}_{i'},\\ 
         \bm{d}^\top \bm{x}_{i'} \leq \bm{d}^\top \bm{x}_{j}-1  & \mbox{for} & i \in [l], i' \in \mathcal{I}_i, j \in \mathcal{M}_i\backslash\mathcal{I}_i.
                \end{array}\right.
\end{equation}
There exists a vertex solution, say $\bm{d}^\triangle \in \mathbb{Q}^d$, of above inequality system \labelcref{eq:proof-mem-maxmin} with polynomial size.
By the definition of \nref{eq:proof-mem-maxmin} and $|\mathcal{I}_1|=1$, we know that, for all $i\in[l]$,
\[
\mathcal{I}_i=\argmin_{j \in \mathcal{M}_i}{} (\bm{d}^\triangle)^\top \bm{x}_j, \qquad\mathcal{I}:= \argmax_{1\leq i \leq l} \min_{j \in \mathcal{M}_i}{} (\bm{d}^\triangle)^\top \bm{x}_j.
\]
 Given rational certificates $\bm{d}^\triangle \in \mathbb{Q}^d, \mathcal{I}\subseteq [l],$ and $\mathcal{I}_1, \dots, \mathcal{I}_l \subseteq [k]$, a verifier can construct and check the consistence of the inequality system \nref{eq:proof-mem-maxmin} in polynomial time, which certifies $i_1 \in \mathcal{I}_f^e(\bm{0})$.
 This verification can be done by the verifier for all $i_1,\dots, i_{d+1}$ and, eventually, certify $\bm{0} \in \conv\left\{\bm{g}_{i_1}, \dots, \bm{g}_{i_{d+1}}\right\}$ by solving an LP in polynomial time. Therefore, the original problem is in the class $\cNP$.
\end{proof}

\section{Sum Rule, Compatibility, and Transversality}\label{sec:cr}
In this section, we study the validity of sum rule for the difference of two convex \PA{} functions and its interplay with two geometric conditions, i.e., compatibility and transversality. %

\subsection{Motivation} 
As a motivating example, we focus on the following DC function $f:=h'-g'$ with two convex \PA{} components $h',g':\mathbb{R}^d\to\mathbb{R}$:
\[
f( \bm{w})=\left(h'(\bm{w}):=h(\bm{A}\bm{x}) \right) - \left( g'(\bm{w}):= g(\bm{B}\bm{w}) \right),
\]
where the matrices $\bm{A} \in \mathbb{R}^{n\times d}, \bm{B}\in\mathbb{R}^{m\times d}$ are given, and $h:\mathbb{R}^n\to \mathbb{R}, g:\mathbb{R}^m \to \mathbb{R}$ are convex \PA{} functions.
One can immediately observe that above \PA{} function $f$ is typically subdifferentially irregular (see \Cref{def:regular}).
The choice of investigating DC form rather than the \MaxMin{} form is driven by applicability considerations. The combination and composition of summation and difference-of-convex structure is prevalent in modern machine learning and statistical applications; see \cite{cui2018composite} and \cite{nouiehed2019pervasiveness}.
Furthermore, DC functions find utility in the analysis of contemporary neural networks; see \cite{arora2018understanding,goodfellow2013maxout} and \Cref{sec:app} for relevant examples.

The computational hardness results in \Cref{sec:hardness-DC} for testing is partly due to the failure of exact subdifferential calculus rules. 
Thus, to enhance the tractability of stationarity testing, it is of interest to find out a condition, under which an equality-type calculus rule holds,  so that the subdifferential set of the function can be efficiently characterized as that of smooth functions. 
Specifically, we aim to investigate the validity of the following equation for subdifferential of the function $f:\mathbb{R}^d\to \mathbb{R}$ at a fixed point $\bm{w}\in\mathbb{R}^d$:
\begin{equation}\label{eq:subd-cr}
\partial f(\bm{w}) = \partial h'(\bm{w}) - \partial g'(\bm{w}) = \bm{A}^\top \partial h(\bm{Aw}) - \bm{B}^\top \partial g(\bm{Bw}).
\end{equation}
Given above exact sum rule in \nref{eq:subd-cr}, computing the quantity $\dist(\bm{0}, \partial f(\bm{w})) \in \mathbb{R}_+$ for a rational vector $\bm{w}$ is a convex quadratic optimization problem. It is well-known that such a problem can be approximately solved in polynomial time by, for example, IPMs \cite{nesterov1994interior}. Therefore, efficient $\epsilon$-stationarity detection for $f$ can be guaranteed by an exact subdifferential sum rule.

\begin{Example}
The exact sum rule in \nref{eq:subd-cr} fails on fairly simple examples. Consider a univariate convex \PA{} function $f:\mathbb{R}\to \mathbb{R}$ defined by $f(t):=2\max\{t,0\} - \max\{t,0\}$. The exact subdifferential sum rule does not hold when $t=0$, as evidenced by the strict set inclusion  $\partial f(0) = [0,1] \subsetneq [-1,2] = \partial [2\max\{\cdot, 0\}](0) - \partial [\max\{\cdot, 0\}](0)$.
\end{Example}

The goal of this section is to establish the necessary and sufficient condition for validating the sum rule in \nref{eq:subd-cr}. 
Before that, to appreciate the advancements in nonsmooth analysis and to contextualize our results, we will discuss several known sufficient conditions that validate \nref{eq:subd-cr}.%
\subsection{Prior Art on Validating the Sum Rule}\label{sec:known-cond}

Let us define a function $r:\mathbb{R}^{n} \times \mathbb{R}^{m} \to \mathbb{R}$ as
\[
r(\bm{u}, \bm{v}):= 
h(\bm{u}) - g(\bm{v}).
\]
Consider the following linear mapping $G: \mathbb{R}^d \to \mathbb{R}^{n} \times \mathbb{R}^{m}$ defined by
\[
G(\bm{w}):=\left(
\bm{A} \bm{w},
\bm{B} \bm{w}
     \right).
\]
Then, by definition, we have $f(\bm{w}) = (h'-g')(\bm{w}) = (r\circ G)(\bm{w})$ for any $\bm{w} \in \mathbb{R}^d$.

\paragraph{Clarke Regularity.} The arguably most well-known condition validating \nref{eq:subd-cr} is the regularity proposed by \citet{clarke1975generalized}; see \Cref{def:regular}.
\begin{Fact}[cf.~{\cite[p.~40, Corollary 3]{clarke1990optimization}}]\label{fct:regular-cr}
Suppose that $f=f_1 + f_2$ where $f_1$ and $f_2$ are locally Lipschitz functions. Given a point $\bm{x}\in\mathbb{R}^d$, if $f_1, f_2$ are regular at $\bm{x}$, then we have
\[
\partial f(\bm{x}) = \partial f_1 (\bm{x}) + \partial f_2(\bm{x}).
\]	
\end{Fact}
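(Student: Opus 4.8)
The plan is to reduce the set identity to an identity between support functions. By \Cref{fct:clarke-dd}, the Clarke subdifferential $\partial f(\bm{x})$ is the unique compact convex set whose support function is the sublinear map $\bm{d}\mapsto f^\circ(\bm{x};\bm{d})$, and likewise for $\partial f_1(\bm{x})$ and $\partial f_2(\bm{x})$. Since the support function of a Minkowski sum of compact convex sets is the sum of the individual support functions, the set $\partial f_1(\bm{x}) + \partial f_2(\bm{x})$ is the compact convex set with support function $\bm{d}\mapsto f_1^\circ(\bm{x};\bm{d}) + f_2^\circ(\bm{x};\bm{d})$. Because two compact convex sets coincide precisely when their support functions agree, it therefore suffices to establish, for every $\bm{d}\in\mathbb{R}^d$,
\[
f^\circ(\bm{x};\bm{d}) = f_1^\circ(\bm{x};\bm{d}) + f_2^\circ(\bm{x};\bm{d}).
\]

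One inequality is essentially free: \Cref{fct:wsum} already gives $\partial f(\bm{x}) \subseteq \partial f_1(\bm{x}) + \partial f_2(\bm{x})$, which in support-function terms reads $f^\circ(\bm{x};\bm{d}) \leq f_1^\circ(\bm{x};\bm{d}) + f_2^\circ(\bm{x};\bm{d})$; alternatively this follows directly from the definition of $f^\circ$, since the $\limsup$ of a sum is at most the sum of the $\limsup$s. The substance lies in the reverse inequality, and this is exactly where regularity enters. First I would record the elementary bound $f'(\bm{x};\bm{d}) \leq f^\circ(\bm{x};\bm{d})$, valid for any locally Lipschitz directionally differentiable function, which holds because the ordinary difference quotient with fixed base point $\bm{x}$ is one of the terms over which the $\limsup$ defining $f^\circ$ is taken. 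Next I would use additivity of the ordinary directional derivative: since $f_1$ and $f_2$ are regular at $\bm{x}$, both $f_1'(\bm{x};\bm{d})$ and $f_2'(\bm{x};\bm{d})$ exist, so the limits add and $f'(\bm{x};\bm{d}) = f_1'(\bm{x};\bm{d}) + f_2'(\bm{x};\bm{d})$.

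Chaining these observations with the regularity identities $f_i^\circ(\bm{x};\bm{d}) = f_i'(\bm{x};\bm{d})$ for $i=1,2$ from \Cref{def:regular} yields
\[
f_1^\circ(\bm{x};\bm{d}) + f_2^\circ(\bm{x};\bm{d}) = f_1'(\bm{x};\bm{d}) + f_2'(\bm{x};\bm{d}) = f'(\bm{x};\bm{d}) \leq f^\circ(\bm{x};\bm{d}),
\]
which is precisely the reverse inequality. Combining the two bounds gives equality of the support functions, and hence the claimed set identity. The only point requiring genuine care is the bookkeeping around the hypothesis: regularity is assumed for $f_1$ and $f_2$ but is \emph{not} posited for $f = f_1 + f_2$, so I must obtain $f'(\bm{x};\bm{d})$ solely through additivity of the summands' directional derivatives, and I must invoke $f'(\bm{x};\cdot) \leq f^\circ(\bm{x};\cdot)$ for $f$ rather than any (unavailable) equality. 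Everything else is routine support-function calculus.
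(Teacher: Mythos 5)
Your proof is correct and follows exactly the classical argument behind the cited result (\cite[p.~40, Corollary~3]{clarke1990optimization}), which the paper quotes as a Fact without reproving: reduce to support functions, get one inequality from subadditivity of $\limsup$, and get the reverse from $f_1^\circ+f_2^\circ=f_1'+f_2'=f'\leq f^\circ$ using regularity of the summands. The care you take in only invoking $f'(\bm{x};\cdot)\leq f^\circ(\bm{x};\cdot)$ for $f$ (rather than an unavailable regularity of $f$) is exactly the right bookkeeping, and all steps are sound given local Lipschitzness.
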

In particular, from \cite[Proposition 2.3.6]{clarke1990optimization}, smoothness or convexity implies Clarke regularity, which enables \Cref{fct:regular-cr} to encompass the gradient and convex subdifferential sum rule as special cases. For our \DC{} function $f=h'-g'$, where $h'$ is convex and thus naturally exhibits regularity, we observe that the function $-g'$ is regular at $\bm{w}$ if and only if $-g'$ simplifies to an affine function near $\bm{w}$ and the function $f$ is locally convex near $\bm{w}$; see \Cref{prop:all-are-transveral} below. This presents a rather strong requirement, making Clarke regularity too stringent an assumption for our purposes.

\paragraph{Separability.} Fix $t \in \{0, \dots, d\}$. Suppose that the input data $\bm{A} \in \mathbb{R}^{n\times d}$ and $\bm{B} \in \mathbb{R}^{m \times d}$ can be written in separable form as 
\begin{equation}\label{eq:motiv-separability}
\bm{A} = \left[ \begin{array}{c|c}
		\widetilde{\bm{A}} & \bm{0}_{n \times t}
		\end{array} \right]  \in \mathbb{R}^{n\times d}, \qquad 
		\bm{B} = 
		\left[ \begin{array}{c|c}
		\bm{0}_{m\times (d-t)} & \widetilde{\bm{B}}
		\end{array} \right]  \in \mathbb{R}^{m\times d}.
\end{equation}
Accordingly, we can write the vector $\bm{w}\in\mathbb{R}^d$ as $\bm{w}^\top = \left[ \begin{array}{c|c}
		\bm{w}_1^\top & \bm{w}_2^\top
		\end{array} \right]$, where $\bm{w}_1\in\mathbb{R}^{d-t}$ and $\bm{w}_2\in\mathbb{R}^{t}$. Then, we have $f(\bm{w})= h(\widetilde{\bm{A}}\bm{w}_1) - g(\widetilde{\bm{B}}\bm{w}_2)$.
It is clear that the function $(\bm{w}_1, \bm{w}_2)\mapsto h(\widetilde{\bm{A}}\bm{w}_1) - g(\widetilde{\bm{B}}\bm{w}_2)$  is  separable in its arguments. The following result by \citet{rockafellar1985extensions} gives another condition validating sum rule in \nref{eq:subd-cr}:
\begin{Fact}[cf.~{\cite[Proposition 2.5]{rockafellar1985extensions}}]
	Let $f(\bm{x},\bm{y})=f_1(\bm{x})+f_2(\bm{y})$. If $f_1$ and $f_2$ are locally Lipschitz at $\bm{x}$ and $\bm{y}$, respectively, then
	\[
	\partial f(\bm{x}, \bm{y}) = \partial f_1(\bm{x}) \times \partial f_2(\bm{y}).
	\]
\end{Fact}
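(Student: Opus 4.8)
The plan is to work directly from the gradient-limit definition of the Clarke subdifferential in \Cref{def:subd}, rather than from the Clarke generalized directional derivative of \Cref{fct:clarke-dd}; this choice lets the product structure do all the work and sidesteps a delicate coupling issue discussed below. Throughout, write $D_{f_1}$, $D_{f_2}$, and $D_f$ for the sets on which $f_1$, $f_2$, and $f$ are (Fréchet) differentiable; since the functions are locally Lipschitz near the relevant points, Rademacher's theorem guarantees these sets have full measure in neighborhoods of $\bm{x}$, $\bm{y}$, and $(\bm{x},\bm{y})$. For a point $\bm{x}$ set $L_{f_1}:=\{\bm{s}:\exists\,\bm{x}_n\to\bm{x},\ \bm{x}_n\in D_{f_1},\ \nabla f_1(\bm{x}_n)\to\bm{s}\}$, and define $L_{f_2}$ and $L_f$ analogously, so that by \Cref{def:subd} one has $\partial f_i=\conv L_{f_i}$ and $\partial f(\bm{x},\bm{y})=\conv L_f$.

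The first step is to decouple differentiability: I claim $f$ is differentiable at $(\bm{x}',\bm{y}')$ if and only if $f_1$ is differentiable at $\bm{x}'$ and $f_2$ is differentiable at $\bm{y}'$, and then $\nabla f(\bm{x}',\bm{y}')=(\nabla f_1(\bm{x}'),\nabla f_2(\bm{y}'))$. The ``if'' direction is immediate from additivity. For ``only if'', I restrict $f$ to the affine slice through $(\bm{x}',\bm{y}')$ on which the second argument is frozen at $\bm{y}'$: the restriction of a differentiable function to an affine subspace is differentiable, and on this slice $f$ equals $f_1$ plus the constant $f_2(\bm{y}')$, whence $f_1$ is differentiable at $\bm{x}'$; the symmetric argument handles $f_2$. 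Consequently $D_f=D_{f_1}\times D_{f_2}$ and $\nabla f=(\nabla f_1,\nabla f_2)$ on $D_f$.

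Next I would prove $L_f=L_{f_1}\times L_{f_2}$. The inclusion $L_f\subseteq L_{f_1}\times L_{f_2}$ follows by projecting any convergent sequence $\nabla f(\bm{x}_n,\bm{y}_n)\to(\bm{s}_1,\bm{s}_2)$ onto its two blocks and invoking the first step. The reverse inclusion is exactly where the product form pays off: given $\bm{s}_1\in L_{f_1}$ and $\bm{s}_2\in L_{f_2}$ realized by \emph{independent} sequences $\bm{x}_n\to\bm{x}$ and $\bm{y}_n\to\bm{y}$, I simply pair them by a common index to obtain $(\bm{x}_n,\bm{y}_n)\in D_{f_1}\times D_{f_2}=D_f$ with $\nabla f(\bm{x}_n,\bm{y}_n)=(\nabla f_1(\bm{x}_n),\nabla f_2(\bm{y}_n))\to(\bm{s}_1,\bm{s}_2)$; the absence of any shared parameter makes this combination trivial. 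I then take convex hulls and invoke the elementary identity $\conv(A\times B)=\conv(A)\times\conv(B)$, whose nontrivial inclusion rewrites a product of convex combinations $\bigl(\sum_i\lambda_i\bm{a}_i,\sum_j\mu_j\bm{b}_j\bigr)$ as $\sum_{i,j}\lambda_i\mu_j(\bm{a}_i,\bm{b}_j)$. This gives $\partial f(\bm{x},\bm{y})=\conv L_f=\conv(L_{f_1}\times L_{f_2})=\conv L_{f_1}\times\conv L_{f_2}=\partial f_1(\bm{x})\times\partial f_2(\bm{y})$.

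The main obstacle here is conceptual, namely choosing the right representation of the subdifferential to begin with. Had I instead gone through the support-function characterization of \Cref{fct:clarke-dd}, the inclusion $\partial f\subseteq\partial f_1\times\partial f_2$ would drop out of the subadditivity of the joint $\limsup$, but the reverse inclusion would demand a single sequence $(\bm{x}'_n,\bm{y}'_n,t_n)\to(\bm{x},\bm{y},0^+)$ along which \emph{both} difference quotients approach their respective suprema using the \emph{same} step $t_n$ — the shared-step-size coupling that the gradient-limit formulation removes entirely. In the chosen route, local Lipschitzness of $f_1$ and $f_2$ enters only through Rademacher's theorem, ensuring $D_{f_1}$, $D_{f_2}$, and hence $D_f$ have full measure so that $L_{f_1}$, $L_{f_2}$, and $L_f$ are nonempty; no further regularity is needed, which is precisely why separability validates the exact sum rule without any compatibility or transversality hypothesis.
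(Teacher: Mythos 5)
Your proof is correct. Note first that the paper does not prove this statement at all: it is quoted as a Fact with a citation to Rockafellar's Proposition~2.5, so there is no in-paper argument to compare against. Judged on its own, your argument is sound and essentially the standard route to this result: working from the gradient-limit definition in \Cref{def:subd}, the equivalence $D_f = D_{f_1}\times D_{f_2}$ via restriction to affine slices is valid, the pairing of independent sequences gives $L_f = L_{f_1}\times L_{f_2}$ in both directions, and the identity $\conv(A\times B)=\conv(A)\times\conv(B)$ is correctly justified by the weights $\lambda_i\mu_j$. Your diagnosis of why the support-function route via \Cref{fct:clarke-dd} is awkward (the shared step size $t_n$ in the joint $\limsup$ makes the inclusion $\partial f_1\times\partial f_2\subseteq\partial f$ nontrivial, while the $\subseteq$ direction is easy) is also apt, and it is precisely the reason the gradient-limit characterization is the natural tool here. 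One small refinement to your closing remark: local Lipschitzness is used not only through Rademacher's theorem (density of $D_{f_i}$) but also to bound the gradients locally, which is what lets one extract convergent subsequences so that $L_{f_1}$, $L_{f_2}$, and $L_f$ are nonempty and compact and hence $\conv L_f$ is already closed; this does not affect the validity of the equality you prove.
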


\paragraph{Surjectivity.} 
Let us define a matrix
\begin{equation}\label{eq:motiv-surj}
\bm{D}=\left[ \begin{array}{c|c}
		\bm{A}^\top & \bm{B}^\top \end{array} \right] \in \mathbb{R}^{d \times (n+m)}.
		\end{equation}
Note that the linear mapping $G$ is surjective if and only if $\rnk(\bm{D}) = n+m$. The third condition, validating \nref{eq:subd-cr}, assumes the surjectivity of the mapping $G$:
\begin{Fact}[cf.~{\cite[Theorem 2.3.10]{clarke1990optimization}, \cite[Exercise 10.7]{rockafellar2009variational}}]
Let $G:\mathbb{R}^d \to \mathbb{R}^n$ be a strictly differentiable map and $r:\mathbb{R}^n\to \mathbb{R}$ be a locally Lipschitz function. If $r$ (or $-r$) is regular at $G(\bm{x})$ or Jacobian $\nabla G(\bm{x})$ has rank $n$, then
$\partial (r\circ G)(\bm{x}) = \nabla G(\bm{x})^\top \partial r(G(\bm{x}))$.
\end{Fact}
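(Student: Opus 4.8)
The plan is to prove equality of the two \emph{compact convex} sets by showing that their support functions coincide. Write $f := r\circ G$, $\bm{y} := G(\bm{x})$, and $\bm{J} := \nabla G(\bm{x})$. Both sides of the claimed identity are nonempty, convex, and compact: $\partial f(\bm{x})$ by \Cref{fct:clarke-dd}, and $\bm{J}^\top \partial r(\bm{y})$ as a linear image of a compact convex set. By \Cref{fct:clarke-dd} the support function of $\partial f(\bm{x})$ at a direction $\bm{d}$ is $f^\circ(\bm{x};\bm{d})$, whereas the support function of $\bm{J}^\top \partial r(\bm{y})$ at $\bm{d}$ is
\[
\sup_{\bm{s}\in\partial r(\bm{y})} \langle \bm{J}^\top\bm{s}, \bm{d}\rangle
= \sup_{\bm{s}\in\partial r(\bm{y})} \langle \bm{s}, \bm{J}\bm{d}\rangle
= r^\circ(\bm{y};\bm{J}\bm{d}).
\]
Thus the whole statement reduces to proving $f^\circ(\bm{x};\bm{d}) = r^\circ(\bm{y};\bm{J}\bm{d})$ for every $\bm{d}\in\mathbb{R}^d$, under either hypothesis.

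Next I would establish the inequality $f^\circ(\bm{x};\bm{d}) \leq r^\circ(\bm{y};\bm{J}\bm{d})$, which holds \emph{unconditionally} and already yields the inclusion $\partial f(\bm{x})\subseteq \bm{J}^\top\partial r(\bm{y})$. Since $G$ is strictly differentiable at $\bm{x}$, we have $G(\bm{x}'+t\bm{d}) = G(\bm{x}') + t\bm{J}\bm{d} + o(t)$ uniformly as $\bm{x}'\to\bm{x}$ and $t\searrow 0$, and the local Lipschitz constant $L$ of $r$ lets me absorb the error:
\[
\frac{f(\bm{x}'+t\bm{d})-f(\bm{x}')}{t}
\leq \frac{r\big(G(\bm{x}')+t\bm{J}\bm{d}\big)-r\big(G(\bm{x}')\big)}{t} + L\cdot\frac{o(t)}{t}.
\]
Taking $\limsup_{\bm{x}'\to\bm{x},\, t\searrow 0}$ and using $G(\bm{x}')\to\bm{y}$ gives the claim.

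For the reverse inequality under regularity of $r$ (see \Cref{def:regular}), I would use the ordinary directional-derivative chain rule $f'(\bm{x};\bm{d}) = r'(\bm{y};\bm{J}\bm{d})$, which is valid because $G$ is differentiable at $\bm{x}$, together with regularity $r'(\bm{y};\cdot)=r^\circ(\bm{y};\cdot)$ and the always-valid domination $f^\circ(\bm{x};\bm{d})\geq f'(\bm{x};\bm{d})$. Chaining these with the previous paragraph forces $f^\circ(\bm{x};\bm{d}) = r^\circ(\bm{y};\bm{J}\bm{d})$; the case where $-r$ is regular then follows by applying this argument to $-f = (-r)\circ G$ and using $\partial(-f)(\bm{x}) = -\partial f(\bm{x})$ and $\partial(-r)(\bm{y}) = -\partial r(\bm{y})$.

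The main obstacle is the surjective case, where I would instead prove $f^\circ(\bm{x};\bm{d}) \geq r^\circ(\bm{y};\bm{J}\bm{d})$ directly. The essential external input is a metric-regularity/open-mapping theorem (Lyusternik--Graves): since $G$ is strictly differentiable at $\bm{x}$ with $\bm{J}$ surjective, $G$ admits a local right inverse, so every sequence $\bm{y}'_k\to\bm{y}$ lifts to some $\bm{x}'_k\to\bm{x}$ with $G(\bm{x}'_k)=\bm{y}'_k$. Choosing $\bm{y}'_k\to\bm{y}$ and $t_k\searrow 0$ that realize the $\limsup$ defining $r^\circ(\bm{y};\bm{J}\bm{d})$, strict differentiability gives $G(\bm{x}'_k+t_k\bm{d}) = \bm{y}'_k + t_k\bm{J}\bm{d} + o(t_k)$, and the Lipschitz bound again absorbs the error, so the difference quotients $\big(f(\bm{x}'_k+t_k\bm{d})-f(\bm{x}'_k)\big)/t_k$ tend to $r^\circ(\bm{y};\bm{J}\bm{d})$; since each such quotient is dominated in the limit by $f^\circ(\bm{x};\bm{d})$, the desired inequality follows and, combined with the first paragraph, completes the proof. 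The delicate point is precisely guaranteeing that the lifted preimages $\bm{x}'_k$ genuinely converge to $\bm{x}$, which is exactly the content of metric regularity of a strictly differentiable map with surjective derivative.
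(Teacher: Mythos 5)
The paper does not prove this statement: it is quoted as a known \emph{Fact} with citations to \cite[Theorem 2.3.10]{clarke1990optimization} and \cite[Exercise 10.7]{rockafellar2009variational}, so there is no in-paper argument to compare against. Your proof is correct and is essentially the classical one from those sources: reducing set equality to equality of support functions $f^\circ(\bm{x};\cdot)$ versus $r^\circ(G(\bm{x});\nabla G(\bm{x})\,\cdot)$, obtaining the unconditional inclusion by letting strict differentiability and the Lipschitz constant of $r$ absorb the $o(t)$ error, handling the regular case via $f'(\bm{x};\bm{d})=r'(G(\bm{x});\nabla G(\bm{x})\bm{d})$ squeezed between $f^\circ$ and $r^\circ$, and handling the surjective case by lifting sequences through Lyusternik--Graves metric regularity; all the steps check out, including the delicate point that the lifted preimages converge to $\bm{x}$ at a linear rate.
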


\paragraph{Discussion.}
Roughly speaking, these existing conditions inherently rely on certain ``separability''-type properties, either literally or in the spectral domain, to certify the validity of the sum rule in \nref{eq:subd-cr}. Under such assumptions, the interaction between the convex and concave parts is properly separated. 
Indeed, for the DC function $f=h'-g'$, aforementioned three conditions are all special case of the following transversality-type condition; see \Cref{sec:zono} for detailed discussion.
\begin{Proposition}\label{prop:all-are-transveral}
Assume that any one of the following conditions holds:
\begin{enumerate}[label=\textnormal{(\alph*)}]
	\item The \PA{} function $-g'$ is Clarke regular.
	\item The matrices $\bm{A}$ and $\bm{B}$ are separable in the sense of \nref{eq:motiv-separability}.
	\item The matrix $\bm{D}$ in \nref{eq:motiv-surj} has full column rank.
\end{enumerate}
Then, we have
\begin{equation}\label{eq:pre-transverality}
	\parr(\partial h'(\bm{w}))\cap \parr (\partial g'(\bm{w})) = \{\bm{0}\}.
\end{equation}
\end{Proposition}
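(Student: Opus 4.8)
The plan is to reduce \nref{eq:pre-transverality} to a linear-algebraic statement about the ranges of $\bm{A}^\top$ and $\bm{B}^\top$ for conditions (b)--(c), and to a rigidity property of the concave summand for condition (a). First I would record the two convex chain rules already used in \nref{eq:subd-cr}, namely $\partial h'(\bm{w}) = \bm{A}^\top \partial h(\bm{A}\bm{w})$ and $\partial g'(\bm{w}) = \bm{B}^\top \partial g(\bm{B}\bm{w})$ (valid since $h,g$ are finite-valued convex). Because $\parr(S) = \spn(S-S)$ and $\bm{A}^\top$ is linear, one has $\parr(\bm{A}^\top \partial h(\bm{A}\bm{w})) = \bm{A}^\top \parr(\partial h(\bm{A}\bm{w}))$, whence $\parr(\partial h'(\bm{w})) \subseteq \im(\bm{A}^\top)$, and symmetrically $\parr(\partial g'(\bm{w})) \subseteq \im(\bm{B}^\top)$. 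Consequently \nref{eq:pre-transverality} holds a fortiori as soon as $\im(\bm{A}^\top) \cap \im(\bm{B}^\top) = \{\bm{0}\}$.

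This observation already disposes of (b) and (c). Under (c), any $\bm{z} = \bm{A}^\top \bm{u} = \bm{B}^\top \bm{v}$ lying in the intersection of the ranges gives $(\bm{u},-\bm{v}) \in \ker \bm{D}$, so full column rank of $\bm{D}$ forces $\bm{u}=\bm{v}=\bm{0}$ and hence $\bm{z}=\bm{0}$. Under (b), separability yields $\im(\bm{A}^\top) \subseteq \mathbb{R}^{d-t} \times \{\bm{0}_t\}$ and $\im(\bm{B}^\top) \subseteq \{\bm{0}_{d-t}\} \times \mathbb{R}^t$, which are complementary coordinate subspaces meeting only at the origin.

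The remaining and hardest case is (a), which I would handle by proving the stronger fact that $\partial g'(\bm{w})$ is a singleton, so that $\parr(\partial g'(\bm{w})) = \{\bm{0}\}$ and \nref{eq:pre-transverality} is immediate. The mechanism is a convex/concave rigidity: since $g'$ is convex and piecewise affine, its one-sided directional derivative $(g')'(\bm{w};\cdot)$ is sublinear, so $(-g')'(\bm{w};\cdot) = -(g')'(\bm{w};\cdot)$ is superlinear; meanwhile, by \Cref{fct:clarke-dd} the Clarke generalized directional derivative $(-g')^\circ(\bm{w};\cdot)$ is always sublinear. Clarke regularity of $-g'$ at $\bm{w}$ (\Cref{def:regular}) equates the two, forcing $(-g')'(\bm{w};\cdot)$ to be simultaneously superlinear and sublinear, hence linear. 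For the convex function $g'$, linearity of $\bm{d}\mapsto (g')'(\bm{w};\bm{d}) = \sigma_{\partial g'(\bm{w})}(\bm{d})$ is equivalent to $\partial g'(\bm{w})$ being a single point, which closes the case.

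I expect case (a) to be the main obstacle, as it requires correctly pairing the variational objects $(-g')'$ and $(-g')^\circ$, invoking that a positively homogeneous function which is both convex and concave must be linear, and then translating linearity of a convex function's directional derivative into singleton-ness of its subdifferential; cases (b) and (c) reduce to elementary facts about the ranges of $\bm{A}^\top$ and $\bm{B}^\top$, so the only genuinely analytic content sits in (a).
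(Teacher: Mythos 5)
Your proposal is correct, and for cases (b) and (c) it simply fills in the range-of-$\bm{A}^\top$/$\bm{B}^\top$ linear algebra that the paper dismisses as evident. The interesting divergence is in case (a): both you and the paper reduce it to showing that Clarke regularity of $-g'$ forces $\partial g'(\bm{w})$ to be a singleton (equivalently, that $g'$ is affine near $\bm{w}$), but the mechanisms differ. The paper works at the level of subdifferentials: it uses regularity to place $-\bm{g}_1,-\bm{g}_2$ in $\widehat{\partial}(-g')(\bm{w})$ for two distinct subgradients $\bm{g}_1,\bm{g}_2\in\partial g'(\bm{w})$, adds the convex subgradient inequality to the Fr\'echet inequality, and extracts a contradiction along the direction $\bm{g}_1-\bm{g}$. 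You work at the level of directional derivatives: regularity equates the superlinear function $(-g')'(\bm{w};\cdot)=-\sigma_{\partial g'(\bm{w})}$ with the sublinear function $(-g')^\circ(\bm{w};\cdot)$ (the sublinearity being exactly what \Cref{fct:clarke-dd} records), forcing the support function of $\partial g'(\bm{w})$ to be linear and hence the set to be a point. Your route is shorter and avoids Fr\'echet subdifferentials and little-$o$ estimates entirely, at the cost of invoking the support-function correspondence; the paper's route is more hands-on but uses only the two subgradient inequalities. Both are complete; just make sure you cite the sublinearity of $f^\circ(\bm{x};\cdot)$ (available in \Cref{fct:clarke-dd}) and the fact that a finite positively homogeneous function that is both convex and concave is linear, since these carry the whole weight of your case (a).
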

\begin{proof}
	See \Cref{prf-prop:all-are-transveral}.
\end{proof}

While the three aforementioned conditions are special cases of \eqref{eq:pre-transverality}, the strength of \eqref{eq:pre-transverality} with respect to the validity of the sum rule remains unclear. Specifically, is it sufficient? If so, is it also necessary? Moreover, identifying the weakest possible condition that guarantees the sum rule in \nref{eq:subd-cr} is an intriguing question, which also motivates the following:
\begin{center}
\textit{Is a ``separability''-type condition necessary to validate the exact sum rule?}
\end{center}
The remainder of this section is devoted to addressing this question, and, perhaps surprisingly, the answer is both yes and no. See the discussion in \Cref{sec:zono}.

\subsection{Compatible Polytopes}\label{sec:compat}
Our development centers around two geometric notions. Here, we introduce the most important one, which concerns polytope pairs.

\begin{Definition}[Compatible polytopes]\label{def:comp-poly}
	Two polytopes $A$ and $B$ in $\mathbb{R}^d$ are called compatible if for any vectors $\bm{a}\in A$ and $\bm{b} \in B$ such that $\bm{a} - \bm{b} \in \ext(A-B)$, we have $\bm{a} + \bm{b} \in \ext(A+B)$. 
\end{Definition}

Here are some elementary properties regarding the compatibility of polytopes.
\begin{Proposition} \label{prop:comp-poly}
The following elementary properties hold.
\begin{enumerate}[label=\textnormal{(\roman*)}]
	\item Polytopes $A$ and $B$ are compatible if and only if $B$ and $A$ are compatible.	
	\item Polytopes $A$ and $B$ are compatible if and only if $-A$ and $-B$ are compatible.	
	\item For any given vector $\bm{v}$, polytopes $A+\{\bm{v}\}$ and $B$ are compatible if and only if $A$ and $B$ are compatible.
	\item Suppose that $X:=\sum_{p=1}^P X_p, Y:=\sum_{q=1}^Q Y_q$, where $X_p,Y_q$ are all polytopes. Then, $X$ and $Y$ are compatible if and only if for any subsets $S_1 \subseteq [P], S_2 \subseteq [Q]$, $\sum_{p\in S_1} X_p$ and $\sum_{q\in S_2} Y_q$ are compatible polytopes.
\end{enumerate}
\end{Proposition}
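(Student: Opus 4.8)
The plan is to dispatch (i)–(iii) as immediate consequences of elementary identities for Minkowski operations and the extreme-point operator, and to concentrate the real work on (iv), where Lemma~\ref{lem:fukuda} (vertices of a Minkowski sum are exactly the sums of vertices selected by a \emph{common} supporting functional, uniquely so) is the essential tool.

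For (i)–(iii) I would first record three facts, each immediate from the definitions: (a) $A+B=B+A$ and $A-B=-(B-A)$, so $\ext(B-A)=-\ext(A-B)$; (b) $\ext(-S)=-\ext(S)$; and (c) $\ext(S+\{\bm v\})=\ext(S)+\{\bm v\}$. Property (i) then holds because, using (a), the hypothesis $\bm b-\bm a\in\ext(B-A)$ is equivalent to $\bm a-\bm b\in\ext(A-B)$ and the conclusion $\bm b+\bm a\in\ext(B+A)$ coincides with $\bm a+\bm b\in\ext(A+B)$; the two compatibility statements are therefore literally the same after relabeling. Property (ii) is the substitution $\bm a\mapsto-\bm a$, $\bm b\mapsto-\bm b$ together with (b), which turns the defining implication for $(-A,-B)$ term-by-term into that for $(A,B)$. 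Property (iii) uses (c): with $\bm a'=\bm a+\bm v$ one has $\ext((A+\{\bm v\})-B)=\ext(A-B)+\{\bm v\}$ and $\ext((A+\{\bm v\})+B)=\ext(A+B)+\{\bm v\}$, so the hypothesis and conclusion for the translated pair reduce exactly to those for $(A,B)$.

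For (iv) the ``if'' direction is trivial, as taking $S_1=[P]$ and $S_2=[Q]$ recovers compatibility of $X$ and $Y$. The substance is the ``only if'' direction. Fix $S_1\subseteq[P]$, $S_2\subseteq[Q]$ and take $\bm a\in\sum_{p\in S_1}X_p$, $\bm b\in\sum_{q\in S_2}Y_q$ with $\bm a-\bm b\in\ext(\sum_{p\in S_1}X_p-\sum_{q\in S_2}Y_q)$. Since a vertex of a polytope is precisely the unique maximizer of some linear functional, there is a direction $\bm h$ for which $\bm a$ uniquely maximizes $\langle\bm h,\cdot\rangle$ over $\sum_{p\in S_1}X_p$ and $\bm b$ uniquely minimizes it over $\sum_{q\in S_2}Y_q$. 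The key step is to perturb $\bm h$ to a direction $\bm h^\star$ that \emph{also} selects a unique vertex on every other summand: the set $\mathcal B$ of directions failing to have a unique maximizer on some $X_p$ or unique minimizer on some $Y_q$ is a finite union of normal cones of positive-dimensional faces, hence closed and nowhere dense, whereas the directions preserving the unique optima at $\bm a,\bm b$ form a nonempty open set (interiors of the relevant normal cones); I choose $\bm h^\star$ in the open set minus $\mathcal B$. Setting $\bm u_p:=\argmax_{X_p}\langle\bm h^\star,\cdot\rangle$ and $\bm v_q:=\argmin_{Y_q}\langle\bm h^\star,\cdot\rangle$ (all singletons), Lemma~\ref{lem:fukuda} yields $\bm a=\sum_{p\in S_1}\bm u_p$, $\bm b=\sum_{q\in S_2}\bm v_q$, together with $\tilde{\bm a}:=\sum_{p\in[P]}\bm u_p\in\ext(X)$, $\tilde{\bm b}:=\sum_{q\in[Q]}\bm v_q\in\ext(Y)$, and $\tilde{\bm a}-\tilde{\bm b}\in\ext(X-Y)$, each being the unique $\bm h^\star$-optimizer of its set.

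At this point I invoke the hypothesis: compatibility of $X,Y$ gives $\tilde{\bm a}+\tilde{\bm b}\in\ext(X+Y)$. Since $\tilde{\bm a}+\tilde{\bm b}=\sum_{p}\bm u_p+\sum_q\bm v_q$ is a decomposition into extreme points of the summands $X_p,Y_q$, the uniqueness clause of Lemma~\ref{lem:fukuda} supplies a single direction $\bm h^{\star\star}$ with $\{\bm u_p\}=\argmax_{X_p}\langle\bm h^{\star\star},\cdot\rangle$ and $\{\bm v_q\}=\argmax_{Y_q}\langle\bm h^{\star\star},\cdot\rangle$ uniformly over all $p,q$. Restricting this common direction to the sub-collection indexed by $S_1\cup S_2$ and applying the sufficiency half of Lemma~\ref{lem:fukuda} gives $\bm a+\bm b=\sum_{p\in S_1}\bm u_p+\sum_{q\in S_2}\bm v_q\in\ext(\sum_{p\in S_1}X_p+\sum_{q\in S_2}Y_q)$, which is exactly compatibility of the subset sums. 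I expect the main obstacle to be precisely this perturbation/genericity step: making rigorous that one can \emph{simultaneously} keep $\bm a,\bm b$ as unique optima on the $S_1$- and $S_2$-sums and force a unique selected vertex on every remaining summand, since that is what allows Lemma~\ref{lem:fukuda} to glue the partial selections into genuine vertices of $X$, $Y$, and $X\pm Y$, and then to descend back to the subset sums.
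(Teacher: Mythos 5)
Your proof is correct, but it takes a genuinely different route from the paper's. The paper proves this proposition by translating compatibility back into nonsmooth analysis: it encodes $A$ and $B$ as the subdifferentials at $\bm{0}$ of their support functions and invokes \Cref{thm:general-sum-Clarke-geometric} to identify compatibility with the validity of the exact sum rule, after which (i)--(ii) follow from $\partial(-f)=-\partial f$, (iii) from the fuzzy sum rule, and (iv) from the fuzzy sum rule combined with \Cref{lem:localization} (strict inclusions are preserved under Minkowski addition of a bounded set). You instead stay entirely inside polyhedral geometry: (i)--(iii) via the elementary identities $\ext(-S)=-\ext(S)$ and $\ext(S+\{\bm v\})=\ext(S)+\{\bm v\}$, and (iv) via \Cref{lem:fukuda} together with a genericity perturbation of the supporting direction. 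Your key step in (iv) is sound: the uniqueness clause of \Cref{lem:fukuda} forces $\bm a$ and $-\bm b$ to be the canonical summands of the extreme point $\bm a-\bm b$, the set of directions uniquely supporting them is a nonempty open intersection of interiors of full-dimensional normal cones, and the directions failing to isolate a vertex on some remaining summand form a finite union of closed cones of dimension at most $d-1$, so the perturbed $\bm h^\star$ exists; the gluing via $\bm h^{\star\star}$ and the descent to the subset sums then work exactly as you describe. What each approach buys: the paper's proof is shorter and illustrates the dictionary between polytope compatibility and subdifferential calculus (which is the point the authors flag in a remark), whereas yours is self-contained, makes no use of \Cref{thm:general-sum-Clarke-geometric} or any nonsmooth machinery, and in particular would remain valid if one wanted these polytope facts before, or independently of, the sum-rule theorem.
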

\begin{proof}
	The proof uses tools from nonsmooth analysis; see \Cref{sec:comp-properity}.
\end{proof}

We now provide a few examples to build intuitions.

\begin{figure}[t]
	\centering
	\includegraphics[width=.9\textwidth]{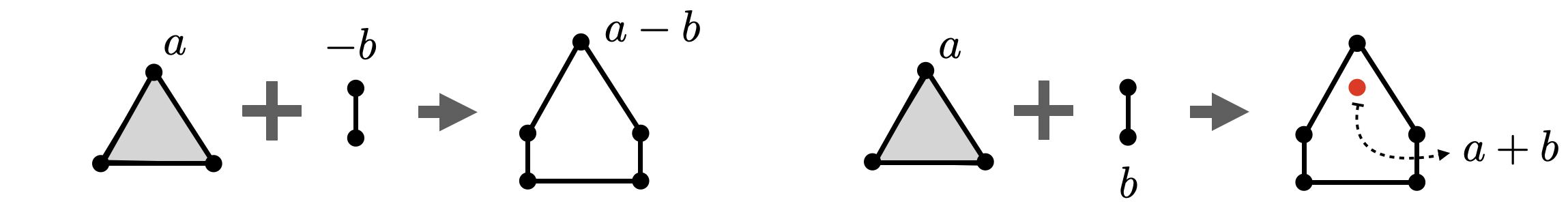}
	\caption{Incompatible polytopes in \Cref{exam:incomp}.}\label{fig:incomp}
\end{figure}

\begin{Example}[Compatible polytopes in $\mathbb{R}^2$]\label{exam:comp2}
	Consider polytopes
	$
	X := [\bm{0}, \bm{e}_1]$ and  $Y := [\bm{0}, \bm{e}_2]
	$ in $\mathbb{R}^2$.
	One can readily verify the compatibility between the polytopes $X$ and $Y$.
\end{Example}

\begin{Example}[Incompatible polytopes in $\mathbb{R}^2$]\label{exam:incomp}
	Let vectors $\bm{x}_1 := \bm{0}, \bm{x}_2 := (-1,-1), \bm{x}_3 := (1,-1), \bm{y}_1 := \bm{0}, $ and $\bm{y}_2 := (0, -1/2)$ in $\mathbb{R}^2$ be given. Consider polytopes
	\[
	X := \conv\{\bm{x}_i: i \in \{1,2,3\}\}, \qquad Y := [\bm{y}_1, \bm{y}_2].
	\]
	Let $\bm{a} := \bm{x}_1$ and $\bm{b} := \bm{y}_2$.
	Note that $\bm{a} - \bm{b} = (0,1/2) \in \ext(X-Y)$. However, it is easily seen that $\bm{a} + \bm{b}=(0,-1/2) \in \intt (X+Y)$, so that $\bm{a} + \bm{b}\notin \ext(X+Y)$. Therefore, polytopes $X$ and $Y$ are incompatible; see the illustration in  \Cref{fig:incomp}.
\end{Example}

The following is a nontrivial example in $\mathbb{R}^4$ and will be discussed in \Cref{sec:zono}.

\begin{Example}[Compatible polytopes in $\mathbb{R}^4$]\label{exam:comp}
Let vectors $\bm{y} := (1,1,-1,-1), \bm{e}_0 := \bm{0}$ in $\mathbb{R}^4$ be given. Define polytopes
\[
X:=\conv\{ \bm{e}_i : i \in \{0, \ldots, 4\}\}, \qquad Y := [\bm{0}, \bm{y}].
\]
By \Cref{lem:fukuda}, every extreme point of $X-Y$ can be written as $\bm{e}_{i'} - \mathbf{1}_{\eta > 0}\cdot \bm{y}$ for some $i' \in \{0,\dots,4\}$ and $\eta \in \{-1,1\}$, and there exists a vector $\bm{h} \in \mathbb{R}^4$ such that $-\eta\cdot \bm{h}^\top \bm{y} > 0$ and  $\bm{h}^\top \bm{e}_{i'} > \bm{h}^\top \bm{e}_i$  for any $i \in \{0,\dots, 4\}\backslash\{i'\}$. 
Let $\bm{a}:=\bm{e}_{i'}$ and $\bm{b}:=\mathbf{1}_{\eta > 0}\cdot \bm{y}$, hence $\bm{a}-\bm{b} \in \ext(X-Y)$.
Suppose that $\bm{a} + \bm{b} \notin \ext(X+Y)$.  According to \Cref{lem:fukuda}, there is no vector $\bm{h}$ such that $\eta\cdot \bm{h}^\top \bm{y} > 0$ and  $\bm{h}^\top \bm{e}_{i'} > \bm{h}^\top \bm{e}_i$  for any $i \in \{0,\dots, 4\}\backslash\{i'\}$. Hence, from \Cref{lem:gordan}, there exist $\bm{\theta}\in \mathbb{R}^5_+$ and $\mu \geq 0$ such that $(\bm{\theta}_{-i'-1}, \mu) \neq \bm{0}$ and
	\[
	-\mu\eta \cdot\bm{y} = \sum_{i \in \{0,\dots, 4\}\backslash\{i'\}} \theta_{i+1}\cdot (-\bm{e}_{i})+\left( \sum_{i \in \{0,\dots, 4\}\backslash\{i'\}} \theta_{i+1}\right)\cdot \bm{e}_{i'}.
	\]
	If $\mu = 0$, then we can only have $\bm{\theta}_{-i'-1} = \bm{0}_4$, contradicting the hypothesis.
Suppose $\mu\neq 0$. Note that $|\{i:\mu\eta \cdot y_i > 0\}|=2$. However, we have
\[
\left|\left\{i: p_i > 0, \bm{p} = \sum_{i \in \{0,\dots, 4\}\backslash\{i'\}} \theta_{i+1}\cdot (-\bm{e}_i)+\left( \sum_{i \in \{0,\dots, 4\}\backslash\{i'\}} \theta_{i+1}\right)\cdot \bm{e}_{i'} \right\}\right|
\leq 1,
\]
a contradiction. Therefore, $\bm{a} + \bm{b} \in \ext(X+Y)$, so that the polytopes $X$ and $Y$ are compatible.
\end{Example}

\subsection{Validity of Exact Sum Rule}\label{sec:validity}
Now, we are ready to present our main result on the validity of sum rule for a general \DC{} function with convex \PA{} components.

\begin{Theorem}[Exact sum rule]\label{thm:general-sum-Clarke-geometric}
Let any convex \PA{} functions $h,g:\mathbb{R}^d\to \mathbb{R}$ and a point $\bm{x}\in\mathbb{R}^d$ be given. The following are equivalent. 
\begin{enumerate}[label=\textnormal{(\alph*)}]
\item $\partial (h - g)(\bm{x})=\partial h(\bm{x}) - \partial g(\bm{x})$. \label{item:thm:general-sum-Clarke-geometric-a}
\item $\partial h(\bm{x})$ and $\partial g(\bm{x})$ are compatible polytopes. \label{item:thm:general-sum-Clarke-geometric-b}
\item It holds that
\[
	\{\bm{0}\} = \bigcup\left\{ \mathcal{T}_{\partial h(\bm{x})}(\bm{a})\cap \big( - \mathcal{T}_{\partial g(\bm{x})}(\bm{b}) \big) :\bm{a} - \bm{b} \in \ext(\partial h(\bm{x}) - \partial g(\bm{x}))\right\}.
\] \label{item:thm:general-sum-Clarke-geometric-c}
\end{enumerate}
\end{Theorem}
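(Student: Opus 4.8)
The plan is to prove the two equivalences \ref{item:thm:general-sum-Clarke-geometric-a}$\Leftrightarrow$\ref{item:thm:general-sum-Clarke-geometric-b} and \ref{item:thm:general-sum-Clarke-geometric-b}$\Leftrightarrow$\ref{item:thm:general-sum-Clarke-geometric-c}, writing $A:=\partial h(\bm{x})$ and $B:=\partial g(\bm{x})$ throughout. Since $\partial(-g)(\bm{x})=-B$ for convex $g$, the weak sum rule (\Cref{fct:wsum}, applied to $h$ and $-g$) always gives $\partial(h-g)(\bm{x})\subseteq A-B$; as both sides are polytopes, \ref{item:thm:general-sum-Clarke-geometric-a} is equivalent to the reverse inclusion, hence to $\ext(A-B)\subseteq\partial(h-g)(\bm{x})$. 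The first step is therefore to obtain a usable description of $\partial(h-g)(\bm{x})$. Using \Cref{fct:pa-dd} to reduce to the positively homogeneous germ $\bm{d}\mapsto\sigma_A(\bm{d})-\sigma_B(\bm{d})$, together with the essentially-active-index formula \cite[Proposition 4.3.1]{scholtes2012introduction}, I would establish the structural identity
\begin{equation*}
\partial(h-g)(\bm{x})=\conv\bigl\{\bm{a}-\bm{b}:\bm{a}\in\ext A,\ \bm{b}\in\ext B,\ \intt\mathcal{N}_{A}(\bm{a})\cap\intt\mathcal{N}_{B}(\bm{b})\neq\emptyset\bigr\},
\end{equation*}
the point being that on the open cell $\intt\mathcal{N}_A(\bm{a})\cap\intt\mathcal{N}_B(\bm{b})$ the germ is affine with gradient $\bm{a}-\bm{b}$, and these cells exhaust the directions of differentiability near $\bm{x}$.

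Next I would translate everything into normal-cone language via \Cref{lem:fukuda}. For a polytope, $\bm{h}$ has $\bm{a}$ as its \emph{unique} maximizer over $A$ exactly when $\bm{h}\in\intt\mathcal{N}_A(\bm{a})$; combining this with the uniform-maximizer condition of \Cref{lem:fukuda} applied to $A+(-B)$ and to $A+B$, and using $\mathcal{N}_{-B}(-\bm{b})=-\mathcal{N}_B(\bm{b})$, yields
\begin{align*}
\bm{a}-\bm{b}\in\ext(A-B)&\iff\intt\mathcal{N}_A(\bm{a})\cap\bigl(-\intt\mathcal{N}_B(\bm{b})\bigr)\neq\emptyset,\\
\bm{a}+\bm{b}\in\ext(A+B)&\iff\intt\mathcal{N}_A(\bm{a})\cap\intt\mathcal{N}_B(\bm{b})\neq\emptyset.
\end{align*}
The uniqueness clause of \Cref{lem:fukuda} guarantees that every $\bm{v}\in\ext(A-B)$ has a unique decomposition $\bm{v}=\bm{a}-\bm{b}$ with $\bm{a}\in\ext A,\bm{b}\in\ext B$, so that compatibility (\ref{item:thm:general-sum-Clarke-geometric-b}) reads: for each such $\bm{v}$ one has $\bm{a}+\bm{b}\in\ext(A+B)$, i.e.\ $\intt\mathcal{N}_A(\bm{a})\cap\intt\mathcal{N}_B(\bm{b})\neq\emptyset$. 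Equivalence \ref{item:thm:general-sum-Clarke-geometric-a}$\Leftrightarrow$\ref{item:thm:general-sum-Clarke-geometric-b} then drops out: under \ref{item:thm:general-sum-Clarke-geometric-b} every extreme point $\bm{a}-\bm{b}$ of $A-B$ satisfies $\intt\mathcal{N}_A(\bm{a})\cap\intt\mathcal{N}_B(\bm{b})\neq\emptyset$, so it is a generator in the displayed formula and thus lies in $\partial(h-g)(\bm{x})$, yielding $\ext(A-B)\subseteq\partial(h-g)(\bm{x})$ and hence equality; conversely, an extreme point of $A-B$ lying in the subset $\partial(h-g)(\bm{x})$ is extreme there, hence equals one of the uniquely decomposed generators, forcing $\intt\mathcal{N}_A(\bm{a})\cap\intt\mathcal{N}_B(\bm{b})\neq\emptyset$.

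For \ref{item:thm:general-sum-Clarke-geometric-b}$\Leftrightarrow$\ref{item:thm:general-sum-Clarke-geometric-c} I would prove the purely conic duality statement that, for full-dimensional closed convex cones $N_1,N_2$ (normal cones at vertices are always full-dimensional, being polar to pointed tangent cones),
\begin{equation*}
\intt N_1\cap\intt N_2\neq\emptyset\iff N_1^{\circ}\cap(-N_2^{\circ})=\{\bm{0}\}.
\end{equation*}
The forward direction uses that a linear functional nonpositive on $N_1$ and vanishing at an interior point of $N_1$ must be identically zero; the reverse is a hyperplane separation of the disjoint nonempty open convex cones $\intt N_1,\intt N_2$, the separating functional necessarily passing through the origin. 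Taking $N_1=\mathcal{N}_A(\bm{a})$, $N_2=\mathcal{N}_B(\bm{b})$ and using $\mathcal{N}_A(\bm{a})^{\circ}=\mathcal{T}_A(\bm{a})$, this says $\intt\mathcal{N}_A(\bm{a})\cap\intt\mathcal{N}_B(\bm{b})\neq\emptyset\iff\mathcal{T}_A(\bm{a})\cap(-\mathcal{T}_B(\bm{b}))=\{\bm{0}\}$. Since both \ref{item:thm:general-sum-Clarke-geometric-b} and \ref{item:thm:general-sum-Clarke-geometric-c} quantify over exactly the pairs with $\bm{a}-\bm{b}\in\ext(A-B)$, and the union in \ref{item:thm:general-sum-Clarke-geometric-c} equals $\{\bm{0}\}$ iff each tangent-cone intersection is trivial (every tangent cone contains $\bm{0}$), the equivalence follows termwise.

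I expect the main obstacle to be rigorously pinning down the structural identity for $\partial(h-g)(\bm{x})$ and, in tandem, the bookkeeping that matches extreme points of $A-B$ with their unique Minkowski summands---in particular ensuring that ``$\bm{a}\in A,\bm{b}\in B$'' in the definition of compatibility may be upgraded to extreme summands, and that an extreme point of the larger polytope $A-B$ sitting inside $\partial(h-g)(\bm{x})$ is necessarily a listed generator. The conic duality lemma, while standard, also requires the mild but essential observation that the relevant normal cones are full-dimensional even when $A$ or $B$ fails to be.
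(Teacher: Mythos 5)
Your proof is correct, and it takes a genuinely different route through the hard half of the theorem. The paper proves \ref{item:thm:general-sum-Clarke-geometric-b}$\Rightarrow$\ref{item:thm:general-sum-Clarke-geometric-a} essentially as you do (a strictly separating vector from \Cref{lem:fukuda} makes the homogeneous germ affine on an open cone, and positive homogeneity pulls the gradient $\bm{a}-\bm{b}$ into $\partial f(\bm{0})$), but for the converse it argues $\neg$\ref{item:thm:general-sum-Clarke-geometric-b}$\Rightarrow\neg$\ref{item:thm:general-sum-Clarke-geometric-a} by contradiction: it extracts the essentially active pair, identifies it with $(\bm{a},\bm{b})$ using the separating vector $\bm{h}$, and then runs a two-case analysis ($v(\bm{z})>0$ versus $v(\bm{z})=0$) on auxiliary support-type functions to contradict non-extremality of $\bm{a}+\bm{b}$ in $A+B$. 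You replace that entire case analysis with the structural identity $\partial(h-g)(\bm{x})=\conv\{\bm{a}-\bm{b}:\intt\mathcal{N}_A(\bm{a})\cap\intt\mathcal{N}_B(\bm{b})\neq\emptyset\}$ together with the two normal-cone characterizations of extremality in $A\pm B$; then \ref{item:thm:general-sum-Clarke-geometric-a}$\Leftrightarrow$\ref{item:thm:general-sum-Clarke-geometric-b} reduces to extreme-point bookkeeping plus the uniqueness clause of \Cref{lem:fukuda}, and the implication \ref{item:thm:general-sum-Clarke-geometric-a}$\Rightarrow$\ref{item:thm:general-sum-Clarke-geometric-b} becomes direct rather than by contradiction. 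For \ref{item:thm:general-sum-Clarke-geometric-b}$\Leftrightarrow$\ref{item:thm:general-sum-Clarke-geometric-c} the paper applies Gordan's theorem (\Cref{lem:gordan}) to the explicit vertex generators of the tangent cones, whereas you invoke the polarity between tangent and normal cones and a separation argument for the full-dimensional normal cones; these are the same alternative in different clothing, and you correctly flag the one hypothesis your version needs (full-dimensionality of vertex normal cones, equivalently pointedness of vertex tangent cones). What each buys: the paper's route is more elementary and entirely self-contained at the level of finite vertex lists; yours is shorter, makes the equivalence ``extreme in $A\pm B$ iff the corresponding normal-cone interiors meet'' a reusable statement, and concentrates all the delicacy into justifying the structural identity via the essentially-active-index formula and the full-measure union of open cells --- which you rightly identify as the step requiring the most care, and which is exactly the machinery the paper also leans on in its membership proofs.
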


\begin{Remark}%
\Cref{thm:general-sum-Clarke-geometric} shows that compatibility (a geometric notion), as defined in \Cref{def:comp-poly}, is a necessary and sufficient condition for the validity of the exact sum rule (a nonsmooth analytic concept) for \PA{} functions. While some sufficient conditions for the equality-type calculus rules can be deduced from the literature (see \Cref{sec:known-cond}, \cite[Chapter 10]{rockafellar2009variational}, and \cite[Chapter 4]{Mordukhovich.18}), nontrivial necessary conditions are rarely found, let alone simultaneously necessary and sufficient conditions, as shown in our \Cref{thm:general-sum-Clarke-geometric}. This reveals a new connection between polyhedral geometry  and nonsmooth analysis for \PA{} functions.
\end{Remark}

\begin{Remark}\label{rmk:cr-exponential}
While the compatibility of two $\mathcal{V}$-polytopes can be verified efficiently, the time required to verify the compatibility of polytopes $\partial h(\bm{x})$ and $\partial g(\bm{x})$ in \Cref{thm:general-sum-Clarke-geometric}\ref{item:thm:general-sum-Clarke-geometric-b} can be exponential for $n$-\MC{} functions $h$ and $g$ for even fixed depth $n$. This is because the $\mathcal{V}$-representation of polytopes $\partial h(\bm{x})$ and $\partial g(\bm{x})$ may contain an exponentially large number of vertices, which results from Minkowski summation. This curse of dimensionality phenomenon also arises in algorithm design for piecewise smooth functions with a similar summation structure; see \cite[Section 7]{pang2017computing}.
\end{Remark}

\begin{Remark}
We invite the reader to compare the properties of polytope compatibility outlined in \Cref{prop:comp-poly} with the basic subdifferential calculus rules presented in \cite[Section 2.3]{clarke1990optimization}, especially regarding how they are used in the proof of \Cref{prop:comp-poly} in \Cref{sec:comp-properity}.
\end{Remark}

\begin{proof}[Proof of \Cref{thm:general-sum-Clarke-geometric}] We first make some preparatory moves.
Using \cite[Proposition 4.4.3(c)]{cui2021modern}, for any $\bm{w}$ near $\bm{0}$, we have
\[
h(\bm{w}) - h(\bm{0})= h'(\bm{0};\bm{w})=\max_{\bm{x} \in  \partial h(\bm{0})} \bm{w}^\top \bm{x}, \qquad g(\bm{w}) - g(\bm{0}) = g'(\bm{0}; \bm{w})=\max_{\bm{y} \in \partial g(\bm{0})} \bm{w}^\top \bm{y}.
\]
To simplify notation, we write $X:=\partial h(\bm{0})$ and $Y:=\partial g(\bm{0})$. 
As $h$ and $g$ are convex \PA{} functions, the set $X$ and $Y$ are polytopes.
Let $\ext(X)=:\{\bm{x}_i\}_{i=1}^n, \ext(Y)=:\{\bm{y}_j\}_{j=1}^m$. %
Define $f:=h'(\bm{0}; \cdot) - g'(\bm{0}; \cdot)$. We naturally have
\begin{equation}\label{eq:prf-sum-rule-f}
f(\bm{w}) = \max_{1\leq i \leq n} \bm{w}^\top \bm{x}_i - \max_{1\leq j \leq m} \bm{w}^\top \bm{y}_j,
\end{equation}
which is positively homogeneous.
Using \Cref{fct:clarke-dd} and $\Delta_t(h-g)(\bm{0})(\bm{w})=\Delta_t f(\bm{0})(\bm{w})$ for sufficiently small $\bm{w}$ and $t>0$, we conclude that $\partial (h-g)(\bm{0}) = \partial f(\bm{0})$.

((b)$\implies$(a)) 
By the fuzzy sum rule \cite[Proposition 2.3.3]{clarke1990optimization}, it holds that $\partial f(\bm{0}) =\partial (h-g)(\bm{0})\subseteq X-Y$.
To prove the other direction,
let $\bm{x} \in X, \bm{y} \in Y$ be such that $\bm{x}-\bm{y}\in \ext(X-Y)$. As polytopes $X$ and $Y$ are compatible, we know that $
\bm{x}+ \bm{y} \in  \ext(X+Y).$
By \Cref{lem:fukuda}, we know that $\bm{x} \in \ext(X), \bm{y} \in \ext(Y)$, and there exists a vector $\bm{w}$ such that
$
\bm{w}^\top \bm{x} > \bm{w}^\top \bm{x}'$ for all $\bm{x}' \in \ext(X)\backslash\{\bm{x}\}$ and $
\bm{w}^\top \bm{y} > \bm{w}^\top \bm{y}'$ for all $\bm{y}'\in\ext(Y)\backslash\{\bm{y}\}.
$

By \nref{eq:prf-sum-rule-f} and the continuity of the function $f$, we know that $f$ is continuously differentiable at any $\bm{w}'$ near $\bm{w}$ with 
\[
f(\bm{w}') = (\bm{x}-\bm{y})^\top \bm{w}',\qquad \nabla f(\bm{w}') = \bm{x}-\bm{y}. 
\]
Using \citep[Theorem 9.61]{rockafellar2009variational} on $t\bm{w}'$ with $t \searrow 0$ and by the positive homogeneity of $f$, we get
\[
 \bm{x}-\bm{y} = \lim_{t \searrow 0}\nabla f(t\bm{w}') \in \partial f(\bm{0}).
\]
By \Cref{lem:fukuda}, we know that $\ext(X-Y)=\{\bm{x} - \bm{y}: \bm{x} \in X, \bm{y} \in Y, \bm{x} - \bm{y} \in \ext(X-Y) \}$.
Therefore, it holds that $\ext (X-Y) \subseteq \partial f(\bm{0})$, which implies $X-Y \subseteq \partial f(\bm{0})=\partial (h-g)(\bm{0})$.%

($\neg$(b)$\implies \neg$(a)) 
Suppose that $\bm{x} \in X, \bm{y} \in Y, \bm{x}-\bm{y} \in \ext(X-Y)$ but  
$
\bm{x}+\bm{y}\notin \ext(X+Y).
$ 
Applying \Cref{lem:fukuda} to the relation $\bm{x}-\bm{y} \in \ext(X-Y)$, there exists a vector $\bm{h}$ such that%
\begin{equation}\label{eq:sum-rule-proof-goldan-h}
\bm{h}^\top \bm{x} > \bm{h}^\top \bm{x}', \forall \bm{x}'\in \ext(X)\backslash\{\bm{x}\}, \qquad
\bm{h}^\top \bm{y} < \bm{h}^\top \bm{y}', \forall \bm{y}'\in \ext(Y)\backslash\{\bm{y}\}.
\end{equation}
In a similar vein, as $\bm{x}+\bm{y} \notin \ext(X+Y)$, there is no vector $\bm{z}$ such that 
\begin{equation}\label{eq:sum-rule-proof-goldan}
\bm{z}^\top \bm{x} > \bm{z}^\top \bm{x}', \forall \bm{x}'\in \ext(X)\backslash\{\bm{x}\},\qquad
\bm{z}^\top \bm{y}> \bm{z}^\top \bm{y}', \forall \bm{y}'\in\ext(Y)\backslash\{\bm{y}\}.
\end{equation}
We prove $\neg (a)$ by contradiction. Suppose that $X-Y = \partial f(\bm{0})$. We have 
$
\ext(X-Y)= \ext \partial f(\bm{0}) \subseteq \partial f(\bm{0}).
$
By \cite[Proposition 4.3.1]{scholtes2012introduction}, we get a representation of $\partial f(\bm{0})$ as
\[
\partial f(\bm{0}) = \conv \left\{ \bm{x}_i-\bm{y}_j: (i,j) \in \mathcal{I}_f^e(\bm{0})  \right\},
\]
where the so-called \emph{essentially active set} $\mathcal{I}_f^e(\bm{0})$ is defined by (see \cite[Section 4.1]{scholtes2012introduction})
\[
\mathcal{I}_f^e(\bm{0}):=\left\{(i,j) \in [n]\times [m]: 
\bm{0} \in \cl \left( \intt \left\{ \bm{w}: \bm{w}^\top(\bm{x}_i- \bm{y}_j) = f(\bm{w}) \right\} \right)
\right\}.
\]
Due to $\bm{x}-\bm{y} \in \ext \partial f(\bm{0})$, by \citep[Corollary 18.3.1]{rockafellar1970convex}, there exists an index $(i,j) \in \mathcal{I}_f^e(\bm{0}) $ such that
$
\bm{x}_i - \bm{y}_j = \bm{x} - \bm{y}.
$
If $\bm{x}_i \neq \bm{x}$ (and consequently $\bm{y}_j \neq \bm{y}$), then using $\bm{x}_i \in \ext(X)$ and $\bm{y}_j \in \ext(Y)$, we obtain
\[
\bm{h}^\top(\bm{x}-\bm{y})=\bm{h}^\top\bm{x}_i-\bm{h}^\top\bm{y}_j \overset{\nref{eq:sum-rule-proof-goldan-h}}{<} \bm{h}^\top(\bm{x}-\bm{y}),
\]
which leads to a contradiction. Therefore, it holds that $\bm{x}_i = \bm{x}, \bm{y}_j = \bm{y}$, and
\[
\bm{0} \in \cl \left( \intt \left\{ \bm{w}: \bm{w}^\top(\bm{x}-\bm{y}) = f(\bm{w}) \right\} \right).
\]
By non-emptiness of above set, there exist $\epsilon > 0$ and $\bm{z}$ such that for any $\bm{w} \in \mathbb{B}_\epsilon(\bm{z})$, it holds 
\begin{equation}\label{eq:proof-CR-int-eq}
v(\bm{w}):=\max_{\bm{x}' \in \ext(X)} \bm{w}^\top \left(\bm{x}' - \bm{x} \right) = \max_{\bm{y}' \in \ext(Y)} \bm{w}^\top \left(\bm{y}' - \bm{y}\right)=:u(\bm{w}).
\end{equation}
Let functions $v, u:\mathbb{R}^d \to \mathbb{R}$ be defined above in \nref{eq:proof-CR-int-eq}.
Note that $v(\bm{z}) \geq 0$ and $u(\bm{z}) \geq 0$, as $\bm{x} \in \ext(X)$ and $\bm{y} \in \ext(Y)$.
Moreover, from \cite[Theorem B.3.1.2, Proposition C.2.1.3]{hiriart2004fundamentals}, the functions $v$ and $u$ are continuous on $\mathbb{R}^d$.
We will derive a contradiction to \nref{eq:proof-CR-int-eq} by considering the following two cases.
\begin{itemize}%
	\item $v(\bm{z}) > 0$: Let $t > 0$ be such that $\bm{z} + t\bm{h} \in \mathbb{B}_\epsilon(\bm{z})$ and $v(\bm{z} + t\bm{h}) > 0$, where the vector $\bm{h}$ is defined above \nref{eq:sum-rule-proof-goldan-h}. %
	For any $\bm{x}'' \in\ext(X)$ such that $v(\bm{z} + t\bm{h}) = (\bm{z}+t\bm{h})^\top \left(\bm{x}'' - \bm{x} \right)$, it follows that $\bm{x}'' \neq \bm{x}$, as otherwise $v(\bm{z} + t\bm{h})=0$.
	Then, we obtain
	\[
	v(\bm{z} + t\bm{h})=(\bm{z}+t\bm{h})^\top \left(\bm{x}'' - \bm{x} \right) \leq v(\bm{z}) + t\bm{h}^\top \left(\bm{x}'' - \bm{x} \right) \overset{\nref{eq:sum-rule-proof-goldan-h}}{<} v(\bm{z}).
	\]
	Similarly, for any $\bm{y}'' \in\ext(Y)$ such that $u(\bm{z}) = \bm{z}^\top \left(\bm{y}'' - \bm{y} \right)$, we have
	\[
	u(\bm{z} + t\bm{h})\geq (\bm{z}+t\bm{h})^\top \left(\bm{y}'' - \bm{y} \right) = u(\bm{z}) + t\bm{h}^\top \left(\bm{y}'' - \bm{y} \right) \overset{\nref{eq:sum-rule-proof-goldan-h}}{\geq} u(\bm{z}) \overset{\nref{eq:proof-CR-int-eq}}{=} v(\bm{z}) > v(\bm{z} + t\bm{h}),
	\]
	a contradiction to \nref{eq:proof-CR-int-eq}.
	\item $v(\bm{z}) = 0$: We first claim that there exist $\bm{x}'\in \ext(X)\backslash\{\bm{x}\}$ or $\bm{y}'\in \ext(Y)\backslash \{\bm{y}\}$ such that $\bm{z}^\top \left(\bm{x}' - \bm{x} \right)=0$ or $\bm{z}^\top \left(\bm{y}' - \bm{y} \right)=0$, as otherwise, we have a contradiction to \nref{eq:sum-rule-proof-goldan}. Without of loss of generality, we assume that there exists $\bm{x}''\in \ext(X)\backslash\{\bm{x}\}$ such that $\bm{z}^\top \big(\bm{x}'' - \bm{x} \big)=0$. Let $t > 0$ be such that $\bm{z} - t\bm{h} \in \mathbb{B}_\epsilon(\bm{z})$. It follows that
	\[
	v(\bm{z} + t\bm{h}) \geq (\bm{z}-t\bm{h})^\top \left(\bm{x}'' - \bm{x} \right) = v(\bm{z}) - t\bm{h}^\top \left(\bm{x}'' - \bm{x} \right) \overset{\nref{eq:sum-rule-proof-goldan-h}}{>} v(\bm{z})=0.
	\]
	Meanwhile, for any $\bm{y}'' \in \ext(Y)$ such that $u(\bm{z}-t\bm{h}) = (\bm{z}-t\bm{h})^\top \left(\bm{y}'' - \bm{y} \right)$, we have
	\[
	u(\bm{z}-t\bm{h}) = (\bm{z}-t\bm{h})^\top \left(\bm{y}'' - \bm{y} \right) \leq u(\bm{z}) - t\bm{h}^\top \left(\bm{y}'' - \bm{y} \right) \overset{\nref{eq:sum-rule-proof-goldan-h}}{\leq} u(\bm{z})\overset{\nref{eq:proof-CR-int-eq}}{=}0 < v(\bm{z} + t\bm{h}),
	\]
	a contradiction to \nref{eq:proof-CR-int-eq}.
\end{itemize}

((c)$\implies$(b)) 
Let $\bm{a}\in X, \bm{b} \in Y$ be such that $\bm{a}-\bm{b} \in \ext(X-Y)$. With \Cref{lem:fukuda}, we know there exist $i'\in[n]$ and $j'\in[m]$ such that $\bm{x}_{i'} = \bm{a},\bm{y}_{j'} = \bm{b}$. Note that
\[
\mathcal{T}_X(\bm{a}) =\cone\left\{X - \bm{a}\right\},\quad
\mathcal{T}_Y(\bm{b}) =\cone\left\{Y - \bm{b}\right\}.
\]
We claim that, for $\bm{\theta} \in \mathbb{R}_+^{n}$, if
$
\sum_{i\neq i'} \theta_{i}\cdot (\bm{x}_{i} - \bm{x}_{i'}) = \bm{0},
$
then $\theta_{i}=0$ for all $i\neq i'$. To see it, by \Cref{lem:fukuda}, there exists $\bm{c}\in\mathbb{R}^d$ such that
$
\bm{c}^\top \bm{x}_{i'} > \bm{c}^\top \bm{x}_{i}$ for all $i\in [n]\backslash\{i'\}.
$
Thus, if there exists $\theta_{i} > 0$ with $i \neq i'$, we get $0=\sum_{i\neq i'} \theta_{i}\cdot \bm{c}^\top(\bm{x}_{i} - \bm{x}_{i'})<0$, a contradiction. 
By symmetry, we have the similar claim for $\{\bm{y}_{j}:j\in[m]\}$.
This, combining with the condition (c), i.e., $\mathcal{T}_X(\bm{a})\cap (- \mathcal{T}_Y(\bm{b})) = \{\bm{0}\}$, yields that if
\begin{equation}\label{eq:sum-rule-proof-goldan-c2b-eq}
	\sum_{i \in [n]\backslash\{i'\}} \theta_{i}\cdot (\bm{x}_{i} - \bm{x}_{i'}) + \sum_{j \in [m]\backslash\{j'\}} \mu_{j}\cdot (\bm{y}_{j} - \bm{y}_{j'}) = \bm{0},
\end{equation}
with $\theta_{i}\geq 0, \mu_{j} \geq 0$, 
then
$\theta_{i}=\mu_{j}=0$ for any $i \in [n]\backslash \{i'\}$ and $j\in [m]\backslash \{j'\}$.
Applying \Cref{lem:gordan} to \nref{eq:sum-rule-proof-goldan-c2b-eq}, there exists a vector $\bm{w}$ such that %
\begin{equation}\label{eq:sum-rule-proof-goldan-c2b-ineq}
\bm{w}^\top \bm{x}_{i'} > \bm{w}^\top \bm{x}_{i}, \forall i \in [n]\backslash\{i'\}, \quad
\bm{w}^\top \bm{y}_{j'} > \bm{w}^\top \bm{y}_{j}, \forall j \in [m]\backslash\{j'\},
\end{equation}
which, by \Cref{lem:fukuda}, yields 
$
\bm{a} + \bm{b} = \bm{x}_{i'} + \bm{y}_{j'} \in \ext(X+Y). 
$
Therefore, polytopes $X$ and $Y$ are compatible.

((b)$\implies$(c)) 
Note that if $\bm{a} \notin X$ or $\bm{b} \notin Y$, we have $\mathcal{T}_X(\bm{a})\cap (- \mathcal{T}_Y(\bm{b})) = \emptyset$ by definition.
Let $\bm{a}\in X, \bm{b} \in Y$ such that $\bm{a}-\bm{b}  \in \ext(X-Y)$ be given. With \Cref{lem:fukuda}, we know there exist $i'\in[n]$ and $j'\in[m]$ such that $\bm{x}_{i'} = \bm{a},\bm{y}_{j'} = \bm{b}$. If $\bm{a} + \bm{b}  \in \ext(X+Y)$, then, by \Cref{lem:fukuda}, there exists a vector $\bm{w}$ such that \nref{eq:sum-rule-proof-goldan-c2b-ineq} holds. By \Cref{lem:gordan}, we know that, for $\theta_{i} \geq 0$ and $\mu_{j} \geq 0,$ if 
\[
\bm{v} := \sum_{i \in [n]\backslash\{i'\}} \theta_{i}\cdot (\bm{x}_{i} - \bm{x}_{i'})=- \sum_{j \in [m]\backslash\{j'\}} \mu_{j}\cdot (\bm{y}_{j} - \bm{y}_{j'}) \in \mathcal{T}_X(\bm{a})\cap (- \mathcal{T}_Y(\bm{b})), 
\]
then $\theta_{i}=\mu_{j}=0$ for any $i\in [n]\backslash \{i'\}$ and $j\in [m]\backslash \{ j'\}$, so that we obtain $\bm{v}=\bm{0}$.
The proof completes by noting that $\mathcal{T}_X(\bm{a})=\cone\{ \bm{x}_{i} - \bm{x}_{i'}, i\in[n]\}$ and $\mathcal{T}_Y(\bm{b})=\cone\{ \bm{y}_{j} - \bm{y}_{j'}, j\in[m]\}$.
\end{proof}

\subsection{Transversality vs Compatibility}\label{sec:zono}

As mentioned in \Cref{rmk:cr-exponential}, verifying the conditions in \Cref{thm:general-sum-Clarke-geometric} may require exponential time. Therefore, a polynomial-time verifiable sufficient condition would be of great practical interest. In this subsection, we introduce such a regularity condition, called transversality, which is closely related to a known condition for \emph{convex} composite functions.
\subsubsection{Definition}
For convex \PA{} functions $h$ and $g$, by \Cref{thm:general-sum-Clarke-geometric}, the compatibility of $\partial h(\bm{0})$ and $\partial g(\bm{0})$ is equivalent to the following regularity condition:
\[
	\{\bm{0}\} = \bigcup\left\{ \mathcal{T}_{\partial h(\bm{x})}(\bm{a})\cap \big( - \mathcal{T}_{\partial g(\bm{x})}(\bm{b}) \big) :\bm{a} - \bm{b} \in \ext(\partial h(\bm{x}) - \partial g(\bm{x}))\right\}.
\]
 One straightforward option is to modify the above condition concerning the intersection of tangent cones to allow for affine-type combinations, leading to the following geometric notion.

\begin{Definition}[Transversality]\label{def:transv}
Given two convex \PA{} functions $h,g:\mathbb{R}^d\to \mathbb{R}$, we say that the functions $h$ and $g$ are transversal at point $\bm{x} \in \mathbb{R}^d$ if 
	\[
	\parr(\partial h(\bm{x})) \cap \parr(\partial g(\bm{x})) = \{\bm{0}\}.
	\]
\end{Definition}

Recall that we have seen the transversality condition above in \Cref{prop:all-are-transveral}, where we showed that, when applied to the \PA{} function $h-g$, three existing conditions are all special cases of our transversality condition.
The subdifferentials of convex \PA{} functions, $h$ and $g$, are convex polytopes. Therefore, given a point $\bm{x}$, we can verify the transversality of $h$ and $g$ by solving an LP feasibility problem. For many cases, including all \MC{} functions, this can be done efficiently.
\begin{Proposition}
Given two $n$-\MC{} functions $h,g:\mathbb{R}^d \to \mathbb{R}$ with rational data and $\bm{w} \in \mathbb{Q}^d$, there exists an algorithm that verifies the transversality of $h$ and $g$ at $\bm{w}$ in polynomial time.
\end{Proposition}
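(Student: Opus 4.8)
The plan is to reduce the verification of transversality to two explicit computations of \emph{parallel subspaces}: I would compute bases of $\parr(\partial h(\bm{w}))$ and $\parr(\partial g(\bm{w}))$ as rational linear subspaces of $\mathbb{R}^d$ (each of dimension at most $d$), and then test whether their intersection is $\{\bm{0}\}$ by standard linear algebra over $\mathbb{Q}$. The key enabling observation is that the $n$-\MC{} data lists the affine selection functions explicitly, so their number $N=\prod_k I_kJ_k$ is bounded by the input size; hence the sum/max expression tree of $h$ (and of $g$) has only polynomially many nodes, and a single bottom-up pass evaluates $h(\bm{w})$ together with every intermediate node value at the rational point $\bm{w}$ in polynomial time. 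In particular, at each $\max$-node I can identify its active (argmax) branch set $A$ — including all ties — by comparing the already-computed children values.

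The core is a second bottom-up pass that attaches to every sub-expression a pair $(\bm{r},V)$, where $\bm{r}\in\mathbb{Q}^d$ is a representative of the (convex) subdifferential of that sub-expression at $\bm{w}$ and $V$ is a basis of its parallel subspace. Since $h$ and $g$ are convex, the exact convex subdifferential calculus applies here (there is no Clarke-type failure, which only afflicts the difference $h-g$). I would use: at an affine leaf $\bm{w}^\top\bm{x}+a$, set $\bm{r}:=\bm{x}$ and $V:=\{\bm{0}\}$; at a sum node, apply the convex sum rule $\partial(\sum_j f_j)=\sum_j\partial f_j$ to get $\bm{r}:=\sum_j\bm{r}_j$ and $V:=\sum_j V_j$; at a $\max$-node with active set $A$, apply the exact rule $\partial(\max_i f_i)(\bm{w})=\conv\big(\bigcup_{i\in A}\partial f_i(\bm{w})\big)$ (cf.\ \cite[Corollary D.4.3.2]{hiriart2004fundamentals}), fix any $i_0\in A$, set $\bm{r}:=\bm{r}_{i_0}$, and set
\[
V := \sum_{i\in A}V_i + \spn\{\bm{r}_i-\bm{r}_{i_0} : i\in A\}.
\]
Step (iii) rests on the identity $\parr\big(\conv(\bigcup_{i\in A}P_i)\big)=\sum_{i\in A}\parr(P_i)+\spn\{\bm{r}_i-\bm{r}_{i_0}:i\in A\}$, which follows by expressing any difference of two points of $\bigcup_i P_i$ as an element of some $\parr(P_i)$ plus a representative offset. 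After each step I would re-reduce the spanning set to a basis by Gaussian elimination, keeping the dimension at most $d$.

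Having obtained bases of $V_h:=\parr(\partial h(\bm{w}))$ and $V_g:=\parr(\partial g(\bm{w}))$, I would compute a basis of $V_h\cap V_g$ (for instance via the kernel of the stacked matrix, or Zassenhaus's algorithm) and, using \Cref{def:transv}, declare $h,g$ transversal at $\bm{w}$ iff this intersection is $\{\bm{0}\}$, equivalently iff $\dim(V_h)+\dim(V_g)=\dim(V_h+V_g)$. Correctness is immediate, and the running time is polynomial: each representative is a sum of at most $N$ input vectors and hence of polynomial bit-size, every subspace is carried as a rational basis, and Gaussian elimination over $\mathbb{Q}$ produces polynomial-length output from polynomial-length input.

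The main obstacle I expect is twofold. First, one must verify the parallel-subspace identity for the $\max$ (convex-hull) step and confirm that the \emph{exact} convex calculus of \Cref{fct:clarke-dd} and \cite[Corollary D.4.3.2]{hiriart2004fundamentals} is being invoked only at genuinely convex sub-expressions, so that no spurious subdifferential slack is introduced. Second, one must argue that bit-sizes do not blow up through the $n$ nested layers; this is routine given standard bounds on Gaussian elimination, but it must be stated carefully, since the $\mathcal{V}$-representation of $\partial h(\bm{w})$ itself may be exponentially large — the whole point is that the algorithm never enumerates vertices and manipulates only the $O(d)$-dimensional parallel subspaces together with single representative points.
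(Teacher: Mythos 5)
Your proposal is correct, but it takes a genuinely different route from the paper. The paper (in \Cref{prop:nMC-polynomiality}(c)) never computes an explicit basis of the parallel subspaces: it builds a polynomial-size lifted $\mathcal{H}$-polyhedron $H$ whose projection is $\partial h(\bm{w})$, computes the \emph{equality set} of $H$ by solving at most $m$ LPs to obtain an $\mathcal{H}$-description of $\parr(H)$ and hence an LP-representation of $\parr(\partial h(\bm{w}))$, and then decides $\parr(\partial h(\bm{w}))\cap\parr(\partial g(\bm{w}))=\{\bm{0}\}$ by solving $d$ further LP feasibility problems (one per coordinate direction, which suffices because the intersection is a subspace). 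You instead exploit the same exact convex calculus the paper uses in \eqref{eq:cvx-sub-nMC-geometric} to propagate a representative subgradient and an explicit rational basis of the parallel subspace up the sum/max expression tree, using the correct identity $\parr\bigl(\conv\bigl(\bigcup_{i\in A}P_i\bigr)\bigr)=\sum_{i\in A}\parr(P_i)+\spn\{\bm{r}_i-\bm{r}_{i_0}\}$, and you finish with plain linear algebra over $\mathbb{Q}$. Your route is more elementary (no LP oracle needed, only rational Gaussian elimination) and yields explicit bases, which is more information than the paper's implicit LP-representation; the paper's route is arguably less bookkeeping because it reuses the LP machinery already set up for parts (a) and (b) of the appendix proposition and sidesteps all bit-size questions by appealing to polynomiality of LP.

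The one point you must state carefully is the bit-size control you flag: ``standard bounds on Gaussian elimination'' bound a \emph{single} elimination, and naively re-reducing to an echelon basis at each of the $n$ layers composes those polynomial bounds $n$ times, which does not by itself rule out an exponential blow-up in $n$ (and $n$ is part of the input here). The clean fix, compatible with your scheme, is to maintain at each node a basis that is a \emph{subset} of the accumulated spanning vectors rather than an arbitrary linear combination of them: every spanning vector that ever arises is a difference of two representatives, and every representative is a sum of at most $\prod_k J_k$ input vectors, so all spanning vectors have bit-size polynomial in the input uniformly over all layers; selecting a maximal linearly independent subset at each node is a single polynomial-time elimination on polynomially bounded data. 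With that amendment your argument is complete.
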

\begin{proof}
	See \Cref{prop:nMC-polynomiality}.
\end{proof}

\begin{Remark}[Terminology]\label{rmk:transversality}
Our transversality condition in \Cref{def:transv} for \emph{nonconvex} \PA{} functions is closely related to existing constraint qualifications for \emph{convex-composite} functions; see \cite[Definition 4]{burke2020strong}.
Specifically, let $r:\mathbb{R}^n\to \mathbb{R}\cup\{\infty\}$ be a convex, closed, proper function, and $G:\mathbb{R}^d \to \mathbb{R}^n$ be a smooth mapping. A function $f:\mathbb{R}^d \to \mathbb{R}\cup \{\infty\}$ is called convex-composite if it can be written as $f=r\circ G$. The calculus rules for convex-composite functions require certain so-called constraint qualifications. Fix a point $\bm{w} \in \mathbb{R}^d$. The following \emph{transversality condition} is presented in \cite[Definition 4(TC)]{burke2020strong}:
\begin{equation}\label{eq:tc-kkt}
\mathop{\textnormal{ker}}\left(\nabla G(\bm{w})^\top \right)
\cap
\parr(\partial r(G(\bm{w})) = \{\bm{0}\},
\end{equation}
where $\mathop{\textnormal{ker}}(\bm{A})$ is the null space of the matrix $\bm{A}$.
To see the relationship between our condition in \Cref{def:transv} and \nref{eq:tc-kkt}, by slightly abusing notation, let $r:\mathbb{R}^d\times \mathbb{R}^d \to \mathbb{R}$ be such that $r(\bm{u}, \bm{v}):=h(\bm{u}) - g(\bm{v})$. Define a smooth mapping $G:\mathbb{R}^d \to \mathbb{R}^d \times \mathbb{R}^d$ as $G(\bm{w}):=(\bm{w}, \bm{w})$. By \cite[Proposition 2.5]{rockafellar1985extensions}, we obtain that $\partial r(G(\bm{w})) = \partial h(\bm{w}) \times \partial (-g)(\bm{w}).$  
Note that $\mathop{\textnormal{ker}}\left(\nabla G(\bm{w})^\top \right)=\{(\bm{u}, -\bm{u}):\bm{u} \in \mathbb{R}^d\}.$
One can easily verify that \nref{eq:tc-kkt} and \Cref{def:transv} are equivalent.
\end{Remark}

\subsubsection{Interrelation}

 As promised, we first show that the transversality of $h$ and $g$ at $\bm{x}$ is a sufficient condition for the compatibility of polytopes $\partial h(\bm{x})$ and $\partial g(\bm{x})$.
\begin{Proposition}[Transversality implies compatibility]\label{prop:par-sufficient}
Let two convex \PA{} functions $h,g:\mathbb{R}^d\to \mathbb{R}$ and a point $\bm{x}\in\mathbb{R}^d$ be given. If functions $h$ and $g$ are transversal at point $\bm{x} \in \mathbb{R}^d$, then
\[
\partial (h-g)(\bm{x}) = \partial h(\bm{x}) - \partial g(\bm{x}).
\]
\end{Proposition}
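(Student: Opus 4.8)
The plan is to avoid proving compatibility by hand and instead route everything through the equivalences already established in \Cref{thm:general-sum-Clarke-geometric}. That theorem tells us the desired identity $\partial (h-g)(\bm{x}) = \partial h(\bm{x}) - \partial g(\bm{x})$ (item \ref{item:thm:general-sum-Clarke-geometric-a}) is equivalent to the tangent-cone condition in item \ref{item:thm:general-sum-Clarke-geometric-c}. So, writing $X := \partial h(\bm{x})$ and $Y := \partial g(\bm{x})$ (both nonempty polytopes, since $h,g$ are convex \PA{}), the whole proof reduces to verifying
\[
\{\bm{0}\} = \bigcup\left\{ \mathcal{T}_{X}(\bm{a})\cap \big( - \mathcal{T}_{Y}(\bm{b}) \big) :\bm{a} - \bm{b} \in \ext(X - Y)\right\}.
\]

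The key geometric observation I would use is that each tangent cone is trapped inside the corresponding parallel subspace. Concretely, for any $\bm{a}\in X$ we have $\mathcal{T}_X(\bm{a}) = \cone(X-\bm{a}) \subseteq \spn(X-\bm{a}) = \parr(X)$, where the final equality holds because $\bm{a}\in X$ forces $\spn(X-\bm{a}) = \spn(X-X)$. The same reasoning gives $\mathcal{T}_Y(\bm{b})\subseteq \parr(Y)$, and since $\parr(Y)$ is a subspace, also $-\mathcal{T}_Y(\bm{b})\subseteq \parr(Y)$. Combining these two containments, for every admissible pair $(\bm{a},\bm{b})$ I obtain
\[
\mathcal{T}_{X}(\bm{a})\cap \big( - \mathcal{T}_{Y}(\bm{b}) \big) \subseteq \parr(X)\cap \parr(Y) = \{\bm{0}\},
\]
where the last equality is exactly the transversality hypothesis of \Cref{def:transv}.

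To finish, I would check that the displayed union really equals $\{\bm{0}\}$ rather than being empty: each intersection contains the origin because tangent cones always do, and the index set is nonempty because the nonempty polytope $X-Y$ has at least one extreme point, which by \Cref{lem:fukuda} admits a Minkowski decomposition $\bm{a}-\bm{b}$ with $\bm{a}\in\ext(X)$, $\bm{b}\in\ext(Y)$. Hence the union is exactly $\{\bm{0}\}$, which verifies item \ref{item:thm:general-sum-Clarke-geometric-c} and therefore yields item \ref{item:thm:general-sum-Clarke-geometric-a}, the claimed sum rule.

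I do not expect a substantive obstacle: the entire argument hinges on the single inclusion $\mathcal{T}_X(\bm{a})\subseteq\parr(X)$, so the only points requiring care are the direction of that inclusion and the identity $\spn(X-\bm{a})=\parr(X)$ for $\bm{a}\in X$. The alternative of proving compatibility directly from \Cref{def:comp-poly} (producing, via \Cref{lem:fukuda} and \Cref{lem:gordan}, a separating direction that is simultaneously maximizing on both $X$ and $Y$) is considerably more laborious, which is precisely why I would prefer the tangent-cone characterization in item \ref{item:thm:general-sum-Clarke-geometric-c}.
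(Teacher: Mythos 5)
Your proposal is correct and follows essentially the same route as the paper's proof: both reduce the claim to item \ref{item:thm:general-sum-Clarke-geometric-c} of \Cref{thm:general-sum-Clarke-geometric} via the inclusion $\mathcal{T}_{\partial h(\bm{x})}(\bm{a})\cap(-\mathcal{T}_{\partial g(\bm{x})}(\bm{b}))\subseteq \parr(\partial h(\bm{x}))\cap\parr(\partial g(\bm{x}))=\{\bm{0}\}$, noting that $\bm{0}$ always lies in the intersection. No gap to report.
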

\begin{proof}
	For any $\bm{a} \in \partial h(\bm{x})$ and $\bm{b} \in \partial g(\bm{x})$, we have 
	\[
	\cone\{\partial h(\bm{x}) - \bm{a}\} \subseteq \parr (\partial h(\bm{x})), \quad\text{and}\quad  - \cone\{\partial g(\bm{x}) - \bm{b}\} \subseteq \parr (\partial g(\bm{x})).
	\]
	Note that $\bm{0} \in \mathcal{T}_{\partial h(\bm{x})}(\bm{a})=\limsup_{t\searrow 0}(\partial h(\bm{x}) - \bm{a})/t=\cone\{\partial h(\bm{x}) - \bm{a}\}$ and
	$\bm{0} \in \mathcal{T}_{\partial g(\bm{x})}(\bm{b})=\limsup_{t\searrow 0}(\partial g(\bm{x}) - \bm{b})/t=\cone\{\partial g(\bm{x}) - \bm{b}\}.$ 
	Hence, for any $\bm{a} \in \partial h(\bm{x})$ and $\bm{b} \in \partial g(\bm{x})$, we have 
	\[
	\{\bm{0}\}\subseteq \mathcal{T}_{\partial h(\bm{x})}(\bm{a})\cap\left(-\mathcal{T}_{\partial g(\bm{y})}(\bm{b})\right) \subseteq \parr(\partial h(\bm{x})) \cap \parr(\partial g(\bm{x})) = \{\bm{0}\},
	\]
	where the last equality is from transversality. 
	 The proof completes by using \Cref{thm:general-sum-Clarke-geometric}.
\end{proof}

\paragraph{Zonotopes.}
 While allowing for polynomial-time verification, this ``relaxed'' condition, i.e., transversality,  may appear excessively conservative. 
 A remarkable property of transversality, beyond the efficient verifiability, is its simultaneous necessity and sufficiency when the polytopes $\partial h(\bm{x})$ and $\partial g(\bm{x})$ are zonotopes; see \Cref{fig:zonotopes} for an illustration.

\begin{figure}[th]
	\centering
	\includegraphics[width=.25\textwidth]{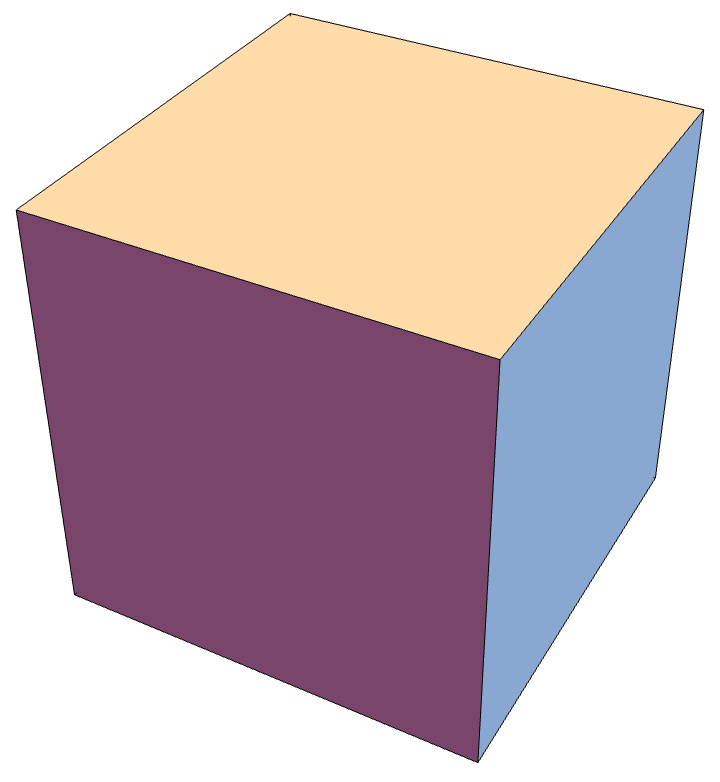}
	\hspace{1cm}
	\includegraphics[width=.25\textwidth]{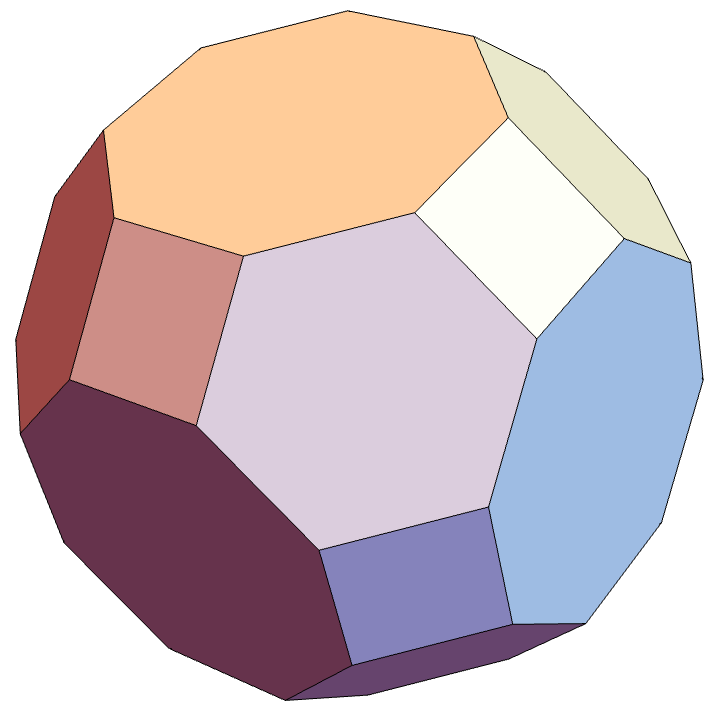}
	\hspace{1cm}
	\includegraphics[width=.25\textwidth]{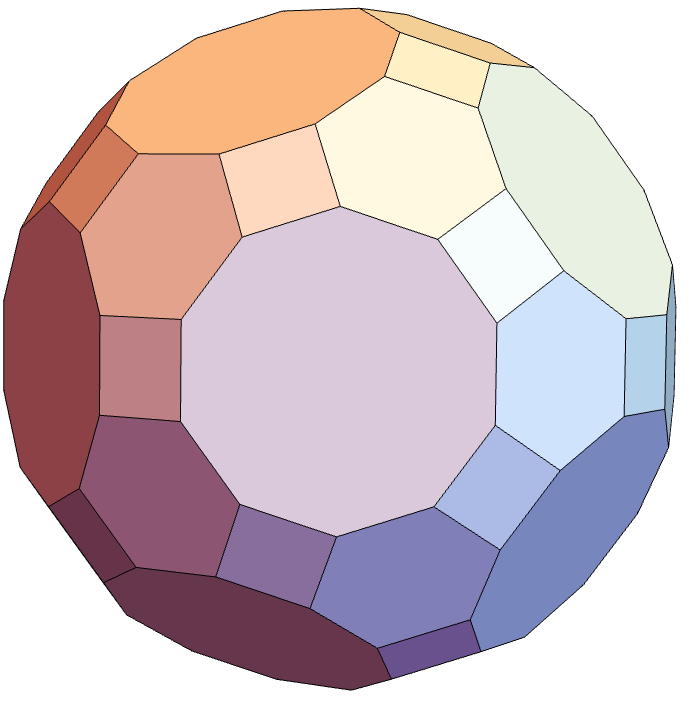}
	\caption{Illustration of zonotopes in $\mathbb{R}^3$.}\label{fig:zonotopes}
\end{figure}

\begin{Proposition}[Zonotopes; compatibility implies transversality]\label{prop:zonotop}
Given two convex \PA{} functions $h,g:\mathbb{R}^d\to \mathbb{R}$ and a point $\bm{x}\in\mathbb{R}^d$, suppose that the polytopes $\partial h(\bm{x})$ and $\partial g(\bm{x})$ are zonotopes. Then, the functions $h$ and $g$ are transversal at the point $\bm{x} \in \mathbb{R}^d$ if and only if
\[
\partial (h-g)(\bm{x}) = \partial h(\bm{x}) - \partial g(\bm{x}).
\]
\end{Proposition}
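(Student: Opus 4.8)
The ``if'' direction is already in hand: by \Cref{prop:par-sufficient}, transversality of $h,g$ at $\bm x$ forces compatibility of the polytopes $\partial h(\bm x)$ and $\partial g(\bm x)$, and hence the exact sum rule by \Cref{thm:general-sum-Clarke-geometric}. So the only content is the converse, which uses the zonotope hypothesis: I would prove its contrapositive, namely that \emph{non}-transversality at $\bm x$ produces a certificate of \emph{in}compatibility of $\partial h(\bm x)$ and $\partial g(\bm x)$, which by \Cref{thm:general-sum-Clarke-geometric} refutes $\partial(h-g)(\bm x)=\partial h(\bm x)-\partial g(\bm x)$. Throughout I abbreviate $X:=\partial h(\bm x)$, $Y:=\partial g(\bm x)$, and write $X=\bm z_X+\sum_{i\in[p]}[-1,1]\bm u_i$, $Y=\bm z_Y+\sum_{j\in[q]}[-1,1]\bm v_j$ as in \Cref{def:zonotope}. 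Since both compatibility (by \Cref{prop:comp-poly}(iii)) and $\parr(\cdot)$ are translation invariant, I may center the zonotopes, so $\parr(X)=U:=\spn\{\bm u_i\}$ and $\parr(Y)=V:=\spn\{\bm v_j\}$, and non-transversality (negating \Cref{def:transv}) reads $W:=U\cap V\neq\{\bm 0\}$.

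\textbf{A combinatorial reformulation of incompatibility.} After centering one has $X-Y=X+Y$ as sets, so both coincide with a single zonotope $Z$ whose generators are $\{\bm u_i\}\cup\{\bm v_j\}$. For generic $\bm h$ set $\sigma_i:=\sgn\langle\bm u_i,\bm h\rangle$ and $\tau_j:=\sgn\langle\bm v_j,\bm h\rangle$. Applying \Cref{lem:fukuda} to $X+(-Y)$, the vertex of $X-Y$ exposed by $\bm h$ decomposes as $\bm a-\bm b$ with $\bm a=\sum_i\sigma_i\bm u_i\in\ext(X)$ and $\bm b=\sum_j(-\tau_j)\bm v_j\in\ext(Y)$; applying \Cref{lem:fukuda} to $X+Y$, one gets $\bm a+\bm b\in\ext(X+Y)$ if and only if the flipped sign vector $(\sigma,-\tau)$ is itself realized by some functional. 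Consequently, to certify incompatibility it suffices to exhibit \emph{one} generic $\bm h$ for which $(\sigma,-\tau)$ is \emph{not} realizable. By \Cref{lem:gordan}, the non-realizability of $(\sigma,-\tau)$ is equivalent to the existence of nonnegative, not-all-zero $\bm\lambda,\bm\mu$ with $\sum_i\lambda_i\sigma_i\bm u_i=\sum_j\mu_j\tau_j\bm v_j$, i.e.\ to $\cone\{\sigma_i\bm u_i\}\cap\cone\{\tau_j\bm v_j\}\neq\{\bm 0\}$; here the pointedness needed to pass between the two formulations is automatic, since every selected generator $\sigma_i\bm u_i$ and $\tau_j\bm v_j$ has positive inner product with $\bm h$. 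Thus the whole problem reduces to the following claim: \emph{if $W\neq\{\bm 0\}$, there is a generic $\bm h$ whose two sign-selected generator cones $\cone\{\sigma_i\bm u_i\}$ and $\cone\{\tau_j\bm v_j\}$ meet outside the origin.}

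\textbf{Setting up the witness.} I would begin from any $\bm 0\neq\bm w\in W$ together with a representation $\bm w=\sum_i a_i\bm u_i=\sum_j b_j\bm v_j$. Discarding the generators carrying zero coefficients is legitimate: by \Cref{prop:comp-poly}(iv) it is enough to show incompatibility for the sub-sums indexed by $\{i:a_i\neq 0\}$ and $\{j:b_j\neq 0\}$, and these still satisfy $\bm w\in\parr(\cdot)\cap\parr(\cdot)$. Absorbing $\sgn(a_i)$ into $\bm u_i$ and $\sgn(b_j)$ into $\bm v_j$ (which changes neither zonotope nor the notions involved), I may assume $a_i,b_j>0$, so that $\bm w\in\cone\{\bm u_i\}\cap\cone\{\bm v_j\}$. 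If the sign vector $(\mathbf 1,\mathbf 1)$ is realizable, the proof ends immediately, because then the witnessing $\bm h$ already satisfies $\bm w\in\cone\{\bm u_i\}\cap\cone\{\bm v_j\}$, giving a nonzero common vector.

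\textbf{The main obstacle.} The hard case is exactly when $\{\bm u_i\}\cup\{\bm v_j\}$ fails to be positively independent, so that $(\mathbf 1,\mathbf 1)$ is \emph{not} a chamber and the ``obvious'' functional does not expose $\bm w$; a two-pair planar example shows this can genuinely occur while the configuration is still incompatible, so the correct chamber must be found elsewhere. To locate it, the plan is to analyze the coupling $L:=\{(\bm x,\bm y)\in U\times V:\pi_W\bm x=\pi_W\bm y\}$, where $\pi_W$ is the orthogonal projection onto $W$; this is precisely the image of $\bm h\mapsto(\pi_U\bm h,\pi_V\bm h)$, and its codimension inside $U\times V$ equals $\dim W$. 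Under this description, realizability of $(\sigma,\tau)$ becomes $\pi_W(C_\sigma)\cap\pi_W(C_\tau)\neq\emptyset$ and realizability of the flip becomes $\pi_W(C_\sigma)\cap\big(-\pi_W(C_\tau)\big)\neq\emptyset$, where $C_\sigma,C_\tau$ are the corresponding chambers in $U$ and $V$. The goal thus becomes the purely geometric task of choosing a generic $\bm w_0\in W$ and chambers with $\bm w_0\in\pi_W(C_\sigma)\cap\pi_W(C_\tau)$ but $\pi_W(C_\sigma)\cap(-\pi_W(C_\tau))=\emptyset$, which follows once the projected cones are pointed and separated along $\bm w_0$. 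Producing a chamber adjacent to $\bm w$ whose $\pi_W$-image is pointed — equivalently, one whose closure meets $U\cap W^\perp$ only at the origin — is the crux; I expect to settle it by a perturbation/limiting argument inside the fiber $\bm w+(U\cap W^\perp)$, using that the chambers form a complete fan of $U$ whose $\pi_W$-images cover $W$. This step is where the zonotope hypothesis is indispensable, and it is the part I anticipate will require the most care.
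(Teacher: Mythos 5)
Your reduction is sound as far as it goes, and it closely parallels the skeleton of the paper's argument: the direction from transversality to the sum rule is indeed immediate from \Cref{prop:par-sufficient}, and the converse is correctly recast --- via \Cref{thm:general-sum-Clarke-geometric}, \Cref{lem:fukuda}, and \Cref{lem:gordan} --- as the task of exhibiting a realizable sign pattern $(\sigma,\tau)$ on the generators for which $\cone\{\sigma_i\bm{u}_i\}\cap\cone\{\tau_j\bm{v}_j\}\neq\{\bm{0}\}$. Your observation that pointedness is automatic for the sign-selected generators, and your disposal of the case where the all-ones pattern is realizable, are both correct. But the proof stops exactly where the real work begins: for the remaining case you offer only a plan (project the two chamber fans onto $W=U\cap V$, locate a chamber adjacent to $\bm{w}$ with pointed projection, separate), and you explicitly flag it as the crux that ``will require the most care.'' That unproved step \emph{is} the nontrivial direction of the proposition, so the argument has a genuine gap. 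Moreover the proposed route is not obviously workable: nothing in the sketch produces the required chamber, and the stated criterion for pointedness of a projected chamber (closure meeting $U\cap W^{\perp}$ only at the origin) is sufficient but not necessary, so even the geometric target is not pinned down correctly.

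The paper closes precisely this gap in \Cref{lem:s-nece}, by a linear-algebra reduction rather than a chamber-projection argument. Using \Cref{lem:localization} (the engine behind \Cref{prop:comp-poly}(iv)) one first passes to minimal selections of the generators, and then a Gaussian-elimination-type loop reduces to a configuration in which $\{\bm{x}_i\}_{i=1}^{n}\cup\{\bm{y}_j\}_{j=1}^{m-1}$ is linearly independent and $\bm{y}_m=\sum_i\alpha_i\bm{x}_i+\sum_{j<m}\beta_j\bm{y}_j$ with every $\alpha_i,\beta_j$ nonzero. At that point the sign pattern you are searching for can simply be read off the coefficients (the witnessing vertices use the indicators $\mathbf{1}_{\alpha_i<0}$ and $\mathbf{1}_{\beta_j<0}$): the dependence relation itself, with coefficients $|\alpha_i|,|\beta_j|,1$, is exactly the nonnegative certificate that \Cref{lem:gordan} needs to show the sum point is \emph{not} extreme in $X+Y$, while linear independence of the remaining $n+m-1$ generators forces the flipped homogeneous system to have only the trivial nonnegative solution, so the difference point \emph{is} extreme in $X-Y$. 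If you want to complete your write-up along your own lines, you need an analogue of that reduction --- some mechanism that singles out a realizable $(\sigma,\tau)$ in the case where $(\mathbf{1},\mathbf{1})$ is not a chamber --- and at present none is supplied.
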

\begin{proof}
	The proof is a bit technical and is deferred to \Cref{sec:prf-zonotope}.
\end{proof}
\begin{Remark}
	It is known that zonotopes correspond to the extreme case in some variational problems involving polytopes; see \citep[Proposition 2.1.8]{gritzmann1993minkowski}. \Cref{prop:zonotop} provides another example of such a case, which is not only of theoretical interest but also has real-world applications. For instance, it can be used in the analysis of the loss function of $\rho$-margin loss SVM and the two-layer \textnormal{ReLU}-type neural networks; see \Cref{sec:svm,sec:2relu} for details.
\end{Remark}

\begin{Remark}
Consider the case where $h(\bm{w}):=\max\left\{\bm{w}^\top \bm{x}, 0\right\}$ and $g(\bm{w}):=\max\left\{\bm{w}^\top \bm{y}, 0\right\}$  in the context of \Cref{prop:zonotop}. Define line segments $X:=[\bm{x},\bm{0}]$ and $Y:=[\bm{y},\bm{0}]$ in $\mathbb{R}^d$ with the non-zero vectors $\bm{x}$ and $\bm{y}$. A straightforward computation shows that $X$ and $Y$ are compatible if and only if $(\parr\{\bm{x},\bm{0}\})\cap (\parr\{\bm{y},\bm{0}\}) = \{\bm{0}\}.$ \Cref{prop:zonotop} demonstrates that this simple observation can be extended, in a highly nontrivial manner, to general zonotopes.
\end{Remark}

In the following example, we illustrate that the sufficient condition presented in \Cref{prop:par-sufficient} may not be necessary in general. This specific example precisely corresponds to the compatible polytopes discussed in \Cref{exam:comp}.
\begin{Example}[$\mathbb{R}^4\to \mathbb{R}$ function]\label{exam:comp-subd}
 Let $\bm{y} := (1,1,-1,-1)\in\mathbb{R}^4$ and  $\bm{e}_0 := \bm{0}_4$ be given. Consider two convex \PA{} functions $h,g:\mathbb{R}^4\to \mathbb{R}$ defined by
\[
h(\bm{w}):=\max_{0\leq i \leq 4} \bm{w}^\top \bm{e}_i,\qquad  g(\bm{w}):= \max{}\{0, \bm{w}^\top \bm{y}\}.
\]
Recall that in \Cref{exam:comp}, polytopes $\partial h(\bm{0})=\conv\{\bm{e}_i:i \in \{0, \dots, 4\}\}$ and $\partial g(\bm{0})=[\bm{0}, \bm{y}]$ are shown to be compatible. By \Cref{thm:general-sum-Clarke-geometric}, we get that
\[
\partial (h-g)(\bm{0})=\partial h(\bm{0})-\partial g(\bm{0})=\conv\{\bm{e}_i\}_{i=0}^4 - [\bm{0}, \bm{y}]. 
\]
However, it is easily seen that $\parr(\partial h(\bm{0}))=\parr\{\bm{e}_i:i \in \{0, \dots, 4\}\} = \mathbb{R}^4  \supsetneq [\bm{0}, \bm{y}]=\partial g(\bm{0})$. Therefore, the exact sum rule holds due to ``compatibility'' rather than ``separability'' of the data.
\end{Example}

Summarizing, we have the following observations.
\begin{itemize}
	\item When polytopes representing subdifferentials are zonotopes, a ``separability''-type condition---namely, transversality---is necessary for the validity of the exact sum rule.
	\item However, in general, as demonstrated in \Cref{exam:comp-subd}, the validity of the exact sum rule relies on compatibility rather than transversality.
\end{itemize}

\subsubsection{Proof of \Cref{prop:zonotop}}\label{sec:prf-zonotope}
Here are some elementary properties of Minkowski sum of sets.
\begin{Lemma}\label{lem:localization}
	Let $A, B, C $ be sets in $\mathbb{R}^n$. Suppose further that $A$ is convex and closed, and $C$ is non-empty and bounded. If the strict inclusion $A \subsetneq B$ holds, then we can assert $A+C \subsetneq B + C$.
\end{Lemma}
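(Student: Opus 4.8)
The plan is as follows. The inclusion $A+C\subseteq B+C$ is immediate from $A\subseteq B$, so the entire content lies in showing the inclusion is \emph{strict}; that is, in exhibiting a single point of $B+C$ lying outside $A+C$. If $A=\emptyset$, then $A+C=\emptyset$ while $B+C\neq\emptyset$ (since $C\neq\emptyset$ and $A\subsetneq B$ forces $B\neq\emptyset$), so the claim is trivial; hence I assume $A\neq\emptyset$ and fix some $\bm{b}\in B\setminus A$, which exists because $A\subsetneq B$.

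The key tool is the strict separation theorem applied to the closed convex set $A$ and the point $\bm{b}\notin A$: there exists a direction $\bm{h}\in\mathbb{R}^n$ with
\[
\langle \bm{h},\bm{b}\rangle > \sigma_A(\bm{h}) := \sup_{\bm{a}\in A}\langle\bm{h},\bm{a}\rangle,
\]
and, crucially, the strict separation guarantees that $\sigma_A(\bm{h})$ is finite. Set $\delta := \langle\bm{h},\bm{b}\rangle - \sigma_A(\bm{h}) > 0$. Because $C$ is bounded, its support value $\sigma_C(\bm{h}):=\sup_{\bm{c}\in C}\langle\bm{h},\bm{c}\rangle$ is finite as well; here I only need an \emph{approximate} maximizer, so I pick $\bm{c}_0\in C$ with $\langle\bm{h},\bm{c}_0\rangle > \sigma_C(\bm{h}) - \delta/2$ (no closedness of $C$ is required, which is why boundedness alone suffices).

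I then claim the witness point $\bm{p}:=\bm{b}+\bm{c}_0\in B+C$ does not belong to $A+C$. Indeed, any representation $\bm{p}=\bm{a}+\bm{c}'$ with $\bm{a}\in A$ and $\bm{c}'\in C$ would force $\langle\bm{h},\bm{p}\rangle\leq\sigma_A(\bm{h})+\sigma_C(\bm{h})$, whereas the construction yields
\[
\langle\bm{h},\bm{p}\rangle = \langle\bm{h},\bm{b}\rangle + \langle\bm{h},\bm{c}_0\rangle > \big(\sigma_A(\bm{h})+\delta\big) + \big(\sigma_C(\bm{h})-\delta/2\big) = \sigma_A(\bm{h})+\sigma_C(\bm{h})+\delta/2,
\]
a contradiction. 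Hence $\bm{p}\in (B+C)\setminus(A+C)$, establishing $A+C\subsetneq B+C$.

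I expect the main subtlety --- rather than a genuine obstacle --- to be the careful bookkeeping of the two support functions, and in particular tracking exactly where each hypothesis is consumed: finiteness of $\sigma_A(\bm{h})$ comes for free from the \emph{strict} separation afforded by closedness and convexity of $A$, while finiteness of $\sigma_C(\bm{h})$ and the existence of the near-maximizer $\bm{c}_0$ rely solely on boundedness of $C$ (not closedness). Getting the $\delta/2$ slack right is what makes the final strict inequality close up, and it is the only place where the nonattainment of the supremum over $C$ must be handled with care.
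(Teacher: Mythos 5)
Your proof is correct and follows essentially the same route as the paper: the paper obtains the separating direction $\bm{d}=\bm{x}_b-\bm{x}_a$ constructively by projecting $\bm{x}_b$ onto the closed convex set $A$ (which is just the standard proof of the strict separation you invoke), and then uses an $\epsilon$-approximate maximizer over $C$ with $\epsilon<\|\bm{d}\|^2$ exactly as your $\bm{c}_0$ with slack $\delta/2$. The bookkeeping of where closedness/convexity of $A$ and boundedness (but not closedness) of $C$ are consumed matches the paper's argument.
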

\begin{proof}
	Let $\bm{x}_b \in B\backslash A$. The claim is trivial when $A=\emptyset$. Choose $\bm{x}_a' \in A$ and set $\delta\coloneqq \|\bm{x}_b-\bm{x}_a'\|$. As $A$ is closed, the following $\bm{x}_a$ is well-defined
	\[
	\bm{x}_a \coloneqq \argmin_{\bm{a} \in A} \|\bm{a} - \bm{x}_b \|=\argmin_{\bm{a} \in A\cap\mathbb{B}_{\delta}(\bm{x}_b)} \|\bm{a} - \bm{x}_b \|. 
	\]
	Let $\bm{d} \coloneqq \bm{x}_b - \bm{x}_a$. As $\bm{x}_b \notin A$ and $A$ is closed, we know $\|\bm{d}\| > 0$.
	By the optimality condition and convexity of $A$, we know $\langle \bm{a} - \bm{x}_a, \bm{d}\rangle \leq 0, \forall \bm{a}\in A$, which implies $\langle  \bm{d}, \bm{a} \rangle \leq \langle\bm{d}, \bm{x}_a \rangle, \forall \bm{a} \in A$. As $C$ is bounded, we know $\langle \bm{c}, \bm{d} \rangle \leq \|\bm{c}\|\cdot \|\bm{d}\| < \infty,\forall \bm{c} \in C$. Let 
	$
	\bm{x}_c \in \eargmax_{\bm{c} \in C}\ \langle \bm{c}, \bm{d} \rangle,
	$
	where $0<\epsilon<\|\bm{d}\|^2$.
	We claim $\bm{x}_b + \bm{x}_c \notin A + C$. Suppose not. Then, there exist $\bm{y}_a \in A, \bm{y}_c \in C$ such that $\bm{y}_a + \bm{y}_c = \bm{x}_b + \bm{x}_c$. However, we compute
	\[
		\langle \bm{d}, \bm{x}_b + \bm{x}_c \rangle 
		=  \langle \bm{d}, \bm{d} + \bm{x}_a \rangle + \langle \bm{d}, \bm{x}_c \rangle \\
		 \geq \|\bm{d}\|^2 + \langle\bm{d}, \bm{y}_a \rangle + \langle\bm{d}, \bm{y}_c \rangle - \epsilon\\
		 >  \langle\bm{d}, \bm{y}_a + \bm{y}_c \rangle,
	\]
	which gives the contradiction.
\end{proof}
\begin{Remark}
Though the claim seems straightforward,
\Cref{lem:localization} is indeed nontrivial. We record the following counterexamples when different conditions are removed.
\begin{itemize}[label=$\circ$]
	\item $C$ is empty: $A+C = B + C = \emptyset$. 
	\item $C$ is unbounded: if $C=\mathbb{R}^n$ and $A,B$ are non-empty, then $A+C = B + C = \mathbb{R}^n$.
	\item $A$ is nonconvex: if $A=\mathbb{B}\backslash\left(\frac{1}{4}\mathbb{B}\right), B=\mathbb{B}, C=\mathbb{B}$, then $A+C = B + C = 2\mathbb{B}$.
	\item $A$ is not closed: if $A=\intt(\mathbb{B}), B=\mathbb{B}, C = \intt(\mathbb{B})$, then $A+C = B + C = \intt(2\mathbb{B})$.
\end{itemize}	
\end{Remark}

\begin{Lemma}\label{lem:extremept}
Let $\{\bm{p}_i\}_{i=1}^n$ be linearly independent. Define a convex set $C = \sum_{i=1}^n \bm{p}_i\cdot[0,1]$, which is actually a parallelotope.
We have $\ext(C) = \sum_{i=1}^n \{0,1\}\cdot \bm{p}_i.$
\end{Lemma}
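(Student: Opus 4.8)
The plan is to derive both inclusions from the Minkowski-decomposition characterization of extreme points in \Cref{lem:fukuda}, applied to the summands $P_i := \bm{p}_i\cdot[0,1] = [\bm{0},\bm{p}_i]$. Each $P_i$ is a segment, so $\ext(P_i) = \{\bm{0},\bm{p}_i\}$. For the inclusion $\ext(C)\subseteq \sum_{i=1}^n\{0,1\}\cdot\bm{p}_i$, I would simply invoke the ``only if'' direction of \Cref{lem:fukuda}: any $\bm{v}\in\ext(C)$ admits a decomposition $\bm{v} = \sum_{i=1}^n\bm{v}_i$ with $\bm{v}_i\in\ext(P_i)=\{\bm{0},\bm{p}_i\}$, i.e. $\bm{v}_i = \epsilon_i\bm{p}_i$ for some $\epsilon_i\in\{0,1\}$. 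This gives one inclusion with essentially no work.

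For the reverse inclusion, fix an arbitrary $\bm{\epsilon}\in\{0,1\}^n$ and set $\bm{v} := \sum_{i=1}^n\epsilon_i\bm{p}_i$, with the candidate decomposition $\bm{v}_i := \epsilon_i\bm{p}_i\in\ext(P_i)$. By the ``if'' direction of \Cref{lem:fukuda}, it suffices to exhibit a single vector $\bm{h}$ for which $\{\bm{v}_i\} = \argmax_{\bm{z}\in P_i}\langle\bm{h},\bm{z}\rangle$ holds \emph{uniformly} over all $i\in[n]$. On the segment $P_i=[\bm{0},\bm{p}_i]$ the maximizer of $\bm{z}\mapsto\langle\bm{h},\bm{z}\rangle$ is the unique point $\bm{p}_i$ when $\langle\bm{h},\bm{p}_i\rangle>0$ and the unique point $\bm{0}$ when $\langle\bm{h},\bm{p}_i\rangle<0$. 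Hence the task reduces to finding $\bm{h}$ with $\langle\bm{h},\bm{p}_i\rangle>0$ whenever $\epsilon_i=1$ and $\langle\bm{h},\bm{p}_i\rangle<0$ whenever $\epsilon_i=0$.

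The key step is realizing this strict sign pattern, and this is exactly where linear independence enters. Since $\bm{p}_1,\dots,\bm{p}_n$ are linearly independent, the linear map $\bm{h}\mapsto(\langle\bm{h},\bm{p}_i\rangle)_{i\in[n]}$ from $\spn\{\bm{p}_i\}_{i=1}^n$ onto $\mathbb{R}^n$ is a bijection (its matrix in any basis of the span is a full-rank Gram-type matrix), so I can prescribe the target vector $(s_i)_{i\in[n]}$ with $s_i:=+1$ if $\epsilon_i=1$ and $s_i:=-1$ if $\epsilon_i=0$, and solve for a certifying $\bm{h}$. This $\bm{h}$ verifies $\bm{v}\in\ext(C)$ via \Cref{lem:fukuda}, completing the argument. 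I expect this realizability step to be the only nontrivial point: the uniqueness (strictness) demanded by \Cref{lem:fukuda} is precisely what forces the strict inequalities, and without linear independence a common certifying $\bm{h}$ need not exist. As a sanity check and slicker alternative, one may instead note that $C$ is the image of the cube $[0,1]^n$ under the injective linear map $T:\bm{x}\mapsto\sum_{i=1}^n x_i\bm{p}_i$; since an injective affine map carries extreme points to extreme points bijectively and $\ext([0,1]^n)=\{0,1\}^n$, one again obtains $\ext(C)=T(\{0,1\}^n)=\sum_{i=1}^n\{0,1\}\cdot\bm{p}_i$.
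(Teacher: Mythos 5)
Your proof is correct, but it follows a different route from the paper's. For the easy inclusion $\ext(C)\subseteq\sum_{i=1}^n\{0,1\}\cdot\bm{p}_i$, the paper writes $C=\conv\bigl(\sum_{i=1}^n\{0,1\}\cdot\bm{p}_i\bigr)$ and cites \cite[Corollary 18.3.1]{rockafellar1970convex}, whereas you invoke the Minkowski-decomposition characterization of \Cref{lem:fukuda}; both work. The real divergence is in the reverse inclusion. The paper argues directly from the definition of extreme point: if $\bm{p}=\sum_i s_i\bm{p}_i$ were a midpoint $\tfrac12\bm{x}_1+\tfrac12\bm{x}_2$ of two distinct points of $C$, linear independence forces the coordinate identities $s_i=\tfrac{\alpha_i+\beta_i}{2}$ with $\alpha_i,\beta_i\in[0,1]$, and since $s_i\in\{0,1\}$ is an extreme point of $[0,1]$ this pins $\alpha_i=\beta_i=s_i$, giving $\bm{x}_1=\bm{x}_2$ --- a contradiction. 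You instead use the ``if'' direction of \Cref{lem:fukuda} and construct an explicit certifying functional $\bm{h}$ realizing the strict sign pattern $\sgn\langle\bm{h},\bm{p}_i\rangle=2\epsilon_i-1$, which is possible because the map $\bm{h}\mapsto(\langle\bm{h},\bm{p}_i\rangle)_{i\in[n]}$ is surjective onto $\mathbb{R}^n$ (its adjoint $\bm{c}\mapsto\sum_i c_i\bm{p}_i$ is injective by linear independence). Your approach buys an explicit supporting functional for each vertex and reuses machinery already present in the paper; the paper's argument is more elementary and self-contained, needing only the definition of extremality and unique coordinates. Your closing remark --- that $C$ is the image of $[0,1]^n$ under an injective linear map, which carries $\ext([0,1]^n)=\{0,1\}^n$ bijectively onto $\ext(C)$ --- is the cleanest of the three and is also valid.
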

\begin{proof}
Note that $C = \conv \left(\sum_{i=1}^n \{0,1\}\cdot \bm{p}_i\right)$. By \cite[Corollary 18.3.1]{rockafellar1970convex}, we obtain $\ext(C) \subseteq \sum_{i=1}^n \{0,1\}\cdot \bm{p}_i.$ It suffices to only prove $\sum_{i=1}^n \{0,1\}\cdot \bm{p}_i \subseteq \ext(C)$.
	Suppose not. Let $\bm{p} := \sum_{i=1}^n s_i\cdot\bm{p}_i \notin \ext(C)$ with $s_i \in \{0,1\} $ for all $i\in[n]$. We write $\bm{p} = \frac{1}{2}\bm{x}_1 + \frac{1}{2}\bm{x}_2 $ where $ \bm{x}_1 := \sum_{i=1}^n\alpha_i\cdot \bm{p}_i \in C, \bm{x}_2 := \sum_{i=1}^n\beta_i\cdot \bm{p}_i \in C$, and $\bm{x}_1 \neq \bm{x}_2$. We know $\alpha_i \in [0,1]$ and $\beta_i \in [0,1]$ for any $i\in[n]$ by definition. Thus, it holds
	$
	\sum_{i=1}^n s_i \cdot \bm{p}_i = \sum_{i=1}^n \left(\frac{\alpha_i + \beta_i}{2} \right)\cdot \bm{p}_i.
	$
	As vectors $\{\bm{p}_i\}_{i=1}^n$ are linearly independent, for any $i \in [n]$, we obtain $s_i = \left(\frac{\alpha_i + \beta_i}{2} \right) \in \{0,1\}$. If $s_i = 0$, we have $\alpha_i = \beta_i = 0$. Meanwhile, we know $\alpha_i = \beta_i = 1$ if $s_i = 1$. Therefore, it holds $\bm{x}_1 = \bm{x}_2 = \bm{p}$, a contradiction.
\end{proof}

The proof of \Cref{prop:zonotop} relies on the following key technical lemma.
\begin{Lemma}\label{lem:s-nece} Given vectors $\bm{x}_i\neq \bm{0}$ and  $\bm{y}_j \neq \bm{0}$ in $\mathbb{R}^d$ for $i\in[n]$ and $j\in[m]$,
let 
\[
f_{\sf Z}(\bm{w}):=\sum_{i=1}^n \max \left\{\bm{w}^\top \bm{x}_i, 0 \right\} - \sum_{j=1}^m \max \left\{ \bm{w}^\top \bm{y}_j, 0 \right\}.
\]
	If there exists $\bm{v}\in\mathbb{R}^d$ such that
	\[
	\bm{0}\neq \bm{v}\in\spn \lB \bm{x}_i \rB_{i=1}^n  \cap 
	\spn \lB \bm{y}_j \rB_{j=1}^m,
	\]
	then $
	\partial f_{\sf{Z}} (\bm{0}) \subsetneq G_{\sf{Z}}:=\sum_{i=1}^n [\bm{x}_i, \bm{0}] - \sum_{j=1}^m [\bm{y}_j, \bm{0}].
	$
\end{Lemma}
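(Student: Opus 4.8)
The plan is to read off the two subdifferentials as zonotopes, reduce to a minimal ``circuit'' gadget on which the exact sum rule provably fails, and then lift the resulting strict inclusion to the full collection by means of the localization lemma. First I would record that, writing $h(\bm w):=\sum_{i=1}^n\max\{\bm w^\top\bm x_i,0\}$ and $g(\bm w):=\sum_{j=1}^m\max\{\bm w^\top\bm y_j,0\}$, the exact convex sum rule gives $\partial h(\bm 0)=\sum_{i=1}^n[\bm 0,\bm x_i]=:X$ and $\partial g(\bm 0)=\sum_{j=1}^m[\bm 0,\bm y_j]=:Y$, both zonotopes, so that $G_{\sf Z}=X-Y$ and, by the weak sum rule (\Cref{fct:wsum}), $\partial f_{\sf Z}(\bm 0)\subseteq G_{\sf Z}$ always. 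Since $\parr(X)=\spn\{\bm x_i\}$ and $\parr(Y)=\spn\{\bm y_j\}$, the hypothesis is exactly the failure of transversality, and by \Cref{thm:general-sum-Clarke-geometric} it suffices to produce a strict inclusion, equivalently to exhibit incompatibility of $X$ and $Y$.

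Next I would extract a minimal mixed dependence from the nonzero $\bm v\in\spn\{\bm x_i\}\cap\spn\{\bm y_j\}$. Choosing $\bm v$ together with a representation $\bm v=\sum_{i\in I}c_i\bm x_i$ of least support, and then a least-support representation $\bm v=\sum_{j\in J}d_j\bm y_j$, a short minimality argument shows that $\{\bm x_i\}_{i\in I}$ and $\{\bm y_j\}_{j\in J}$ are each linearly independent, that $I$ and $J$ are both nonempty, and that the only linear dependence among $\{\bm x_i\}_{i\in I}\cup\{\bm y_j\}_{j\in J}$ is the one-dimensional family spanned by $\sum_{i\in I}c_i\bm x_i-\sum_{j\in J}d_j\bm y_j=\bm 0$ (a circuit). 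Replacing $\bm x_i\mapsto\sgn(c_i)\bm x_i$, $\bm y_j\mapsto\sgn(d_j)\bm y_j$ and positively rescaling each vector only translates both $\partial f_1(\bm 0)$ and its zonotope by a common vector, hence preserves strict inclusion (cf. \Cref{prop:comp-poly}(ii)--(iii)); so I may assume the normalized relation $\sum_{i\in I}\bm x_i=\sum_{j\in J}\bm y_j=:\bm v\neq\bm 0$.

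On this gadget, with $X_1:=\sum_{i\in I}[\bm 0,\bm x_i]$ and $Y_1:=\sum_{j\in J}[\bm 0,\bm y_j]$, I would test compatibility at $\bm a:=\bm v=\sum_{i\in I}\bm x_i\in X_1$ and $\bm b:=\bm 0\in Y_1$. Because the circuit's dependence space is one-dimensional and mixed, there is no nontrivial nonnegative dependence of $\{\bm x_i\}_{i\in I}\cup\{\bm y_j\}_{j\in J}$, so \Cref{lem:gordan} supplies $\bm h$ with $\bm h^\top\bm x_i>0$ and $\bm h^\top\bm y_j>0$ for all $i\in I,\,j\in J$; via \Cref{lem:fukuda} this certifies $\bm a-\bm b=\bm v\in\ext(X_1-Y_1)$. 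On the other hand, $\bm v=\sum_{i\in I}\bm x_i+\bm 0=\bm 0+\sum_{j\in J}\bm y_j$ are two distinct Minkowski decompositions of $\bm a+\bm b=\bm v$ in $X_1+Y_1$, so the uniqueness clause of \Cref{lem:fukuda} forces $\bm a+\bm b\notin\ext(X_1+Y_1)$. Hence $X_1$ and $Y_1$ are incompatible, and \Cref{thm:general-sum-Clarke-geometric} gives $\partial f_1(\bm 0)\subsetneq G_1:=X_1-Y_1$. (Since any proper subset of a circuit is independent, one could instead verify this inclusion by enumerating the vertices of $G_1$ through \Cref{lem:extremept}.)

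Finally I would lift to the full collection. Writing $f_{\sf Z}=f_1+f_2$ and $G_{\sf Z}=G_1+G_2$, where $f_2$ and $G_2$ use the remaining indices, two applications of \Cref{fct:wsum} give $\partial f_{\sf Z}(\bm 0)\subseteq\partial f_1(\bm 0)+\partial f_2(\bm 0)\subseteq\partial f_1(\bm 0)+G_2$. As $\partial f_1(\bm 0)$ is convex and compact, $G_2$ is nonempty and bounded, and $\partial f_1(\bm 0)\subsetneq G_1$, \Cref{lem:localization} yields $\partial f_1(\bm 0)+G_2\subsetneq G_1+G_2=G_{\sf Z}$, and chaining the inclusions gives $\partial f_{\sf Z}(\bm 0)\subsetneq G_{\sf Z}$, as claimed. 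I expect the main obstacle to lie in the second and third steps: ensuring that the shared direction $\bm v$ localizes to a genuinely \emph{mixed} circuit (so both $I$ and $J$ are nonempty and the dependence is unique up to scale) and carefully tracking the sign normalizations so that the clean ``two decompositions'' witness survives. The localization step then painlessly absorbs all remaining segments, which is exactly why one cannot simply make a single global sign choice on all the vectors at once, as that may destroy the property that the candidate point is a vertex of $X-Y$.
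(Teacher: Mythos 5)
Your proposal follows essentially the same route as the paper's proof: extract a minimal mixed dependence (a circuit) between the $\bm{x}_i$'s and $\bm{y}_j$'s, certify incompatibility of the two zonotopes at a single point that is a vertex of $X_1-Y_1$ but not of $X_1+Y_1$ (via \Cref{lem:gordan} and \Cref{lem:fukuda}), convert incompatibility into the strict inclusion $\partial f_1(\bm{0})\subsetneq G_1$ through \Cref{thm:general-sum-Clarke-geometric}, and absorb the remaining segments with the fuzzy sum rule and \Cref{lem:localization}. Your witness $(\bm{a},\bm{b})=(\bm{v},\bm{0})$ after sign normalization, together with the ``two Minkowski decompositions'' argument in place of a second application of Gordan, is a clean cosmetic variant of the paper's choice $\bm{x}=\sum_i\mathbf{1}_{\alpha_i<0}\bm{x}_i$, $\bm{y}=\sum_j\mathbf{1}_{\beta_j<0}\bm{y}_j+\bm{y}_m$.

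Two steps need repair as written. First, taking least-support representations $\bm{v}=\sum_{i\in I}c_i\bm{x}_i=\sum_{j\in J}d_j\bm{y}_j$ does \emph{not} make the dependence space of $\{\bm{x}_i\}_{i\in I}\cup\{\bm{y}_j\}_{j\in J}$ one-dimensional: with $\bm{x}_1=\bm{y}_1=\bm{e}_1$, $\bm{x}_2=\bm{y}_2=\bm{e}_2$, and $\bm{v}=\bm{e}_1+\bm{e}_2$, both representations have minimal support, yet the union carries a two-dimensional space of dependencies. What minimality does give you is that each part $\{\bm{x}_i\}_{i\in I}$ and $\{\bm{y}_j\}_{j\in J}$ is linearly independent, hence every circuit contained in the union is mixed; you must then pass to such a circuit $C$ (whose associated vector $\bm{v}_C=\sum_{i\in I\cap C}c_i'\bm{x}_i$ is nonzero precisely because the $\bm{x}$-part is independent) before asserting the one-dimensionality you use in the Gordan step. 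This is exactly what the paper's elimination procedure accomplishes, so the gap is one of justification rather than of strategy. Second, positively rescaling the generators does not ``translate'' $\partial f_1(\bm{0})$ and $G_1$ by a common vector (only the sign flips do that); it dilates the corresponding segments, and \Cref{prop:comp-poly}(ii)--(iii) does not cover this. The reduction survives because the certificates in \Cref{lem:gordan} and \Cref{lem:fukuda} depend only on the directions of the generators, so compatibility of $\sum_i[\bm{0},\lambda_i\bm{x}_i]$ and $\sum_j[\bm{0},\mu_j\bm{y}_j]$ is invariant under $\lambda_i,\mu_j>0$ --- but that is the argument you need to supply.
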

\begin{proof} Our proof will be divided into two steps.

\textit{Step 1.}
	We first show that it suffices to consider only a properly chosen subset of $\{\bm{x}_i\}_{i=1}^n$ and $\{\bm{y}_j\}_{j=1}^{m}$. Let the index set $\mathcal{J}_x \subseteq [n]$  be a minimal selection from $\{\bm{x}_i\}_{i=1}^n$ such that $\lB \bm{x}_i \rB_{i\in\mathcal{J}_x}$ are linearly independent and satisfy
	$
	\bm{0}\neq \bm{v} \in \spn \lB \bm{x}_i \rB_{i\in\mathcal{J}_x}.%
	$
	Similarly, we define $\mathcal{J}_y \subseteq [m]$  for $\{\bm{y}_j\}_{j=1}^m$.
	Then, we write
	\[
f_{\sf{Z}}(\bm{w})= f_{\sf{Z}_1}(\bm{w}) + f_{\sf{Z}_2}(\bm{w}),
\]
where
\[
\begin{aligned}
f_{\sf{Z}_1}(\bm{w})&\coloneqq\sum_{i\in[n]\backslash\mathcal{J}_x} \max\lB \bm{x}_i^\top \bm{w},0 \rB - \sum_{j\in[m]\backslash\mathcal{J}_y } \max\lB \bm{y}_j^\top \bm{w}, 0 \rB,\\
f_{\sf{Z}_2}(\bm{w})&\coloneqq\sum_{i\in\mathcal{J}_x} \max\lB \bm{x}_i^\top \bm{w},0 \rB - \sum_{j\in\mathcal{J}_y } \max\lB \bm{y}_j^\top \bm{w}, 0 \rB.	
\end{aligned}
\]
By using the fuzzy sum rule \cite[Proposition 2.3.3]{clarke1990optimization} twice, we obtain
\[
\partial f_{\sf{Z}}(\bm{0}) \subseteq \partial f_{\sf{Z}_1}(\bm{0}) + \partial f_{\sf{Z}_2}(\bm{0}) \subseteq \partial f_{\sf{Z}_1}(\bm{0}) + G_{\sf{Z}_2} \subseteq G_{\sf{Z}},
\]
where we define set $G_{\sf{Z}_2}\coloneqq \sum_{i\in\mathcal{J}_x} [\bm{x}_i, \bm{0}] -\sum_{j\in\mathcal{J}_y}  [\bm{y}_j,\bm{0}]$.
Thus, to prove $\partial f_{\sf{Z}}(\bm{w}) \subsetneq G_{\sf{Z}}$, by \Cref{lem:localization} and \cite[Proposition 2.1.2(a)]{clarke1990optimization}, we only need to show $\partial f_{\sf{Z}_2}(\bm{w}) \subsetneq G_{\sf{Z}_2}$. By abuse of notation and focus on $f_{\sf{Z}_2}$, we can assume that $\mathcal{J}_x=[n]$ and $\mathcal{J}_y=[m]$. Hence, $\{\bm{x}_i\}_{i=1}^n$ are linearly independent, and $\{\bm{y}_j\}_{j=1}^m$ are also linearly independent. 

 Fix $\bm{y}_m$. We can further assume $\{\bm{x}_i\}_{i=1}^n \cup \{\bm{y}_j\}_{j=1}^{m-1}$ are linearly independent. To see this, we employ a procedure akin to Gaussian elimination. 
	
	\begin{description}
	\item[(Initialization)] As $\bm{v}\in\spn \lB \bm{x}_i \rB_{i=1}^n  \cap 
	\spn \lB \bm{y}_j \rB_{j=1}^m $, we write
	\begin{equation}\label{eq:prf-def-of-v}
			\bm{0}\neq \bm{v} = \sum_{i=1}^n a_i \cdot \bm{x}_i = \sum_{j=1}^m b_j \cdot \bm{y}_j.
	\end{equation}
	As index sets $\mathcal{J}_x$ and $\mathcal{J}_y$ are minimal, we get $a_i\neq 0, b_j \neq 0, \forall i \in [n],j\in[m]$.
		\item[(Elimination)]  Provided $\{\bm{x}_i\}_{i=1}^n \cup \{\bm{y}_j\}_{j=1}^{m-1}$ are linearly dependent, we can write
	$
	\bm{0} = \sum_{i=1}^n p_i\cdot \bm{x}_i + \sum_{j=1}^{m-1}q_j\cdot \bm{y}_j
	$ 
	with some $j' \in [m-1]$ such that $q_{j'}\neq 0$, as otherwise by linear independence of $\{\bm{x}_i\}_{i=1}^n$, it holds that $p_i=0$ for all $i\in[n]$, contradicting the linear dependence of $\{\bm{x}_i\}_{i=1}^n \cup \{\bm{y}_j\}_{j=1}^{m-1}$.
	As $q_{j'}\neq 0$, we have 
	\[
	\bm{y}_{j'} = -\sum_{i=1}^n (p_i/q_{j'})\cdot \bm{x}_i-\sum_{j\in [m-1]\backslash\{j'\}} (q_j/q_{j'})\cdot \bm{y}_j.
	\]
	Substituting it into \nref{eq:prf-def-of-v}, the vector $\bm{y}_{j'}$ is removed and we obtain
	\[
	\bm{v}' =\sum_{i=1}^n \left(a_i':=a_i+\frac{b_{j'}p_i}{q_{j'}}\right)\bm{x}_i = b_m \bm{y}_m + \sum_{j\in [m-1]\backslash\{j'\}} \left(b_j':=b_j-\frac{b_{j'}q_j}{q_{j'}}\right)\cdot \bm{y}_j,
	\]
	where $\bm{v}'=\bm{v} + \frac{b_{j'}}{q_{j'}} \sum_{i=1}^n p_i\cdot \bm{x}_i$.
	To see $\bm{v}'\neq \bm{0}$, suppose not. Then, we get 
	\[
	\bm{v} = -\frac{b_{j'}}{q_{j'}} \sum_{i=1}^n p_i\cdot \bm{x}_i=\frac{b_{j'}}{q_{j'}} \sum_{j=1}^{m-1} q_j\cdot \bm{y}_j \in \spn \lB \bm{y}_j \rB_{j=1}^{m-1},
	\]
	 so that the linear independence of $\lB \bm{y}_j \rB_{j=1}^{m}$ yields $b_m=0$, a contradiction. After that, we exam $\{a_i'\}_i$ and $\{b_j'\}_j$. We remove $\bm{x}_i$ if $a_i'=0$ and remove $\bm{y}_j$ if $b_j'=0$.
	\item[(Repeat)] 
 Due to $\bm{v}'\neq \bm{0}$, repeat this elimination procedure at most $m-1$ times and
 by abuse of notation, we obtain that $\{\bm{x}_i\}_{i=1}^n \cup \{\bm{y}_j\}_{j=1}^{m-1}$ are linearly independent. It is possible that all $\{\bm{y}_j\}_{j=1}^{m-1}$ are removed and we get $m=1$ with $\bm{y}_m \in \spn\{\bm{x}_i\}_{i=1}^n$. But as $\bm{y}_m \neq \bm{0}$, we always have $n\geq 1$. By \Cref{lem:localization}, it suffices to focus on such a subset of the original inputs.
		\end{description}	
	Therefore, we can write
	\begin{equation}\label{eq:ym}
			\bm{y}_m = \sum_{i=1}^n \alpha_i \bm{x}_i + \sum_{j=1}^{m-1} \beta_j \bm{y}_j,
	\end{equation}
	with $\{\bm{x}_i\}_{i=1}^n \cup \{\bm{y}_j\}_{j=1}^{m-1}$ are linearly independent and $\alpha_i \neq 0, \beta_j \neq 0$ for any $i \in [n], j \in [m-1]$. 

\textit{Step 2.}
	We proceed to show that polytopes $X:=\sum_{i=1}^n[\bm{0},\bm{x}_i]$ and $Y:=\sum_{j=1}^m [\bm{0}, \bm{y}_j]$ are incompatible. Let $\bm{\theta}\in\mathbb{R}_+^{n+m}$ and we define
	\[\theta_i :=     \left\{ \begin{array}{rcl}
         |\alpha_i| & \mbox{for}
         & 1\leq i \leq n \\ |\beta_{i-n}|  & \mbox{for} & n+1 \leq i \leq n+m-1, \\
         1 & \mbox{for} & i = m+n
                \end{array}\right. \quad
	\bm{A} := \left[ \begin{array}{c}
     \sgn(\alpha_1)\cdot \bm{x}_1^\top \\
     \vdots \\
     \sgn(\alpha_n)\cdot \bm{x}_n^\top \\
      \sgn(\beta_1)\cdot \bm{y}_1^\top \\
     \vdots \\
     \sgn(\beta_{m-1})\cdot \bm{y}_{m-1}^\top \\
     -\bm{y}_m^\top
     \end{array} \right]
     \in \mathbb{R}^{(n+m)\times d}.
     	\]
	Note that $\bm{\theta}\neq \bm{0}$ and 
	$
	\bm{A}^\top \bm{\theta} = \sum_{i=1}^n \alpha_i \bm{x}_i + \sum_{j=1}^{m-1} \beta_j \bm{y}_j - \bm{y}_m = \bm{0}.
	$
	By Gordan's Theorem in \Cref{lem:gordan}, there is no vector $\bm{d}\in\mathbb{R}^d$ such that
	\begin{equation}\label{eq:sgn-gordan-1}
	\left\{ \begin{array}{rcl}
         \sgn(-\alpha_i)\cdot \bm{d}^\top \bm{x}_i > 0 & \mbox{for}
         & i \in [n] \\
         \sgn(-\beta_j)\cdot \bm{d}^\top \bm{y}_j > 0 & \mbox{for}
         & j \in [m-1]\\
          \bm{d}^\top \bm{y}_m > 0 &
         & 
             \end{array}\right..
	\end{equation}
	Let $\bm{x}:=\sum_{i=1}^n \mathbf{1}_{\alpha_i<0}\cdot \bm{x}_i  \in X$ and $\bm{y}:=\sum_{j=1}^{m-1}\mathbf{1}_{\beta_j<0}\cdot \bm{y}_j + \bm{y}_m \in Y$.
	Another way of stating \nref{eq:sgn-gordan-1} is to say: there is no vector $\bm{d}$ such that $\bm{d}^\top\bm{x} > \bm{d}^\top \bm{z}_x$ and $\bm{d}^\top\bm{y} > \bm{d}^\top \bm{z}_y$ for any $\bm{z}_x \in X\backslash\{\bm{x}\}$ and $\bm{z}_y \in Y\backslash\{\bm{y}\}$. 
	By \Cref{lem:fukuda}, we have
	\[
	\bm{x}+\bm{y}=\sum_{i=1}^n \mathbf{1}_{\alpha_i<0}\cdot \bm{x}_i + \sum_{j=1}^{m-1}\mathbf{1}_{\beta_j<0}\cdot \bm{y}_j + \bm{y}_m \notin \ext(X+Y).
	\]	
	Let $\bm{\theta}' \in \mathbb{R}_+^{n+m}$ and consider the following equation:
	\[
	\sum_{i=1}^n \theta_i'\cdot\sgn(\alpha_i)\cdot \bm{x}_i + \sum_{j=1}^{m-1}\theta_{j+n}'\cdot\sgn(\beta_j)\cdot (-\bm{y}_j) + \theta_{n+m}'\cdot \bm{y}_m = \bm{0}.
	\]
	By linear independence of $\{\bm{x}_i\}_{i=1}^n\cup \{\bm{y}_j\}_{j=1}^{m-1}$ and $n\geq 1$, one can readily see that $\theta'_{n+m} \geq 0$ can only be zero, so that it holds $\bm{\theta}'=\bm{0}$. By Gordan's Theorem in \Cref{lem:gordan}, there exists a vector $\bm{d}\in\mathbb{R}^d$ such that
	\begin{equation}
	\left\{ \begin{array}{rcl}
         \sgn(-\alpha_i)\cdot \bm{d}^\top \bm{x}_i > 0 & \mbox{for}
         & i \in [n] \\
         \sgn(-\beta_j)\cdot \bm{d}^\top (-\bm{y}_j) > 0 & \mbox{for}
         & j \in [m-1]\\
          \bm{d}^\top (-\bm{y}_m) > 0 &
         & 
             \end{array}\right..
	\end{equation}
	Using \Cref{lem:fukuda}, we get
	\[
	\bm{x}-\bm{y}=\sum_{i=1}^n \mathbf{1}_{\alpha_i<0}\cdot \bm{x}_i - \sum_{j=1}^{m-1}\mathbf{1}_{\beta_j<0}\cdot \bm{y}_j - \bm{y}_m \in \ext(X-Y).
	\]	
	Therefore, we conclude that polytopes $X$ and $Y$ are incompatible,	which completes the proof by \Cref{thm:general-sum-Clarke-geometric}.
\end{proof}

Now, we are ready to prove \Cref{prop:zonotop}.
\begin{proof}[Proof of \Cref{prop:zonotop}] The ``only if'' part is a direct corollary of \Cref{prop:par-sufficient}. We only prove the ``if'' part. 
Without loss of generality, we assume $\bm{x}=\bm{0}$. As functions $h,g$ are convex piecewise affine, we can write
\[
h(\bm{d})= h(\bm{0})+ h'(\bm{0}; \bm{d}) = \max_{\bm{g}_h \in \partial h(\bm{0})} \bm{d}^\top \bm{g}_h, \qquad 
g(\bm{d})= g(\bm{0})+g'(\bm{0}; \bm{d}) = \max_{\bm{g}_g \in \partial g(\bm{0})} \bm{d}^\top \bm{g}_g.
\]
From the hypothesis,  polytopes $\partial h(\bm{0})$ and $\partial g(\bm{0})$ are zonotopes. Then, we can write them with their generators as $\partial h(\bm{0})=\sum_{p=1}^P [\bm{x}_{p,1}, \bm{x}_{p,2}]$ and $\partial g(\bm{0}) = \sum_{q=1}^Q [\bm{y}_{q,1}, \bm{y}_{q,2}]$ for some $P$ and $Q$.
Consequently, we can construct the following \PA{} function:
\[
f(\bm{w})=\sum_{p=1}^P \max \left\{\bm{w}^\top (\bm{x}_{p,1} - \bm{x}_{p,2}), 0 \right\} - \sum_{q=1}^Q \max \left\{ \bm{w}^\top (\bm{y}_{q,1} - \bm{y}_{q,2}), 0 \right\}.
\]
By the compatibility of $\partial h(\bm{0})$ and $\partial g(\bm{0})$ from hypothesis and \Cref{thm:general-sum-Clarke-geometric}, using \Cref{prop:comp-poly}(iii),
we know that polytopes $\sum_{p=1}^P [\bm{x}_{p,1}- \bm{x}_{p,2}, \bm{0}]$ and $\sum_{q=1}^Q [\bm{y}_{q,1}- \bm{y}_{q,2}, \bm{0}]$ are also compatible. 
Then, it follows from \Cref{thm:general-sum-Clarke-geometric} that
\[
\partial f(\bm{0})=\sum_{p=1}^P \conv \left\{\bm{x}_{p,1} - \bm{x}_{p,2}, \bm{0} \right\} - \sum_{q=1}^Q \conv \left\{ \bm{y}_{q,1} - \bm{y}_{q,2}, \bm{0} \right\}.
\]
It can readily be seen that $\parr(\partial h(\bm{0}))=\sum_{p=1}^P [\bm{x}_{p,1}, \bm{x}_{p,2}] = \spn\{\bm{x}_{p,1} - \bm{x}_{p,2}\}_{p=1}^P$ and $\parr(\partial g(\bm{0}))=\sum_{q=1}^Q[\bm{y}_{q,1}, \bm{y}_{q,2}] = \spn\{\bm{y}_{q,1} - \bm{y}_{q,2}\}_{q=1}^Q$. The proof completes by using the contraposition of \Cref{lem:s-nece} on $f$.
\end{proof}

\section{Rounding and Finite Termination}\label{sec:robust}

Our focus has been on exact $\epsilon$-stationarity testing, which checks whether $\bm{0} \in \partial f(\bm{w}) + \epsilon\mathbb{B}$ for a given point $\bm{w}$. Yet, for a numerical algorithm, reaching exact nondifferentiable points is virtually impossible due to algorithmic randomization and/or finite precision limitations. Therefore, a need arises for developing a \emph{robust} stationarity testing approach. This approach, when queried  at a point $\bm{w}$, certifies or refutes $\bm{0} \in \partial f(\bm{w} + \delta\mathbb{B}) + \epsilon\mathbb{B}$ for given precisions $\epsilon, \delta \geq 0$. In this section, we present the main algorithmic results of this paper regarding robust testing for the \PA{} function $h-g$, where the convex functions $h,g:\mathbb{R}^d \to \mathbb{R}$ are both given in the $n$-\MC{} form with the depth $n$ as an input.

\subsection{Testing Oracle and Problem Setup}\label{sec:robust-setup}

We begin by abstracting the procedure for exact $\epsilon$-stationarity testing into the following oracle.

\begin{Definition}[Oracle]\label{def:oracle}
Let convex \PA{} functions $h,g$ be given.
A stationarity test oracle for function $h-g$ is an oracle whose inputs are certain representation of functions $h$ and $g$, a point $\bm{w}\in\mathbb{Q}^d$, and a rational number $\epsilon \geq 0$. This oracle decides whether $\bm{0} \in  \partial (h-g)(\bm{w})+\epsilon\mathbb{B}$  or not.
\end{Definition}

\begin{Remark}
In our robust testing algorithm, we only need to call the exact $\epsilon$-stationarity testing procedure in a black-box manner. Thus, one can use any efficient implementation of the oracle defined above. 
Moreover, by our development in \Cref{sec:cr}, if polytopes $\partial h(\bm{w})$ and $\partial g(\bm{w})$ are compatible, 
then the oracle in \Cref{def:oracle} has polynomial-time implementation.
\end{Remark}

\paragraph{Information-Theoretic Hardness.}
Let an efficient implementation of the oracle be provided, as defined in \Cref{def:oracle}. Suppose the \DC{} function is given to us in a black-box manner, following the information-theoretic model \cite{nemirovskij1983problem}. 
In this setting, the efficiency of the algorithm is evaluated based on its use of information rather than the time it takes for computation.
An immediate question arises: Can we utilize the testing oracle in \Cref{def:oracle} to construct an algorithm for detecting an $(\epsilon,\delta)$-NAS point? This algorithm should only interact with the objective function via a local oracle (see, e.g., \cite[Section 2.3.1]{tian2022no}), which provides information about the function, such as function values and subdifferentials. The following proposition provides a negative answer.%

\begin{Proposition}[cf.~{\cite[Proposition 2]{tian2022no}}]\label{prop:info-hardness} Suppose that $0 < \epsilon,\delta < 1$. For any deterministic algorithm $\mathscr{A}$ interacting with a local oracle of the objection function, there exist two $4$-Lipschitz convex \PA{} functions $h,g:\mathbb{R}\to\mathbb{R}$ such that $\mathscr{A}$
 cannot decide whether $0 \in \partial (h-g)(\delta\mathbb{B}) + \epsilon\mathbb{B}$ with finite calls to the local oracle and the testing oracle in \Cref{def:oracle}.
\end{Proposition}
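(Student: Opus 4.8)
The plan is to establish an information-theoretic impossibility result by constructing two indistinguishable instances that force any deterministic algorithm to answer incorrectly on at least one of them. The overall strategy follows the adversary (or ``resisting oracle'') technique standard in oracle-complexity lower bounds: I would build a family of $4$-Lipschitz convex \PA{} functions $h,g:\mathbb{R}\to\mathbb{R}$ parameterized by an unknown location, and argue that the finitely many oracle responses the algorithm $\mathscr{A}$ collects are consistent with two different parameter values, one for which $0\in\partial(h-g)(\delta\mathbb{B})+\epsilon\mathbb{B}$ holds and one for which it fails. Since $\mathscr{A}$ is deterministic and must terminate after finitely many queries, it produces the same output on both instances, hence errs on one.

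First I would fix the one-dimensional setting and design a ``flat-passage'' gadget: let $h(w):=2\max\{w,0\}$ and take $g$ to agree with $h$ except on a short interval of width controlled by $\delta$ centered at some hidden point $w^*$, where the difference $h-g$ develops a kink (or a flat spot) whose presence encodes a nonsmooth near-stationary point. Concretely, one choice is $g(w):=\max\{w,0\}+\max\{\,|w-w^*|-\eta,\,0\}$ for a small $\eta$ (kept $4$-Lipschitz by construction), so that $h-g$ is affine with slope bounded away from $0$ outside a tiny window around $w^*$ but admits a subgradient of small norm inside it. The key point is that away from $w^*$ the local oracle and the testing oracle of \Cref{def:oracle} return information identical to that of the ``null'' instance where the passage is absent (or placed far away). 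Since the local oracle only reveals function values and subdifferentials at queried points, and the testing oracle only decides exact $\epsilon$-stationarity at a queried point, both oracles are \emph{local} and cannot detect a perturbation localized in a region the algorithm has not yet probed.

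The core of the argument is then the adversary's move. I would let $\mathscr{A}$ run, and at each queried point $w_t$ answer according to the null instance; after $\mathscr{A}$ halts having made queries $w_1,\dots,w_T$, I would place $w^*$ in a gap between consecutive queries (such a gap of positive length exists since $T$ is finite and the relevant region has positive measure), far enough from every $w_t$ that all past responses remain valid for the perturbed instance. This yields two instances, $f_0=h-g_0$ (null) and $f_1=h-g_1$ (perturbed), on which every oracle response seen by $\mathscr{A}$ coincides, so $\mathscr{A}$ outputs the same decision. Choosing the gadget so that $f_1$ has an $(\epsilon,\delta)$-\NAS{} point at the origin while $f_0$ does not (using $0<\epsilon,\delta<1$ to guarantee the separation, via a computation analogous to the distance estimate $\dist(\bm 0,\partial(h_{\sf C}-g_{\sf C})(\bm 0))=1/2$ in the proof of \Cref{lem:dc-np-hard-C}) shows the two instances have opposite correct answers, a contradiction.

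The main obstacle I anticipate is the simultaneous control of three constraints on the gadget: keeping both $h$ and $g$ genuinely $4$-Lipschitz and convex \PA{}, ensuring the perturbation is \emph{invisible} to the testing oracle (not merely the value/subgradient oracle), and guaranteeing the $(\epsilon,\delta)$-\NAS{} status flips between the two instances. The subtlety is that the testing oracle of \Cref{def:oracle} is more powerful than a plain first-order oracle, since it solves an exact $\epsilon$-stationarity query globally at the queried point; I must verify that a single such query at any $w_t$ outside the hidden window still cannot certify stationarity of $f_1$, which requires the window to be narrow relative to $\delta$ yet wide enough that $0$ becomes $(\epsilon,\delta)$-\NAS{}. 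Balancing these widths against the prescribed ranges $0<\epsilon,\delta<1$ is the delicate quantitative step, and I would lean on the cited construction in \cite[Proposition 2]{tian2022no} to calibrate the Lipschitz constant and the passage geometry precisely.
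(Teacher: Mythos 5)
First, note that the paper does not actually prove this proposition; it is imported verbatim from \cite[Proposition 2]{tian2022no} (the ``cf.''\ in the statement), so you are reconstructing an external argument rather than one the paper spells out. Your overall plan --- a resisting-oracle argument producing two instances that agree on every oracle response the deterministic algorithm $\mathscr{A}$ collects but disagree on whether $0\in\partial(h-g)(\delta\mathbb{B})+\epsilon\mathbb{B}$, with the hidden window placed in a gap between the finitely many queries --- is the standard and correct route, and your observation that the testing oracle of \Cref{def:oracle} is still a \emph{local} object (it depends only on $\partial(h-g)$ at the queried point, hence is blind to a perturbation supported away from that point) is exactly the point that needs to be made.

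However, the concrete gadget you write down does not realize the locality you need, and this is a genuine gap rather than a calibration issue. With $h(w)=2\max\{w,0\}$ and $g(w)=\max\{w,0\}+\max\{|w-w^*|-\eta,0\}$ you get $f_1(w)=\max\{w,0\}-\max\{|w-w^*|-\eta,0\}$, and the second term equals $|w-w^*|-\eta$ for all $w$ outside the window. So $f_1$ depends on $w^*$ \emph{globally}: a single function-value query at, say, $w=0$ already returns $-(|w^*|-\eta)$ (up to the $\max\{w,0\}$ part) and pins down $|w^*|$, destroying indistinguishability from the null instance. The adversary argument collapses at the step ``all past responses remain valid for the perturbed instance.'' The repair is to make the perturbation compactly supported in $f=h-g$ itself, e.g.\ take the null instance $f_0(w)=w$ (so $\dist(0,\partial f_0(w))=1>\epsilon$ everywhere, using $\epsilon<1$) and $f_1(w)=w-2\max\{0,\eta-|w-w^*|\}$, which coincides with $f_0$ outside $[w^*-\eta,w^*+\eta]$ and has $\partial f_1(w^*)=[-1,3]\ni 0$ at the tent's peak. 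A valid DC decomposition with convex \PA{}, $4$-Lipschitz components is then obtained from the identity $\max\{0,\eta-|w-w^*|\}=\eta+\max\{|w-w^*|-\eta,0\}-|w-w^*|$, giving $h_1(w)=\max\{w,0\}+2|w-w^*|$ and $g_1(w)=\max\{-w,0\}+2\max\{|w-w^*|-\eta,0\}+2\eta$. (This presumes, as in \cite{tian2022no}, that the local oracle reveals $f=h-g$ rather than $h$ and $g$ separately; if the components themselves were observable, even the repaired gadget would leak $w^*$ through $|w-w^*|$ and a further fix would be needed.) With this modification your argument goes through.
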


Therefore, to make progress, we consider a white-box setting as follows. %

\paragraph{Problem Setup.} 
The functions $h$ and $g$ in this section are given in $n$-\MC{} form with $n$ as an input.
We adopt a constructive viewpoint, wherein we certify the $(\epsilon,\delta)$-NAS status of a point $\bm{w}$ for a \PA{} function $h-g$ only when we find a certificate $\bm{w}^* \in \mathbb{B}_\delta(\bm{w})$ that satisfies $\bm{0} \in \partial (h-g)(\bm{w}^*)+ \epsilon\mathbb{B}.$
Note that if a point $\bm{w}^* \in \mathbb{B}_\delta(\bm{w})$ successfully passes the exact stationarity test using an implementation of the oracle in \Cref{def:oracle}, then $\bm{w}$ must indeed be an $(\epsilon,\delta)$-NAS point. In other words, there are no false positives in this test.
The question that arises is whether, if $\bm{w}$ is sufficiently close to some unknown $\epsilon$-stationary point, we can \emph{always} find a point $\bm{w}^*$ near $\bm{w}$ that is $\epsilon$-stationary. Namely, we aim to control the occurrence of false negatives in our robust test. Without leveraging structures of the objective function, finding such a point is generally impossible, as discussed in \Cref{prop:info-hardness}.
As a result, in contrast to the information-theoretic model \cite{nemirovskij1983problem}, we will employ an oracle Turing machine as our computational model.

\paragraph{DC with $n$-MC{} Components.} In this section, we study the robust testing problem for a \PA{} function represented by the difference of two $n$-\MC{} functions, $h,g:\mathbb{R}^d\to \mathbb{R}$. To fix notations, we explicitly express these two $n$-\MC{} functions as

\[
\begin{aligned}
h(\bm{w})&:=\sum_{1\leq j_n \leq J_n^h} \max_{1\leq i_{n} \leq I_n^h} \cdots \sum_{1\leq j_1 \leq J_1^h} \max_{1\leq i_1 \leq I_1^h} \Big(\bm{w}^\top \bm{x}_{i_1,j_1,\dots,i_n,j_n} + a_{i_1,j_1,\dots,i_n,j_n}\Big), \\
g(\bm{w})&:=\sum_{1\leq j_n \leq J_n^g} \max_{1\leq i_{n} \leq I_n^g} \cdots \sum_{1\leq j_1 \leq J_1^g} \max_{1\leq i_1 \leq I_1^g} \Big(\bm{w}^\top \bm{y}_{i_1,j_1,\dots,i_n,j_n} + b_{i_1,j_1,\dots,i_n,j_n}\Big).
\end{aligned}
\]
To analyze the \PA{} function $h-g$ with its multi-composite structure more succinctly, we introduce several value functions for the $n$-\MC{} functions $h$ and $g$, respectively. We begin by defining the value functions for $h$.
For any index $(i_1,j_1,\dots,i_n,j_n) \in \prod_{k=1}^n [I_k^h]\times [J_k^h]$, let  $v_{i_1,j_1,\dots,i_n,j_n}:\mathbb{R}^d \to \mathbb{R}$ denote the affine function
\[
v_{i_1,j_1,\dots,i_n,j_n}(\bm{w}) :=\bm{w}^\top \bm{x}_{i_1,j_1,\dots,i_n,j_n} + a_{i_1,j_1,\dots,i_n,j_n}.
\]
Then, for any $k \in [n-1]$, we define convex value functions $v_{j_k,\dots,i_n,j_n}$ and  $v_{i_{k+1},j_{k+1},\dots,i_n,j_n}$ as

\[
v_{j_k,\dots,i_n,j_n}(\bm{w}) :=\max_{1\leq i_k \leq I_k^h}v_{i_k,j_k,\dots,i_n,j_n} (\bm{w}), \qquad 
v_{i_{k+1}, j_{k+1}\dots,i_n,j_n}(\bm{w}) :=\sum_{1\leq j_k \leq J_k^h}v_{j_k, i_{k+1},\dots,i_n,j_n} (\bm{w}).
\]
Finally, for any $j_n \in [J_n^h]$, we define a value function $v_{j_n}$ as
\[
v_{j_n}(\bm{w}) := \max_{1\leq i_n \leq I_n^h} v_{i_n, j_n} (\bm{w}).
\]
Similar functions are also defined for the $n$-\MC{} function $g$ with $u_{i_k,j_k,\dots,i_n,j_n}$ and $u_{j_k,\dots,i_n,j_n}$ for any $k \in [n]$. Thus, we can rewrite the \PA{} function $h-g$ simply as
\[
(h-g)(\bm{w})= \left(\sum_{1\leq j_n \leq J_n^h} v_{j_n}(\bm{w})\right) - \left(\sum_{1\leq j_n \leq J_n^g} u_{j_n}(\bm{w})\right).
\]

\paragraph{Separation $\delta$-Function.} Suppose we have an $\epsilon$-stationary point $\bm{w}^* \in \mathbb{R}^d$. When a point $\bm{w}$ is sufficiently close (within a distance of $\delta$) to $\bm{w}^*$, our objective is to certify the condition: 
$
\bm{0} \in \partial (h-g)(\bm{w} + \delta\mathbb{B}) + \epsilon\mathbb{B}.
$ 
To quantify the required degree of proximity, we introduce the following separation constants for the \PA{} function $h-g$ with $n$-\MC{} components.
\begin{Definition}[Separation]\label{def:sep}
Define a polynomial-time computable constant $R$ as
\[
\begin{aligned}
R:=\max\left\{\sum_{1\leq j_n \leq J_n^h} \max_{1\leq i_{n} \leq I_n^h} \right.\cdots \sum_{1\leq j_1 \leq J_1^h} &\max_{1\leq i_1 \leq I_1^h} \| \bm{x}_{i_1,j_1,\dots,i_n,j_n} \|,  \\
&\left.\sum_{1\leq j_n \leq J_n^g} \max_{1\leq i_{n} \leq I_n^g} \cdots \sum_{1\leq j_1 \leq J_1^g} \max_{1\leq i_1 \leq I_1^g} \| \bm{y}_{i_1,j_1,\dots,i_n,j_n} \|
\right\}.
\end{aligned}
\]
Let  a point $\bm{w} \in \mathbb{R}^d$ be given. Let $\gamma^v_{\sf gap}(\bm{w})\in (0,\infty]$ be the supremum of  $\gamma > 0$ such that	 the following linear inequality system is satisfied:
\[
\left\{
\begin{aligned}
v_{j_n}(\bm{w}) &\geq v_{i_n, j_n}(\bm{w}) + \gamma, &\text{for any}& & v_{i_n, j_n}(\bm{w}) &\neq v_{j_n}(\bm{w}), \\
 v_{j_{n-1},i_n,j_n}(\bm{w}) &\geq v_{i_{n-1},j_{n-1},i_n,j_n}(\bm{w}) + \gamma, &\text{for any} & & v_{i_{n-1},j_{n-1},i_n,j_n}(\bm{w}) &\neq  v_{j_{n-1},i_n,j_n}(\bm{w}), \\
 &\ \ \vdots & & & &\ \ \vdots \\
  v_{j_{1},i_2,\dots,i_n,j_n}(\bm{w}) &\geq v_{i_{1},j_1,\dots,i_n,j_n}(\bm{w}) + \gamma, &\text{for any}& & v_{i_{1},j_1,\dots,i_n,j_n}(\bm{w}) &\neq  v_{j_{1},i_2,\dots,i_n,j_n}(\bm{w}).
\end{aligned}
\right.
\]
Likewise, a function $\gamma^u_{\sf gap} : \mathbb{R}^d \to (0,\infty]$ can be defined for value functions of $g$.
We define the extended-real-valued function $\delta_{\sf sep}:\mathbb{R}^d\to (0,\infty]$ as 
\[
\delta_{\sf sep}(\bm{w}):=\frac{\min\{\gamma^v_{\sf gap}(\bm{w}), \gamma^u_{\sf gap}(\bm{w})\}}{12R}.
\]
\end{Definition}

\begin{Remark}
For rational input data, all constants (and functions) in \Cref{def:sep} can be computed (and evaluated) in polynomial time. Notably, the constant $\delta_{\textnormal{sep}}(\bm{w})$ exhibits similarity with quantities \cite[Equation (7.3.9)]{wright1997primal} and  \cite[Definition 4]{spielman2003smoothed} in the context of solving LP with IPMs. These quantities are closely related to finite-termination strategies for IPMs; see also \cite{ye1992finite}.	However, our approach to test stationarity differs significantly from that of terminating IPMs in the following two key aspects:
\begin{enumerate}[label=\textnormal{(\roman*)}]
	\item Linear programming is a convex problem, and IPMs are guaranteed to converge to a global optimal solution. In contrast, our interest lies in terminating an algorithm that converges to a stationary point of a nonconvex objective function. Hence, no global property, like convexity, can be exploited in algorithm design for finite termination.
	\item The existing termination techniques in \cite{ye1992finite,spielman2003smoothed} crucially rely on the strict complementarity of the solution that many IPMs converge to (see \cite{guler1993convergence}). However, in our stationarity setting, we make no assumptions about the stationary point to which the algorithm is converging. In particular, our guarantee (in \Cref{thm:robust-test}) is algorithm-independent.
\end{enumerate}
\end{Remark}
We can now define the goal of this section as solving the following robust stationarity testing problem for a \DC{} function with $n$-\MC{} components.
\begin{tcolorbox}[sharp corners,colback=white, colframe=black,boxrule=.2mm]
\begin{center}
	\sf Robust Stationarity Testing for DC (RST-DC)
\end{center}
\textbf{Instance.} An integer $n\in\mathbb{N}$. Two $n$-\MC{} functions $h$ and $g$ with rational data. A point $\bm{w} \in \mathbb{Q}^d$. The precision parameters $0 < \epsilon, \delta\in \mathbb{Q}$.

\vspace{3mm}
\textbf{Question.} Either
\begin{description}[leftmargin=!,labelwidth=\widthof{\bfseries Falseea},font=\normalfont\space]
	\item[\textsf{(True)}] find a vector $\bm{w}^* \in \mathbb{Q}^d$ such that 
$
\|\bm{w} - \bm{w}^*\| \leq \delta$ and $\bm{0} \in \partial (h-g)(\bm{w}^*) + \epsilon\mathbb{B}$, or
\item[\textsf{(False)}] assert that any  vector $\bm{w}^* \in \mathbb{R}^d$ with $\bm{0} \in \partial (h-g)(\bm{w}^*) + \epsilon\mathbb{B}$ satisfies 
\[\|\bm{w} - \bm{w}^*\| > \min\{\delta, \delta_{\sf sep}(\bm{w}^*)\}.\]
\end{description}
\end{tcolorbox}

\begin{Remark}\label{rmk:uniform-lb}
The constant $\delta_{\sf sep}(\bm{w}^*)>0$ in the right-hand side of the {\sf{(False)}} output option above depends on an unknown point $\bm{w}^*$. While this does not affect its usage as an algorithm-independent finite-termination criterion, considering the infimum of  $\delta_{\sf sep}(\bm{w}^*)$ over the set of $\epsilon$-stationary points $S_\epsilon:=\{\bm{x}:\bm{0}\in\partial (h-g)(\bm{x})+\epsilon\mathbb{B}\}$ reveals that this infimum could be zero. To obtain a uniform positive lower bound that only depends on $h,g, \epsilon$, and boundedness, as promised in \Cref{sec:postive}, we show that a topological argument suffices. To this end, first note that $S_\epsilon$ is closed. To see it, consider a sequence $\bm{x}_k \to \bm{x}$ with $\bm{x}_k \in S_\epsilon$. By the outer semicontinuous of the set-valued mapping $\partial(h-g)$ at $\bm{x}$ \cite[Proposition 2.1.5(d)]{clarke1990optimization} and compactness of $\epsilon\mathbb{B}$, we get $\bm{x} \in S_\epsilon$. Consider the compact set $S^t_\epsilon:=S_\epsilon\cap t\mathbb{B}$ with a large $t \in [0,\infty)$ such that $\bm{w} \in ( t-2\delta )\mathbb{B}$. For any $\bm{x} \in S^t_\epsilon$, by \Cref{def:sep}, we have $\delta_{\sf sep}(\bm{x})>0$. Thus, the set $O_{\epsilon, t}:=\bigcup_{\bm{x} \in S_\epsilon^t} \intt\left(\bm{x}+\delta_{\sf sep}(\bm{x})\mathbb{B}\right)$ is an open cover of the compact $S^t_\epsilon$. The assertion in option {\sf (False)} essentially says that  
\[
\bm{w} \notin \bigcup\left\{ \bm{w}^* + \min\{\delta, \delta_{\sf sep}(\bm{w}^*)\}\mathbb{B}: \bm{w}^* \in [\partial(h-g)]^{-1}(\epsilon\mathbb{B})\right\} =: F_{\epsilon,\delta};
\]
see \Cref{fig:delta} for an illustration. Note that
$ \cl(O_{\epsilon, t})\cap(S_\epsilon^t+\delta\mathbb{B}) \subseteq F_{\epsilon,\delta},$ hence $\bm{w} \notin \cl(O_{\epsilon, t})\cap(S_\epsilon^t+\delta\mathbb{B})$.
Let $\delta^*:=\sup\{r\geq 0: S^t_\epsilon+r\mathbb{B}\subseteq O_{\epsilon, t}\}$, which is a constant that only depends on $h,g,\epsilon$, and $t$. We show that $\delta^* > 0$. Conversely, suppose $\delta^* = 0$. Then, for any $n\in\mathbb{N}$, there exists $\bm{x}_n \in S^t_\epsilon$ such that $\bm{x}_n+\frac{1}{n}\mathbb{B} \not\subseteq O_{\epsilon, t}$. From the compactness of $S^t_\epsilon$, by taking a subsequence if necessary, we get $\bm{x}_n \to \bm{x}'$. But $\bm{x}_n+\frac{1}{n}\mathbb{B} \subseteq \bm{x}'+\intt\left(\delta_{\sf sep}(\bm{x}')\mathbb{B}\right) \subseteq O_{\epsilon, t}$ for sufficiently large $n \in \mathbb{N}$, which is a contradiction. Thus, $\bm{w} \notin \cl(O_{\epsilon, t})\cap(S_\epsilon^t+\delta\mathbb{B})$ implies 
\[
\bm{w} \notin S^t_\epsilon + \min\{\delta, \delta^*\}\mathbb{B} = S_\epsilon\cap t\mathbb{B}+ \min\{\delta, \delta^*\}\mathbb{B}.
\]
By definition, we have $\bm{w} \notin \mathbb{R}^d\backslash(t-2\delta)\mathbb{B}$, so that $\bm{w} \notin S_\epsilon + \min\{\delta, \delta^*\}\mathbb{B}$ as desired.
\end{Remark}

\begin{figure}[t]
	\centering
	\includegraphics[width=.58\textwidth]{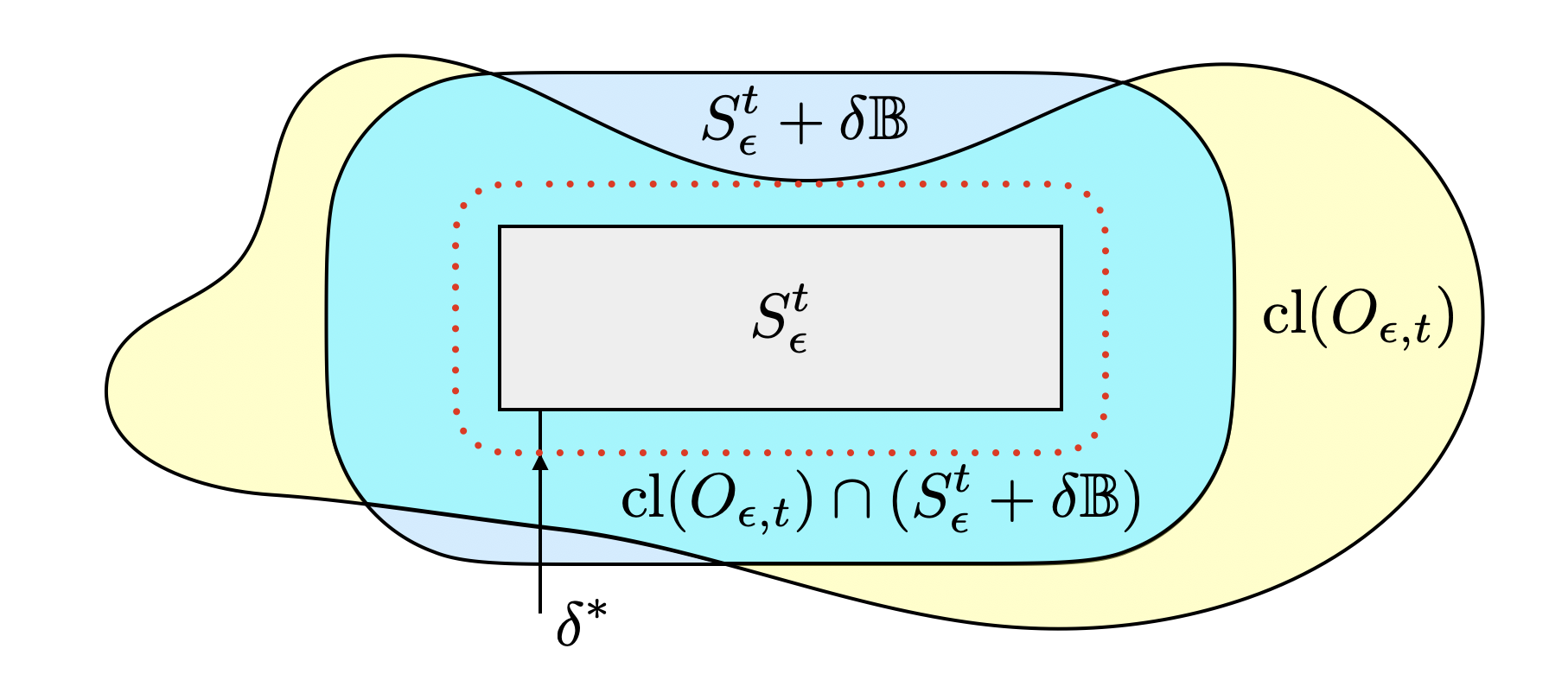}
	\caption{Illustration of sets in \Cref{rmk:uniform-lb}.}\label{fig:delta}
\end{figure}

\begin{Remark}
We emphasize that the positive radius $\min\{\delta, \delta_{\sf sep}(\bm{w}^*)\}$ is independent of the point $\bm{w}$. Such independence is crucial for terminating an algorithm (e.g., subgradient method) in finite time. Indeed, if $\bm{0} \notin \partial (h-g)(\bm{w})$, it is easy to certify that there is a positive number $\eta_{\bm{w}}$ dependent on $\bm{w}$, such that $\bm{0} \notin \partial (h-g)(\bm{w} + \eta_{\bm{w}}\mathbb{B})$. However, as $\bm{w} \to \bm{w}^* \in [\partial (h-g)]^{-1}(\bm{0})$, we may find that  
\[
\bm{w} \notin S_\epsilon\cap t\mathbb{B}+ \min\{\delta, \delta^*\}\mathbb{B}.
\]
with $\eta_{\bm{w}} \searrow 0$, providing little information about $\bm{w}^*$ and rendering it unsuitable as a stopping rule.
\end{Remark}

\subsection{A Convex Toy Example}
Before presenting the formal construction for the general case, let us first examine a relatively simple toy example to build some intuition. Specifically, we focus on testing $(\epsilon,\delta)$-NAS  for the following convex \PA{} function:
\[
h(\bm{w}):=\max_{1 \leq i \leq n} \bm{w}^\top \bm{x}_i + a_i.
\]
Although this is a somewhat trivial problem, it allows us to introduce some very natural constructions and ideas that will later be proven generalizable to much more complex nonconvex scenarios.

Fix $\bm{w} \in \mathbb{R}^d$ and suppose that there is an $\epsilon$-stationary point $\bm{w}^*$ near $\bm{w}$. It is easy to see that the variational properties of $h$ at $\bm{w}$ are fully determined by the following active index set:
\[
\mathcal{I}(\bm{w}):=\left\{ i \in [n]: h(\bm{w})= \bm{w}^\top \bm{x}_i + a_i\right\}.
\]
To explore the neighborhood of the point of interest, $\bm{w}$, a natural idea is to consider the following $\delta_1$-approximate active index set:
\[
\mathcal{I}^{\delta_1}(\bm{w}):=\left\{ i \in [n]: h(\bm{w}) - \delta_1 \leq  \bm{w}^\top \bm{x}_i + a_i\right\}.
\]
If the parameter $\delta_1 > 0$ is chosen properly, we can reasonably hope to ``capture'' the active pattern of the unknown $\epsilon$-stationary point $\bm{w}^*$ using $\mathcal{I}^{\delta_1}(\bm{w})$; i.e., \(\mathcal{I}^{\delta_1}(\bm{w}) = \mathcal{I}(\bm{w}^*)\).
A natural approach to leverage this identified pattern is to characterize the subdivision of \(\mathbb{R}^d\) such that the function \(h\) exhibits the desired active pattern on that subdivision.

To this end, let us consider the following polyhedron parameterized by $\delta_1, \delta_2, \delta_3 > 0$:
\[
H^{\delta_1,\delta_2,\delta_3}:=
\left\{\bm{z}:
\begin{alignedat}{3}
\bm{z}^\top \bm{x}_i + a_i &= \bm{z}^\top \bm{x}_j + a_j , && \forall i,j \in \mathcal{I}^{\delta_1}(\bm{w}), && \qquad(a)\\
  \bm{z}^\top \bm{x}_i + a_i &\geq h(\bm{w}) - \delta_2,  && \forall i \in \mathcal{I}^{\delta_1}(\bm{w}), && \qquad(b)\\
			\bm{z}^\top \bm{x}_i + a_i&\leq h(\bm{w}) - \delta_3,\ \  && \forall i \in [n]\backslash\mathcal{I}^{\delta_1}(\bm{w}), && \qquad(c)\\
\end{alignedat}
\right\},
\]
where $\delta_2 \in (0,\delta_3)$. The set $H^{\delta_1,\delta_2,\delta_3}$ will serve as a subset of the aforementioned subdivision.
To understand the motivation behind the construction of $H^{\delta_1,\delta_2,\delta_3}$, a few comments are in order:
\begin{itemize}
\item The equalities in (a) are designed to ensure that the identified pieces in $\mathcal{I}^{\delta_1}(\bm{w})$ have the same objective value. These equality constraints, along with the inequalities in (b) and (c), may cause the system to become infeasible, but for now, let us assume they are consistent.
\item As we are only interested in the neighborhood of $\bm{w}$, and motivated by the Lipschitz continuity of $h$, the inequalities in (b) ensure that the common objective value of the identified pieces in (a) is not significantly lower than $h(\bm{w})$ by a parameter $\delta_2 > 0$.
	\item The inequalities in (c) artificially create a positive objective value gap $\delta_3-\delta_2 > 0$ between the pieces in $\mathcal{I}^{\delta_1}(\bm{w})$ and those in $[n]\backslash \mathcal{I}^{\delta_1}(\bm{w})$.
\end{itemize}
By construction, we observe that $\mathcal{I}(\bm{z}) = \mathcal{I}^{\delta_1}(\bm{w})$ for all $\bm{z} \in H^{\delta_1, \delta_2, \delta_3}$. Using \cite[Corollary D.4.3.2]{hiriart2004fundamentals}, if $\mathcal{I}^{\delta_1}(\bm{w}) = \mathcal{I}(\bm{w}^*)$ for a properly chosen $\delta_1$, then for any $\bm{z} \in H^{\delta_1, \delta_2, \delta_3}$, we have $\partial h(\bm{z}) = \partial h(\bm{w}^*)$. The next step is to check whether $\dist(\bm{w}, H^{\delta_1, \delta_2, \delta_3}) \leq \delta$. If this condition is satisfied, the projection of $\bm{w}$ onto $H^{\delta_1, \delta_2, \delta_3}$ provides an explicit certificate for $\bm{w}$ being an $(\epsilon, \delta)$-NAS point.

However, there are several technical challenges in making the above rough ideas for the simple convex \PA{} function \(h\) rigorous and applicable to the general \PA{} function \(h-g\) with multi-composite structures. Here, we highlight some of these questions:  
\begin{itemize}  
    \item How should the parameters \(\delta_1\), \(\delta_2\), and \(\delta_3\) be set? Note that a large \(\delta_1\) will make the polyhedron \(H^{\delta_1,\delta_2,\delta_3}\) empty, while a small \(\delta_1\) may fail to capture the active pattern of the desired \(\mathcal{I}(\bm{w}^*)\).  
    \item How can we address the complicated composite structure in the multi-composite representation of \(h\) and \(g\)? A trivial extension may encounter issues of computability and nonconvexity.  
    \item In the above discussion, we assume the existence of an \(\epsilon\)-stationary point \(\bm{w}^*\). How can we provably and efficiently certify the absence of \(\epsilon\)-stationary points in the neighborhood of \(\bm{w}\)?  
\end{itemize}

These questions, among others, will be resolved by a more dedicated construction presented in the remainder of this section.

\subsection{The Butterfly Net Algorithm}

We start by describe informally the intuition behind the the algorithm.

\paragraph{Intuition.} 
Let us informally sketch the idea behind our new robust testing algorithm for solving the \textsf{RST-DC} problem. Let convex \PA{} functions $h,g:\mathbb{R}^d\to \mathbb{R}$ and a point $\bm{w} \in \mathbb{R}^d$ be given. Fix precision $\epsilon \geq 0$ and $\delta > 0$. Our goal is to identify an unknown stationary point $\bm{w}^* \in \mathbb{R}^d$, whose existence is uncertain, near the point $\bm{w}$. To convey the intuition informally, imagine standing at the point $\bm{w}$, swinging a butterfly net in an attempt to capture the elusive $\bm{w}^*$. The length of the net is adjustable, set by a positive $\delta_0:=\delta$, and the shape of its opening is also adjustable, polyhedral, and defined by $\delta_0$. Let polyhedron $P^{\delta_0} \subseteq \mathbb{R}^d$ represent the opening, whose formal definition is presented in \Cref{sec:poly-opening}. We swing the net with $\delta_0$ by projecting $\bm{w}$ onto the polyhedron $P^{\delta_0}$ if the polyhedron is non-empty. Suppose the projection  is $\widehat{\bm{w}} \in \mathbb{R}^d$. If $\|\bm{w} - \widehat{\bm{w}}\| \leq \delta$ and $\bm{0} \in \partial (h-g)(\widehat{\bm{w}}) + \epsilon\mathbb{B}$, we succeed and return {\sf (True)} by confirming $\bm{w}$ as an $(\epsilon,\delta)$-\NAS{} point with an explicit certificate $\widehat{\bm{w}}$. If not, we halve $\delta_0$ to $\delta_0/2$ and repeat the capturing process. This loop will not continue indefinitely. We halt and assert {\sf (False)} if $\delta_0 \leq \delta_{\textnormal{sep}}(\bm{w})$, relying on the observation (see \Cref{prop:termination}) that $\bm{w} \in P^{\delta_0}$ for all $0 <\delta_0 \leq \delta_{\textnormal{sep}}(\bm{w})$. The crux of the correctness of our algorithm  (see \Cref{thm:robust}) is that if $0 < \|\bm{w} - \bm{w}^*\| \leq \delta_0 \leq 2\delta_{\textnormal{sep}}(\bm{w}^*)$, then we are guaranteed to have ``captured'' the point $\bm{w}^*$, with $\bm{w}^* \in P^{\delta_0}$. The projection $\widehat{\bm{w}}$ also satisfies $\partial (h-g)(\widehat{\bm{w}}) = \partial (h-g)(\bm{w}^*)$. This enables us to verify the $(\epsilon,\delta)$-\NAS{} status of $\bm{w}$ without precisely pinpointing the original target $\bm{w}^*$. Ultimately, the correctness is demonstrated through contraposition.

\subsubsection{A Family of Polyhedra}\label{sec:poly-opening}

Fix $\bm{w} \in \mathbb{R}^d$ and $\delta > 0$.
For any $1\leq k \leq n$ and index $(j_k,\dots,i_n,j_n) \in [J_k^h]\times \prod_{p=k+1}^n ([I_p^h]\times [J_p^h])$, we consider an index set $\mathcal{I}_{j_k,\dots,i_n,j_n}^{\bm{w},\delta,h} \subseteq [I_k^h]$:
\[
\mathcal{I}_{j_k,\dots,i_n,j_n}^{\bm{w},\delta,h}:=\left\{i_k\in[I_k^h]:v_{i_k,j_k,\dots,i_n,j_n}(\bm{w}) \geq v_{j_k,\dots,i_n,j_n}(\bm{w}) - 3R\delta \right\}.
\]
We define a potentially \emph{nonconvex} set $V_{j_k,\dots,i_n,j_n}^{\bm{w},\delta,h}$ in $\mathbb{R}^d$ as
\[
V_{j_k,\dots,i_n,j_n}^{\bm{w},\delta,h}:=
\left\{\bm{z}:
\begin{alignedat}{2}
v_{i_k,j_k,\dots,i_n,j_n}(\bm{z})&=v_{j_k,\dots,i_n,j_n}(\bm{z}), && \forall i_k \in \mathcal{I}^{\bm{w},\delta,h}_{j_k,\dots,i_n,j_n} \\
  v_{j_k,\dots,i_n,j_n}(\bm{z}) &\geq v_{j_k,\dots,i_n,j_n}(\bm{w}) - 2R\delta,  && \\
			v_{i_k,j_k,\dots,i_n,j_n}(\bm{z})&\leq v_{j_k,\dots,i_n,j_n}(\bm{w}) - 4R\delta,\ \  && \forall i_k \in [I_k^h]\backslash \mathcal{I}^{\bm{w},\delta,h}_{j_k,\dots,i_n,j_n}\\
\end{alignedat}
\right\},
\]
where we use the superscript $\delta,h,\bm{w}$ to emphasize the dependence in the definition. While the value function $v_{j_k, \dots, i_n, j_n}$ is convex, its upper level set is not necessarily convex. Hence, the set $V_{j_k, \dots, i_n, j_n}^{\bm{w}, \delta, h}$ may be nonconvex. The intersection of all these sets across indices is denoted by $V^{\bm{w}, \delta, h}$ and defined as follows:
\[
	V^{\bm{w},\delta,h}:= \bigcap \left\{ V_{j_k,\dots,i_n,j_n}^{\bm{w},\delta,h}: 1\leq k \leq n,(j_k,\dots,i_n,j_n) \in {\textstyle [J_k^h]\times \prod_{p=k+1}^n ([I_p^h]\times [J_p^h])}\right\}.
\]

Similarly, for any $1\leq k \leq n$ and index $(j_k,\dots,i_n,j_n) \in [J_k^g]\times \prod_{p=k+1}^n ([I_p^g]\times [J_p^g])$, we define an index set $\mathcal{I}_{j_k,\dots,i_n,j_n}^{\bm{w},\delta,g} \subseteq [J_k^g]$ as:
\[
\mathcal{I}_{j_k,\dots,i_n,j_n}^{\bm{w},\delta,g}:=\Big\{i_k\in[J_k^g]:u_{i_k,j_k,\dots,i_n,j_n}(\bm{w}) \geq u_{j_k,\dots,i_n,j_n}(\bm{w}) - 3R\delta \Big\}.
\]
A potentially \emph{nonconvex} set 
$U_{j_k,\dots,i_n,j_n}^{\bm{w},\delta,g}$ in $\mathbb{R}^d$ can be defined as
\[
U_{j_k,\dots,i_n,j_n}^{\bm{w},\delta,g}:=
\left\{\bm{z}:
\begin{alignedat}{2}
u_{i_k,j_k,\dots,i_n,j_n}(\bm{z})&=u_{j_k,\dots,i_n,j_n}(\bm{z}), && \forall i_k \in \mathcal{I}_{j_k,\dots,i_n,j_n}^{\bm{w},\delta,g}\\
  u_{j_k,\dots,i_n,j_n}(\bm{z}) &\geq u_{j_k,\dots,i_n,j_n}(\bm{w}) - 2R\delta,  && \\
			u_{i_k,j_k,\dots,i_n,j_n}(\bm{z})&\leq v_{j_k,\dots,i_n,j_n}(\bm{w}) - 4R\delta,\ \  && \forall i_k \in [J_k^g]\backslash \mathcal{I}_{j_k,\dots,i_n,j_n}^{\bm{w},\delta,g} \\
\end{alignedat}
\right\}.
\]
The following set is defined accordingly:
\[
U^{\bm{w},\delta,g}:= \bigcap \left\{ U_{j_k,\dots,i_n,j_n}^{\bm{w},\delta,g}: 1\leq k \leq n, (j_k,\dots,i_n,j_n) \in {\textstyle [J_k^g]\times \prod_{p=k+1}^n ([I_p^g]\times [J_p^g])}\right\}.
\]
The key object (i.e.~``the net'') of our algorithm is the following set in $\mathbb{R}^d$:
\begin{equation}\label{eq:defofP}
	P^{\bm{w},\delta,h,g}:=V^{\bm{w},\delta,h}\cap U^{\bm{w},\delta,g}.
\end{equation}

\subsubsection{The Algorithm} 
Our main algorithmic results in this paper are the robust testing algorithm presented in \Cref{alg:ls} and a ``rounding'' subprocedure in \Cref{alg:rounding}. 
The idea of the scheme roughly follows the steps we informally described at the beginning of this section.

\begin{algorithm}[h]  
	\caption{Robust Stationarity Test for (\sf{RST-DC})}  
	\label{alg:ls}
	\begin{algorithmic}[1]  
		\Procedure{RST}{$\bm{w}, \epsilon, \delta,h,g$} %
				\State $k\leftarrow 0$;
		\Repeat 
		\State $\widehat{\bm{w}}\leftarrow\textsc{Rnd}(\bm{w}, 2^{-k}\delta, h,g)$; \Comment{\Cref{alg:rounding}}
		\If{$\widehat{\bm{w}}$ is not the flag {\sf (Infeasible)}, $\dist(\bm{0}, \partial (h-g)(\widehat{\bm{w}})) \leq \epsilon$, and $\|\widehat{\bm{w}} - \bm{w}\| \leq \delta$}\label{item:alg1-oracle}
		\State \Return{{\sf (True)} and $\widehat{\bm{w}}$;} \label{item:alg1-true}
		\EndIf
		\State $k \leftarrow k+1$;
		\Until{$2^{-k-2}\delta \leq \delta_{\sf sep}(\bm{w})$}
		\State \Return {\sf (False)};
		\EndProcedure
	\end{algorithmic}  
\end{algorithm}

\begin{algorithm}[h]  
	\caption{Rounding}  
	\label{alg:rounding}
	\begin{algorithmic}[1]  
		\Procedure{Rnd}{$\bm{w},\delta,h,g$} %
		\If{$P^{\bm{w},\delta,h,g}=\emptyset$}
		\State \Return{a flag {\sf (Infeasible)};}
		\EndIf
		\State Solve the following convex $\ell_2$-projection problem:%
		\[
			\widehat{\bm{w}} := \argmin_{ \bm{z} \in P^{\bm{w},\delta,h,g}} \ \frac{1}{2}\| \bm{z} - \bm{w} \|^2_2 \tag{see \nref{eq:defofP} for $P^{\bm{w},\delta,h,g}$}
		\]
		\State \Return{$\widehat{\bm{w}}$;}
		\EndProcedure
	\end{algorithmic}  
\end{algorithm} 

\paragraph{Exact Testing Oracle.} In line~\ref{item:alg1-oracle} of \Cref{alg:ls}, for a candidate certificate $\widehat{\bm{w}}$, we need to verify whether $\dist(\bm{0}, \partial (h-g)(\widehat{\bm{w}})) \leq \epsilon$ by calling an exact testing oracle, as described in \Cref{def:oracle}.
We emphasize that while the assumed oracle has the capability to solve an $\cNP$-hard problem (as shown in \Cref{thm:hard-dc}), we do not exploit its computational power in other parts of the algorithm. Indeed, as will be highlighted shortly in \Cref{rmk:other-stationarity}, if the user is willing to accept a weaker notion of stationarity, such as DC-criticality, or possesses additional information that ensures the validity of the sum rule in \Cref{sec:compat}, then the exact testing oracle can be implemented in polynomial time. The main purpose of introducing such an ideal oracle is to decouple the challenges of exact and robust testing and to demonstrate that these two tasks can, to some extent, be addressed independently.

\paragraph{Projection onto $P^{\bm{w},\delta,h,g}$.}\label{sec:representation-P}
In our algorithm, we need to find the projection of the candidate point $\bm{w}$ onto the set $P^{\bm{w},\delta,h,g}$, which is defined as the intersection of many potentially \emph{nonconvex} set. Here, we demonstrate that the sets $V^{\bm{w},\delta,h}, U^{\bm{w},\delta,g}$, and $P^{\bm{w},\delta,h,g}$ are all convex.%

Without of loss of generality, we focus on showing the convexity of $V^{\bm{w},\delta,h}$. 
Note that, for any $k \in [n]$ and any fixed index $(j_k,\dots,i_n,j_n) \in [J_k^h]\times \prod_{p=k+1}^n ([I_p^h]\times [J_p^h])$, the set $V_{j_k,\dots,i_n,j_n}^{\bm{w},\delta,h}$ can be rewritten as the following redundant form:
\[
V_{j_k,\dots,i_n,j_n}^{\bm{w},\delta,h}=
\left\{\bm{z}:
\begin{alignedat}{2}
v_{i_k,j_k,\dots,i_n,j_n}(\bm{z})&=v_{i_k',j_k,\dots,i_n,j_n}(\bm{z}), && \forall i_k,i_k' \in \mathcal{I}^{\bm{w},\delta,h}_{j_k,\dots,i_n,j_n} \\
  v_{i_k,j_k,\dots,i_n,j_n}(\bm{z}) &\geq v_{j_k,\dots,i_n,j_n}(\bm{w}) - 2R\delta,  && \forall i_k \in \mathcal{I}^{\bm{w},\delta,h}_{j_k,\dots,i_n,j_n}\\
			v_{i_k,j_k,\dots,i_n,j_n}(\bm{z})&\leq v_{j_k,\dots,i_n,j_n}(\bm{w}) - 4R\delta,\ \  && \forall i_k \in [I_k^h]\backslash \mathcal{I}^{\bm{w},\delta,h}_{j_k,\dots,i_n,j_n}\\
\end{alignedat}
\right\}.
\]
We observe that $v_{j_k,\dots,i_n,j_n}(\bm{z}) = v_{i_k,j_k,\dots,i_n,j_n}(\bm{z})$ for any $\bm{z} \in V_{j_k,\dots,i_n,j_n}^{\bm{w},\delta,h}$ and any $i_k \in \mathcal{I}^{\bm{w},\delta,h}_{j_k,\dots,i_n,j_n}$.

When $k=1$, for any fixed index $(j_1,\dots,i_n,j_n) \in [J_1^h]\times \prod_{p=2}^n ([I_p^h]\times [J_p^h])$, the convex function $v_{i_1,j_1,\dots,i_n,j_n}(\bm{z}) = \bm{z}^\top \bm{x}_{i_1,j_1,\dots,i_n,j_n} + b_{i_1,j_1,\dots,i_n,j_n}$ is affine on $\mathbb{R}^d$ for any $i_1 \in [I_1^h]$, so that the set $V_{j_1,\dots,i_n,j_n}^{\bm{w},\delta,h}$ is the intersection of halfspaces and hyperplanes, and thus convex.
Moreover, the convex function $v_{j_1,\dots,i_n,j_n}$ is actually affine on the convex set $V_{j_1,\dots,i_n,j_n}^{\bm{w},\delta,h}$.
 In what follows, we demonstrate that similar claims hold for any $1 \leq k \leq n$ by induction.
 
 	When $k=2$, for any fixed index $(j_2,\dots,i_n,j_n) \in [J_2^h]\times \prod_{p=3}^n ([I_p^h]\times [J_p^h])$, the convex function $v_{i_2, j_2,\dots,i_n,j_n} = \sum_{j_1 \in [J^h_1]} v_{j_1,\dots,i_n,j_n}$ is affine on the convex set
\begin{equation}\label{eq:represent-P-k=2}
\bigcap_{j_{1} \in [J_{1}^h]} V_{j_{1},\dots,i_n,j_n}^{\bm{w},\delta,h}.
\end{equation}
Therefore, the intersection of $V_{j_2,\dots,i_n,j_n}^{\bm{w},\delta,h}$ and the above set in \eqref{eq:represent-P-k=2}
 is the intersection of halfspaces and hyperplanes, and is thus convex.

 In the same vein, it is easily seen that the set
\[
\bigg(\bigcap \left\{ V_{j_k,\dots,i_n,j_n}^{\bm{w},\delta,h}: 1\leq k < k',(j_k,\dots,i_n,j_n) \in {\textstyle [J_k^h]\times \prod_{p=k+1}^n ([I_p^h]\times [J_p^h])}\right\} \bigg)\cap V_{j_{k'},\dots,i_n,j_n}^{\bm{w},\delta,h} 
\]
is convex for any $2\leq k' \leq n$, so that the sets $V^{\bm{w},\delta,h}, U^{\bm{w},\delta,g}$, and $P^{\bm{w},\delta,h,g}$ are all convex.
Using standard LP reformulation techniques, it can be shown that the convex polyhedron $P^{\bm{w},\delta,h,g}$ is representable by a polynomial size LP. Consequently, the projection of the point $\bm{w}$ onto $P^{\bm{w},\delta,h,g}$ can be computed in polynomial time.

\subsubsection{Correctness and Efficiency}
In the following result, we show that the new algorithm will correctly answer the {\sf{RST-DC}} question in oracle-polynomial time.

\begin{Theorem}\label{thm:robust-test}
	Let two $n$-\MC{} functions $h,g:\mathbb{R}^d\to \mathbb{R}$, a point $\bm{w}\in\mathbb{Q}^d$, and rational precisions $\epsilon \geq 0, \delta > 0$ be given. Suppose that we have an implementation of the stationarity test oracle in \Cref{def:oracle} for function $h-g$. \Cref{alg:ls} will terminate in oracle-polynomial time and provide a correct answer to \textnormal{Problem} \sf{RST-DC}.
\end{Theorem}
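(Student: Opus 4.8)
The plan is to split the verification into two independent claims: that \Cref{alg:ls} is \emph{sound} (its output is always correct for Problem \textsf{RST-DC}) and that it halts in oracle-polynomial time. Soundness of the \textsf{(True)} branch is immediate: the algorithm returns \textsf{(True)} together with $\widehat{\bm w}$ only when line~\ref{item:alg1-oracle} certifies that $\widehat{\bm w}$ is feasible, $\dist(\bm 0, \partial(h-g)(\widehat{\bm w})) \le \epsilon$, and $\|\widehat{\bm w} - \bm w\| \le \delta$. Since $\widehat{\bm w}$ is the projection of $\bm w$ onto the rational polyhedron $P^{\bm w, \delta_0, h, g}$, it lies in $\mathbb{Q}^d$ with polynomially bounded encoding length, and the three conditions are exactly the requirements of the \textsf{(True)} option; hence no false positives occur and this branch needs no further argument.

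The substance is the soundness of the \textsf{(False)} branch, which I would establish by contraposition: assuming there is an $\epsilon$-stationary point $\bm w^*$ (i.e.\ $\bm 0 \in \partial(h-g)(\bm w^*) + \epsilon\mathbb{B}$) with $\|\bm w - \bm w^*\| \le \min\{\delta, \delta_{\sf sep}(\bm w^*)\}$, I will show the loop must return \textsf{(True)} before the stopping test fires. If $\bm w^* = \bm w$, then \Cref{prop:termination} guarantees $\bm w \in P^{\bm w, \delta_0, h, g}$ once $\delta_0$ is small enough, so \textsc{Rnd} returns $\widehat{\bm w} = \bm w$, which passes the guard. If $\bm w^* \neq \bm w$, write $r := \|\bm w - \bm w^*\| \in (0, \delta]$; since $r \le \delta_{\sf sep}(\bm w^*)$ we have $[r, 2r] \subseteq [\,r,\, 2\delta_{\sf sep}(\bm w^*)\,]$, and because the tested radii $2^{-k}\delta$ decrease by a factor of two each pass, some tested $\delta_0$ satisfies $r \le \delta_0 \le 2\delta_{\sf sep}(\bm w^*)$. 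For that $\delta_0$, \Cref{thm:robust} yields $\bm w^* \in P^{\bm w, \delta_0, h, g}$ (so $P^{\bm w, \delta_0, h, g} \neq \emptyset$ and \textsc{Rnd} does not flag \textsf{(Infeasible)}) and $\partial(h-g)(\widehat{\bm w}) = \partial(h-g)(\bm w^*)$; consequently $\dist(\bm 0, \partial(h-g)(\widehat{\bm w})) = \dist(\bm 0, \partial(h-g)(\bm w^*)) \le \epsilon$, while $\|\widehat{\bm w} - \bm w\| \le \|\bm w^* - \bm w\| = r \le \delta$ because $\widehat{\bm w}$ is the nearest point of $P^{\bm w, \delta_0, h, g}$ to $\bm w$ and $\bm w^*$ lies in that set. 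Thus the guard in line~\ref{item:alg1-oracle} is met, the algorithm returns \textsf{(True)}, and the contrapositive is complete.

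For the running time, I would observe that the index $k$ increases by one each pass and the stopping test $2^{-k-2}\delta \le \delta_{\sf sep}(\bm w)$ triggers once $2^{-k}\delta = \Theta(\delta_{\sf sep}(\bm w))$; since $R$, the gaps $\gamma^v_{\sf gap}(\bm w), \gamma^u_{\sf gap}(\bm w)$, and hence $\delta_{\sf sep}(\bm w)$ are rationals of polynomial bit-length computable in polynomial time from the rational $n$-\MC{} data (\Cref{def:sep}), the number of iterations is $O\!\big(\log(\delta/\delta_{\sf sep}(\bm w))\big)$, which is polynomial in the input size. Each iteration makes one call to \textsc{Rnd}, which tests emptiness of $P^{\bm w, \delta_0, h, g}$ and then solves a convex $\ell_2$-projection; by the convexity and polynomial-size LP representability of $P^{\bm w, \delta_0, h, g}$ established just before \Cref{thm:robust-test}, both steps run in polynomial time and return a rational $\widehat{\bm w}$ of polynomial size, followed by a single invocation of the exact testing oracle of \Cref{def:oracle}. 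Summing over the polynomially many iterations gives oracle-polynomial time, and the stopping test guarantees termination.

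The main obstacle is the geometric bookkeeping tying together the three scales $\|\bm w - \bm w^*\|$, $\delta_{\sf sep}(\bm w^*)$, and the stopping threshold $\delta_{\sf sep}(\bm w)$: one must ensure that the capturing radius $\delta_0 \in [\,\|\bm w - \bm w^*\|,\, 2\delta_{\sf sep}(\bm w^*)\,]$ supplied by \Cref{thm:robust} is actually \emph{reached} by the loop rather than skipped past by the stopping rule keyed to $\delta_{\sf sep}(\bm w)$. This is exactly why the constants $3R\delta$, $2R\delta$, $4R\delta$ in the definition of $P^{\bm w, \delta, h, g}$, the factor $12R$ in $\delta_{\sf sep}$, and the offset $2^{-k-2}$ in the stopping test are chosen as they are; closing the contraposition rigorously requires a quantitative comparison of $\delta_{\sf sep}(\bm w)$ with $\delta_{\sf sep}(\bm w^*)$ valid on the ball $\|\bm w - \bm w^*\| \le \delta_{\sf sep}(\bm w^*)$, together with the two supporting results \Cref{prop:termination} and \Cref{thm:robust}, whose proofs in turn rest on the convexity of $P^{\bm w, \delta, h, g}$ and on the preservation of the active pattern of the multi-composite value functions $v_{\cdot}$ and $u_{\cdot}$ across the ball of radius $\delta_0$.
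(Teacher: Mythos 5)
Your overall architecture (no false positives; contraposition for the \textsf{(False)} branch; iteration count $O(\log(\delta/\delta_{\sf sep}(\bm{w})))$ with a bit-length lower bound on $\delta_{\sf sep}(\bm{w})$ coming from the minimal nonzero gap of a rational linear system) matches the paper's, and your running-time analysis is essentially identical to Step~2 of the paper's proof. The gap is in the contraposition, and it is exactly the one you flag at the end: your claim that ``some tested $\delta_0$ satisfies $r \le \delta_0 \le 2\delta_{\sf sep}(\bm{w}^*)$'' is false in general, because the loop stops once $2^{-k-2}\delta \le \delta_{\sf sep}(\bm{w})$, and when $\delta_{\sf sep}(\bm{w}^*) < \delta_{\sf sep}(\bm{w})$ the entire window $[r,\, 2\delta_{\sf sep}(\bm{w}^*)]$ can lie strictly below every radius the loop ever tests. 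Your proposed repair --- a quantitative comparison of $\delta_{\sf sep}(\bm{w})$ with $\delta_{\sf sep}(\bm{w}^*)$ on the ball $\|\bm{w}-\bm{w}^*\| \le \delta_{\sf sep}(\bm{w}^*)$ --- is not how the paper closes this, and no useful such inequality holds: $\delta_{\sf sep}$ is built from the minimal nonzero gaps among the value functions (\Cref{def:sep}) and can change discontinuously between nearby points.

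The paper's resolution of this case ($\delta > 2\delta_{\sf sep}(\bm{w}^*)$ and $\delta_{\sf sep}(\bm{w}^*) < \delta_{\sf sep}(\bm{w})$) avoids any such comparison. At the last iteration before the stopping test fires, the tested radius still satisfies $2^{-k}\delta \le 2\delta_{\sf sep}(\bm{w})$, so \Cref{prop:termination} gives $\widehat{\bm{w}} = \bm{w}$ exactly. One then imagines continuing the loop to some $k' \ge k$ with $\delta_{\sf sep}(\bm{w}^*) \le 2^{-k'}\delta \le 2\delta_{\sf sep}(\bm{w}^*)$; for that radius \Cref{prop:termination} again returns $\bm{w}$, while \Cref{thm:robust} (applied with reference point $\bm{w}^*$, using $\|\bm{w}-\bm{w}^*\| \le \delta_{\sf sep}(\bm{w}^*) \le 2^{-k'}\delta \le 2\delta_{\sf sep}(\bm{w}^*)$) forces $\partial(h-g)(\bm{w}) = \partial(h-g)(\bm{w}^*)$. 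Hence $\bm{w}$ itself is $\epsilon$-stationary, and the guard in line~\ref{item:alg1-oracle} already succeeds at the real iteration $k$ with certificate $\widehat{\bm{w}} = \bm{w}$, so nothing is lost by stopping early. You need this argument (or an equivalent one) to complete the contraposition; without it the \textsf{(False)} branch is unjustified in precisely the regime you identified.
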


\begin{Remark}[Other solution concepts]\label{rmk:other-stationarity}
While the stationarity mentioned in \Cref{def:oracle} is stated using Clarke subdifferential for simplicity, 
the conclusion of \Cref{thm:robust-test} and the applicability of \Cref{alg:ls} actually hold for other exact testing oracle beyond Clarke stationarity (see the statement of \Cref{thm:robust}).
 Indeed, our robust testing approach works for any locally defined solution notions (e.g., local minimum, Fr\'echet and limiting subdifferential-based stationarities; see \cite[Chapter 6]{cui2021modern}) and \DC{}-decomposition dependent solution notion (e.g., DC-criticality), as long as an implementable exact $\epsilon$-stationarity testing oracle is provided.
\end{Remark}

For any algorithm that computes stationary points asymptotically, an efficient stopping rule can be constructed using \Cref{alg:ls}, which results in a new algorithm that terminates in finite time. 
The following immediate corollary of \Cref{thm:robust-test} highlights this.

\begin{Corollary}[Stopping rule]\label{coro:stop}
	Let $n$-\MC{} functions $h,g:\mathbb{R}^d\to \mathbb{R}$ be given. Fix any precisions $0< \epsilon, \delta \in \mathbb{Q}.$ Suppose that a polynomial-time implementation of some (generalized) $\epsilon$-stationarity test oracle is provided. For any sequence $\bm{w}_t \to \bm{w}^* \in (\partial f)^{-1}(\epsilon\mathbb{B})$, there exists a finite $T \in \mathbb{N}$ such that for all $t \geq T$, 
the outputs of calling procedure 
\[
 \textsc{RST}(\bm{w}_t, \epsilon, \delta, h,g)
 \]
 defined by \Cref{alg:ls}
are {\sf (True)} and a certificate $\widehat{\bm{w}}_t \in \mathbb{R}^d$ such that 
\[
\bm{0} \in \partial (h-g)(\widehat{\bm{w}}_t)+\epsilon\mathbb{B},\qquad\text{and}\qquad\|\bm{w}_t-\widehat{\bm{w}}_t\|\leq \delta.
\] 
Moreover, for any $t\in\mathbb{N}$, calling procedure $\textsc{RST}(\bm{w}_t, \epsilon, \delta, h,g)$ terminates in polynomial time.
\end{Corollary}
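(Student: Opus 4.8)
The plan is to read off the corollary directly from \Cref{thm:robust-test} (together with its generalization to arbitrary locally defined or DC-decomposition-dependent solution oracles recorded in \Cref{rmk:other-stationarity} and \Cref{thm:robust}), exploiting the precise semantics of the two output options of Problem \textsf{RST-DC}. First I would record the two structural guarantees supplied by \Cref{thm:robust-test}: for every input the procedure $\textsc{RST}(\bm{w}_t,\epsilon,\delta,h,g)$ halts in oracle-polynomial time and returns a \emph{correct} answer to \textsf{RST-DC}. Since the hypothesis now furnishes a genuine polynomial-time implementation of the $\epsilon$-stationarity oracle, each oracle call costs polynomial time, so every invocation of $\textsc{RST}$ runs in honest polynomial time; this settles the last sentence of the corollary for all $t\in\mathbb{N}$.

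The heart of the argument is to rule out the \textsf{(False)} branch once $\bm{w}_t$ is close enough to the limit. Fix the limit point $\bm{w}^*\in(\partial f)^{-1}(\epsilon\mathbb{B})$, so that $\bm{0}\in\partial(h-g)(\bm{w}^*)+\epsilon\mathbb{B}$. By \Cref{def:sep} the separation value $\delta_{\sf sep}(\bm{w}^*)$ is strictly positive, since the defining linear systems admit a positive gap $\gamma$. Hence $\rho:=\min\{\delta,\delta_{\sf sep}(\bm{w}^*)\}>0$ is a fixed positive constant depending only on $h,g,\epsilon,\delta$ and the point $\bm{w}^*$. Because $\bm{w}_t\to\bm{w}^*$, there is a finite $T\in\mathbb{N}$ with $\|\bm{w}_t-\bm{w}^*\|\leq\rho$ for all $t\geq T$.

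Now I would invoke correctness by contraposition. Suppose that for some $t\geq T$ the procedure returned \textsf{(False)}. By the specification of the \textsf{(False)} option, \emph{every} $\bm{w}'$ with $\bm{0}\in\partial(h-g)(\bm{w}')+\epsilon\mathbb{B}$ must satisfy $\|\bm{w}_t-\bm{w}'\|>\min\{\delta,\delta_{\sf sep}(\bm{w}')\}$; specializing this universally quantified assertion to $\bm{w}'=\bm{w}^*$ yields $\|\bm{w}_t-\bm{w}^*\|>\rho$, contradicting $t\geq T$. Therefore the correct answer is \textsf{(True)}, and by the \textsf{(True)} specification the procedure returns a certificate $\widehat{\bm{w}}_t\in\mathbb{Q}^d$ with $\|\bm{w}_t-\widehat{\bm{w}}_t\|\leq\delta$ and $\bm{0}\in\partial(h-g)(\widehat{\bm{w}}_t)+\epsilon\mathbb{B}$, as claimed. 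The generalized-oracle version is identical, since $\textsc{RST}$ treats the exact test as a black box and \Cref{thm:robust} ensures correctness for any such locally defined solution notion.

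I do not expect a genuine obstacle: the corollary is essentially a specialization of \Cref{thm:robust-test}. The only points demanding care are the quantifier structure of the \textsf{(False)} option—one must instantiate the universal statement at the \emph{particular} stationary limit $\bm{w}^*$, not at the (possibly distant) certificate points—and the verification that $\delta_{\sf sep}(\bm{w}^*)>0$, which makes $\rho$ a bona fide positive threshold and is exactly what converts the algorithm-independent \textsf{(False)} guarantee into a usable finite-time stopping rule.
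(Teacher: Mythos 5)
Your proposal is correct and follows exactly the route the paper intends: the corollary is stated as an immediate consequence of \Cref{thm:robust-test}, and your argument simply makes that derivation explicit by instantiating the universally quantified \textsf{(False)} guarantee at the limit point $\bm{w}^*$, using $\delta_{\sf sep}(\bm{w}^*)>0$ from \Cref{def:sep} to obtain a positive threshold $\rho$ and hence a finite $T$. The observation that a polynomial-time oracle implementation upgrades oracle-polynomial time to genuine polynomial time is also the intended reading, so there is nothing to add.
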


\subsection{Proof of \Cref{thm:robust-test}}

We first remark on the $R$-Lipschitz continuity of value functions of $h$ and $g$.
\begin{Lemma}[Lipschitz continuity]
	For any $k \in [n]$, value functions $v_{i_k, j_k,\dots,i_n,j_n}, u_{i_k, j_k,\dots,i_n,j_n}$, $v_{j_{k},\dots,i_n,j_n}$, and $u_{j_{k},\dots,i_n,j_n}$ are all $R$-Lipschitz. %
\end{Lemma}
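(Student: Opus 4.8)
The plan is a straightforward induction on the layer index $k$, running from the innermost affine pieces outward, and relying on three elementary facts about Lipschitz moduli: an affine map $\bm{w}\mapsto \bm{w}^\top\bm{x}+a$ is $\|\bm{x}\|$-Lipschitz; a pointwise maximum of finitely many $L_i$-Lipschitz functions is $(\max_i L_i)$-Lipschitz; and a finite sum of $L_i$-Lipschitz functions is $(\sum_i L_i)$-Lipschitz. I would prove the bound for the value functions of $h$, the argument for $g$ being identical by symmetry upon replacing $\bm{x}_{i_1,j_1,\dots,i_n,j_n}$ by $\bm{y}_{i_1,j_1,\dots,i_n,j_n}$.

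Writing $L(\cdot)$ for the Lipschitz modulus, the base layer gives $L(v_{i_1,j_1,\dots,i_n,j_n}) = \|\bm{x}_{i_1,j_1,\dots,i_n,j_n}\|$. I would then propagate the bound through the two alternating operations in \Cref{def:mc}: the max-layer yields $L(v_{j_k,\dots,i_n,j_n}) \le \max_{1\le i_k\le I_k^h} L(v_{i_k,j_k,\dots,i_n,j_n})$, and the sum-layer yields $L(v_{i_{k+1},j_{k+1},\dots,i_n,j_n}) \le \sum_{1\le j_k\le J_k^h} L(v_{j_k,i_{k+1},\dots,i_n,j_n})$. Unrolling these two recursions shows that the Lipschitz modulus of \emph{any} value function is bounded above by a partial alternating max/sum of the norms $\|\bm{x}_{i_1,\dots,i_n,j_n}\|$, in which the indices strictly interior to the given value function are reduced by the appropriate $\max$ or $\sum$ while the exterior indices are held fixed. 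Both families named in the lemma, namely $v_{i_k,j_k,\dots,i_n,j_n}$ (sum-layer outputs, or the base affine pieces when $k=1$) and $v_{j_k,\dots,i_n,j_n}$ (max-layer outputs), arise in this unrolling and are therefore covered.

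The final step is to observe that every such partial expression is dominated by the full expression $\sum_{j_n}\max_{i_n}\cdots\sum_{j_1}\max_{i_1}\|\bm{x}_{i_1,\dots,i_n,j_n}\|$, which is precisely the first term in the definition of $R$ and hence is $\le R$. Indeed, fixing an index that would otherwise be summed over merely discards nonnegative summands, and fixing an index that would otherwise be maximized over merely selects one term of the maximum; in either case the partial value cannot exceed the full value. Combining this with the modulus bounds of the previous paragraph gives that every value function of $h$ is $R$-Lipschitz, and the same reasoning against the second term of $R$ settles the value functions of $g$.

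The computation is entirely routine, so I do not expect any genuine obstacle; the only care required is in the bookkeeping of the multi-index notation, so that the interior versus exterior indices of each partial expression are identified correctly, and in noting that the outermost operation defining $R$ is the summation over $j_n$, which is what guarantees that even a single-$j_n$ partial expression stays bounded by the full sum.
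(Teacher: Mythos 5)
Your argument is correct and is precisely the elementary induction the paper has in mind; the paper itself omits the proof with the remark that it is ``elementary,'' and your layer-by-layer propagation of Lipschitz moduli through the alternating max/sum structure, followed by domination of each partial expression by the full expression defining $R$, is exactly the standard route. The one point worth keeping explicit in a write-up is the nonnegativity of the inner expressions (built from norms via max and sum), which is what justifies discarding summands when comparing a partial sum to the full one.
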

\begin{proof}
	The proof is elementary and omitted for brevity.
\end{proof}

The correctness of \Cref{alg:ls} hinges on the following key technical lemma.

\begin{Lemma}\label{thm:robust}
Fix a point $\bm{w}^*\in\mathbb{R}^d$. For any $\bm{w}\in\mathbb{R}^d$ and $\delta > 0$ such that
\[
\|\bm{w} - \bm{w}^*\| \leq  \delta \leq 2\delta_{\textnormal{sep}}(\bm{w}^*),
\] let $\widehat{\bm{w}} $ be the output of calling procedure 
$
 \textsc{Rnd}(\bm{w}, \delta, h,g)
$
defined by \Cref{alg:rounding}.
Then, the output $\widehat{\bm{w}}$ is not the flag {\sf (Infeasible)} and satisfies $\|\widehat{\bm{w}} - \bm{w}\| \leq \delta$. Moreover, for any sufficiently small $\tau > 0$ and any point $\bm{h} \in \mathbb{R}^d$ sufficiently close to $\bm{0}$, the difference quotient functions of $h$ and $g$ satisfy
\[
\Delta_\tau h(\widehat{\bm{w}}+\bm{h})(\bm{d}) = \Delta_\tau h(\bm{w}^{*}+\bm{h})(\bm{d}), \qquad \Delta_\tau g(\widehat{\bm{w}}+\bm{h})(\bm{d}) = \Delta_\tau g(\bm{w}^{*}+\bm{h})(\bm{d})%
\]
and thus,
\[
\Delta_\tau (h-g)(\widehat{\bm{w}}+\bm{h})(\bm{d}) = \Delta_\tau (h-g)(\bm{w}^{*}+\bm{h})(\bm{d}),%
\]
for any direction $\bm{d} \in \mathbb{R}^d$. Consequently, by \Cref{fct:clarke-dd}, we have
$
\partial (h-g)(\widehat{\bm{w}}) = \partial (h-g)(\bm{w}^*).%
$
\end{Lemma}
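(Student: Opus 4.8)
\textbf{Proof plan for Lemma \ref{thm:robust}.}

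The plan is to certify three things in turn: that $\bm{w}^*\in P^{\bm{w},\delta,h,g}$ (so the net is nonempty and the flag \textsf{(Infeasible)} is impossible), that the projection $\widehat{\bm{w}}$ inherits the exact active pattern of $\bm{w}^*$ at every layer of the multi-composite structure, and finally that this shared active pattern forces the difference quotients---and hence the Clarke subdifferentials---of $h-g$ to agree at $\widehat{\bm{w}}$ and $\bm{w}^*$. The bound $\|\widehat{\bm{w}}-\bm{w}\|\leq\delta$ is then immediate: since $\bm{w}^*\in P^{\bm{w},\delta,h,g}$ and $\widehat{\bm{w}}$ is the $\ell_2$-projection of $\bm{w}$ onto this convex set, we have $\|\widehat{\bm{w}}-\bm{w}\|\leq\|\bm{w}^*-\bm{w}\|\leq\delta$ by hypothesis.

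First I would show $\bm{w}^*\in V^{\bm{w},\delta,h}$, arguing layer by layer using the separation hypothesis $\delta\leq 2\delta_{\textnormal{sep}}(\bm{w}^*)$ together with $\|\bm{w}-\bm{w}^*\|\leq\delta$ and the $R$-Lipschitz continuity of all value functions. The key quantitative fact is that $\gamma^v_{\sf gap}(\bm{w}^*)\geq 12R\delta_{\textnormal{sep}}(\bm{w}^*)\geq 6R\delta$, which creates a value gap at $\bm{w}^*$ that is wide relative to the $R\delta$-scale perturbations incurred by moving from $\bm{w}^*$ to $\bm{w}$. Concretely, for a fixed index $(j_k,\dots,i_n,j_n)$: if a piece is active at $\bm{w}^*$ (i.e.\ $v_{i_k,\dots}(\bm{w}^*)=v_{j_k,\dots}(\bm{w}^*)$), then by Lipschitzness $v_{i_k,\dots}(\bm{w})\geq v_{j_k,\dots}(\bm{w})-2R\delta$, which together with $3R\delta$ slack places $i_k\in\mathcal{I}^{\bm{w},\delta,h}_{j_k,\dots,i_n,j_n}$; conversely any inactive piece at $\bm{w}^*$ lies below by at least $\gamma^v_{\sf gap}(\bm{w}^*)\geq 6R\delta$, so after an $R\delta$ shift it still stays below $v_{j_k,\dots}(\bm{w})-3R\delta$ and is excluded. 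Thus $\mathcal{I}^{\bm{w},\delta,h}_{j_k,\dots,i_n,j_n}$ \emph{exactly equals} the true active set of $h$ at $\bm{w}^*$ at that node. The defining equalities of $V_{j_k,\dots,i_n,j_n}^{\bm{w},\delta,h}$ then hold at $\bm{w}^*$ by definition of activeness, while the two inequality families (the $-2R\delta$ lower and $-4R\delta$ upper bounds) are verified by the same Lipschitz-plus-gap estimate. The identical argument gives $\bm{w}^*\in U^{\bm{w},\delta,g}$, so $\bm{w}^*\in P^{\bm{w},\delta,h,g}$ and the procedure does not return \textsf{(Infeasible)}.

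Next I would transfer the active pattern to $\widehat{\bm{w}}$. Since $\widehat{\bm{w}}\in P^{\bm{w},\delta,h,g}$, the defining constraints of the net already force, at every node, that the pieces indexed by $\mathcal{I}^{\bm{w},\delta,h}_{j_k,\dots,i_n,j_n}$ are exactly the active ones and all others are strictly dominated (with a $4R\delta$ versus $2R\delta$ separation); this is the content of the observation that $\mathcal{I}(\bm{z})=\mathcal{I}^{\bm{w},\delta,h}_{j_k,\dots,i_n,j_n}$ for all $\bm{z}$ in the net, proved inductively up the layers as in the convexity discussion of Section \ref{sec:representation-P}, where each value function $v_{j_k,\dots}$ is shown to be \emph{affine} on the net. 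Because both $\widehat{\bm{w}}$ and $\bm{w}^*$ share the identical active index set at every node, and these active patterns are preserved under sufficiently small perturbations $\bm{h}$ (the strict $4R\delta$/$2R\delta$ gaps leave room), the selection functions realized by $h$ and $g$ near $\widehat{\bm{w}}+\bm{h}$ coincide piece-for-piece with those near $\bm{w}^*+\bm{h}$. This yields $\Delta_\tau h(\widehat{\bm{w}}+\bm{h})(\bm{d})=\Delta_\tau h(\bm{w}^*+\bm{h})(\bm{d})$ and likewise for $g$ for small $\tau$ and $\bm{h}$, whence the claim for $h-g$; passing to the $\limsup$ in the definition of $f^\circ$ from \Cref{fct:clarke-dd} gives $\partial(h-g)(\widehat{\bm{w}})=\partial(h-g)(\bm{w}^*)$.

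The main obstacle I anticipate is making the ``active pattern is preserved under perturbation'' step fully rigorous through the nested \emph{composition}: activeness at an inner layer determines the value of $v_{j_k,\dots}$, which then feeds into the sum $v_{i_{k+1},\dots}=\sum_{j_k}v_{j_k,\dots}$ and the outer max, so one must verify by induction that the layer-$k$ value gap $\gamma^v_{\sf gap}$ indeed controls the layer-$(k{+}1)$ activeness after accounting for how the $R$-Lipschitz bound accumulates through summation. Getting the bookkeeping of constants to close---confirming that the single scale $3R\delta$ in the index set, $2R\delta$/$4R\delta$ in the net constraints, and the factor $12R$ in $\delta_{\textnormal{sep}}$ are mutually consistent at \emph{every} depth---is the delicate part; the convexity/affineness argument from Section \ref{sec:representation-P} is what lets the induction go through, since affineness of $v_{j_k,\dots}$ on the net is exactly what guarantees the summed value function behaves predictably at the next layer.
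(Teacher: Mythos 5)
Your plan follows essentially the same two-step route as the paper's proof: first establishing $\bm{w}^*\in P^{\bm{w},\delta,h,g}$ via the $R$-Lipschitz estimates and the $12R\delta_{\sf sep}(\bm{w}^*)$ gap (yielding that $\mathcal{I}^{\bm{w},\delta,h}_{j_k,\dots,i_n,j_n}$ equals the exact active set at $\bm{w}^*$), then using feasibility of $\widehat{\bm{w}}$ and the $2R\delta$-versus-$4R\delta$ separation in the net's constraints to transfer the same active pattern to $\widehat{\bm{w}}$ and match directional derivatives; the constant bookkeeping you flag as delicate closes exactly as you describe, since $\gamma^v_{\sf gap}$ is defined uniformly over all layers and every value function shares the single Lipschitz constant $R$.
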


\begin{Remark}
We highlight two key points regarding \Cref{thm:robust} as follows:
\begin{itemize}
\item The reference point $\bm{w}^*$ is almost arbitrary and need not be an ($\epsilon$-)stationary point.
\item The information about the point $\bm{w}^*$ is not utilized by \Cref{alg:rounding}. However, as the conclusion of \Cref{thm:robust} indicates, the output of \Cref{alg:rounding} (a point $\widehat{\bm{w}}$) is closely tied to the variational properties of $h-g$ evaluated at the (unknown) point $\bm{w}^*$.
\end{itemize}
\end{Remark}

\begin{proof}
The proof falls naturally into two parts.

\textbf{Step 1.}
We first prove that under the hypothesis, $\bm{w}^* \in P^{\bm{w},\delta,h,g}.$
Note that $0 < \delta \leq 2\delta_{\sf sep}(\bm{w}^*)$ and $\| \bm{w} - \bm{w}^* \| \leq \delta$.
Fix any $k \in [n]$ and any index $(j_k,\dots,i_n,j_n) \in [J_k^h]\times \prod_{p=k+1}^n ([I_p^h]\times [J_p^h])$. For any $i_k \in [I_k^h]$ such that $v_{i_k,j_k,\dots,i_n,j_n}(\bm{w}^*)< v_{j_k,\dots,i_n,j_n}(\bm{w}^*)$, we compute
\begin{align*}
v_{i_k,j_k,\dots,i_n,j_n}(\bm{w})
&= v_{i_k,j_k,\dots,i_n,j_n}(\bm{w}^*) - (v_{i_k,j_k,\dots,i_n,j_n}(\bm{w}^*) - v_{i_k,j_k,\dots,i_n,j_n}(\bm{w}))\\
&\leq v_{i_k,j_k,\dots,i_n,j_n}(\bm{w}^*) + R\cdot \|\bm{w}^* - \bm{w}\| \tag{$R$-Lipschitz of $v_{i_k,j_k,\dots,i_n,j_n}$} \\
&\leq v_{j_k,\dots,i_n,j_n}(\bm{w}^*)- 12R\delta_{\sf sep}(\bm{w}^*) + R\delta \tag{definition of $\delta_{\sf sep}(\bm{w}^*)$} \\
 &\leq v_{j_k,\dots,i_n,j_n}(\bm{w}^*)- 5R\delta \tag{$\delta \leq 2\delta_{\sf sep}(\bm{w}^*)$}\\
 &\leq  v_{j_k,\dots,i_n,j_n}(\bm{w})- 4R\delta. \tag{$R$-Lipschitz of $v_{j_k,\dots,i_n,j_n}$}
\end{align*}
Therefore, we conclude that 
\begin{equation}\label{eq:robust-prf-wstar-leq}
\left\{i_k \in [I_k^h]:v_{i_k,j_k,\dots,i_n,j_n}(\bm{w}^*)< v_{j_k,\dots,i_n,j_n}(\bm{w}^*)\right\} \subseteq [I_k^h]\backslash \mathcal{I}^{\bm{w},\delta,h}_{j_k,\dots,i_n,j_n}. %
\end{equation}
Moreover, %
for any $i_k \in [I_k^h]$ such that $v_{i_k,j_k,\dots,i_n,j_n}(\bm{w}^*)= v_{j_k,\dots,i_n,j_n}(\bm{w}^*)$, it holds
\begin{align*}
v_{i_k,j_k,\dots,i_n,j_n}(\bm{w})
&= v_{i_k,j_k,\dots,i_n,j_n}(\bm{w}^*) - (v_{i_k,j_k,\dots,i_n,j_n}(\bm{w}^*)-v_{i_k,j_k,\dots,i_n,j_n}(\bm{w})) \\
&\geq v_{i_k,j_k,\dots,i_n,j_n}(\bm{w}^*) - R\cdot \|\bm{w}^* - \bm{w}\| \tag{$R$-Lipschitz of $v_{i_k,j_k,\dots,i_n,j_n}$} \\
&\geq v_{j_k,\dots,i_n,j_n}(\bm{w}^*) - R\delta \\
&\geq v_{j_k,\dots,i_n,j_n}(\bm{w}) - 2R\delta. \tag{$R$-Lipschitz of $v_{j_k,\dots,i_n,j_n}$}
\end{align*}
We obtain that 
\begin{equation}\label{eq:robust-prf-wstar-eq}
\left\{i_k \in [I_k^h]:v_{i_k,j_k,\dots,i_n,j_n}(\bm{w}^*) = v_{j_k,\dots,i_n,j_n}(\bm{w}^*)\right\} \subseteq \mathcal{I}^{\bm{w},\delta,h}_{j_k,\dots,i_n,j_n}.%
\end{equation}
As index sets
$\mathcal{I}^{\bm{w},\delta,h}_{j_k,\dots,i_n,j_n}$ and $[I_k^h]\backslash\mathcal{I}^{\bm{w},\delta,h}_{j_k,\dots,i_n,j_n}$ are disjoint by definition, %
we have that above two set inclusions in \nref{eq:robust-prf-wstar-leq,eq:robust-prf-wstar-eq} are actually equalities, so that
\begin{equation}\label{eq:prf-rounding-correct-Ih}
\left\{i_k \in [I_k^h]:v_{i_k,j_k,\dots,i_n,j_n}(\bm{w}^*) = v_{j_k,\dots,i_n,j_n}(\bm{w}^*)\right\} = \mathcal{I}^{\bm{w},\delta,h}_{j_k,\dots,i_n,j_n}.%
\end{equation}
Besides, for any $i_k\in \mathcal{I}^{\bm{w},\delta,h}_{j_k,\dots,i_n,j_n}$, we have
\begin{equation}\label{eq:robust-prf-wstar-in-P}
v_{i_k,j_k,\dots,i_n,j_n}(\bm{w}^*) \overset{\eqref{eq:prf-rounding-correct-Ih}}{=} v_{j_k,\dots,i_n,j_n}(\bm{w}^*) \geq v_{j_k,\dots,i_n,j_n}(\bm{w}) - R\delta > v_{j_k,\dots,i_n,j_n}(\bm{w}) - 2R\delta. 
\end{equation}
For any $i_k\in [I_k^h]\backslash\mathcal{I}^{\bm{w},\delta,h}_{j_k,\dots,i_n,j_n}$, it holds
\begin{align}
	v_{i_k,j_k,\dots,i_n,j_n}(\bm{w}^*) &\leq  v_{j_k,\dots,i_n,j_n}(\bm{w}^*) - 6R\delta \tag{definition of $\delta_{\sf sep}(\bm{w}^*)$} \\
	 &\leq v_{j_k,\dots,i_n,j_n}(\bm{w}) - 5R\delta < v_{j_k,\dots,i_n,j_n}(\bm{w}) - 4R\delta. \label{eq:robust-prf-wstar-in-P2}
\end{align}
Combining \nref{eq:prf-rounding-correct-Ih,eq:robust-prf-wstar-in-P,eq:robust-prf-wstar-in-P2}, we have $\bm{w}^* \in V_{j_k,\dots,i_n,j_n}^{\bm{w},\delta,h}$. As $ k \in [n]$ and index $(j_k,\dots,i_n,j_n)$ are arbitrary, it follows immediately that $\bm{w}^* \in V^{\bm{w},\delta,h}$.

The same reasoning applies to the sets $\mathcal{I}^{\bm{w},\delta,g}_{j_k,\dots,i_n,j_n}$ and $[I_k^g]\backslash\mathcal{I}^{\bm{w},\delta,g}_{j_k,\dots,i_n,j_n}$ for any $k \in [n]$ and index $(j_k,\dots,i_n,j_n) \in [J_k^g]\times \prod_{p=k+1}^n ([I_p^g]\times [J_p^g])$.
Thus, it follows that
\begin{equation}\label{eq:prf-rounding-correct-Ig}
\Big\{i_k \in [I_k^g]:v_{i_k,j_k,\dots,i_n,j_n}(\bm{w}^*) = u_{j_k,\dots,i_n,j_n}(\bm{w}^*)\Big\} = \mathcal{I}^{\bm{w},\delta,g}_{j_k,\dots,i_n,j_n}.%
\end{equation}
Consequently, we obtain
\[
\bm{w}^* \in V^{\bm{w},\delta,h}\cap U^{\bm{w},\delta,g} = P^{\bm{w}, \delta,h,g} \neq \emptyset,
\]
 hence that the output $\widehat{\bm{w}}$ will not be the flag {\sf (Infeasible)}.

\textbf{Step 2.} We next show the equivalence related to the difference quotient functions of $h,g,$ and $h-g$.
 By the optimality of projection onto $P^{\bm{w}, \delta,h,g}$, we have $\|\widehat{\bm{w}} - \bm{w}\| \leq \|\bm{w}^* - \bm{w}\| \leq \delta$.
For any $k \in [n]$ and any $(j_k,\dots,i_n,j_n) \in [J_k^h]\times \prod_{p=k+1}^n ([I_p^h]\times [J_p^h])$, $i_k\in \mathcal{I}^{\bm{w},\delta,h}_{j_k,\dots,i_n,j_n}$, and $i_k'\in [I_k^h]\backslash\mathcal{I}^{\bm{w},\delta,h}_{j_k,\dots,i_n,j_n}$, from the feasibility $\widehat{\bm{w}}\in P^{\bm{w},\delta,h,g}$,  it follows that 
\[
v_{i_k,j_k,\dots,i_n,j_n}(\widehat{\bm{w}}) \geq v_{j_k,\dots,i_n,j_n}(\bm{w}) - 2R\delta > v_{j_k,\dots,i_n,j_n}(\bm{w}) - 4R\delta \geq v_{i_k',j_k,\dots,i_n,j_n}(\widehat{\bm{w}}).
\] 
Hence
\begin{equation}\label{eq:prf-rounding-correct-Ih-out}
\left\{i_k \in [I_k^h]:v_{i_k,j_k,\dots,i_n,j_n}(\widehat{\bm{w}}) = v_{j_k,\dots,i_n,j_n}(\widehat{\bm{w}})\right\} = \mathcal{I}^{\bm{w},\delta,h}_{j_k,\dots,i_n,j_n}.%
\end{equation}
In a similar vein, for any $k \in [n]$ and index $(j_k,\dots,i_n,j_n) \in [J_k^g]\times \prod_{p=k+1}^n ([I_p^g]\times [J_p^g])$, one can readily show that
\begin{equation}\label{eq:prf-rounding-correct-Ig-out}
\Big\{i_k \in [I_k^g]:u_{i_k,j_k,\dots,i_n,j_n}(\widehat{\bm{w}}) = u_{j_k,\dots,i_n,j_n}(\widehat{\bm{w}})\Big\} = \mathcal{I}^{\bm{w},\delta,g}_{j_k,\dots,i_n,j_n}.%
\end{equation}
It follows from \nref{eq:prf-rounding-correct-Ih,eq:prf-rounding-correct-Ig,eq:prf-rounding-correct-Ih-out,eq:prf-rounding-correct-Ig-out} that, for any direction $\bm{d}\in\mathbb{R}^d$, we have
\begin{subequations}
\begin{align}
h'(\widehat{\bm{w}}; \bm{d})&=\sum_{1\leq j_n \leq J_n^h} \max_{i_{n} \in \mathcal{I}^{\bm{w},\delta,h}_{j_n}} \cdots \sum_{1\leq j_1 \leq J_1^h} \max_{i_1 \in \mathcal{I}^{\bm{w},\delta,h}_{j_1,i_2,\dots,i_n,j_n}} \bm{d}^\top \bm{x}_{i_1,j_1,\dots,i_n,j_n} = h'(\bm{w}^*; \bm{d}), \label{eq:proof-rounding-hg-dd-h}\\
g'(\widehat{\bm{w}}; \bm{d})&=\sum_{1\leq j_n \leq J_n^g} \max_{i_{n} \in \mathcal{I}^{\bm{w},\delta,g}_{j_n}} \cdots \sum_{1\leq j_1 \leq J_1^g} \max_{i_1 \in \mathcal{I}^{\bm{w},\delta,g}_{j_1,i_2,\dots,i_n,j_n}} \bm{d}^\top \bm{x}_{i_1,j_1,\dots,i_n,j_n} = g'(\bm{w}^*; \bm{d}). \label{eq:proof-rounding-hg-dd-g}
\end{align}
\end{subequations}
Therefore, for any $\bm{v} \in \mathbb{R}^d$ sufficiently near $\bm{0}$, we have
\begin{align*}
h(\widehat{\bm{w}}+\bm{v}) - h(\widehat{\bm{w}}) &\overset{\textnormal{(\Cref{fct:pa-dd}})}{=} h'(\widehat{\bm{w}}; \bm{v}) \overset{\eqref{eq:proof-rounding-hg-dd-h}}{=} h'(\bm{w}^*; \bm{v}) \overset{\textnormal{(\Cref{fct:pa-dd}})}{=} h(\bm{w}^*+\bm{v}) - h(\bm{w}^*), \\
g(\widehat{\bm{w}}+\bm{v}) - g(\widehat{\bm{w}}) &\overset{\textnormal{(\Cref{fct:pa-dd}})}{=} g'(\widehat{\bm{w}}; \bm{v}) \overset{\eqref{eq:proof-rounding-hg-dd-g}}{=} g'(\bm{w}^*; \bm{v}) \overset{\textnormal{(\Cref{fct:pa-dd}})}{=} g(\bm{w}^*+\bm{v}) - g(\bm{w}^*).
\end{align*}
As $\tau > 0$ and $\|\bm{h}\|$ are both sufficiently small, we conclude that 
\[
\begin{aligned}
	\Delta_\tau h(\widehat{\bm{w}}+\bm{h})(\bm{d}) &= 
\frac{\big(h(\widehat{\bm{w}}+\bm{h} + \tau \bm{d}) - h(\widehat{\bm{w}})\big) - \big(h(\widehat{\bm{w}}+\bm{h})-h(\widehat{\bm{w}})\big) }{\tau} \\
&= \frac{\big(h(\bm{w}^*+\bm{h} + \tau \bm{d}) - h(\bm{w}^*)\big) - \big(h(\bm{w}^*+\bm{h})-h(\bm{w}^*)\big) }{\tau} = \Delta_\tau h(\bm{w}^*+\bm{h})(\bm{d}).
\end{aligned}
\]
Similar arguments apply to $\Delta_\tau g$ and $\Delta_\tau (h-g)$.
\end{proof}

To analyze the efficiency of \Cref{alg:ls}, we first observe the following finite-termination property of searching the parameter $\delta > 0$.

\begin{Corollary}[Finite termination]\label{prop:termination}
Fix $\bm{w}\in\mathbb{R}^d$. For any $0 < \delta \leq 2\delta_{\textnormal{sep}}(\bm{w})$, let $\widehat{\bm{w}} $ be the output of calling procedure 
\[
 \textsc{Rnd}(\bm{w}, \delta, h, g)
 \]
defined by \Cref{alg:rounding}. Then we have that $\widehat{\bm{w}}$ is not the flag {\sf (Infeasible)} and satisfies $\widehat{\bm{w}}=\bm{w}$.	
\end{Corollary}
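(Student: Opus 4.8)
The plan is to derive this statement as an immediate specialization of \Cref{thm:robust}, taking the reference point $\bm{w}^*$ to coincide with the query point $\bm{w}$. First I would observe that under the hypothesis $0 < \delta \leq 2\delta_{\sf sep}(\bm{w})$, setting $\bm{w}^* := \bm{w}$ makes the chain of inequalities $\|\bm{w} - \bm{w}^*\| = 0 \leq \delta \leq 2\delta_{\sf sep}(\bm{w}^*)$ required by \Cref{thm:robust} hold trivially. Consequently \Cref{thm:robust} applies verbatim and already guarantees that the output $\widehat{\bm{w}}$ of $\textsc{Rnd}(\bm{w}, \delta, h, g)$ is not the flag {\sf (Infeasible)}, which disposes of the first half of the claim.

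The only additional point to establish is the identity $\widehat{\bm{w}} = \bm{w}$. For this I would invoke Step~1 of the proof of \Cref{thm:robust}, which shows (instantiated at $\bm{w}^* = \bm{w}$) that $\bm{w}^* = \bm{w} \in P^{\bm{w}, \delta, h, g}$; in particular the feasible set of the projection in \Cref{alg:rounding} is non-empty and contains the query point itself. Since the rounding step computes the $\ell_2$-projection $\widehat{\bm{w}} = \argmin_{\bm{z} \in P^{\bm{w},\delta,h,g}} \tfrac{1}{2}\|\bm{z} - \bm{w}\|^2$, and the objective $\bm{z} \mapsto \tfrac{1}{2}\|\bm{z} - \bm{w}\|^2$ is strictly convex with unique unconstrained minimizer $\bm{z} = \bm{w}$, the membership $\bm{w} \in P^{\bm{w},\delta,h,g}$ forces the constrained minimum to be attained exactly at $\widehat{\bm{w}} = \bm{w}$ (the minimal objective value $0$ being achieved inside the feasible set).

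There is essentially no genuine obstacle here: the corollary reduces entirely to the membership $\bm{w} \in P^{\bm{w},\delta,h,g}$, which is precisely what Step~1 of \Cref{thm:robust} establishes in the degenerate case $\bm{w}^* = \bm{w}$. The single point deserving an explicit line of care is that the feasibility certificate in \Cref{thm:robust} was derived \emph{without} assuming $\bm{w}^*$ to be (approximately) stationary; hence it is entirely legitimate to instantiate it at the arbitrary point $\bm{w}^* = \bm{w}$, and I would state this clearly so as to rule out any appearance of circularity or of a hidden stationarity hypothesis. With that remark in place, the argument is complete.
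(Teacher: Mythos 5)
Your proof is correct and follows exactly the paper's own argument: the paper likewise sets $\bm{w}^* := \bm{w}$, invokes Step~1 of the proof of \Cref{thm:robust} to obtain $\bm{w} \in P^{\bm{w},\delta,h,g}$, and concludes that the projection is feasible with $\widehat{\bm{w}} = \bm{w}$ as its unique optimal solution. Your closing remark that \Cref{thm:robust} imposes no stationarity hypothesis on $\bm{w}^*$ is also consistent with the paper, which makes the same observation in the remark following that lemma.
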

\begin{proof}
Let $\bm{w}^*:=\bm{w}$.
From the first step in the proof of \Cref{thm:robust}, we know that 
 $\bm{w} \in P^{\bm{w},\delta,h,g}$ whenever $0 < \delta \leq 2\delta_{\textnormal{sep}}(\bm{w})$.
	Therefore, the convex projection problem in \Cref{alg:rounding} is feasible with the output $\widehat{\bm{w}}=\bm{w}$ as the unique optimal solution.
\end{proof}

Now, we are ready for the proof of \Cref{thm:robust-test}.

\begin{proof}[Proof of \Cref{thm:robust-test}]
The proof will be divided into two steps.

\textbf{Step 1.}
	We first prove the correctness of \Cref{alg:ls} for solving problem {\sf{RST-DC}}. Here, we do not need the rationality of inputs. Note that \Cref{alg:ls} never makes false positive error, in the sense that when \Cref{alg:ls} returns {\sf (True)} and gives an answer to {\sf{RST-DC}}, it is always correct. We prove the {\sf (False)} case by contrapositive.
	Suppose that there exists a vector $\bm{w}^*\in\mathbb{R}^d$ such that $\dist\big(\bm{0}, \partial f(\bm{w}^*)\big) \leq \epsilon$ and $\|\bm{w} - \bm{w}^*\| \leq \min\{ \delta, \delta_{\sf sep}(\bm{w}^*)\}.$ We consider the following three cases.
	\begin{itemize}
		\item $0 < \delta \leq 2\delta_{\sf sep}(\bm{w}^*)$. Let $\widehat{\bm{w}}$ be the output of calling
		\[
		\textsc{Rnd}(\bm{w}, \delta, h,g)
		\]
		for the first time ($k=0$).
		Note that by the hypothesis, we have $\|\bm{w} - \bm{w}^*\| \leq \min\{ \delta, \delta_{\sf sep}(\bm{w}^*)\} \leq \delta \leq 2 \delta_{\sf sep}(\bm{w}^*)$.
		By \Cref{thm:robust}, we know that the output $\widehat{\bm{w}}$ is not the flag {\sf (Infeasible)}. Besides, we have  $\|\widehat{\bm{w}} - \bm{w}\| \leq \delta$ and
	$
	\dist\big(\bm{0}, \partial f(\widehat{\bm{w}}) \big) = \dist\big(\bm{0},\partial f(\bm{w}^*)\big) \leq \epsilon,
	$ so that \Cref{alg:ls} will return {\sf (True)} and $\widehat{\bm{w}}$ in line~\ref{item:alg1-true}.
	\item $\delta > 2\delta_{\sf sep}(\bm{w}^*)$ and $\delta_{\sf sep}(\bm{w}^*) \geq \delta_{\sf sep}(\bm{w})$. There exists some $k \in \mathbb{N}$ such that $\delta_{\sf sep}(\bm{w}) \leq\delta_{\sf sep}(\bm{w}^*) \leq 2^{-k}\delta \leq 2\delta_{\sf sep}(\bm{w}^*)$ before escaping the loop. Let $\widehat{\bm{w}}$ be the output of calling 
	\[
	\textsc{Rnd}(\bm{w}, 2^{-k}\delta, h,g).
	\]
	Note that by the hypothesis, we have $\|\bm{w} - \bm{w}^*\| \leq \delta_{\sf sep}(\bm{w}^*) \leq 2^{-k}\delta$. 
	By \Cref{thm:robust}, we know that the output $\widehat{\bm{w}}$ is not the flag {\sf (Infeasible)}, $\|\widehat{\bm{w}} - \bm{w}\| \leq 2^{-k}\delta \leq \delta$ and $
	\dist\big(\bm{0}, \partial f(\widehat{\bm{w}}) \big) = \dist\big(\bm{0},\partial f(\bm{w}^*)\big) \leq \epsilon,
	$ %
	so that \Cref{alg:ls} will return {\sf (True)} and $\widehat{\bm{w}}$ in line~\ref{item:alg1-true}.
	\item $\delta > 2\delta_{\sf sep}(\bm{w}^*)$ and $\delta_{\sf sep}(\bm{w}^*) < \delta_{\sf sep}(\bm{w})$. Let $\widehat{\bm{w}}$ be the output of calling 
	\[
	\textsc{Rnd}(\bm{w}, 2^{-k}\delta, h,g)
	\]
	for the last time before breaking the loop; i.e., with $\delta_{\sf sep}(\bm{w}) < 2^{-k}\delta \leq 2\delta_{\sf sep}(\bm{w})$. 
	By \Cref{prop:termination}, we know that the output $\widehat{\bm{w}}$ is not the flag {\sf (Infeasible)} with $\widehat{\bm{w}} = \bm{w}$. Assume, only for theoretical analysis, that the loop never ends. Then, there exists some $k' \geq k$ such that $\delta_{\sf sep}(\bm{w}^*) \leq 2^{-k'}\delta \leq 2\delta_{\sf sep}(\bm{w}^*)$. By \Cref{prop:termination}, we know that 
	\[
	\widehat{\bm{w}} = \bm{w} = \textsc{Rnd}(\bm{w}, 2^{-k'}\delta, h,g).
	\]
	Note that by hypothesis, we have $\|\bm{w} - \bm{w}^*\| \leq \delta_{\sf sep}(\bm{w}^*) \leq 2^{-k'}\delta\leq \delta$. 
 By \Cref{thm:robust}, we conclude that $
	\dist\big(\bm{0}, \partial f(\widehat{\bm{w}}) \big) = \dist\big(\bm{0}, \partial f(\bm{w}) \big) = \dist\big(\bm{0},\partial f(\bm{w}^*)\big) \leq \epsilon,
	$ so there is no loss in stopping \Cref{alg:ls} at iteration $k$ rather than at $k'$. Thus, without the assumption that the loop never stop, \Cref{alg:ls} will still return {\sf (True)} and $\widehat{\bm{w}}$ in line~\ref{item:alg1-true}.
	\end{itemize}
	Hence, \Cref{alg:ls} will return {\sf (True)} and $\widehat{\bm{w}}$ correctly under all aforementioned three cases. By contrapositive, if \Cref{alg:ls} returns {\sf (False)} eventually, then for all vector $\bm{w}^*\in\mathbb{R}^d$, we have either $\dist\big(\bm{0}, \partial f(\bm{w}^*)\big) > \epsilon$ or $\|\bm{w} - \bm{w}^*\| > \min\{ \delta, \delta_{\sf sep}(\bm{w}^*)\},$ which gives the assertion in the {\sf (False)} part of {\sf{RST-DC}} correctly.
	
	\textbf{Step 2.}
	Next, we consider the efficiency of \Cref{alg:ls}, where the rationality of data becomes crucial. Let the bit size of the inputs be $L \in \mathbb{N}$. Note that the only nontrivial step is calling \Cref{alg:rounding} in Line 4, whose computational complexity is polynomial in $L$ by the classic work of \citet{kozlov1980polynomial} and the discussion in \Cref{sec:representation-P}. 
	If $\delta_{\sf sep}(\bm{w}) = \infty$, then we only call \Cref{alg:rounding} once in \Cref{alg:ls}  before termination.
	If $\delta_{\sf sep}(\bm{w}) \in [1, \infty)$, then the iteration number is upper bounded by $1+\lceil |\log_2(\delta)| \rceil \leq O(L)$. Now suppose that $\delta_{\sf sep}(\bm{w}) \in [0, 1)$.
	In this case, it is easy to see that we will call \Cref{alg:rounding}  at most 
	\[
	1+\lceil |\log_2(\delta)| \rceil + \lceil |\log_2(\delta_{\sf sep}(\bm{w}))|\rceil
	\] times.
	It is clear that the bit size of $R$ and $\delta$ is upper bounded by $O(L)$. So we only bound the term $\lceil |\log_2(\delta_{\sf sep}(\bm{w}))|\rceil$.
	Note that, from \Cref{def:sep}, the positive quantity $12R\cdot\delta_{\sf sep}(\bm{w})$ is the minimal non-zero gap in a polynomial-sized rational linear inequality system. It is tedious but routine to show that $12R\cdot\delta_{\sf sep}(\bm{w}) \geq 2^{-O(L)}$; see, e.g., \cite[Lemma 3.1]{wright1997primal}.
	Thus, the overall computational complexity is polynomial in the number of input bits $L$.
\end{proof}

\section{Applications}\label{sec:app}
In this section,
we discuss some applications of the presented techniques to structured piecewise smooth functions from statistics and machine learning. Although these functions may not be \PA{}, they include \PA{} components in the analysis of their variational properties.
To fix notation, let labeled data $\{(\widetilde{\bm{x}} _i, y_i)\in\mathbb{R}^{d+1}:i \in [n]\}$ be given. We usually consider augmented data vectors $\bm{x}_i \in \mathbb{R}^{d+1}$ for all $i\in[n]$ defined by $\bm{x}_i^\top := \left[ \begin{array}{c|c}
		\widetilde{\bm{x}}_i^\top & 1 \end{array} \right]$.
\subsection{$\rho$-Margin Loss SVM}\label{sec:svm}
The problem for solving SVM with a $\rho$-margin loss has been widely recognized in operations research \cite{brooks2011support,zhang2018robust}, statistics \cite{shen2003psi}, and machine learning \cite{huang2014ramp,tian2022computing} communities because it provides better robustness against data outliers than the vanilla SVM. The $\rho$-margin loss (also known as ramp-loss) can be also seen from the learning theory textbook by \citet[Corollary 5.11]{mohri2018foundations}, in which the problem serves as the $\rho$-margin generalization bound for linear hypothesis set. For this classification problem, we further assume that the labels $y_i \in \{-1,1\}$ for all $i \in [n].$

Formally, the objective function of $\rho$-margin loss SVM problem can be written as
\[
f_{\sf{SVM}}(\bm{w}):= \lambda r(\bm{w}) + \sum_{i=1}^n \min\left\{ 1, \max\left\{ 1-\frac{y_i\bm{x}_i^\top \bm{w} }{\rho} \right\} \right\},
\]
where $\rho >0$ and $r:\mathbb{R}^{d+1}\to \mathbb{R}$ is a continuously differentiable regularization function.
We can rewrite $f_{\sf{SVM}}$ as $f_{\sf{SVM}}(\bm{w}) = \frac{\lambda}{2}r(\bm{w}) + \widetilde{f}_{\sf{SVM}}(\bm{w})$, where the \PA{} function $\widetilde{f}_{\sf{SVM}}(\cdot)$ is defined by
\[
\widetilde{f}_{\sf{SVM}}(\bm{w}):= \sum_{i=1}^n \max\left\{ 1-\frac{y_i\bm{x}_i^\top \bm{w}}{\rho}, 0 \right\} - \sum_{j=1}^n\max\left\{ \frac{-y_j\bm{x}_j^\top \bm{w}}{\rho},0 \right\}.
\]
By \cite[Exercise 8.8(c)]{rockafellar2009variational}, we obtain
\[
\partial f_{\sf{SVM}}(\bm{w}) = \lambda \nabla r(\bm{w}) + \partial\widetilde{f}_{\sf{SVM}}(\bm{w}).
\]
Thus, the only task remaining concerns the variational behavior of $\widetilde{f}_{\sf{SVM}}$, which is evidently a \DC{} function with $1$-\MC{} components. 
Define sets $\mathcal{I}_1(\bm{w}) := \{i\in[n]: y_i\bm{x}_i^\top \bm{w} = \rho \}$ and $\mathcal{I}_2(\bm{w}) := \{j\in[n]: y_j\bm{x}_j^\top \bm{w} = 0 \}$. 
Besides, the convex subdifferentials of DC components of $\widetilde{f}_{\sf{SVM}}$ are zonotopes by definition.
By \Cref{prop:zonotop}, we have the following subdifferential formula:
\[
\partial f_{\sf{SVM}}(\bm{w}) =
\lambda \nabla r(\bm{w}) -\sum_{i=1}^n \frac{y_i\bm{x}_i}{\rho}\partial [\max\{\cdot, 0\}]\left(1-y_i\bm{x}_i^\top \bm{w}/\rho\right) + \sum_{j=1}^n\frac{y_j\bm{x}_j}{\rho}\partial [\max\{\cdot, 0\}]\left(-y_j\bm{x}_j^\top \bm{w}/\rho\right)
\]
if and only if the following transversality condition holds at the point $\bm{w}$:
\[%
	\spn \left\{\bm{x}_i: i \in \mathcal{I}_1(\bm{w})\right\} \cap  \spn \left\{\bm{x}_j: j \in \mathcal{I}_2(\bm{w})\right\}= \{\bm{0}\},
	\]
which can be implied by a ``general position''-type condition on the data points $\{\widetilde{\bm{x}}_i\}_i$; see \Cref{sec:2relu} for more details.
Our other results in \Cref{sec:cr,sec:robust} can be similarly applied.
\subsection{Continuous Piecewise Affine Regression}
A conventional statistical estimation model is typically defined by a linear combination of input features, resulting in the formation of a linear regression model. Recently, there has been a surge of interest in research exploring the application of piecewise affine models \cite{mazumder2019computational, hahn2017parameter}, as discussed in the work \cite{cui2018composite}.
Simply put, a continuous piecewise affine regression problem replaces the linear model with a general piecewise affine function. Formally, the objective is to minimize the following piecewise differentiable function:

\[
f_{\sf{PAR}}\big((\bm{a}_j)_{j\in[m_1]}, (\bm{b}_k)_{k\in[m_2]}\big):=\sum_{i=1}^n \ell_i \left( \max_{1 \leq j \leq m_1} \bm{a}_j^\top \bm{x}_i - \max_{1 \leq k \leq m_2} \bm{b}_k^\top \bm{x}_i \right),
\]
where the loss function $\ell_i:\mathbb{R} \to \mathbb{R}$ for the $i$-th data point is assumed to be continuously differentiable.
For instance, we may choose $\ell_i(t):=\tfrac{1}{2}(t-y_i)^2,$ which leads to a least squares problem.
For any $i \in [n]$ and $j \in [m_1]$, let us define
\[
\bm{\theta}_a:=\left[ \begin{array}{c}
     \bm{a}_1 \\ \vdots \\\bm{a}_{m_1} \end{array} \right] \in \mathbb{R}^{dm_1}, \qquad
     \bm{x}^a_{i,j} := \left[ \begin{array}{c}
     \bm{0}_{d(j-1)} \\ \bm{x}_i \\\bm{0}_{d(m_1-j)} \end{array} \right] \in \mathbb{R}^{dm_1}.
\]
We can define vectors $\bm{\theta}_b \in \mathbb{R}^{dm_2}$ and $\bm{x}^b_{i,k}\in \mathbb{R}^{dm_2}$ for any $i \in [n]$ and $k \in [m_2]$ in a similar way. This gives a reformulation of the function $f_{\sf{PAR}}$ as
\[
f_{\sf{PAR}'}(\bm{\theta}_a, \bm{\theta}_b):=\sum_{i=1}^n \ell_i \left( \max_{1 \leq j \leq m_1} \bm{\theta}_a^\top \bm{x}^a_{i,j} - \max_{1 \leq k \leq m_2} \bm{\theta}_b^\top \bm{x}^b_{i,k} \right).
\]
An immediate difficulty in analyzing the differential structure of the function $f_{\sf{PAR}'}$ is its non-separable form, which is due to the nonlinear functions $\ell_i:\mathbb{R}\to \mathbb{R}$. 
Indeed, it is legitimate to consider only a separable, partially linearized version of $f_{\sf{PAR}'}$.
The following result can be seen as a corollary of a more general theorem in \cite[Theorem 6.5]{mordukhovich1996nonsmooth} (see also \cite[Theorem 4.5(ii)]{Mordukhovich.18}). Here, we provide a self-contained, elementary proof.
\begin{Proposition}[Partial linearization]\label{prop:linearization}
Let a point $\bm{x}\in\mathbb{R}^d$ and a locally Lipschitz  $f:\mathbb{R}^d \rightarrow \mathbb{R}$ be given in form of composition $f= h\circ G$, where function $h:\mathbb{R}^n \rightarrow \mathbb{R}$ is differentiable with locally Lipschitz continuous gradient near $G(\bm{x})$ and the mapping $G:\mathbb{R}^d\rightarrow\mathbb{R}^n$ is locally Lipschitz near $\bm{\bm{x}}$. Then, we have
	\[
	\partial f(\bm{x}) = \partial \Big[\left\langle \nabla h\big(G(\bm{x})\big), G(\cdot ) \right\rangle \Big](\bm{x}).
	\]
\end{Proposition}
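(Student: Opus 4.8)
The plan is to reduce the claimed set identity to an equality of Clarke generalized directional derivatives and then invoke the support-function characterization in \Cref{fct:clarke-dd}. Write $\bm{p} := \nabla h(G(\bm{x}))$ and define the linearized function $\phi(\bm{y}) := \langle \bm{p}, G(\bm{y})\rangle$. Since $G$ is locally Lipschitz near $\bm{x}$, so is $\phi$; likewise $f = h\circ G$ is locally Lipschitz near $\bm{x}$. By \Cref{fct:clarke-dd}, both $\partial f(\bm{x})$ and $\partial \phi(\bm{x})$ are exactly the sets of $\bm{s}$ satisfying $\bm{s}^\top\bm{d} \le (\cdot)^\circ(\bm{x};\bm{d})$ for every $\bm{d}$; hence it suffices to prove $f^\circ(\bm{x};\bm{d}) = \phi^\circ(\bm{x};\bm{d})$ for all $\bm{d}\in\mathbb{R}^d$.

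The key step is a direct comparison of difference quotients. Fix a neighborhood of $\bm{x}$ on which $G$ is $L_G$-Lipschitz and a neighborhood of $G(\bm{x})$ on which $\nabla h$ is $L$-Lipschitz. For $\bm{x}'$ close to $\bm{x}$ and $t>0$ small, set $\bm{u}' := G(\bm{x}')$ and $\bm{v} := G(\bm{x}'+t\bm{d})$. The local Lipschitz continuity of $\nabla h$ gives the second-order estimate
\[
\big| h(\bm{v}) - h(\bm{u}') - \langle \nabla h(\bm{u}'), \bm{v}-\bm{u}'\rangle \big| \le \tfrac{L}{2}\|\bm{v}-\bm{u}'\|^2,
\]
and a further split $\langle \nabla h(\bm{u}'), \bm{v}-\bm{u}'\rangle = \langle \bm{p}, \bm{v}-\bm{u}'\rangle + \langle \nabla h(\bm{u}') - \bm{p}, \bm{v}-\bm{u}'\rangle$. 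Recognizing $\langle \bm{p}, \bm{v}-\bm{u}'\rangle = \phi(\bm{x}'+t\bm{d}) - \phi(\bm{x}')$, I obtain
\[
\frac{f(\bm{x}'+t\bm{d}) - f(\bm{x}')}{t} = \frac{\phi(\bm{x}'+t\bm{d}) - \phi(\bm{x}')}{t} + \frac{E_1 + E_2}{t},
\]
where $E_1$ is the quadratic remainder above and $E_2 := \langle \nabla h(\bm{u}') - \bm{p}, \bm{v}-\bm{u}'\rangle$.

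It then remains to show that both error contributions vanish in the joint limit $\bm{x}'\to\bm{x}$, $t\searrow 0$. Using $\|\bm{v}-\bm{u}'\| = \|G(\bm{x}'+t\bm{d}) - G(\bm{x}')\| \le L_G\, t\|\bm{d}\|$, the quadratic term satisfies $|E_1|/t \le \tfrac{L}{2} L_G^2\, t\|\bm{d}\|^2 \to 0$. For the cross term, the Lipschitz bounds $\|\nabla h(\bm{u}') - \bm{p}\| \le L\|G(\bm{x}') - G(\bm{x})\| \le L L_G\|\bm{x}'-\bm{x}\|$ together with $\|\bm{v}-\bm{u}'\| \le L_G\, t\|\bm{d}\|$ yield $|E_2|/t \le L L_G^2\|\bm{x}'-\bm{x}\|\,\|\bm{d}\| \to 0$. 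Since these bounds hold uniformly over the chosen neighborhood, taking $\limsup_{\bm{x}'\to\bm{x},\,t\searrow 0}$ of the displayed identity gives $f^\circ(\bm{x};\bm{d}) = \phi^\circ(\bm{x};\bm{d})$, and the proof concludes via \Cref{fct:clarke-dd}.

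I anticipate the main obstacle to be exactly this uniform control of the two error terms in the combined $\limsup$: one must verify that the quadratic remainder $E_1$ contributes $O(t)$ after division by $t$ (using the locally Lipschitz gradient of $h$), while the mismatch term $E_2$ contributes $O(\|\bm{x}'-\bm{x}\|)$ after division by $t$ (using the locally Lipschitz continuity of $G$). Both hypotheses are genuinely needed, and it is worth noting that neither error would be controlled under the weaker assumption that $h$ is merely continuously differentiable without a locally Lipschitz gradient.
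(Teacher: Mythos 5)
Your proposal is correct and follows essentially the same route as the paper's proof: both reduce the subdifferential identity to the equality $f^\circ(\bm{x};\cdot)=\phi^\circ(\bm{x};\cdot)$ via the support-function characterization in Fact 2.3, and both control the difference of the quotients by the descent-lemma quadratic remainder (your $E_1$, of order $t$) plus the gradient-mismatch cross term (your $E_2$, of order $\|\bm{x}'-\bm{x}\|$), each bounded using the Lipschitz constants of $\nabla h$ and $G$. Your two-sided absolute-value bound on $E_1+E_2$ is a marginally cleaner way to get both inequalities of the $\limsup$ comparison at once, where the paper argues one direction and says the other is symmetric, but this is a cosmetic difference only.
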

\begin{proof}
Let $\bar{f}:=\left\langle \nabla h\big(G(\bm{x})\big), G(\cdot ) \right\rangle$.
We show $f^\circ(\bm{x}; \bm{v}) = \bar{f}^\circ(\bm{x}; \bm{v})$ for any $\bm{v}\in\mathbb{R}^d$. Note that the Clarke generalized subderivative can be written as
\[
f^\circ(\bm{x};\bm{v}) = \limsup_{\substack{\bm{x}'\rightarrow \bm{x} \\ t \searrow 0}} \Delta_t f(\bm{x}')(\bm{v})
= \lim_{\epsilon\searrow 0} \sup_{\|\bm{x}'- \bm{x}\|\leq \epsilon} \sup_{0 < t < \epsilon} \Delta_t f(\bm{x}')(\bm{v}).
\]
We assume $h$ is $L_h$-smooth near $g(\bm{x})$ and $G$ is $L_G$-Lipschitz near $\bm{x}$. We will use the following estimation (see \cite[Lemma 1.2.3]{nesterov2003introductory}) if $h$ is $L_h$-smooth at $\bm{z} \in \mathbb{R}^n$:
\[
-\frac{L_h}{2}\|\bm{z}'-\bm{z}\|^2 \leq h(\bm{z}') - h(\bm{z})- \left\langle \nabla h(\bm{z}), \bm{z}'-\bm{z}\right\rangle
\leq \frac{L_h}{2}\|\bm{z}'-\bm{z}\|^2.
\]
To prove $f^\circ(\bm{x};\bm{v}) \geq \bar{f}^\circ(\bm{x};\bm{v})$, we compute as follows
\begin{align*}
	\Delta_t f(\bm{x}')(\bm{v})&= \frac{h\big(G(\bm{x}'+t\bm{v})\big) - h\big(G(\bm{x}')\big)}{t} \\
	&\geq \frac{1}{t}\left\langle \nabla h\big(G(\bm{x}')\big), G(\bm{x}'+t\bm{v}) - G(\bm{x}') \right\rangle -\frac{L_h}{2t} \left\| G(\bm{x}'+t\bm{v}) - G(\bm{x}') \right\|^2 \\
	&\geq \frac{1}{t}\left\langle \nabla h\big(G(\bm{x})\big), G(\bm{x}'+t\bm{v}) - G(\bm{x}') \right\rangle  -\frac{L_h L_g^2}{2}\left\| \bm{v} \right\|^2 \cdot t 
	 -L_hL_g^2 \|\bm{v}\|\cdot \|\bm{x}-\bm{x}'\| \\
	 &= \Delta_t \bar{f}(\bm{x}')(\bm{v})  -\frac{L_h L_g^2}{2}\left\| \bm{v} \right\|^2 \cdot t 
	 -L_hL_g^2 \|\bm{v}\|\cdot \|\bm{x}-\bm{x}'\|.
\end{align*}
Therefore, for any $\bm{v}\in\mathbb{R}^d$, we know 
\[
\begin{aligned}
f^\circ(\bm{x};\bm{v}) &= \lim_{\epsilon\searrow 0} \sup_{\|\bm{x}'- \bm{x}\|\leq \epsilon} \sup_{0 < t < \epsilon} \Delta_t f(\bm{x}')(\bm{v}), \\
&\overset{(i)}{\geq} \lim_{\epsilon\searrow 0} \sup_{\|\bm{x}'- \bm{x}\|\leq \epsilon} \sup_{0 < t < \epsilon} \Delta_t \bar{f}(\bm{x}')(\bm{v})  - \left(\lim_{\epsilon\searrow 0} \frac{L_h L_g^2}{2}\left\| \bm{v} \right\|^2 \cdot \epsilon\right)
	 -\left(\lim_{\epsilon\searrow 0} L_hL_g^2 \|\bm{v}\|\cdot \epsilon\right) \\
&=\lim_{\epsilon\searrow 0} \sup_{\|\bm{x}'- \bm{x}\|\leq \epsilon} \sup_{0 < t < \epsilon} \Delta_t \bar{f}(\bm{x}')(\bm{v}) \\
&=\bar{f}^\circ(\bm{x};\bm{v}),
\end{aligned}
\]
where in $(i)$ we use $\sup f-g \geq \sup f - \sup g$.
For the converse direction $f^\circ(\bm{x};\bm{v}) \leq \bar{f}^\circ(\bm{x};\bm{v})$, we just compute similarly.
We have proved $f^\circ(\bm{x};\bm{v}) = \bar{f}^\circ(\bm{x};\bm{v}), \forall \bm{v}\in\mathbb{R}^d$. The claim follows from the correspondence between sublinear $f^\circ$ and convex set $\partial f(\bm{x})$ \cite[Proposition 2.1.5]{clarke1990optimization}.	
\end{proof}

Fix $(\bm{\theta}_a, \bm{\theta}_b)$.
Now, we consider a partially linearized version of the function $f_{\sf{PAR}'}$:
\[
\widetilde{f}_{\sf{PAR}'}(\bm{\theta}_a', \bm{\theta}_b'):=\left( \widetilde{f}_{\sf{PARa}}(\bm{\theta}_a'):=\sum_{i=1}^n p_i\cdot \max_{1 \leq j \leq m_1} \bm{\theta}_a'^\top \bm{x}^a_{i,j} \right) - \left( \widetilde{f}_{\sf{PARb}}(\bm{\theta}_b'):=\sum_{i=1}^n p_i\cdot\max_{1 \leq k \leq m_2} \bm{\theta}_b'^\top \bm{x}^b_{i,k} \right),
\]
where $p_i = \ell_i'\left( \max_{1 \leq j \leq m_1} \bm{\theta}_a^\top \bm{x}^a_{i,j} - \max_{1 \leq k \leq m_2} \bm{\theta}_b^\top \bm{x}^b_{i,k} \right) \in \mathbb{R}$ for any $i \in [n]$.
By \cite[Proposition 2.5]{rockafellar1985extensions} and \Cref{prop:linearization}, we have
\[
\partial f_{\sf{PAR}'}(\bm{\theta}_a, \bm{\theta}_b) = \partial \widetilde{f}_{\sf{PAR}'}(\bm{\theta}_a, \bm{\theta}_b) = \partial \widetilde{f}_{\sf{PARa}}(\bm{\theta}_a) \times [-\partial \widetilde{f}_{\sf{PARb}}(\bm{\theta}_b)],
\]
where functions $\widetilde{f}_{\sf{PARa}}$ and $\widetilde{f}_{\sf{PARb}}$ are both DC functions with $1$-\MC{} components; so that our results in \Cref{sec:cr,sec:robust} can be applied accordingly.

\subsection{Shallow ReLU-Type Neural Networks}\label{sec:2relu}

We discuss the stationarity testing problem for a two-layer neural network with a ReLU-type activation function, which is a task already considered by \citet{yun2018efficiently}. For a ``ReLU-type'' activation function, we mean a univariate function representable by maximizing two affine univariate functions, e.g., ReLU and Leaky ReLU. Formally, we are interested in the following loss function $f_{\sf{2NN}}:\mathbb{R}^{h(d+1)}\to \mathbb{R}$ for a two-layer network with $h$ hidden units:

\[
 f_{\sf{2NN}}\big((\bm{w}_k, u_k)_{k\in[h]}\big):=\sum_{i=1}^n \ell_i\left(\sum_{k=1}^h u_k\cdot \max\left\{\bm{w}_k^\top \bm{x}_i, 0 \right\} \right) + r\big((\bm{w}_k, u_k)_{k\in[h]}\big),
\]
where the loss function $\ell_i:\mathbb{R} \to \mathbb{R}$ for the $i$ data point and the regularization term $r$ are assumed to be continuously differentiable. Our analysis relies on the following corollary of \Cref{prop:linearization}.

\begin{Corollary}\label{coro:prod}
	Let the function $f:\mathbb{R}^d\times \mathbb{R} \rightarrow \mathbb{R}$ be $(\bm{x}, u) \mapsto u\cdot g(\bm{x})$, where $g:\mathbb{R}^d\rightarrow\mathbb{R}$ is a Lipschitz function. Then, we have $\partial f(\bm{x},u) = \partial [u\cdot g](\bm{x}) \times \{g(\bm{x})\}$.
\end{Corollary}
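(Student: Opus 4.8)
The plan is to cast the bilinear map $f(\bm{x},u)=u\cdot g(\bm{x})$ as a composition with a smooth outer function and a Lipschitz inner mapping, so that \Cref{prop:linearization} applies verbatim. Writing the joint variable as $\bm{z}=(\bm{x},u)\in\mathbb{R}^{d+1}$, I would define the inner mapping $G:\mathbb{R}^{d+1}\to\mathbb{R}^2$ by $G(\bm{x},u):=(g(\bm{x}),u)$ and the outer function $h:\mathbb{R}^2\to\mathbb{R}$ by $h(s,t):=st$. Then $f=h\circ G$, the mapping $G$ is locally Lipschitz near $(\bm{x},u)$ because $g$ is Lipschitz and the second coordinate is linear, and $h$ is $C^{\infty}$, hence differentiable with a locally Lipschitz gradient $\nabla h(s,t)=(t,s)$. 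This verifies all hypotheses of \Cref{prop:linearization} with $d$ replaced by $d+1$ and $n=2$.

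First I would evaluate the outer gradient at the base point: since $G(\bm{x},u)=(g(\bm{x}),u)$, we have $\nabla h(G(\bm{x},u))=(u,g(\bm{x}))$. Substituting into the conclusion of \Cref{prop:linearization} gives
\[
\partial f(\bm{x},u)=\partial\Big[\big\langle (u,g(\bm{x})),\,G(\cdot,\cdot)\big\rangle\Big](\bm{x},u),
\]
and expanding the inner product shows that the linearized function is
\[
(\bm{x}',u')\mapsto \big\langle (u,g(\bm{x})),(g(\bm{x}'),u')\big\rangle = u\cdot g(\bm{x}') + g(\bm{x})\cdot u'.
\]
The key structural observation is that this function is \emph{separable}: the first summand $\bm{x}'\mapsto u\cdot g(\bm{x}')$ depends only on $\bm{x}'$, while the second $u'\mapsto g(\bm{x})\cdot u'$ is an affine function of $u'$ alone.

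Finally I would invoke the separable sum rule \cite[Proposition 2.5]{rockafellar1985extensions} (both summands being locally Lipschitz in their respective arguments) to split the subdifferential as a Cartesian product,
\[
\partial f(\bm{x},u)=\partial[u\cdot g](\bm{x})\times\partial[g(\bm{x})\cdot(\cdot)](u),
\]
and then identify the second factor: the map $u'\mapsto g(\bm{x})\cdot u'$ is linear, hence continuously differentiable, so $\partial[g(\bm{x})\cdot(\cdot)](u)=\{g(\bm{x})\}$ by \Cref{fct:clarke-dd}. This yields $\partial f(\bm{x},u)=\partial[u\cdot g](\bm{x})\times\{g(\bm{x})\}$, as claimed. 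I expect the only delicate point to be the bookkeeping in confirming that the hypotheses of \Cref{prop:linearization} genuinely hold for the product map---in particular that $h(s,t)=st$ supplies the required local Lipschitz continuity of $\nabla h$ near $G(\bm{x},u)$---since once the linearization is in hand, the separability split and the singleton identification are entirely routine.
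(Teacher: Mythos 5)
Your proposal is correct and follows essentially the same route as the paper: write $f=h\circ G$ with $h(s,t)=st$ and $G(\bm{x},u)=(g(\bm{x}),u)$, apply \Cref{prop:linearization} to reduce to the partially linearized function $(\bm{x}',u')\mapsto u\cdot g(\bm{x}')+g(\bm{x})\cdot u'$, and split it via the separable sum rule. Your write-up is in fact slightly more careful than the paper's one-line chain of equalities, since you make explicit the appeal to \cite[Proposition 2.5]{rockafellar1985extensions} and the identification of the second factor as the singleton $\{g(\bm{x})\}$.
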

\begin{proof}
	Let $h:\mathbb{R}\times \mathbb{R}\rightarrow \mathbb{R}$ be $h(a,b)=a \cdot b$. It is easy to see $h$ is smooth at any $(a,b)$. Let $C(\bm{x}, u) = (g(\bm{x}),u)$. As $f(\bm{x},u) = (h\circ C)(\bm{x},u)$, by \Cref{prop:linearization}, we know
	\[
	\partial f(\bm{x},u) = \partial \big[(\bm{x}',u')\mapsto g(\bm{x})\cdot u' + u\cdot g(\bm{x}')\big](\bm{x},u) = \partial [u\cdot g](\bm{x}) \times \{g(\bm{x})\},
	\]
	as required.
\end{proof}
Fix $(\bm{w}_k, u_k)_{k\in[h]}$.
By virtue of \Cref{coro:prod}, consider the following partially linearized loss function of $f_{\sf{2NN}}$:
\[
\begin{aligned}
 \widetilde{f}_{\sf{2NN}}\big((\bm{w}_k', u_k')_{k\in[h]}\big):=&\ \sum_{i=1}^n p_i \cdot\left(\sum_{k=1}^h u_k'\cdot \max\left\{\bm{w}_k'^\top \bm{x}_i, 0 \right\} \right) + r\big((\bm{w}'_k, u'_k)_{k\in[h]}\big) \\
 =&\ \sum_{k=1}^h u_k'\cdot\left(g_k(\bm{w}_k'):=  \sum_{i=1}^n p_i\cdot\max\left\{\bm{w}_k'^\top \bm{x}_i, 0 \right\} \right) + r\big((\bm{w}_k', u_k')_{k\in[h]}\big),
 \end{aligned}
\]
where $p_i:=\ell_i'\left(\sum_{k=1}^h u_k\cdot \max\left\{\bm{w}_k^\top \bm{x}_i, 0 \right\} \right) \in \mathbb{R}$ for any $i\in[n]$.
By \Cref{prop:linearization}, \cite[Proposition 2.5]{rockafellar1985extensions}, and \Cref{coro:prod}, we have
\[
\begin{aligned}
\partial f_{\sf{2NN}}\big((\bm{w}_k, u_k)_{k\in[h]}\big) &= \partial \widetilde{f}_{\sf{2NN}}\big((\bm{w}_k, u_k)_{k\in[h]}\big) \\
&= \nabla r\big((\bm{w}_k, u_k)_{k\in[h]}\big)+\prod_{k=1}^h \partial[u_k\cdot g_k](\bm{w}_k) \times \{g_k(\bm{w}_k)\}.
\end{aligned}
\]
Note that the function $\bm{w} \mapsto u_k\cdot g_k(\bm{w})$ is a DC function with $1$-\MC{} components for any $k \in [h]$.
Let us define sets $\mathcal{I}^+_k = \{i \in [n]: u_kp_i> 0, \bm{w}_k^\top \bm{x}_i = 0 \}$ and $\mathcal{I}^-_k = \{i \in [n]: u_kp_i< 0, \bm{w}_k^\top \bm{x}_i = 0 \}$ for any $k \in [h]$.
 By \cite[Exercise 8.8(c)]{rockafellar2009variational} and \Cref{prop:zonotop}, we have the validity of the following natural subdifferential formula:
\begin{equation}\label{eq:subdiff-2relu}
\partial f_{\sf{2NN}}\big((\bm{w}_k, u_k)_k\big)=\nabla r\big((\bm{w}_k, u_k)_k\big)+\prod_{k=1}^h  \left(u_k\sum_{i=1}^n p_i\bm{x}_i \cdot \partial[\max\{\cdot, 0\}]\left(
\bm{w}_k^\top \bm{x}_i\right)\right)\times \{g_k(\bm{w}_k)\}
\end{equation}
if and only if the following qualification on data $\{\bm{x}_i\}_{i=1}^n$ holds
\begin{equation}\label{eq:2relu-kq}
\bigcup_{k\in[h]} \Big( \spn \left\{\bm{x}_i: i \in \mathcal{I}_k^+ \right\}\cap \spn \left\{\bm{x}_j: j \in \mathcal{I}_k^- \right\}\Big)= \{\bm{0}\}.
\end{equation}

An immediate question arises regarding when the condition in \nref{eq:2relu-kq} is satisfied. A sufficient condition, as presented in \cite[Assumption 2]{yun2018efficiently}, is that the data $\{\widetilde{\bm{x}}_i \in \mathbb{R}^d:i\in[n]\}$ should be in ``general position,'' a concept which is formally defined below.

\begin{Definition}[General position of points; cf.~{\cite[p.~57]{grunbaum2003convex}}]\label{def:gp}
	A finite subset $\{\widetilde{\bm{x}}_i \in \mathbb{R}^d:i\in[n]\}$ is said to be in general position if no $d+1$ points among them lie on the same affine hyperplane.
\end{Definition}

We are now in a position to compare the weakest possible regularity condition presented in \nref{eq:2relu-kq}, which is deduced from \Cref{prop:zonotop}, with existing conditions found in the literature.%
\begin{Proposition}\label{prop:relation-kq}
For the conditions concerning the validity of the subdifferential formula of $\partial f_{\sf{2NN}}$ in \eqref{eq:subdiff-2relu}, the following relation is observed:
\[
\text{``general position'' in \Cref{def:gp}} \implies \text{surjectivity in \Cref{sec:known-cond}} \implies \text{\nref{eq:2relu-kq}}.
\]	
\end{Proposition}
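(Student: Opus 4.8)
The plan is to prove the two implications separately, after reducing the whole statement to a per–hidden-unit claim. Recall that the loss $f_{\sf{2NN}}$ is, up to the smooth term $r$, separable across the units $k\in[h]$ in the variables $\bm{w}_k$, and that its subdifferential factorizes as the Cartesian product displayed in \eqref{eq:subdiff-2relu}; moreover the text already shows that \eqref{eq:subdiff-2relu} holds if and only if the kink qualification \eqref{eq:2relu-kq} holds. Hence it suffices to track, for each fixed $k$, the DC function $\bm{w}\mapsto u_kg_k(\bm{w})=\sum_{i=1}^n u_kp_i\max\{\bm{w}^\top\bm{x}_i,0\}$, whose convex part collects the indices with $u_kp_i>0$ and whose concave part collects those with $u_kp_i<0$. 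At the point $\bm{w}_k$ both parts have zonotopal subdifferentials, and a direct computation (only the active indices contribute line-segment generators, the inactive ones contributing fixed points) gives $\parr(\partial[\textnormal{convex part}](\bm{w}_k))=\spn\{\bm{x}_i:i\in\mathcal{I}_k^+\}$ and $\parr(\partial[\textnormal{concave part}](\bm{w}_k))=\spn\{\bm{x}_i:i\in\mathcal{I}_k^-\}$, so that for this unit \eqref{eq:2relu-kq} reads $\spn\{\bm{x}_i:i\in\mathcal{I}_k^+\}\cap\spn\{\bm{x}_j:j\in\mathcal{I}_k^-\}=\{\bm{0}\}$.

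For the implication ``surjectivity $\Rightarrow$ \eqref{eq:2relu-kq}'', I first record that, when the surjectivity condition of \Cref{sec:known-cond} is instantiated at $\bm{w}_k$ for this DC function, the locally affine (inactive) pieces may be discarded, so that the relevant matrix $\bm{D}_k$ in \eqref{eq:motiv-surj} has columns exactly $\{\bm{x}_i:i\in\mathcal{I}_k^+\cup\mathcal{I}_k^-\}$; surjectivity is precisely that these active augmented vectors are linearly independent (equivalently, $\bm{D}_k$ has full column rank, i.e.\ the map $\bm{w}\mapsto(\bm{x}_i^\top\bm{w})_{i\in\mathcal{I}_k^+\cup\mathcal{I}_k^-}$ is onto). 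From this linear independence and the disjointness of $\mathcal{I}_k^+$ and $\mathcal{I}_k^-$, any vector lying in both spans would yield a nontrivial linear relation among independent vectors, forcing it to be $\bm{0}$; this is exactly the per-unit transversality and coincides with the conclusion of \Cref{prop:all-are-transveral}\textnormal{(c)}. Taking the union over $k$ gives \eqref{eq:2relu-kq}.

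For the implication ``general position $\Rightarrow$ surjectivity'', I fix $k$, write $\bm{w}_k=(\tilde{\bm{w}}_k,b_k)$ with $\tilde{\bm{w}}_k\in\mathbb{R}^d$, and treat the generic case $\tilde{\bm{w}}_k\neq\bm{0}$ (if $\tilde{\bm{w}}_k=\bm{0}$ and $b_k\neq0$ then $\mathcal{I}_k^+\cup\mathcal{I}_k^-=\emptyset$ and there is nothing to prove). Every index $i$ with $\bm{x}_i^\top\bm{w}_k=0$ satisfies $\tilde{\bm{w}}_k^\top\widetilde{\bm{x}}_i+b_k=0$, so the corresponding data points $\widetilde{\bm{x}}_i$ all lie on a single affine hyperplane of $\mathbb{R}^d$; by \Cref{def:gp} there can be at most $d$ of them, whence $|\mathcal{I}_k^+\cup\mathcal{I}_k^-|\leq d$. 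Translating \Cref{def:gp} into the language of the augmented vectors --- ``$d+1$ points lie on a common affine hyperplane $\{\bm{z}:\bm{a}^\top\bm{z}=\beta\}$'' is the same as ``the $d+1$ vectors $\bm{x}_i$ admit the common nonzero annihilator $(\bm{a},-\beta)$'', i.e.\ are linearly dependent --- general position ensures that every collection of at most $d+1$ of the $\bm{x}_i$ is linearly independent. Since $|\mathcal{I}_k^+\cup\mathcal{I}_k^-|\leq d$, the active augmented vectors are linearly independent, which is the surjectivity condition for unit $k$.

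The main obstacle is conceptual rather than computational: correctly identifying what ``surjectivity'' means in this composite/product setting. The subtlety is that the subdifferential formula \eqref{eq:subdiff-2relu} is point-specific, so surjectivity must be read as a condition on the \emph{locally active} pieces (the columns of $\bm{D}_k$ being $\{\bm{x}_i:i\in\mathcal{I}_k^+\cup\mathcal{I}_k^-\}$), not on all data vectors with $u_kp_i\neq0$; were one to demand full column rank of the unrestricted data matrix, general position could not imply it once $n>d+1$. A secondary point requiring care is the degenerate configuration $\bm{w}_k=\bm{0}$, in which every datum is active and \Cref{def:gp} no longer bounds the active set; this non-generic case must be excluded, and I would flag it explicitly. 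The remaining steps --- the parallel-subspace computation and the translation of general position into linear independence of augmented vectors --- are routine once these interpretive issues are settled.
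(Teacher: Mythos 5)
Your proposal is correct and follows essentially the same route as the paper, whose proof is a one-line citation: the implication ``general position $\Rightarrow$ surjectivity'' is exactly \cite[Lemma 1]{yun2018efficiently} (which you reprove via the bound $|\mathcal{I}_k^+\cup\mathcal{I}_k^-|\leq d$ and the equivalence between affine dependence of the $\widetilde{\bm{x}}_i$ and linear dependence of the augmented $\bm{x}_i$), while ``surjectivity $\Rightarrow$ \eqref{eq:2relu-kq}'' is the specialization of \Cref{prop:all-are-transveral}/\Cref{prop:zonotop} to the per-unit zonotopal subdifferentials, which your direct linear-independence argument reproduces. Your explicit caveats --- that surjectivity must be read on the \emph{locally active} columns (otherwise general position cannot imply it for $n>d+1$) and that the degenerate configuration $\bm{w}_k=\bm{0}$ must be excluded --- are both correct and are implicit in the cited Lemma 1 of \citet{yun2018efficiently}, so flagging them is a genuine improvement in precision rather than a deviation.
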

\begin{proof}
	The proof is by \cite[Lemma 1]{yun2018efficiently} and \Cref{prop:zonotop}.
\end{proof}

In the following, we present two examples to demonstrate that the one-sided implications in \Cref{prop:relation-kq} are indeed strict.
\begin{Example}[{\nref{eq:2relu-kq}} $\notimplies$ surjectivity]
Let the function $f:\mathbb{R}^4\rightarrow \mathbb{R}$ be given as
\[
f(x,y,z,b)\coloneqq\max\{2y+b,0\}+\max\{2x+2z+b,0\}+\max\{x+y+z+b,0\}-\max\{x-z+b,0\}.
\]
Consider $(x,y,z,b)=\bm{0}$. One can verify that \nref{eq:2relu-kq} is satisfied but not surjectivity in \Cref{sec:known-cond}.
\end{Example}

\begin{Example}[Surjectivity $\notimplies$ ``general position'']
Let the function $f:\mathbb{R}^3\rightarrow \mathbb{R}$ be given as
\[
f(x,y,b)\coloneqq\max\{-2y+b,0\}+\max\{-y+b,0\}+\max\{x+b,0\}-\max\{y+b,0\}.
\]
Consider $(x,y,b) = (1,1,-1)$. Surjectivity is satisfied, but the data is not in general position.
\end{Example}

\begin{Remark}
In practice, if the features of data include a discrete-valued component, e.g., $\{-1,1\}$, then the points $\{\widetilde{\bm{x}}_i\}_{i=1}^n$ are rarely in general position because at least half of them must lie in the same affine hyperplane (e.g.,  $\{\bm{y}:y_1 = 1\}$ or $\{\bm{y}:y_1 = -1\}$). The qualification deduced in \nref{eq:2relu-kq} establishes a weaker condition---indeed, the weakest one---compared to the concept of ``general position'' for ensuring the correctness of any testing strategy relying on the validity of \eqref{eq:subdiff-2relu}.
\end{Remark}

\subsection{Penalized Deep  Neural Networks}
In this subsection, we show that the analysis used in \Cref{sec:2relu} can be applied to nonsmooth deep neural networks. However, the multi-composite form of the vanilla deep networks does not exhibit a piecewise affine structure with respect to the parameters. Therefore, we shift our focus to a penalized form of deep networks using a technique known as the ``pull-out'' method according to \cite[Section 9.3]{cui2021modern}. This penalization formulation has been widely adopted in the machine learning and optimization community to design provably convergent algorithms for deep neural networks; refer to \cite{zeng2019global, zhang2017convergent, taylor2016training, liu2023inexact} and references therein for more details.

Formally, the training of a deep neural network can be expressed as the following constrained nonconvex nonsmooth optimization problem:
\begin{alignat*}{2}
	\min_{\substack{(\bm{w}_l)_{l\in[L]}, (\bm{v}_{i,0})_{i\in[n]}\\ (\bm{v}_{i,l})_{i\in[n],l\in[L]}}} &\ \sum_{i=1}^n \ell_i (\bm{v}_{i,L}) + r\big((\bm{w}_l)_l\big)
 \\
	\text{s.t.}\qquad\ \ &\ \bm{v}_{i,l} = \psi^{l}(\bm{w}_l, \bm{v}_{i-1}), \qquad && \forall i\in[n], l\in[L] \\ 
	&\ \bm{v}_{i,0} = \bm{x}_i, && \forall i\in[n],
\end{alignat*}
where the loss function $\ell_i$ for the $i$-th data, the activation function $\psi^l$ of the $l$-th layer, and the regularization term $r$ are both assumed to be either continuously differentiable or of DC-type with $m$-\MC{} components, where $m$ is fixed.
Penalization proves to be a highly effective method for incorporating complex constraints into the objective function by introducing a penalty function and a scalar penalty parameter.
Existing works in the literature on neural network optimization have explored the following two penalized formulations.%
\paragraph{Quadratically Penalized Formulation.}  A classical method involves penalization by a quadratic function, widely employed in designing convergent algorithms for optimizing deep neural networks. For further details, see \cite[Equation (2.3)]{zeng2019global}, \cite[Equation (4)]{zhang2017convergent}, and \cite[Equation (4)]{taylor2016training}. Specifically, the penalized objective function is expressed as:
\[
 f_{\sf{Q}}\big((\bm{w}_l)_{l}, (\bm{v}_{i,l})_{i,l}\big):=\sum_{i=1}^n \left(\ell_i\left(\bm{v}_{i,L} \right) + \rho\sum_{l=1}^L \left\| \bm{v}_{i,l} - \psi^{l}(\bm{w}_l, \bm{v}_{i-1}) \right\|_2^2 \right) + r\big((\bm{w}_l)_l\big),
 \]
 where $\rho \geq 0$ is the penalty parameter.

 \paragraph{Exactly Penalized Formulation.} The exact penalty method produces a feasible solution to the constrained problem with a finite penalty parameter by employing a nondifferentiable penalty function. This approach, featured in \cite[Section 9]{cui2021modern}, has gained recent popularity in computing sharper stationary points for deep neural networks; see \cite[Equation (2.5)]{cui2020multicomposite} and \cite[Problem (PP)]{liu2023inexact}. %
 In particular, the $\ell_1$-penalized formulation in \cite[Equation (2.5)]{cui2020multicomposite} can be written as:
 \[
 f_{\sf{E}}\big((\bm{w}_l)_{l}, (\bm{v}_{i,l})_{i,l}\big):=\sum_{i=1}^n \left(\ell_i\left(\bm{v}_{i,L} \right) + \rho\sum_{l=1}^L \left\| \bm{v}_{i,l} - \psi^{l}(\bm{w}_l, \bm{v}_{i-1}) \right\|_1 \right) + r\big((\bm{w}_l)_l\big) 
 \]
 with $\rho \geq 0$ as the exact penalty parameter.
 
 One can readily verify that by using the procedures and techniques presented in \Cref{sec:2relu}, the results in \Cref{sec:cr,sec:robust} can be applied to the partly linearized versions of $f_{\sf{Q}}$ and $f_{\sf{E}}$ accordingly. The details are omitted for brevity.

\section{Closing Remarks}\label{sec:concl}

We explore the computational complexity, regularity conditions, and the development of robust algorithms for testing approximate stationary points in the context of piecewise affine functions. Our findings reveal the computational hardness in testing various first-order approximate stationarity concepts. We establish the first necessary and sufficient condition for the validity of an equality-type (Clarke) subdifferential sum rule, which applies to a specific representation of arbitrary piecewise affine functions. Additionally, we introduce the first oracle-polynomial-time algorithm designed to detect near-approximate stationary points for piecewise affine functions, which offers an efficient algorithm-independent stopping rule. These results are complemented with applications to various structured piecewise smooth functions. 
As for open research directions, for piecewise affine functions with a fixed input dimension, it is unclear whether the hardness results still hold. It would be interesting to study the necessary and sufficient conditions for the validity of calculus rules for a broader class of nonsmooth functions. %
Additionally, developing an algorithm-independent, potentially non-constructive, and robust testing approach for more general piecewise smooth functions would be an intriguing direction for further exploration.

\section*{Acknowledgement}
LT would like to thank Jiewen Guan (CUHK) for his careful reading of a previous version of the manuscript.

\appendix
\addtocontents{toc}{\protect\setcounter{tocdepth}{1}}
\crefalias{section}{appendix}

\section{Complexity Issues of $n$-MC{} Functions}%
\label{sec:nMC}
In this section, we will discuss the complexity issues related to the \MC{} representation for convex  \PA{} functions.
In particular, we consider the following convex function:
\[
h(\bm{w}):=\sum_{1\leq j_n \leq J_n} \max_{1\leq i_{n} \leq I_n} \cdots \sum_{1\leq j_1 \leq J_1} \max_{1\leq i_1 \leq I_1} \Big(\bm{w}^\top \bm{x}_{i_1,j_1,\dots,i_n,j_n} + a_{i_1,j_1,\dots,i_n,j_n}\Big).
\]
\paragraph{Encoding Length.}
The required input data for representing the $n$-\MC{} function $h:\mathbb{R}^d\to \mathbb{R}$ is formulated as follows:
\begin{itemize}
	\item The level of composition: $n \in \mathbb{N}$.
	\item The width parameters: $\{(I_p, J_p) \in \mathbb{N} \times \mathbb{N}: 1\leq p \leq n\}$.
	\item The dimension of input: $d \in \mathbb{N}$.
	\item The data: $\{(\bm{x}_{i_1,j_1,\dots,i_n,j_n}, a_{i_1,j_1,\dots,i_n,j_n}) \in \mathbb{R}^d \times \mathbb{R}: (i_1,j_1,\dots,i_n,j_n) \in \prod_{p=1}^n ([I_p]\times [J_p])\}$.
\end{itemize}
Following the notation in \cite[Section 1.3]{grotschel2012geometric}, we use $\langle a \rangle, \langle \bm{a} \rangle$, and $\langle \bm{A} \rangle$ to denote the encoding length (or bit/input size) of scalar $a$, vector $\bm{a}$, and matrix $\bm{A}$, respectively.
Therefore, the encoding length of the function $h$, denoted by $\langle h \rangle \in \mathbb{N}$, is defined as
\[
\begin{aligned}
\langle h \rangle:=\sz{n} + \sz{d} &+ \sum_{p=1}^n \left(\sz{I_p} + \sz{J_p}\right) \\
&+ 
\sum_{1 \leq j_n \leq J_n}
\sum_{1 \leq i_n \leq I_n}
\cdots
\sum_{1 \leq j_1 \leq J_1}
\sum_{1 \leq i_1 \leq I_1}
\Big(\sz{\bm{x}_{i_1,j_1,\dots,i_n,j_n}} + \sz{a_{i_1,j_1,\dots,i_n,j_n}}\Big).
\end{aligned}
\]

\paragraph{Value Functions.}
To analyze the  multi-composite structure of the \PA{} function $h$ more succinctly, we introduce several auxiliary value functions. 
For any index $(i_1,j_1,\dots,i_n,j_n) \in \prod_{k=1}^n [I_k]\times [J_k]$, let  $v_{i_1,j_1,\dots,i_n,j_n}:\mathbb{R}^d \to \mathbb{R}$ denote the affine function
\[
v_{i_1,j_1,\dots,i_n,j_n}(\bm{w}) :=\bm{w}^\top \bm{x}_{i_1,j_1,\dots,i_n,j_n} + a_{i_1,j_1,\dots,i_n,j_n}.
\]
Then, for any $k \in [n-1]$, we define convex value functions $v_{j_k,\dots,i_n,j_n}$ and  $v_{i_{k+1},j_{k+1},\dots,i_n,j_n}$ as

\[
v_{j_k,\dots,i_n,j_n}(\bm{w}) :=\max_{1\leq i_k \leq I_k}v_{i_k,j_k,\dots,i_n,j_n} (\bm{w}), \qquad 
v_{i_{k+1}, j_{k+1}\dots,i_n,j_n}(\bm{w}) :=\sum_{1\leq j_k \leq J_k}v_{j_k, i_{k+1},\dots,i_n,j_n} (\bm{w}).
\]
Finally, for any $j_n \in [J_n]$, we define value functions $v_{j_n}$ as
\[
v_{j_n}(\bm{w}) := \max_{1\leq i_n \leq I_n} v_{i_n, j_n} (\bm{w}).
\]
Thus, we can write the \PA{} function $h$ as a simple summation:
\[
h(\bm{w})= \sum_{1\leq j_n \leq J_n} v_{j_n}(\bm{w}).
\]

\paragraph{Convex Subdifferential.}
We now present a convenient representation of the convex subdifferential of the function $h$.
The value functions will be used to describe the active set of elemental functions of $h$.
Note that the value functions $v_{j_k,\dots,i_n,j_n}(\bm{w})$ and  $v_{i_k,j_k,\dots,i_n,j_n}(\bm{w})$ can be evaluated in polynomial time with respect to the input size $\sz{h}+\sz{\bm{w}}$. The total number of value functions is upper bounded by
\[
\sum_{k=1}^n \prod_{p=k}^n I_p J_p \leq 
n\prod_{p=1}^n I_p J_p,
\]
which is polynomial in $\sz{h}$.
For any integer $k \in [n]$ and index $(j_k,\dots,i_n,j_n) \in [J_k]\times \prod_{p=k+1}^n ([I_p]\times [J_p])$, we define the active set $\mathcal{A}_{j_k,\dots,i_n,j_n}(\bm{w})$ as
\[
\mathcal{A}_{j_k,\dots,i_n,j_n}(\bm{w}):=\left\{ i \in [I_k]: v_{j_k,\dots,i_n,j_n}(\bm{w}) = v_{i_k,j_k,\dots,i_n,j_n}(\bm{w})\right\},
\]
which is polynomial time computable.
From \cite[Theorem D.4.1.1, Corollary D.4.3.2]{hiriart2004fundamentals}, we have
\begin{equation}\label{eq:cvx-sub-nMC-geometric}
\begin{aligned}
\partial h(\bm{w}) &=  \sum_{1\leq j_n \leq J_n} \conv \bigcup_{i_{n} \in \mathcal{A}_{j_n}(\bm{w})} \partial v_{i_n,j_n}(\bm{w})\\
&=
\sum_{1\leq j_n \leq J_n} \conv \bigcup_{i_{n} \in \mathcal{A}_{j_n}(\bm{w})} \cdots \sum_{1\leq j_1 \leq J_1} \conv \bigcup_{i_1 \in \mathcal{A}_{j_1,\dots,i_n,j_n}(\bm{w})} \{ \bm{x}_{i_1,j_1,\dots,i_n,j_n}\}.
\end{aligned}
\end{equation}
As the main focus of this paper is on identifying polynomial-time computability, the following reformulation, while somewhat rough and potentially redundant, is sufficient for our purpose.
Using standard LP reformulation techniques, any vector $\bm{g} \in \mathbb{R}^d$ in the polytope $\partial h(\bm{w})$ can be written as
\begin{equation}\label{eq:cvx-sub-nMC-g}
\bm{g} := 
\sum_{1\leq j_n \leq J_n} \quad \sum_{i_{n} \in \mathcal{A}_{j_n}(\bm{w})} \cdots \sum_{1\leq j_1 \leq J_1} \quad \sum_{i_1 \in \mathcal{A}_{j_1,\dots,i_n,j_n}(\bm{w})} t_{i_1,j_1,\dots,i_n,j_n}\cdot \bm{x}_{i_1,j_1,\dots,i_n,j_n},
\end{equation}
where the coefficients $\{t_{i_1,j_1,\dots,i_n,j_n} \}_{i_1,j_1,\dots,i_n,j_n}$ are linearly constrained as follows.
For any integer $k \in [n-1]$ and index $(j_k,\dots,i_n,j_n) \in [J_k]\times \prod_{p=k+1}^n ([I_p]\times [J_p])$, we require
\begin{equation}\label{eq:cvx-sub-nMC-t-general}
\sum_{i_k \in \mathcal{A}_{j_k,\dots,i_n,j_n}(\bm{w})} t_{i_k,j_k,\dots,i_n,j_n} = t_{i_{k+1},j_{k+1},\dots,i_n,j_n}, \quad\text{and}\quad  t_{i_k,j_k,\dots,i_n,j_n} \geq 0 \text{ for any } i_k \in [I_k].
\end{equation}
Meanwhile, for any $j_n \in [J_n]$, we require
\begin{equation}\label{eq:cvx-sub-nMC-t-final}
\sum_{i_n \in \mathcal{A}_{j_n}(\bm{w})} t_{i_n,j_n} = 1, \quad\text{and}\quad t_{i_n,j_n} \geq 0, \forall i_n \in [I_n].
\end{equation}
We can now express the subdifferential of $h$ at point $\bm{w}$ as 
\[
\partial h(\bm{w})=\left\{\bm{g}: \text{linear (in)equalities } \text{\labelcref{eq:cvx-sub-nMC-g,eq:cvx-sub-nMC-t-general,eq:cvx-sub-nMC-t-final} hold} \right\}.
\]
In the sequel, we show that the polytope $\partial h(\bm{w})$ is representable by a polynomial-size LP.
Let $\bm{a}\in\mathbb{R}^{\prod_{p=1}^n I_pJ_p}$ be the vectorization of coefficients $\{t_{i_1,j_1,\dots,i_n,j_n}, (i_1,j_1,\dots,i_n,j_n) \in \prod_{p=1}^n ([I_p]\times [J_p])\}$. Arrange the input data $\{\bm{x}_{i_1, j_1, \dots, i_n,j_n}\}_{i_1, j_1, \dots, i_n,j_n}$ as a matrix $\bm{X} \in \mathbb{R}^{d \times \left(\prod_{p=1}^n I_pJ_p\right)}$ as 
\[
\bm{X} := 
\begin{bNiceMatrix}
\vert & \cdots & \vert \\
\bm{x}_{1,1, \cdots, 1,1}' & \cdots & \bm{x}_{I_1,J_1, \cdots, I_n,J_n}' \\
\vert & \cdots & \vert \\
\end{bNiceMatrix},
\]
where $\bm{x}_{i_1, j_1, \dots, i_n,j_n}' = \bm{x}_{i_1, j_1, \dots, i_n,j_n}$ if, for all $k \in [n]$, it holds that $i_k \in \mathcal{A}_{j_k,\dots,i_n,j_n}(\bm{w})$; otherwise,  $\bm{x}_{i_1, j_1, \dots, i_n,j_n}'$ is set to $\bm{0}$.
Thus, we can rewrite \labelcref{eq:cvx-sub-nMC-g} in a compact form as $\bm{g} = \bm{X}\bm{a}.$
Let $\bm{b} \in \mathbb{R}^{\sum_{k=2}^{n-1}\prod_{p=k}^n I_pJ_p}$ 
be the vectorization of coefficients $t_{i_k,j_k,\dots,i_n,j_n}$ for all $2 \leq k \leq n-1$ and $(i_k,j_k,\dots,i_n,j_n) \in \prod_{p=k}^n ([I_p]\times [J_p])$. 
Let $\bm{c} \in \mathbb{R}^{I_nJ_n}$ be the vectorization of coefficients $t_{i_n, j_n}$ for all $(i_n, j_n) \in [I_n]\times [J_n]$.
Then, we can define matrices 
\[
\begin{aligned}	
\bm{B}_1 &\in \{0,1\}^{\left(\sum_{k=2}^{n}\prod_{p=k}^n I_pJ_p\right)\times\left(\prod_{p=1}^n I_pJ_p\right)}, \\
 \quad \bm{B}_2 &\in \{-1,0,1\}^{\left(\sum_{k=2}^{n}\prod_{p=k}^n I_pJ_p\right)\times\left(\sum_{k=2}^{n-1}\prod_{p=k}^n I_pJ_p\right)}\text{, and} \\
  \quad \bm{C}_1 &\in \{-1,0\}^{\left(\sum_{k=2}^{n}\prod_{p=k}^n I_pJ_p\right)\times I_nJ_n}
\end{aligned}
\]
such that \labelcref{eq:cvx-sub-nMC-t-general} can be rewritten as
\[
\bm{B}_1 \bm{a} + \bm{B}_2 \bm{b} + \bm{C}_1\bm{c} = \bm{0}, \qquad \bm{a}\geq \bm{0}, \bm{b} \geq \bm{0}.
\]
Similarly, we define a matrix $\bm{C}_2 \in \{0, 1\}^{J_n\times I_nJ_n}$ such that \labelcref{eq:cvx-sub-nMC-t-final} can be written as
\[
\bm{C}_2 \bm{c} = \bm{1}_{J_n}, \qquad \bm{c} \geq \bm{0}.
\]
Then, we consider the following polyhedron $H$ in the lifted space:
\begin{equation}\label{eq:nMC-H}
H: = \left\{ 
\begin{bNiceMatrix}[margin]
\bm{g} \\ \hline
\bm{a} \\
\bm{b} \\
\bm{c} 
\end{bNiceMatrix}
:
\begin{bNiceArray}{c|ccc}
-\bm{I}_d & \bm{X} & \bm{0} & \bm{0} \\
\bm{0} & \bm{B}_1 & \bm{B}_2 & \bm{C}_1 \\
\bm{0} & \bm{0} & \bm{0} & \bm{C}_2 \\
\end{bNiceArray}
\begin{bNiceMatrix}[margin]
\bm{g} \\ \hline
\bm{a} \\
\bm{b} \\
\bm{c} 
\end{bNiceMatrix}
=
\begin{bNiceMatrix}
\bm{0} \\
\bm{0} \\
\bm{1}_{J_n} 
\end{bNiceMatrix}, 
\begin{bNiceMatrix}
\bm{a} \\
\bm{b} \\
\bm{c} 
\end{bNiceMatrix}
\geq \bm{0}
\right\}.
\end{equation}
To ease the notation, let us define
\[
\bm{t}:=\begin{bNiceMatrix}
\bm{a} \\
\bm{b} \\
\bm{c} 
\end{bNiceMatrix},\quad
\bm{A}_1:=
\begin{bNiceMatrix}
-\bm{I}_d  \\
\bm{0}  \\
\bm{0}  \\
\end{bNiceMatrix},
\quad
\bm{A}_2:=
\begin{bNiceMatrix}
 \bm{X} & \bm{0} & \bm{0} \\
 \bm{B}_1 & \bm{B}_2 & \bm{C}_1 \\
 \bm{0} & \bm{0} & \bm{C}_2 \\
\end{bNiceMatrix},
\quad
\bm{r}:=
\begin{bNiceMatrix}
\bm{0} \\
\bm{0} \\
\bm{1}_{J_n} 
\end{bNiceMatrix},
\]
where the dimension of matrix $
\begin{bNiceArray}{c|c}
\bm{A}_1 & \bm{A}_2
\end{bNiceArray}$ is $(d+J_n+\sum_{k=2}^{n}\prod_{p=k}^n I_pJ_p)\times (d+\sum_{k=1}^{n}\prod_{p=k}^n I_pJ_p)$. The encoding length of the matrix  $
\begin{bNiceArray}{c|c}
\bm{A}_1 & \bm{A}_2
\end{bNiceArray}$ is upper bounded by
\[
\sz{\bm{X}} + 
2\left(d+n\prod_{p=1}^n I_pJ_p\right)^2,
\]
which, although it is a fairly rough upper bound, is still polynomial with respect to the input size of the function $h$. We can rewrite the polynomial-size polyhedron $H$ as
$
\left\{ 
(\bm{g}, \bm{t})
:
\bm{A}_1 \bm{g} +  \bm{A}_2 \bm{t} 
= \bm{r},
\bm{t}
\geq \bm{0}
\right\}.
$
Consequently, the polytope $\partial h(\bm{w})$ can be represented as the projection of $H$ by
\begin{equation}\label{eq:nMC-standard-LP}
\partial h(\bm{w}) = \{\bm{g}: \exists (\bm{g}, \bm{t}) \in H\}=
\left\{ 
\bm{g}
:
\bm{A}_1 \bm{g} +  \bm{A}_2 \bm{t} 
= \bm{r},
\bm{t}
\geq \bm{0}
\right\}.
\end{equation}
It is also evident that $H$ can be written as a polynomial-size $\mathcal{H}$-polyhedron in the standard form.

\paragraph{Decision Problems.}
The main goal of this section is to prove the following claims:
\begin{Proposition}\label{prop:nMC-polynomiality}
Let two convex \PA{} functions $h,g:\mathbb{R}^d\to \mathbb{R}$ with rational data be given in $n$-\MC{} form. For a given point $\bm{w}\in\mathbb{Q}^d$, the following problems are in the class $\cP$.
\begin{enumerate}[label=\textnormal{(\alph*)}]
	\item Given $\bm{g} \in \mathbb{Q}^d$, determine whether $\bm{g} \in \partial h(\bm{w})$.%
	\item Given $\epsilon \in [0, \infty)\cap\mathbb{Q}$, determine whether $\bm{0} \in \partial h(\bm{w}) - \partial g(\bm{w}) + \epsilon\mathbb{B}$.%
	\item Determine whether $\parr(\partial h(\bm{w})) \cap \parr (\partial g(\bm{w})) = \{\bm{0}\}$.%
\end{enumerate}	
\end{Proposition}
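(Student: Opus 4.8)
The unifying strategy is to reduce each of the three decision problems to a polynomial-size linear or convex-quadratic program and then invoke the classical polynomial-time solvability of such programs. The common starting point is the explicit $\mathcal{H}$-representation \eqref{eq:nMC-standard-LP}: given $\bm{w}\in\mathbb{Q}^d$, one first computes, in polynomial time, all active sets $\mathcal{A}_{j_k,\dots,i_n,j_n}(\bm{w})$ (each obtained by comparing polynomially many value functions, all evaluable in polynomial time), and assembles the matrices $\bm{A}_1,\bm{A}_2$ and the vector $\bm{r}$ defining the lifted polyhedron $H$. As established in \Cref{sec:nMC}, the encoding length of $H$ is polynomial in $\langle h\rangle+\langle\bm{w}\rangle$, and $\partial h(\bm{w})$ is exactly the image of $H$ under the linear projection onto the $\bm{g}$-coordinates; the same construction applies verbatim to $g$. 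For Part (a), fixing the given rational $\bm{g}$ in \eqref{eq:nMC-standard-LP} turns the membership question $\bm{g}\in\partial h(\bm{w})$ into the feasibility of the system $\bm{A}_2\bm{t}=\bm{r}-\bm{A}_1\bm{g},\ \bm{t}\geq\bm{0}$ in the single variable block $\bm{t}$, a polynomial-size LP feasibility instance decidable in polynomial time.

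For Part (b), I would first observe that $\bm{0}\in\partial h(\bm{w})-\partial g(\bm{w})+\epsilon\mathbb{B}$ holds if and only if $\dist(\partial h(\bm{w}),\partial g(\bm{w}))\leq\epsilon$, that is, the two polytopes contain points within Euclidean distance $\epsilon$. Using the lifted representations of both subdifferentials, this distance is the square root of the optimal value of the convex quadratic program minimizing $\|\bm{g}_h-\bm{g}_g\|_2^2$ over variables $(\bm{g}_h,\bm{t}_h,\bm{g}_g,\bm{t}_g)$ subject to the two copies of the constraints in \eqref{eq:nMC-standard-LP}. Since this program has polynomial size and rational data, deciding whether its optimal value is at most $\epsilon^2$ can be done in polynomial time by \cite{kozlov1980polynomial}; the case $\epsilon=0$ degenerates further to LP feasibility of $\partial h(\bm{w})\cap\partial g(\bm{w})$.

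For Part (c), the key is that taking parallel subspaces commutes with linear maps, so that $\parr(\partial h(\bm{w}))$ is the projection onto the $\bm{g}$-coordinates of $\parr(H)$; this lets me bypass the (possibly exponential) vertex set of $\partial h(\bm{w})$. To compute $\parr(H)$ I would determine the affine hull of $H$: for each of the polynomially many coordinates $t_i$, solve the bounded LP $\max\{t_i:(\bm{g},\bm{t})\in H\}$ to detect whether $t_i$ is an implicit equality forced to $0$ on all of $H$, where boundedness holds because the coefficients $\bm{t}$ lie in $[0,1]$ by \eqref{eq:cvx-sub-nMC-t-general} and \eqref{eq:cvx-sub-nMC-t-final}. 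Collecting the forced-zero index set $Z$, the subspace $\parr(H)=\{(\bm{g},\bm{t}):\bm{A}_1\bm{g}+\bm{A}_2\bm{t}=\bm{0},\ t_i=0\ \forall i\in Z\}$ is the null space of a polynomial-size rational matrix, a basis of which, and hence of its projection $\parr(\partial h(\bm{w}))$, is obtainable by Gaussian elimination in polynomial time. Computing bases $U$ and $V$ of $\parr(\partial h(\bm{w}))$ and $\parr(\partial g(\bm{w}))$ in this way, the desired condition $\parr(\partial h(\bm{w}))\cap\parr(\partial g(\bm{w}))=\{\bm{0}\}$ is equivalent to $\dim U+\dim V=\dim(U+V)$, a comparison of ranks of rational matrices decidable in polynomial time.

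The main obstacle is precisely the parallel-subspace computation in Part (c): extracting the affine hull of a projected, exponentially-faceted polytope without vertex enumeration. The resolution rests on the two facts just used, namely that $\parr$ passes through linear projections and that implicit equalities of a polynomial-size $\mathcal{H}$-polyhedron are detectable by polynomially many LPs, together with the standard but necessary bookkeeping that all intermediate quantities (LP optima, null-space bases from Gaussian elimination) retain polynomially bounded encoding lengths.
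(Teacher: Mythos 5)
Your proposal is correct, and for parts (a) and (b) it is exactly what the paper intends: both reduce to the polynomial-size lifted LP representation \eqref{eq:nMC-standard-LP} of $\partial h(\bm{w})$ and $\partial g(\bm{w})$, with (a) an LP feasibility problem in $\bm{t}$ and (b) a convex QP minimizing $\|\bm{g}_h-\bm{g}_g\|^2$ over the product polyhedron, solvable exactly by \cite{kozlov1980polynomial}. For part (c) you use the same key lemma as the paper --- the implicit equalities (equality set) of the polynomial-size lifted polyhedron $H$ in \eqref{eq:nMC-H} are detectable by polynomially many LPs, and $\parr$ commutes with the linear projection onto the $\bm{g}$-coordinates --- but you finish differently: you extract explicit rational bases of $\parr(\partial h(\bm{w}))$ and $\parr(\partial g(\bm{w}))$ by Gaussian elimination and decide trivial intersection via the rank identity $\dim U+\dim V=\dim(U+V)$, whereas the paper keeps both parallel subspaces as LP-representable sets and tests, for each $i\in[d]$, emptiness of $\{\bm{g}:\bm{g}^\top\bm{e}_i\geq 1\}\cap\parr(\partial h(\bm{w}))\cap\parr(\partial g(\bm{w}))$ by $d$ LP feasibility problems. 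The two finishes are interchangeable; yours avoids the extra $d$ feasibility LPs at the cost of tracking encoding lengths through Gaussian elimination, which is standard. One small inaccuracy: your claim that the auxiliary LPs $\max\{t_i:(\bm{g},\bm{t})\in H\}$ are bounded because $\bm{t}\in[0,1]^{\cdot}$ is not quite right, since the coordinates of $\bm{t}$ indexed by inactive pieces are constrained only by nonnegativity in \eqref{eq:cvx-sub-nMC-t-general} and may be unbounded; this is harmless, because unboundedness is detectable in polynomial time and correctly certifies that the corresponding constraint is not an implicit equality (and such directions project to $\bm{0}$ in the $\bm{g}$-coordinates anyway), but the justification as written is off.
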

\begin{proof}
 The claim in (a) and (b) directly follow from the LP representation in \eqref{eq:nMC-standard-LP} and the polynomiality of solving LPs \cite{vavasis1996primal} and QPs \cite{kozlov1980polynomial}.
 In the sequel, we will only prove (c).
 Note that, with a slight abuse of notation, $H$ in \eqref{eq:nMC-H} can be represented in $\mathcal{H}$-form as $\{(\bm{g}, \bm{t}): \bm{M}^\top \bm{g} + \bm{N}^\top \bm{t} \leq \bm{c}\},$ where matrices $\bm{M} \in \mathbb{Q}^{d \times m}, \bm{N} \in \mathbb{Q}^{d\times t}$, and a vector $\bm{c} \in \mathbb{Q}^m$ all have polynomial size. 
 Using an idea similar to \cite[Section 5.2]{Jones:169769} (see also \cite[Proposition 4.4]{paffenholz2010polyhedral}), we have
 \[
 \parr(H)=\left\{(\bm{g}, \bm{t}): \bm{m}_i^\top \bm{g} + \bm{n}_i^\top \bm{t} \leq \bm{0}, i \in \mathcal{E}\right\},
 \]
 where $\bm{m}_i, \bm{n}_i$ are columns of $\bm{M},\bm{N}$ and $\mathcal{E}:=\left\{i \in [m]: \bm{m}_i^\top \bm{g}+ \bm{n}_i^\top \bm{t} = c_i \text{ for all } (\bm{g}, \bm{t}) \in H \right\}$ is the so-called \emph{equality set} of $H$ (see \cite[Definition 4.1]{paffenholz2010polyhedral}). The equality set $\mathcal{E} \subseteq [m]$ is computable in polynomial time by solving at most $m$ LPs (see \cite[Section 5.2]{Jones:169769} and \cite[Theorem 3.2]{edmonds1982brick}).
 From the non-emptiness of $\partial h(\bm{0})$ \cite[Proposition 2.1.2(a)]{clarke1990optimization}, we have $H\neq \emptyset$.
 Let $(\bm{g}_0, \bm{t}_0) \in H$, so that $\bm{g}_0 \in \partial h(\bm{0})$. We compute
\begin{align*}
\left\{\bm{g}: \bm{m}_i^\top \bm{g} + \bm{n}_i^\top \bm{t} \leq \bm{0}, i \in \mathcal{E} \right\} &=\{\bm{g}: (\bm{g}, \bm{t}) \in \parr(H)\} \\
&= \{\bm{g}: (\bm{g}, \bm{t}) \in \aff(H)\} - \bm{g}_0 \\
& =\aff(\partial h(\bm{0})) - \bm{g}_0 \tag{\cite[Lemma 3.21]{Jones:169769}} \\
&=\parr(\partial h(\bm{0})),
\end{align*}
so that the set $\parr(\partial h(\bm{0}))$ is efficiently LP representable. For all $i \in [d]$, consider the following set
\[
P_i:=\left\{\bm{g}: \bm{g}^\top \bm{e}_i \geq 1\right\}\cap \parr(\partial h(\bm{w})) \cap \parr (\partial g(\bm{w})).
\] 
It is evident that $\parr(\partial h(\bm{w})) \cap \parr (\partial g(\bm{w})) = \{\bm{0}\}$ if and only if $P_i = \emptyset$ for all $i \in [d]$, which can be verified or refuted by solving at most $d$ linear feasibility problems.
Consequently, (c) can be determined in polynomial time.
\end{proof}

\section{Missing Proofs from \Cref{sec:hardness}}
\subsection{Proof of \Cref{coro:dc-critical} (DC-Criticality)}\label{sec:prf-dc-critical}
\begin{proof}[Proof of \Cref{coro:dc-critical}]
	Note that in the definition of \Cref{prop:DCC}, for the convex \PA{} functions $h_{\sf C}$ and $g_{\sf C}$, we have $\bm{0} \in \partial h_{\sf C}(\bm{0})$ and $\bm{0} \in \partial g_{\sf C}(\bm{0})$. Thus, it always holds that $\bm{0} \in \partial h_{\sf C}(\bm{0}) - \partial g_{\sf C}(\bm{0})$, i.e., the point $\bm{0}$ is always a \DC{}-critical point of $h_{\sf C}-g_{\sf C}$. However, by \Cref{lem:dc-np-hard-C}, determining whether $\bm{0} \in \partial (h_{\sf C} - g_{\sf C})(\bm{0})$ is strongly $\cNP$-hard. As the convex functions $h_{\sf C}$ and $g_{\sf C}$ are both $2$-\MC{} functions, from \Cref{thm:mem-dc}, we conclude that determining whether $\bm{0} \in \partial (h_{\sf C} - g_{\sf C})(\bm{0})$ is $\cNP$-hard. 
\end{proof}

\subsection{Proof of \Cref{coro:abs-norm-hard} (Abs-Norm)}\label{sec:prf-abs}

We briefly review the abs-normal representation of a subclass of piecewise differentiable functions. For more details, we refer the reader to \cite{griewank2013stable,griewank2016first}.

The abs-normal representation is a piecewise linearization scheme concerning a certain subclass of piecewise differentiable functions in the sense of \citet{scholtes2012introduction}. In this subclass, functions are defined as compositions of smooth functions and the absolute value function. By identities $\max\{a,b\}=(a+b)/2+|a-b|/2, \min\{a,b\}=(a+b)/2-|a-b|/2,$ and $\max\{x,0\}=x/2+|x|/2$, composition with these nonsmooth elemental functions can also be represented in the abs-normal form.
Let $\varphi:\mathbb{R}^d\rightarrow\mathbb{R}$ be a function in such subclass. By numbering all inputs to the absolute value functions in the evaluation order as ``switching variables'' $z_i$ for $i\in [s]$, the function $\bm{x}\mapsto y=\varphi(\bm{x})$ can be written in the following abs-normal form:
\[
\bm{z} = F(\bm{x}, \bm{p}),\qquad y = f(\bm{x}, \bm{p}),
\]
where $\bm{x}\in\mathbb{R}^d, \bm{p}\in\mathbb{R}_+^s$, the smooth mapping $F:\mathbb{R}^d\times \mathbb{R}^s_+ \rightarrow \mathbb{R}^s$, and the smooth function $f:\mathbb{R}^d\times \mathbb{R}^s \rightarrow \mathbb{R}$. As the numbering of $\{z_i\}_{i=1}^s$ is in the evaluation order, $z_i$ is a function of $z_j$ only if $j < i$. In sum, we have
$
y = \varphi(\bm{x}) = f(\bm{x}, |\bm{z}(\bm{x})|),
$
where $\bm{z}(\bm{x})$ is a successive evaluation of $\{z_i\}_{i=1}^s$ with given $\bm{x}$. To see such an evaluation of $\bm{z}(\bm{x})$ is well-defined, note that $z_1 = F_1(\bm{x})$ and for any $1 < i \leq s$,
$ 
z_i = F_i(\bm{x}, |z_1|, \dots, |z_{i-1}|).
$
We remark that, similar to the \DC{} decomposition in DC programming, the function $\varphi$ may have many different abs-normal decomposition, hence has different abs-normal forms. The following vectors and matrices are useful when study the function in abs-normal form:
\begin{alignat*}{2}
\bm{a} &\coloneqq \frac{\partial}{\partial \bm{x}} f(\bm{x},\bm{p}) \in \mathbb{R}^d, \qquad
&&\bm{Z} \coloneqq \frac{\partial}{\partial \bm{x}} F(\bm{x},\bm{p}) \in \mathbb{R}^{s\times d}, \\
\bm{b} &\coloneqq \frac{\partial}{\partial \bm{p}} f(\bm{x},\bm{p}) \in \mathbb{R}^s, 
&&\bm{L} \coloneqq \frac{\partial}{\partial \bm{p}} F(\bm{x},\bm{p}) \in \mathbb{R}^{s\times s}.
\end{alignat*}
For any $\bm{\sigma} \in \{-1,1\}^s$, we will denote by $\bm{\Sigma } \coloneqq \diag(\bm{\sigma }) \in \{-1,0,1\}^{s\times s}$. We can define (see also \cite[Equation (11)]{griewank2016first}) a matrix
$
\nabla \bm{z}^\sigma \coloneqq (\bm{I} - \bm{L\Sigma})^{-1}\bm{Z} \in \mathbb{R}^{s\times d},
$
which will play a key role in the definition of linear independence kink qualification (LIKQ) \cite[Definition 2.12]{walther2019characterizing}.

\begin{proof}[Proof of \Cref{coro:abs-norm-hard}]
We first show that $f_{\sf{F}}(\cdot)$ in \Cref{prob:plt} can be written in the abs-normal form in polynomial time.
For ease of notation, let $q_i(\bm{d}) \coloneqq -\sum_{j=1}^3 \max\left\{ \bm{d}^\top \bm{y}_{3(i-1)+j},0 \right\}$ for any $i \in [n]$. Then, we can rewrite every $q_i$ in the abs-linear form as
\begin{alignat*}{2}
z_i(\bm{d}) &= \bm{y}_i^\top \bm{d}, &\forall i \in [n]. \\	
q_i(\bm{d}, \bm{p}) &= -\frac{1}{2}\sum_{j=1}^3\bm{d}^\top \bm{y}_{3(i-1)+j} - \frac{1}{2}\sum_{j=1}^3 p_{3(i-1)+j}, \qquad &\forall i \in [n].
\end{alignat*}
Note that the function $f_{\sf{F}}$ can be expressed as
\[
y\coloneqq f_{\sf{F}}(\bm{d})=\max_{1 \leq i \leq n} q_i(\bm{d}, |\bm{z}|) = \max\{\dots, \max\{q_1(\bm{d},|\bm{z}|),q_2(\bm{d},|\bm{z}|)\},\dots,q_n(\bm{d},|\bm{z}|)\},
\]
which can be written in abs-normal form as
\begin{alignat*}{2}
	z_i &= F_i(\bm{q}, |\bm{z}|) = \frac{1}{2^{i-2}}\cdot q_1 + \sum_{t=2}^{i-1}\big( q_t + p_{3n+t-1} \big) - q_i, &\qquad\forall 3n+1 \leq i \leq 4n-1,\\
	y&=f(\bm{q}, \bm{p}) = \frac{1}{2^{n-1}}\cdot q_1 + \sum_{t=2}^n\big( q_t + p_{3n+t-1} \big).
\end{alignat*}
In sum, we have
\[
z_i = \left\{ \begin{array}{rcl}
         \displaystyle \bm{y}_i^\top \bm{w} & \mbox{for}
         & 1 \leq i \leq 3n \\ 
         \displaystyle\frac{1}{2^{i-3n-1}}\cdot q_1+\sum_{t=2}^{i-3n-1} \frac{1}{2^{i-3n-t}}\cdot \big( q_t + p_{3n+t-1} \big)  & \mbox{for} & 3n+1 \leq i \leq 4n-1                \end{array}\right..
\]
Then, we know
\[
f_{\sf{F}}(\bm{d})=f(\bm{d}, |\bm{z}(\bm{d})|) = \frac{1}{2^{n-1}}\cdot q_1 + \sum_{t=2}^n\big( q_t + |z_{3n+t-1}(\bm{d})| \big).
\]
Then, the matrices $\bm{L}, \bm{Z}, \bm{a}, \bm{b}$ can be computed in polynomial time.
We note that $\bm{0}\in\widehat{\partial} f_{\sf{F}}(\bm{0})$ if and only 
if the function $f_{\sf{F}}$ is first-order minimal in abs-normal form and this is shown in the discussion below \cite[Equation (2)]{griewank2019relaxing} (see also \cite[p.~3]{griewank2016first}). 
To see the verification of FOM is in $\cNP$, for any given $\bm{\sigma} \in \{-1,1\}^s$, the computation of the vector $\bm{a}^\top + \bm{b}^\top \big(\diag(\bm{\sigma}) - \bm{L}\big)^{-1}\bm{Z}$ and the matrix $\big(\diag(\bm{\sigma}) - \bm{L}\big)^{-1}\bm{Z}$ can be done in polynomial time. Then, verification of FOM for a given $\bm{\sigma}$ reduces to check the infeasibility of a linear system, which is in $\cP$. In sum, we have shown verification of FOM is $\cNP$-complete, which implies a general test of FOM without kink qualification in \cite[Theorem 4.1]{griewank2019relaxing} is co-$\cNP$-complete.
\end{proof}

\subsection{Proof of \Cref{coro:nnhard} (CNNs)}\label{sec:cnn-hard}
We formulate the following problem concerning the detection of $\epsilon$-stationary points for a CNN.
\begin{Problem}\label{prob:nnt} Fix $\epsilon \in [0, 1/2)$.
	Let the input data $\{\bm{y}_i\}_{i=1}^{3n} \subseteq \{-1,0,1\}^{m}$	
		be given. Let us define a gadget function $f_{\sf{NF}}:\mathbb{R}^{3}\times\mathbb{R}^m \rightarrow \mathbb{R}$ as
	\[
	f_{\sf{NF}}(\bm{u}, \bm{w})\coloneqq \max_{1\leq i \leq n} \sum_{j=1}^3 u_{j} \cdot \max\left\{ \bm{w}^\top \bm{y}_{3(i-1)+j},0 \right\}.
	\]
	Then, consider the following function $f_{\sf{NC}}:\mathbb{R}^{3}\times\mathbb{R}^{3}\times\mathbb{R}^m \rightarrow \mathbb{R}$:
	\[
	f_{\sf NC}(\bm{a}, \bm{b}, \bm{d}):=\frac{\bm{d}^\top \bm{e}_1}{2} + \max\left\{ f_{\sf{NF}}(\bm{a}, \bm{d}) + f_{\sf NF}(\bm{b}, -\bm{d}), a_1 \max\left\{\frac{\bm{d}^\top \bm{e}_1}{2},0\right\}+b_1 \max\left\{-\frac{\bm{d}^\top \bm{e}_1}{2},0\right\}  \right\}.
	\]
	Determine whether  
	$
	\bm{0} \in \partial f_{\sf{NC}}(-\bm{1}_3,-\bm{1}_3, \bm{0}_m) + \epsilon\mathbb{B}.
	$
\end{Problem}

\begin{Lemma}\label{lem:nnt-co-np-hard}
	\Cref{prob:nnt} is $\cNP$-hard.%
\end{Lemma}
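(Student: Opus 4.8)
The plan is to reduce directly from 3SAT by reusing the Max-Min seesaw construction from the proof of \Cref{lem:plst-np-hard}. Given a 3SAT instance I would build the data $\{\bm{y}_i\}_{i=1}^{3n}$ exactly as in \eqref{eq:hard-f-y}, form the gadget $f_{\sf{NC}}$, and query stationarity at $\bm{\theta}^*:=(-\bm{1}_3,-\bm{1}_3,\bm{0}_m)$. The first and most important observation is that the CNN gadget collapses onto the function of \Cref{prop:PLST} along the slice $\bm{a}=\bm{b}=-\bm{1}_3$: substituting $u_j=-1$ turns each term $u_j\max\{\bm{d}^\top\bm{y},0\}$ into $-\max\{\bm{d}^\top\bm{y},0\}$, so that $f_{\sf{NF}}(-\bm{1}_3,\bm{d})=f_{\sf{F}}(\bm{d})$ and $f_{\sf{NF}}(-\bm{1}_3,-\bm{d})=f_{\sf{F}}(-\bm{d})$, while $a_1=b_1=-1$ makes the second argument of the outer maximum equal to $-|\bm{d}^\top\bm{e}_1/2|$. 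Hence $f_{\sf{NC}}(-\bm{1}_3,-\bm{1}_3,\bm{d})=f_{\sf{C}}(\bm{d})$, the exact function analyzed in \Cref{prop:PLST}.

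The crux is then to establish the product decomposition of the Clarke subdifferential
\[
\partial f_{\sf{NC}}(\bm{\theta}^*)=\{\bm{0}_3\}\times\{\bm{0}_3\}\times\partial f_{\sf{C}}(\bm{0}_m).
\]
First I would note that at $\bm{d}=\bm{0}_m$ every inner ReLU value $\max\{\bm{d}^\top\bm{y},0\}$ and every term $\max\{\pm\bm{d}^\top\bm{e}_1/2,0\}$ vanishes, so $f_{\sf{NC}}(\bm{a},\bm{b},\bm{0}_m)=0$ identically and the map is constant on the slice $\{\bm{d}=\bm{0}\}$. To control the $(\bm{a},\bm{b})$-blocks I would use the limiting-gradient definition of $\partial$: at any differentiability point $(\bm{a},\bm{b},\bm{d})$ near $\bm{\theta}^*$, each partial derivative with respect to a coordinate of $\bm{a}$ or $\bm{b}$ is one of the quantities $\pm\max\{\bm{d}^\top\bm{y}_{\bullet},0\}$ or $\pm\max\{\pm\bm{d}^\top\bm{e}_1/2,0\}$, all of which are $O(\|\bm{d}\|)$; hence along any sequence of differentiability points converging to $\bm{\theta}^*$ the $\bm{a}$- and $\bm{b}$-blocks of the gradient tend to $\bm{0}$, forcing those blocks of every element of $\partial f_{\sf{NC}}(\bm{\theta}^*)$ to be $\bm{0}$. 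For the $\bm{d}$-block I would show that the achievable limits of $\nabla_{\bm{d}}f_{\sf{NC}}(\bm{a}_n,\bm{b}_n,\bm{d}_n)$ coincide with those obtained by freezing $\bm{a}=\bm{b}=-\bm{1}_3$, so that the $\bm{d}$-block equals $\partial f_{\sf{C}}(\bm{0}_m)$ by the collapse identity of the first paragraph.

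With the decomposition in hand the conclusion is immediate: since the $\bm{a},\bm{b}$ blocks are identically $\bm{0}$,
\[
\dist\big(\bm{0},\partial f_{\sf{NC}}(\bm{\theta}^*)\big)=\dist\big(\bm{0},\partial f_{\sf{C}}(\bm{0}_m)\big),
\]
so $\bm{0}\in\partial f_{\sf{NC}}(\bm{\theta}^*)+\epsilon\mathbb{B}$ if and only if $\bm{0}\in\partial f_{\sf{C}}(\bm{0}_m)+\epsilon\mathbb{B}$. By the analysis in the proof of \Cref{lem:plst-np-hard}, the latter holds precisely when the given 3SAT instance is satisfiable. As the data construction and the evaluation point $\bm{\theta}^*$ are computable in polynomial time with polynomially bounded magnitude, this yields the claimed $\cNP$-hardness of \Cref{prob:nnt}.

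The main obstacle I anticipate is the rigorous justification of the $\bm{d}$-block equality, specifically verifying that perturbing $\bm{a},\bm{b}$ away from $-\bm{1}_3$ introduces no new $\bm{d}$-gradient limits beyond those of $f_{\sf{C}}$; the delicate issue is that the active index of the outer max-pooling $\max_{1\le i\le n}$ can switch discontinuously. I would resolve this by a two-sided inclusion: ``$\supseteq$'' is clear by restricting to the slice, while ``$\subseteq$'' follows because near $\bm{\theta}^*$ the weight signs are pinned to $u_j=-1$, so the index attaining the outer maximum is governed by the same comparison of $-\sum_j\max\{\bm{d}^\top\bm{y}_{\bullet},0\}$ terms as in $f_{\sf{F}}$. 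Alternatively, one can bypass the direct limiting argument by applying the partial-linearization identity of \Cref{prop:linearization} together with the product rule of \Cref{coro:prod} to the smooth-times-Lipschitz blocks $u_j\cdot\max\{\bm{w}^\top\bm{y}_{\bullet},0\}$, which yields the same product decomposition more cleanly.
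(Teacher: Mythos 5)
Your reduction is the same as the paper's: the same 3SAT data \eqref{eq:hard-f-y}, the same gadget, the same query point, and the same collapse identity $f_{\sf{NC}}(-\bm{1}_3,-\bm{1}_3,\cdot)=f_{\sf{C}}(\cdot)$. The one place where the argument is not yet a proof is the full product decomposition $\partial f_{\sf{NC}}(\bm{\theta}^*)=\{\bm{0}_3\}\times\{\bm{0}_3\}\times\partial f_{\sf{C}}(\bm{0}_m)$. The inclusion ``$\supseteq$'' is fine: the Clarke subdifferential of the restriction to the slice $\{\bm{a}=\bm{b}=-\bm{1}_3\}$ is always contained in the projection of the full Clarke subdifferential (compare the generalized directional derivatives in directions of the form $(\bm{0},\bm{0},\bm{v})$), and your observation that the $\bm{a}$- and $\bm{b}$-partials are $O(\|\bm{d}\|)$ correctly pins those blocks to $\bm{0}$. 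But the inclusion ``$\subseteq$'' for the $\bm{d}$-block is precisely the kind of statement that can fail for Clarke subdifferentials: a sequence of differentiability points with $(\bm{a}_n,\bm{b}_n)\neq(-\bm{1}_3,-\bm{1}_3)$ can realize an active pattern (a choice of pooling winner or outer branch) whose region of validity is disjoint from the slice, so its gradient limit need not come from an essentially active piece of $f_{\sf{C}}$ at $\bm{0}$ and hence need not lie in $\partial f_{\sf{C}}(\bm{0}_m)$. Your remark that ``the weight signs are pinned to $u_j=-1$'' does not exclude this, because the pooling comparisons can tie at $\bm{a}=-\bm{1}_3$ and be broken either way by the perturbation. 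The proposed shortcut via \Cref{prop:linearization} and \Cref{coro:prod} is also unavailable here: \Cref{prop:linearization} requires the outer function to be differentiable with locally Lipschitz gradient, and your outer function is a max.

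The good news is that the hard inclusion is never needed, which is how the paper proceeds. In the satisfiable case you only need $\bm{0}\in\partial f_{\sf{C}}(\bm{0}_m)\Rightarrow\bm{0}\in\partial f_{\sf{NC}}(\bm{\theta}^*)$, which is the easy ``$\supseteq$'' direction; the paper gets it even more directly by exhibiting points $(-\bm{1}_3,-\bm{1}_3,t\bm{y})$ with $t\searrow 0$ at which $f_{\sf{NC}}$ is continuously differentiable in \emph{all} variables with gradient tending to $\bm{0}$. In the unsatisfiable case no decomposition is required: $f_{\sf{NF}}(\bm{u},\cdot)\equiv 0$ for every $\bm{u}$ near $-\bm{1}_3$ (for each $\bm{d}$ some clause block has all three ReLUs equal to zero, and every block is a nonpositive combination), and the second branch of the outer max is nonpositive when $a_1,b_1<0$, so $f_{\sf{NC}}(\bm{a},\bm{b},\bm{d})=\bm{d}^\top\bm{e}_1/2$ on a full neighborhood of $\bm{\theta}^*$; the function is affine there, $\partial f_{\sf{NC}}(\bm{\theta}^*)$ is the singleton $\{(\bm{0}_3,\bm{0}_3,\bm{e}_1/2)\}$, and the distance is $1/2>\epsilon$. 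Replacing the two-sided decomposition by this case analysis turns your sketch into the paper's proof.
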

\begin{proof}
The proof is similar in spirit to that of \Cref{lem:plst-np-hard}. 
For any given instance of 3SAT, we construct the input data $\{\bm{y}_i\}_{i=1}^{3n}$ according to the proof of \Cref{lem:plt-np-hard}. Fix $\epsilon \in [0,1/2)$. We show that $\bm{0} \in \partial f_{\sf{NC}}(-\bm{1}_3,-\bm{1}_3, \bm{0}_m) + \epsilon\mathbb{B}$ if and only if the instance of 3SAT can be satisfied. 

(``If'') 
Note that $f_{\sf{C}}(\bm{y})=f_{\sf{NC}}(-\bm{1}_3,-\bm{1}_3, \bm{y})$ for any $\bm{y} \in \mathbb{R}^m$.
If the instance of 3SAT in the reduction presented in the proof of \Cref{lem:plt-np-hard} can be satisfied, by the argument in the (``If'') part of $\Cref{lem:plst-np-hard}$ and continuity, there exists $\bm{y}\in \{-1,1\}^m$ such that for any $\bm{y}' \in \mathbb{R}^m,$ near $\bm{y}$, it holds that
\[
f_{\sf{NF}}(-\bm{1}_3, \bm{y}') + f_{\sf{NC}}(-\bm{1}_3, -\bm{y}') < - \left| \frac{\bm{d}^\top \bm{e}_1}{2} \right| =  - \max\left\{\frac{\bm{d}^\top \bm{e}_1}{2},0\right\} -\max\left\{-\frac{\bm{d}^\top \bm{e}_1}{2},0\right\}.
\]
Therefore, by continuity, one can readily check that there exists $\bm{y}\in \{-1,1\}^m$ such that 
\[
f_{\sf{NC}}(\bm{a}',\bm{b}',\bm{y}')=\frac{\bm{y}'^\top \bm{e}_1}{2}+a'_1\cdot\max\left\{\frac{\bm{y}'^\top \bm{e}_1}{2},0\right\}+b_1'\cdot \max\left\{-\frac{\bm{y}'^\top \bm{e}_1}{2},0\right\}
\] 
for any $(\bm{a}',\bm{b}',\bm{y}')$ near $(-\bm{1}_3,-\bm{1}_3,\bm{y})$. Without loss of generality, we assume $y_1 = 1$. Then, the above equation simplifies to
$
f_{\sf{NC}}(\bm{a}',\bm{b}',\bm{y}')=(1+a'_1)y_1'/2,
$ 
so that the function $f_{\sf{NC}}$ is continuously differentiable near the point $(-\bm{1}_3,-\bm{1}_3,\bm{y})$ with gradient $\nabla f_{\sf{NC}}(-\bm{1}_3,-\bm{1}_3,\bm{y}) = \bm{0}.$ Consider sequence $\{(-\bm{1}_3,-\bm{1}_3,\frac{1}{n}\bm{y})\}_n\to  (-\bm{1}_3,-\bm{1}_3,\bm{0}_m)$ as $n\to \infty$. By homogeneity with respect to $\bm{y}$, it holds  $\nabla f_{\sf{NC}}(-\bm{1}_3,-\bm{1}_3,\frac{1}{n}\bm{y}) = \bm{0}$ for any $n \geq 1$. We conclude that $\bm{0} \in \partial f_{\sf{NC}}(-\bm{1}_3,-\bm{1}_3,\bm{0}_m)$ as desired.

(``Only if'') If the instance of the 3SAT cannot be satisfied, by virtue of the proof of \Cref{lem:plt-np-hard}, one can easily check that $f_{\sf{NF}}(\bm{u},\bm{y}) = 0$ for any $\bm{u}$ near $-\bm{1}_3$ and any $\bm{y}\in\mathbb{R}^m$. Consequently, $f_{\sf{NC}}(\bm{a},\bm{b},\bm{y}) = \frac{\bm{y}^\top \bm{e}_1}{2}$ for any $(\bm{a}, \bm{b})$ near $(-\bm{1}_3,-\bm{1}_3)$ and any $\bm{y}\in\mathbb{R}^m$, so that 
\[
\dist\Big(\bm{0}, \partial f_{\sf{NC}}(-\bm{1}_3,-\bm{1}_3, \bm{0}_m)\Big) = \frac{1}{2} > \epsilon,
\]
as desired.
\end{proof}

\begin{proof}[Proof of \Cref{coro:nnhard}]
We now show that the function $f_{\sf{NC}}$ in \Cref{prob:nnt} can be represented by the loss of a convolutional neural network.
First, we observe
\[
f_{\sf{NF}}(\bm{a}, \bm{d}) + f_{\sf NF}(\bm{b}, -\bm{d}) = \max_{1 \leq i, j \leq n} 
\sum_{k=1}^3 \Big( a_{k} \cdot \max\left\{ \bm{d}^\top \bm{y}_{3(i-1)+k},0 \right\}+
b_k \cdot \max\left\{ \bm{d}^\top \bm{y}_{3(j-1)+k},0 \right\}\Big).
\]
The construction is by training the network on a single data point with the loss function locally being $t\mapsto t$ and a linear regularization term $\bm{d}\mapsto \frac{\bm{d}^\top\bm{e}_1}{2}$. The architecture of our network is formally described as follows. 
\begin{description}
\item[(input data)] The data point is organized into the following matrix 
\[\bm{X}:=\left[ \begin{array}{c|c|c|c|c}
		\bm{y}_1 & \cdots & \bm{y}_{3n} & \frac{\bm{e}_1}{2} & -\frac{\bm{e}_1}{2}\end{array} \right]\subseteq \mathbb{Q}^{m\times (3n+2)}.
		\]
\item[(conv. layer)]
Define the output $\bm{h}^{(1)}:=\bm{X}^\top \bm{d} \in \mathbb{R}^{3n+2}$ with network parameters $\bm{d}\in\mathbb{R}^m$.
\item[(ReLU)] Apply ReLU activation function element-wise to obtain $\bm{h}^{(2)}:= \max\{\bm{h}^{(1)},0\} \in \mathbb{R}^{3n+2}$.
\item[(conv. layer)] Define the convolutional operation, with network parameters $\bm{a}\in\mathbb{R}^3$ and $\bm{b}\in\mathbb{R}^3$, on the vector $\bm{h}^{(2)}$ to obtain a vector $\bm{h}^{(3)} \in \mathbb{R}^{n^2+1}$ as
\[
h^{(3)}_p:=
\left\{ \begin{array}{rcl}
         \sum_{k=1}^3 \Big( a_{k} \cdot h^{(2)}_{3\lfloor (p-1)/n \rfloor+k}+
b_{k} \cdot  h^{(2)}_{3(p-1+\lfloor (p-1)/n \rfloor)+k}\Big) & \mbox{for}
         & 1 \leq p \leq n^2 \\
          a_1 \cdot h^{(2)}_{3n+1} + b_1 \cdot h^{(2)}_{3n+2} & \mbox{for} & p=n^2+1. 
                \end{array}\right.
\]
\item[(max-pooling)] Define the output layer as $h^{(\text{out})}:= \max_{i \in [n^2+1]} h_i^{(3)}$.
\end{description}
In sum, we obtain the equality $\frac{\bm{d}^\top \bm{e}_1}{2} + h^{(\text{out})}(\bm{a}, \bm{b}, \bm{d})=f_{\sf{NC}}(\bm{a},\bm{b},\bm{d})$ as required. The proof completes by using \Cref{lem:nnt-co-np-hard}.
\end{proof}

\section{Missing Proofs from \Cref{sec:cr}}

\subsection{Proof of \Cref{prop:comp-poly}}\label{sec:comp-properity}

\begin{proof}[Proof of \Cref{prop:comp-poly}]
Given the polytopes $A$ and $B$ in $\mathbb{R}^d$, let us define two convex \PA{} functions $h,g:\mathbb{R}^d\to \mathbb{R}$ as 
\[
h(\bm{w}):= \max_{\bm{a} \in A}\ \bm{w}^\top \bm{a}, \qquad
g(\bm{w}):= \max_{\bm{b} \in B}\ \bm{w}^\top \bm{b}.
\]
It holds that $A=\partial h(\bm{0})$ and $B=\partial g(\bm{0})$.
Using \Cref{thm:general-sum-Clarke-geometric}, we know that the polytopes $A$ and $B$ are compatible if and only if 
\[
\partial (h-g)(\bm{0}) = \partial h(\bm{0})-\partial g(\bm{0}).
\] 
Therefore, we prove the geometric claims in \Cref{prop:comp-poly} using tools from nonsmooth analysis as follows:
\begin{itemize}
	\item (i) and (ii) follows from $\partial(-f) = -\partial f$; see \cite[Proposition 2.3.1]{clarke1990optimization}.
	\item (iii) is a direct corollary of  \cite[Proposition 2.3.3]{clarke1990optimization}.
	\item (iv) follows from \Cref{lem:localization} and \cite[Proposition 2.3.3]{clarke1990optimization}.
\end{itemize}
This completes the proof.
\end{proof}
\subsection{Proof of \Cref{prop:all-are-transveral}}\label{prf-prop:all-are-transveral}

\begin{proof}[Proof of {\Cref{prop:all-are-transveral}}]
	Evidently, either statement (b) or (c) implies \nref{eq:pre-transverality}. We will only prove that (a) implies \nref{eq:pre-transverality}.
	If the function $-g'$ is Clarke regular at $\bm{w}$, then we have $-\partial g'(\bm{w}) = \partial (-g')(\bm{w}) = \widehat{\partial} (-g')(\bm{w})$. We claim that the function $g'$ can only be an affine function near $\bm{w}$, so that \nref{eq:pre-transverality} holds since $\parr (\partial g'(\bm{w})) = \{\bm{0}\}$. Suppose conversely that the function $g'$ is not affine near $\bm{w}$. By \cite[Proposition 2.2.4]{clarke1990optimization}, there exist at least two distinct subgradients $\bm{g}_1, \bm{g}_2 \in \partial g'(\bm{w})$. By Clarke regularity, we obtain $-\bm{g}_1, -\bm{g}_2 \in \widehat{\partial} (-g')(\bm{w})$. Note that there must exists an element in $\{-\bm{g}_1, -\bm{g}_2\}$, denoted by $-\bm{g}$ for simplicity, that is not equal to $\bm{g}_1$. By the definition of the Fr\'echet and convex subdifferentials, for any $\bm{z} \in \mathbb{R}^d$, we obtain
	\begin{align*}
			g'(\bm{z}) &\geq g'(\bm{w}) + \bm{g}_1^\top (\bm{z} - \bm{w}), \tag{$\bm{g}_1 \in \partial g'(\bm{w})$ and \Cref{fct:clarke-dd}} \\
			-g'(\bm{z}) &\geq -g'(\bm{w}) - \bm{g}^\top (\bm{z} - \bm{w}) + o(\|\bm{z} - \bm{w}\|). \tag{$-\bm{g} \in \widehat{\partial} (-g')(\bm{w})$ and \cite[Definition 8.3(a)]{rockafellar2009variational}} \\
	\end{align*}
	Summing up, we get $(\bm{g}_1 - \bm{g})^\top (\bm{z} - \bm{w}) \leq o(\|\bm{z} - \bm{w}\|)$ for any $\bm{z} \in \mathbb{R}^d$. Let $\bm{z}:=\bm{w}+t(\bm{g}_1 - \bm{g}) \to \bm{w}$ as $t\searrow 0$. It follows that
	\[
	\lim_{t \searrow 0} \frac{(\bm{g}_1 - \bm{g})^\top (\bm{z} - \bm{w})}{\|\bm{z} - \bm{w}\|} = \|\bm{g}_1 - \bm{g}\| \neq 0,
	\]
	a contradiction. 
\end{proof}

\bibliography{ref}
\bibliographystyle{abbrvnat}

\end{document}